\newcommand{\even}{\mathrm{Even}}
\newcommand{\TP}{\mathrm{TP}}
\newcommand{\Ult}{\mathrm{Ult}}
\newcommand*\axiomfont[1]{\textsf{\textup{#1}}}
\newcommand\zfc{\axiomfont{ZFC}}
\newcommand\gch{\axiomfont{GCH}}
\newcommand\sch{\axiomfont{SCH}}
\newcommand\ch{\axiomfont{CH}}
\newcommand{\cof}{\mathrm{cof}}
\newcommand{\dom}{\text{dom}\,}
\newcommand{\Add}{\mathrm{Add}}
\newcommand{\Con}{\mathrm{Con}}
\newtheorem{theo}{Theorem}[section]
\newtheorem{quest}[theo]{Question}
\newtheorem{defi}[theo]{Definition}
\newtheorem{lemma}[theo]{Lemma}
\newtheorem{prop}[theo]{Proposition}
\newtheorem{remark}[theo]{Remark}
\newtheorem{convention}[theo]{Convention}
\newtheorem{claim}[theo]{Claim}
\newtheorem{notation}[theo]{Notation}
\def\s{\subseteq}
\newcommand{\one}{\mathds{1}}
\title[The tree property at double successors of singular cardinals]{The tree property at  double successors of singular cardinals of uncountable cofinality with infinite gaps}
\begin{document}

\author{Mohammad Golshani}
\address{School of Mathematics, Institute for Research in Fundamental Sciences (IPM), P.O. Box:
19395-5746, Tehran-Iran.}
\email{golshani.m@gmail.com}
\author{Alejandro Poveda}
\address{Departament de Matem\`atiques i Inform\`atica, Universitat de Barcelona.
Gran Via de les Corts Catalanes, 585,
08007 Barcelona, Catalonia.}
\email{alejandro.poveda@ub.edu}
\subjclass[2000]{Primary: 03E35, 03E55. Secondary: 03E05.}
\keywords{Tree property, Large Cardinals, Singular Continuum Hypothesis, Magidor forcing,  Mitchell forcing.}
\thanks{ The first author's research has been supported by a grant from IPM (No. 97030417). The second author's research has been supported by MECD (Spanish Government) Grant no FPU15/00026, MEC project number MTM2017-86777-P and SGR (Catalan Govern\-ment) project number 2017SGR-270.}

\maketitle

\begin{abstract}
Assume that $\kappa$ and $\lambda$ are respectively strong and weakly compact cardinals with $\lambda>\kappa$. Fix $\Theta\geq \lambda$ a cardinal with $\cof(\Theta)>\kappa$ and $\cof(\delta)=\delta<\kappa$. Assuming the $\gch_{\geq \kappa}$ holds, we construct a generic extension of the universe where $\kappa$ is a strong limit cardinal, $\cof(\kappa)=\delta$, $2^\kappa= \Theta$ and $\TP(\kappa^{++})$ holds.  This extends the main result of \cite{FriHon} for uncountable cofinalities.
\end{abstract}

\section{Introduction}
Infinite trees play an essential role in Combinatorial Set Theory. Recall that for an infinite cardinal $\kappa$, a tree $\langle T, \prec\rangle$ is said to be a $\kappa$-tree if it has height $\kappa$ and all of its levels have size less than $\kappa$. A cofinal branch in $T$ is just a chain in $\prec$ of size $\kappa$. In the present paper we are interested in the following  classical question about infinite trees: Given any cardinal $\aleph_0\leq \cof(\kappa)=\kappa$, does any $\kappa$-tree $T$ have a cofinal branch?  If for a given such $\kappa$ the answer to this question is positive it is said that the \textit{Tree Property holds at $\kappa$}. On what follows will  be denoting this latter fact by $\TP(\kappa)$.

Classical results respectively due to K\"{o}nig and Aronszajn  show that $\mathrm{TP}(\aleph_0)$ holds while, surprisingly,  $\mathrm{TP}(\aleph_1)$ fails. In this regard it is worth saying that both assertions are derivable just from \zfc,\, thus without  appea\-ling to further axiomatic assumptions. At the light of these discovering it seems natural to pursue a similar investigation for bigger regular cardinals. The first theorem on this direction was proved by Mitchel  and Silver \cite{Mit}. This result stands out by its emphasis on the fundamental role of Large Cardinals in the understanding of the tree property configurations above $\aleph_1$. Using the Forcing technique,  Mitchell first showed that $\Con(\zfc+\exists \kappa\,\text{$``\kappa$ is weakly compact''})$ implies
$\Con(\zfc+\TP(\aleph_2))$. The converse implication was subsequently proved by Silver, who showed that if $\TP(\aleph_2)$ holds then $\omega_2$ is  weakly compact in $L$. As a result, the exact consistency strength of $\zfc+\TP(\aleph_2)$ was established, being  the existence of a weakly compact cardinal. 

In this paper we are particularly interested in the forcing notion introduced by Mitchell in his proof of this first implication. In honour to his ground-breaking discovering, this forcing  is known among the specialists as \emph{Mitchell Forcing}. Given a weakly compact cardinal $\kappa$, forcing with Mitchell's poset produces a generic extension where $\kappa=\aleph_2$, $2^{\aleph_0}=\aleph_2$ and $\TP(\aleph_2)$ holds \cite{Mit}. In this regard, it is worth mention that the presence of this failure of the \ch\, in Mitchell's model is not casual: actually, it is mandatory by virtue of a classical result of Specker \cite{Spe}.

 Following up with Mitchell's work, Abraham \cite{Abr} later proved the follo\-wing: Assume the \gch\, holds,  and that there is a supercompact cardinal $\kappa$ joint with a weakly compact cardinal $\lambda>\kappa$. Then there is a forcing notion which produces a generic extension where $\kappa=\aleph_2$, $\lambda=\aleph_3$, and both $\TP(\aleph_2)$  and $\TP(\aleph_3)$ hold.\footnote{Also, in this model $2^{\aleph_0}=\aleph_2$, $2^{\aleph_1}=\aleph_3$.}  This result was subsequently extended by Cummings and Foreman in \cite{CumFor}: Assume the \gch\, holds and that there is a sequence $\langle \kappa_n\mid n<\omega\rangle$ of supercompact cardinals. Then there is a generic extension where  $\TP(\aleph_n)$ holds, for each $2\leq n<\omega$. Subsequent improvements of this result are due to Neeman \cite{NeeUpTo} and Unger \cite{UngUpTo}.

 The \textit{raison d'être} for   all of these investigations raises from an old open question posed by Magidor: Is it consistent that $\TP(\kappa)$ holds, for each $\aleph_2\leq \cof(\kappa)=\kappa$? Alternatively, does \zfc\, prove the existence of $\kappa$-Aronszajn trees, when $\aleph_2\leq \cof(\kappa)=\kappa$? At the light of the previous results it seems that there is a chance --of course, modulo Large Cardinlas-- for a positive answer to Magidor's question. Nonetheless, the construction of a model witnessing this thesis remains as one of the main open challenges of Set Theory.
A remarkable aspect of this problem come from its narrow connection  with the possible behaviours of the continuum function $\aleph_\alpha\mapsto 2^{\aleph_\alpha}$, which constitutes another central theme in Cardinal Combinatorics.  More precisely, if  $M$ is a model for a positive answer to Magidor's question
then $M$ witnesses a global failure of the \gch. The first model $M$ exhibiting this global failure of the \gch\, was constructed in \cite{ForWoo} using a supercompact cardinal and Supercompact Radin forcing with collapses.



The aim of this paper is to contribute to this
collective effort by analy\-zing the tree property configurations at double successors of singular strong limit cardinals. The first of these results is due to Cummings and Foreman \cite{CumFor}, which has inspired the latter developments of the field. In the above cited paper the authors proved the following analogous of Mitchell's theorem at the scale of strong limit singular cardinals: Assume the $\gch$ holds and that there is a supercompact cardinal $\kappa$ joint with a weakly compact $\lambda>\kappa$. Then there is a \textit{Mitchell-like} forcing which yields a generic extension where $\kappa$ is strong limit, $\cof(\kappa)=\omega$, $2^\kappa=\kappa^{++}=\lambda$, hence $\sch_\kappa$ fails, and $\mathrm{TP}(\kappa^{++})$ holds. Later developments due to Friedman and Halilovi\'{c} \cite{FriHal} obtained Cummings-Foreman's result for $\kappa=\aleph_\omega$, starting from almost optimal hypotheses. Subsequently, Gitik \cite{Gitik2014} provided the exact consistency strength of this theory.

 The main novelty of Cummings-Foreman (CF) approach is that it provides a general scheme to combine  the Prikry-type technology  with Mitchell's original ideas. This aspect has been subsequently exploited in \cite{Ung}, \cite{SinTree}, \cite{GolMoh} and \cite{FriHon} where several generalizations of CF-theorem have been obtained.

In this paper we are particularly interested in the approach taken in \cite{FriHon}, where the problematic of getting arbitrary failures for the $\sch_\kappa$ in the CF-model is addressed. As it is mentioned in the introduction of \cite{FriHon} this is somewhat conflictive with Mitchell's original approach. Broadly speaking, if one aims to force a generic extension where  $2^\kappa\geq \kappa^{+3}$ and $\TP(\kappa^{++})$ hold, then the Mitchell's-like forcing from \cite{CumFor} would exhibit a mismatch between the lengths of its \textit{Cohen component} and its \textit{Collapsing component}. It turns that this disagreement  become troublesome at the time of implementing Mitchell's classical analysis of the quotients forcings (see \cite{Mit} or \cite{Abr} for details). The above cited paper is precisely devoted to show how to overcome this difficulty.

In the present work we will provide a further generalization of the CF-theorem which extends the developments obtained in \cite{FriHon} and \cite{GolMoh}.
The main result of the paper reads as follows:

\begin{theo}[Main theorem]
\label{main theorem 1}
Assume the $\gch_{\geq \kappa}$ holds. Let $\kappa$ be a strong cardinal and $\lambda > \kappa$ be a weakly compact. Fix $\Theta \geq  \lambda$ a cardinal with $\cof(\Theta)>\kappa$ and  $\cof(\delta)=\delta < \kappa$. Then there is a generic extension of the universe of sets $V$ where the following properties hold:
\begin{enumerate}
\item $\kappa$ is a strong limit singular cardinal with $\cof(\kappa)=\delta$;
\item All cardinals and cofinalities outside $((\kappa^+)^V,\lambda)$ are preserved. In particular, $\lambda=(\kappa^{++})^V$.
\item $2^{\kappa} = \Theta$, hence $\sch_{\kappa}$ fails;
\item $\mathrm{TP}(\kappa^{++})$ holds.
\end{enumerate}
\end{theo}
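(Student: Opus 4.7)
The plan is to adapt the Friedman--Honzik construction \cite{FriHon} by replacing Prikry forcing (appropriate for $\cof(\kappa)=\omega$) with Magidor forcing (appropriate for uncountable $\cof(\kappa)=\delta$). Concretely, I would force with an iteration of the form $\mathbb{P}=\mathbb{P}_0 * \dot{\mathbb{M}} * \dot{\mathbb{M}g}(\dot{\vec{U}})$. The first factor $\mathbb{P}_0$ is a preparation that, exploiting the strength of $\kappa$, arranges in advance that, after the next stage, there is a coherent sequence of normal measures $\vec{U}=\l U_\alpha \mid \alpha<\delta\r$ on $\kappa$ of the type required to run Magidor forcing with limit cofinality $\delta$. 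The second factor $\mathbb{M}$ is a Mitchell-like poset in the style of \cite{FriHon}, with a \emph{Cohen component} of length $\Theta$ (designed to yield $2^\kappa=\Theta$) and a \emph{collapsing component} of length $\lambda$ (designed to yield $\lambda=\kappa^{++}$). The third factor $\mathbb{M}g(\vec{U})$ is Magidor forcing relative to $\vec{U}$, which singularizes $\kappa$ to cofinality exactly $\delta$ without collapsing any cardinal $\geq\kappa^+$.

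Once the iteration is defined, the first task is to verify the preservation claims underlying (1)--(3). The Mitchell stage $\mathbb{M}$ should be designed to enjoy the $\lambda$-c.c.\ together with a suitable projection-style decomposition, yielding $\lambda=\kappa^{++}$ and $2^\kappa=\Theta$ in $V[G_0][G_1]$, while $\gch_{\geq\kappa}$ in $V$ controls the behaviour of $\mathbb{P}_0$ and of the upper part of $\mathbb{M}$. Using a lifting argument based on a $\kappa$-strong embedding $j\colon V\to N$, one then has to show that in $V[G_0][G_1]$ the cardinal $\kappa$ still carries a coherent $\delta$-sequence of normal measures $\vec{U}$; the standard lifting technology --- term forcing, master conditions, and a Laver-like design of $\mathbb{P}_0$ --- should suffice. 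The Magidor poset $\mathbb{M}g(\vec{U})$ is $\kappa^+$-c.c.\ and preserves all cardinals $\geq \kappa^+$, so after this third stage $\kappa$ becomes a strong limit singular cardinal of cofinality $\delta$ with $\kappa^{++}=\lambda$ and $2^\kappa=\Theta$, giving items (1)--(3).

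The main obstacle is establishing $\TP(\lambda)$ in the final model $V[G_0][G_1][G_2]$. Given a $\lambda$-tree $T$ there, the intended strategy is to exploit the weak compactness of $\lambda$ in $V$ via an elementary embedding $j\colon V\to M$ with $\crit(j)=\lambda$, lift $j$ through the entire iteration, and conclude that $T$ must already have a cofinal branch in $V[G_0][G_1][G_2]$ by reflecting a branch from level $\lambda$ of $j(T)$. Two delicate points arise. First, one inherits from \cite{FriHon} the \emph{length mismatch} between the Cohen component ($\Theta$) and the collapsing component ($\lambda$) of $\mathbb{M}$, which blocks the direct use of the Cummings--Foreman quotient analysis; the Friedman--Honzik thinning-and-guessing technique must be deployed to factor $\mathbb{M}\cong\mathbb{M}_\alpha*\dot{\mathbb{M}}^\alpha$ (for appropriate $\alpha<\lambda$) in such a way that the quotient $\dot{\mathbb{M}}^\alpha$ adds no cofinal branch to $T\upharpoonright\alpha$. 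Second, one must verify that the tail of Magidor forcing, combined with this quotient, still does not add new branches to $T$; this is where the genuinely new work concentrates, and it should be addressed via the Prikry-like property of Magidor together with a $\kappa^+$-Knaster-type analysis of the upper part of the forcing, using $\gch_{\geq\kappa}$ to count nice names. Putting these two ingredients together with the reflection argument produces the desired cofinal branch and hence $\TP(\lambda)$.
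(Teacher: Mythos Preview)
Your three-step plan $\mathbb{P}_0 * \dot{\mathbb{M}} * \dot{\mathbb{M}g}(\vec U)$ is structurally different from the paper's forcing. There Magidor is not appended after a Mitchell poset; rather, the ``Cohen component'' of the Mitchell-style poset $\mathbb{R}$ is itself the iteration $\mathbb{A}_\Theta * \dot{\mathbb{M}}_\Theta$, and each collapse $r(\gamma)$ is an $\Add(\kappa^+,1)$-condition computed in an intermediate Cohen$*$Magidor extension $V^{\mathbb{A}_{\mathrm{Even}(\gamma)}*\dot{\mathbb{M}}^\pi_\gamma}$. The coherent sequence $\mathcal{U}$ is obtained directly from Woodin's indestructibility of strongness under $\Add(\kappa,\Theta)$ --- no lifting of a strong embedding through collapses is needed --- and is then shown to restrict coherently to the pure-Cohen sub-extensions indexed by a suitable unbounded set (Lemmas~\ref{LemmaD+} and \ref{ReflectionMeasuresD}); Mitchell's genericity criterion (Theorem~\ref{MitchellCriterion}) then yields the projections between the various Cohen$*$Magidor iterations. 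The tree property is established not by lifting a weak-compactness embedding through the iteration but by $\Pi^1_1$-indescribability: the statement that $\dot T$ is $\lambda$-Aronszajn reflects to some Mahlo $\xi\in\mathcal{B}^*$, and one then shows the quotient $\mathbb{R}^*/\mathbb{R}^*\upharpoonright\xi$ adds no branch to $T\cap\xi$.

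The genuine gap in your proposal is precisely the step you flag as ``where the genuinely new work concentrates''. In the paper the quotient projects from $\mathbb{P}_\xi \times \mathbb{Q}^{\mathrm{Even}}_\xi$, where $\mathbb{P}_\xi = (\mathbb{A}_\lambda*\dot{\mathbb{M}}^{\pi^*}_\lambda)/(\mathbb{A}_\xi*\dot{\mathbb{M}}^{\pi^*}_\xi)$ and $\mathbb{Q}^{\mathrm{Even}}_\xi$ is $\kappa^+$-closed over an even truncation (Lemma~\ref{FactorizationRstarxi}); the decisive lemma is that $\mathbb{P}_\xi\times\mathbb{P}_\xi$ is $\kappa^+$-cc over $V^{\mathbb{C}_\xi}$. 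This is proved via a generalized R\"{o}wbottom lemma for Magidor forcing (Lemmas~\ref{RowbottomMagidor1} and \ref{RowbottomMagidor}), which homogenizes colourings of block sequences and lets one decide membership in the Cohen$*$Magidor quotient uniformly (Lemmas~\ref{NotNotInTheQuotientMagidor}--\ref{LemmaCCnessMagidor}). Your ordering, with Magidor last, would instead require reflecting the Magidor measures along truncations of a Mitchell poset that already contains collapses (not pure Cohen), and then controlling the chain condition of the square of a (Mitchell-tail $*$ Magidor)-type quotient; neither ingredient is supplied, and it is exactly the interleaved placement of Magidor in $\mathbb{R}$ that makes the R\"{o}wbottom-style argument available.
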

  For the proof of this result we have been inspired by \cite{FriHon}, where in our context the role of  Prikry forcing is now played by Magidor forcing. It is worth mention that in our model $\TP(\kappa^+)$ fails (see Section \ref{NotTPkappa^+}).  Roughly speaking this is consequence of the fact that our forcing does not collapse $\kappa^+$. To overcome this issue one needs to use \emph{Supercompact-like Prikry forcings} such as Sinapova forcing \cite{Sin}. A generalization of  our main theorem in this direction has been recently obtained in \cite{Pov}.

The structure of the paper is as follows: In Section \ref{Preliminaries} we give a self-contained account of the classical Magidor forcing to change the cofinality of a measurable cardinal $\kappa$ with $o(\kappa)=\delta$ to $\delta$.  In Section \ref{ProofTheorem1} we  present in detail our main forcing construction to obtain Theorem \ref{main theorem 1} when $\Theta=\lambda^+$. There we will show that this forcing produces a generic extension where (1)-(3) hold. In Section \ref{TPkappa++SectionMagidor} we finish this analysis by showing that $\TP(\kappa^{++})$ also holds.

 We will end up  with Section \ref{SectionGapArbitrary}, where we will enumerate the necessary modifications to obtain the  consistency of an arbitrary failure of the $\sch_\kappa$ in our model. The notation used is either quite standard or will be properly explained. Certain familiarity with Prikry-type forcings \cite{Git} and Large Cardinals \cite{Kan} would be desirable.

\section{Preliminaries}\label{Preliminaries}
One of the main forcing tools of the present paper is the so-called \textit{Magidor forcing}. This poset was originally introduced by Magidor in \cite{Mag} and is inteded to change the cofinality of a measurable cardinal $\kappa$ with $o(\kappa)=\delta$ to some regular cardinal $\delta'\leq \delta$.\footnote{For a definition of $o(\kappa)$ see \cite[\S2]{MitChap}.} Thus, in a broad sense, Magidor forcing can be conceived as a generalization of the  classical Prikry forcing \cite{Prikry}.

 Magidor's original approach to this  forcing was based on $\lhd$-increasing sequences of measures $\mathcal{U}=\langle U_\alpha\mid \alpha<\delta\rangle$ (see \cite[Definition 2.2]{MitChap}). Nonetheless, latter developments of Mitchell suggested to use \emph{Coherent Sequences of Measures}  instead (c.f. Definition \ref{CoherentSequence}). This is due to the fact that coherent sequence of measures can be also used to define more sophisticated forcings, such as Radin forcing \cite{Rad}. In this paper we will follow Mitchell's approach to Magidor forcing.

The purpose of this section is just to review the definition  and main properties of Magidor forcing. This will become important in sections \ref{ProofTheorem1} and \ref{TPkappa++SectionMagidor} where we will modify Cummings-Foreman poset $\mathbb{R}$ by switching Prikry by Magidor forcing. 
All details can be found  in Magidor's original paper \cite{Mag} or in Gitik's excellent article \cite[\S5]{Git}, with the exception of  Lemma \ref{RowbottomMagidor1} and  Lemma \ref{RowbottomMagidor}.

\begin{defi}[Coherent sequence]\label{CoherentSequence}
A coherent sequence of measures $\mathcal{U}$ is a function with domain $\{(\alpha,\beta)\mid\, \alpha<\ell^{\mathcal{U}}~\mathrm{and}~\beta<o^{\mathcal{U}}(\alpha)\}$ such that for $(\alpha,\beta)\in dom(\mathcal U)$ the following conditions are true:
\begin{enumerate}
\item
$\mathcal{U}(\alpha,\beta)$ is a normal measure over $\alpha$;
\item
If $j^{\alpha}_{\beta}:V\longrightarrow \Ult(V,\mathcal{U}(\alpha,\beta))$ stands for the usual ultrapower embedding, then $j^{\alpha}_{\beta}(\mathcal{U})\upharpoonright\alpha+1=\mathcal{U}\upharpoonright(\alpha,\beta)$,
where $\mathcal{U}\upharpoonright\alpha:= \mathcal{U}\upharpoonright\{(\alpha',\beta')\mid \alpha'<\alpha\,\&\, \beta'<o^{\mathcal{U}}(\alpha')\}$ and
$$
\mathcal{U}\upharpoonright(\alpha,\beta):=\mathcal{U}\upharpoonright\{(\alpha',\beta')\mid(\alpha'<\alpha\,\&\,\beta'<o^{\mathcal{U}}(\alpha'))~\mathrm{or}~(\alpha=\alpha'\,\&\,\beta'<\beta)\}.
$$
\end{enumerate}
The ordinals $\ell^{\mathcal U}$ and $o^{\mathcal{U}}(\alpha)$ are called respectively  the length of $\mathcal{U}$ and the Mitchell order at $\alpha$ of $\mathcal U$.
\end{defi}
\begin{defi}
Let $\mathcal{U}=\langle \mathcal{U}(\alpha,\beta)\mid \alpha\leq \kappa, \beta<o^{\mathcal{U}}(\alpha)\rangle$ be a coherent sequence of measures with $\ell^\mathcal{U}=\kappa+1$ and $o^{\mathcal{U}}(\kappa)=\delta$. For each $\alpha<\ell^\mathcal{U}$, define $\mathcal{F}_\mathcal{U}(\alpha):=\bigcap\limits_{\beta<o^{\mathcal{U}}(\alpha)}\mathcal{U}(\alpha,\beta)$, if $o^{\mathcal{U}(\alpha)}>0$, and otherwise,  set $\mathcal{F}_\mathcal{U}(\alpha):=\{\emptyset\}$.
\end{defi}
Observe that $\mathcal{F}_\mathcal{U}(\alpha)$ is the set of all subsets of $\alpha$ which are measure one with respect to all the measures $\mathcal{U}(\alpha,\beta)$, $\beta<o^{\mathcal{U}}(\alpha)$. It is fairly easy to check that in the non-trivial case where $o^{\mathcal{U}}(\alpha)> 0$,  $\mathcal{F}_\mathcal{U}(\alpha)$ yields an $\alpha$-complete normal filter over $\alpha.$ In the cases where $\mathcal{U}$ is clear from the context we will tend to omit its mention when referring to $\mathcal{F}_\mathcal{U}(\alpha)$.
\begin{defi}[Magidor forcing]\label{MagidorForcing}
Let $\mathcal{U}$ be a coherent sequence of measures with $\ell^{\mathcal{U}}=\kappa+1$ and $o(\kappa)=o^{\mathcal{U}}(\kappa)=\delta$. 
\begin{enumerate}
\item [(a)] Magidor forcing relative to $\mathcal{U}$, denoted by $\mathbb{M}_{\mathcal{U}},$ consists of all finite sequences of the form $p=\langle \langle\alpha^p_0,A^p_0\rangle,\dots,\langle\alpha^p_n,A^p_n\rangle\rangle$ where:
\begin{enumerate}
\item[($\aleph$)] $\delta < \alpha_0<\dots<\alpha_n=\kappa$,
\item[$(\beth)$] $A^p_i\in\mathcal{F}(\alpha^p_i)$,
\item[$(\gimel)$] $A^p_i\cap (\alpha^p_{i-1}+1)=\emptyset$ (where,  $\alpha^p_{-1}:=\delta+1$).
\end{enumerate}
The sequence $\langle \alpha^p_0,\dots,\alpha^p_{n-1}\rangle$ is called the \emph{stem} of $p$ and the integer $n^p$ the \emph{length} of $p$. Whenever the condition is clear from the context we shall tend to suppress the co\-rresponding superscript.
\item [(b)] For $p=\langle \langle\alpha_0,A_0\rangle,\dots,\langle\alpha_n,A_n\rangle\rangle$ and $q=\langle \langle\beta_0,B_0\rangle,\dots,\langle\beta_m,B_m\rangle\rangle$ be two conditions in $\mathbb{M}_{\mathcal{U}}$ we will say that $q$ is stronger than $p$ ($q\leq p$) if the following conditions are fulfilled:
\begin{enumerate}
\item[$(\aleph)$] $m\geq n$,
\item[$(\beth)$] $\forall i\leq n~~\exists j\leq m~~\alpha_i=\beta_j$ and $B_j\subseteq A_i$,
\item[$(\gimel)$] For all $j$ be such that $ \beta_j\notin \{\alpha_1,\dots,\alpha_n\},~~B_j\subseteq A_k\cap \beta_j$ and $ \beta_j\in A_k$, where $k:=\min\{k\leq n\mid \beta_j<\alpha_k\}$.
\end{enumerate}
\item [(c)]  $q$ is a direct extension or a Prikry extension of $p~(q\leq^* p)$  if $q\leq p$ and $m=n$.
\end{enumerate}
In the cases where $\mathcal{U}$ is clear from the context, we will tend to write $\mathbb{M}$ rather than $\mathbb{M}_\mathcal{U}$. Given a condition $p\in\mathbb{M}$ we will write $$p:=\langle\langle\alpha^p_0,A^p_0 \rangle, \dots, \langle \alpha^p_{n^p-1},A^p_{n^p-1} \rangle,\langle \alpha^p_{n^p},A^p_{n^p} \rangle  \rangle.$$
If $p, q\in\mathbb{M}_\mathcal{U}$ are two conditions with the same stem, we define $p\wedge q:= \langle\langle\alpha^p_0,A^p_0\cap A^q_0 \rangle, \dots, \langle \alpha^p_{n^p},A^p_{n^p}\cap A^q_{n^q} \rangle  \rangle$.
\end{defi}
\begin{defi}\label{OneStepExtensionMagidor}
Let $p$ be a sequence witnessing clauses $(\aleph)$ and $(\gimel)$ of Defi\-nition \ref{MagidorForcing}(a).  For $i\leq n^p$ and $\alpha\in A^p_i$, define $p{}^\curvearrowright\langle\alpha\rangle$ as the sequence
$\langle\langle\alpha^p_0,A^p_0\rangle\dots
\langle \alpha^p_{i-1}, A^p_{i-1}\rangle,\langle\alpha, A^p_i\cap \alpha\rangle, \langle \alpha^p_i,A^p_i\rangle,\dots\langle \kappa,A^p_{n^p}\rangle \rangle.$ For a sequence $\vec{\alpha}\in [A^p_i]^{<\omega}$, define by recursion  $p{}^\curvearrowright\vec{\alpha}:=(p{}^\curvearrowright(\vec{\alpha}\upharpoonright|\vec{\alpha}|){}^\curvearrowright \langle\vec{\alpha}(|\vec{\alpha}|)\rangle.\footnote{Here by convention $p{}^\curvearrowright\emptyset:=p$.}$
\end{defi}
\begin{remark}\label{NotAllExtensions}
\rm{
Observe that not for all $\alpha\in  A^p_i$,  $p{}^\curvearrowright\langle\alpha\rangle\in \mathbb{M}$ as it may be the case that $\alpha\cap A^p_i\notin\mathcal{F}(\alpha)$. Actually, $p{}^\curvearrowright\langle\alpha\rangle\in \mathbb{M}$ if and only if $ A^p_i\cap \alpha\in\mathcal{F}(\alpha)$. }
\end{remark}

\begin{defi}
Let $p\in\mathbb{M}$ be a  condition. We will say that a finite sequence of ordinals $\vec{\alpha}$ is \emph{addable} to  $p$ if $\vec{\alpha}\in[\prod_{i\leq n^p} A^p_i]^{\leq n^p}$. We will denote the set of addable sequences to $p$ by $\mathfrak{A}_p$.
\end{defi}
\begin{defi}
Let $p\in\mathbb{M}$. A finite sequence of ordinals  $\vec{x}$ is a \emph{block sequence} for $p$ if $\vec{x}\in [\biguplus_{i\leq n^p} A^p_i]^{<\omega}$. For each $i\leq n^p$, set $\vec{x}_i:=\vec{x}\cap A^p_i$. Also, let $i_{\vec{x}}$ be denote an enumeration of the $i\leq n^p$ for which $\vec{x}_i\neq \emptyset$.
\end{defi}
We use the symbol $\biguplus$ rather than $\bigcup$ just to emphasize the fact that the sets $A^p_i$ and $A^p_j$ are disjoint, provided $i\neq j$.
\begin{defi}[Minimal extensions of $\mathbb{M}$]
Let $p\in\mathbb{M}$ and $\vec{x}$ be a block sequence for $p$. Mimicking Definition \ref{OneStepExtensionMagidor}, we define recursively $p{}^\curvearrowright\vec{x}:=(p{}^\curvearrowright \vec{x}\setminus \cup_{j<i}\vec{x}_j){}^\curvearrowright \vec{x}_i$, where $i=\max i_{\vec{x}}$.
\end{defi}
This discussion lead us to isolate the following concept:
\begin{defi}[Pruned condition]
A condition $p\in\mathbb{M}$ is said to be pruned if for every  $\vec{x}\in[\biguplus_{i\leq n^p} A^p_i]^{<\omega}$, $p{}^\curvearrowright\vec{x}\in \mathbb{M}$.
\end{defi}
\begin{prop}\label{PrunedIsOnePruned}
Let $p\in\mathbb{M}$. Then, $p$ is a  pruned condition if and only if $p{}^\curvearrowright\langle \alpha\rangle\in\mathbb{M}$, for all $\alpha\in \biguplus_{i\leq n^p}A^p_i$.
\end{prop}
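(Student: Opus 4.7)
The forward implication is immediate: a singleton $\langle\alpha\rangle$ with $\alpha\in\biguplus_{i\leq n^p} A^p_i$ is itself a block sequence for $p$, so if $p$ is pruned then $p{}^\curvearrowright\langle\alpha\rangle\in\mathbb{M}$ by definition. The content is the converse, which I would prove by induction on $|\vec{x}|$. The cases $|\vec x|=0,1$ are, respectively, trivial and exactly the hypothesis.

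For the inductive step, let $\vec{x}$ be a block sequence for $p$ with $|\vec x|\geq 2$, let $\alpha$ be the largest element of $\vec x$, and write $\vec{y}:=\vec{x}\setminus\{\alpha\}$. Then $\vec y$ is again a block sequence for $p$, so by the inductive hypothesis $q:=p{}^\curvearrowright\vec{y}\in\mathbb{M}$. Since $p{}^\curvearrowright\vec{x}=q{}^\curvearrowright\langle\alpha\rangle$, Remark \ref{NotAllExtensions} reduces the task to showing $A^q_j\cap \alpha\in\mathcal{F}(\alpha)$, where $j$ is the index of the block of $q$ containing $\alpha$.

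Here the key bookkeeping is the following: letting $i\leq n^p$ be such that $\alpha\in A^p_i$, all elements of $\vec{y}_i:=\vec{y}\cap A^p_i$ lie below $\alpha$ (as $\alpha=\max\vec{x}$), so inserting them into the stem of $p$ only cuts the interval of $A^p_i$ in which $\alpha$ sits from below. Concretely, $A^q_j=A^p_i\setminus(\gamma+1)$ where $\gamma:=\max\vec{y}_i$ if $\vec{y}_i\neq\emptyset$, and $A^q_j=A^p_i$ otherwise. Therefore
\[
A^q_j\cap\alpha \;=\; (A^p_i\cap\alpha)\setminus(\gamma+1).
\]
By the hypothesis $p{}^\curvearrowright\langle\alpha\rangle\in\mathbb{M}$ together with Remark \ref{NotAllExtensions}, we have $A^p_i\cap\alpha\in \mathcal{F}(\alpha)$; and since $\mathcal{F}(\alpha)$ is an $\alpha$-complete normal filter, removing the bounded set $\gamma+1<\alpha$ keeps the result in $\mathcal{F}(\alpha)$. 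This verifies the required membership and closes the induction.

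The only delicate point is the bookkeeping step identifying $A^q_j$ inside $q=p{}^\curvearrowright\vec{y}$; once one observes that the elements of $\vec y$ that actually matter for the block containing $\alpha$ are precisely those in $\vec{y}_i$ lying below $\alpha$, the argument reduces to the trivial observation that $\alpha$-complete normal filters are closed under removing bounded subsets of $\alpha$.
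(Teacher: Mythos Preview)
Your proof is correct and follows essentially the same approach the paper intends: the paper's argument is the single sentence ``the second follows easily from the recursive definition of $p{}^\curvearrowright\vec{x}$,'' and your induction on $|\vec x|$ is precisely the unpacking of that remark. Your bookkeeping identifying $A^q_j$ as $A^p_i$ with a bounded initial segment removed, together with the observation that $\mathcal{F}(\alpha)$ is closed under removing bounded sets, is exactly the content hidden in the paper's one-liner.
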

\begin{proof}
The first implication is obvious and the second  follows easily from the recursive definition of $p{}^\curvearrowright\vec{x}$. 
\end{proof}
One can use the previous result to show that any condition $p\in\mathbb{M}$ has a $\leq^*$-extension which is pruned.
\begin{prop}\label{ThereisPrunedInMagidor}
For each $p\in\mathbb{M}$, there is $p^*\leq^* p$ which is pruned.
\end{prop}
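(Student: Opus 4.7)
The plan is to shrink each $A_i^p$ by a one-step diagonal trim and verify that the result gives a pruned direct extension. For each $i\le n^p$, set
$$A_i^* := \{\alpha\in A_i^p : A_i^p\cap\alpha\in\mathcal{F}(\alpha)\},$$
and let $p^* := \l\l\alpha_i^p,A_i^*\r : i\le n^p\r$. By Remark~\ref{NotAllExtensions}, $A_i^*$ is precisely the set of $\alpha$ for which $p^{\curvearrowright}\l\alpha\r\in\mathbb{M}$, so the verification reduces to two points: (a) $A_i^*\in\mathcal{F}(\alpha_i^p)$, giving $p^*\le^* p$, and (b) for every $\alpha\in A_i^*$, $A_i^*\cap\alpha\in\mathcal{F}(\alpha)$. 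Once (b) holds, Proposition~\ref{PrunedIsOnePruned} immediately yields that $p^*$ is pruned.

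For (a), the case $o^{\mathcal{U}}(\alpha_i^p)=0$ is trivial since then $A_i^p=\emptyset$. Otherwise I would fix $\beta<o^{\mathcal{U}}(\alpha_i^p)$ and argue in the ultrapower $j:=j^{\alpha_i^p}_\beta\colon V\to\Ult(V,\mathcal{U}(\alpha_i^p,\beta))$. The coherence clause of Definition~\ref{CoherentSequence} yields $j(\mathcal{U})\upharpoonright(\alpha_i^p+1) = \mathcal{U}\upharpoonright(\alpha_i^p,\beta)$, so in the target model the Mitchell order at $\alpha_i^p$ is exactly $\beta$ and $j(\mathcal{F})(\alpha_i^p) = \bigcap_{\gamma<\beta}\mathcal{U}(\alpha_i^p,\gamma)$. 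Since $A_i^p\in\mathcal{F}(\alpha_i^p)$ belongs to every $\mathcal{U}(\alpha_i^p,\gamma)$ and $\crit(j)=\alpha_i^p$ forces $j(A_i^p)\cap\alpha_i^p = A_i^p$, \L os's theorem gives $\alpha_i^p\in j(A_i^*)$; hence $A_i^*\in\mathcal{U}(\alpha_i^p,\beta)$ for each such $\beta$, whence $A_i^*\in\mathcal{F}(\alpha_i^p)$.

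For (b), fix $\alpha\in A_i^*$, so $A_i^p\cap\alpha\in\mathcal{F}(\alpha)$. Rerunning the ultrapower argument of (a) \emph{at $\alpha$}, with $A_i^p\cap\alpha$ in place of $A_i^p$, produces
$$\{\beta\in A_i^p\cap\alpha : (A_i^p\cap\alpha)\cap\beta\in\mathcal{F}(\beta)\}\in\mathcal{F}(\alpha).$$
Because $(A_i^p\cap\alpha)\cap\beta = A_i^p\cap\beta$ whenever $\beta<\alpha$, this set coincides with $A_i^*\cap\alpha$, giving $A_i^*\cap\alpha\in\mathcal{F}(\alpha)$ as needed. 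The main obstacle is the coherence identity $j(\mathcal{F})(\alpha_i^p)=\bigcap_{\gamma<\beta}\mathcal{U}(\alpha_i^p,\gamma)$, which reduces a second-order closure requirement on $A_i^p$ to a first-order membership fact; once this is extracted from Definition~\ref{CoherentSequence}, both (a) and (b) become the same \L os-style reflection applied one level lower.
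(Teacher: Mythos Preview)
Your proof is correct and slightly more economical than the paper's. The paper iterates the diagonal trim $\omega$ times, setting $A^{*,0}_i=A^p_i$, $A^{*,n+1}_i=\{\alpha\in A^{*,n}_i: A^{*,n}_i\cap\alpha\in\mathcal F(\alpha)\}$, and $A^*_i=\bigcap_{n<\omega}A^{*,n}_i$; the ultrapower argument is applied only at the fixed ordinal $\alpha^p_i$ to show each $A^{*,n}_i\in\mathcal F(\alpha^p_i)$, and then countable completeness of $\mathcal F(\alpha)$ gives $A^*_i\cap\alpha\in\mathcal F(\alpha)$ for $\alpha\in A^*_i$ directly from the definitions. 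You instead observe that a \emph{single} trim already suffices: by rerunning the same coherence/\L o\'s argument at the variable ordinal $\alpha$ (rather than only at $\alpha^p_i$), you get $A^{*,1}_i\cap\alpha\in\mathcal F(\alpha)$ for every $\alpha\in A^{*,1}_i$, so the iteration stabilizes after one step. Your route trades the appeal to countable completeness for a second invocation of coherence one level down; both are perfectly valid, and yours is marginally shorter.
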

\begin{proof}
Fix $p\in\mathbb{M}$. Without loss of generality assume that all large sets in $p$ are non empty, as otherwise the argument is similar. For each $i\leq n^p$, set $A^{*,0}_i:=A^p_i$, $A^{*,n+1}_i:=\{\alpha\in A^{*,n}_i\mid A^{*,n}_i\cap \alpha\in\mathcal{F}(\alpha)\}$
and $A^*_i:=\bigcap_{n<\omega} A^{*,n}_i$.
\begin{claim}
For each $i\leq n^p$ and $n<\omega$, $A^{*,n}_i\in \mathcal{F}(\alpha^p_i)$. In particular, for each $i\leq n^p$, $A^*_i\in\mathcal{F}(\alpha^p_i)$.
\end{claim}
\begin{proof}[Proof of claim]
If the first assertion is true the second  follows automatically by the $\alpha^p_i$-completeness of $\mathcal{F}(\alpha^p_i)$. 
 Thus, we want to argue by induction that $A^{*,n}_i\in\mathcal{F}(\alpha^p_i)$, for $n<\omega$. Clearly this is true for $n=0$. For the inductive step, let $\beta<o^{\mathcal{U}}(\alpha^p_i)$ and observe that $\alpha^p_i\in j^{\alpha^p_i}_\beta(A^{*,n}_i)$ and $ j^{\alpha^p_i}_\beta(A_i^{*,n})\cap \alpha^p_i= A_i^{*,n}\in \mathcal{F}(\alpha^p_i)$, so $A^{*,n+1}_i\in\mathcal{F}(\alpha^p_i)$.
\end{proof}
Let $p^*$ be the $\leq^*$-extension of $p$ with $A^{p^*}_i=A^*_i$, each $i\leq n^p$. We claim that $p^*$ is pruned. For showing this we use Proposition \ref{PrunedIsOnePruned}. Let $\alpha\in \biguplus_{i\leq n^p}A^*_i$ and say that $\alpha\in A^*_i$. By construction it is easy to check that $A^{*,n}_i\cap \alpha\in\mathcal{F}(\alpha)$, for all $n<\omega$, hence $A^{*}_i\cap \alpha\in\mathcal{F}(\alpha)$. Finally,  Remark \ref{NotAllExtensions}  yields $p{}^\curvearrowright\langle\alpha\rangle\in\mathbb{M}$.
\end{proof}
Let us now state the basic properties of Magidor forcing.
\begin{prop}\label{MagidorIsKnaster}
$\mathbb{M}$ is $\kappa^+$-Knaster. That is, any set $S\in[ \mathbb{M}]^{\kappa^+}$ contains a set $\mathcal{I}\in[S]^{\kappa^+}$ of pairwise compatible conditions.
\end{prop}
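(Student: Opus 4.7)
The plan is to run a standard pigeonhole/stem-matching argument: in any Prikry/Magidor-style forcing, two conditions with the same stem are directly compatible by intersecting their large sets, and there are only $\kappa$ many possible stems.

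First I would note that for every $p\in\mathbb{M}$, the stem $s(p)=\langle \alpha^p_0,\dots,\alpha^p_{n^p-1}\rangle$ is a finite sequence of ordinals strictly below $\kappa$ (the top coordinate is always $\kappa$, so it carries no information). Since $\kappa$ is measurable, hence inaccessible, we have $|\kappa^{<\omega}|=\kappa$. Therefore, given any $S\in[\mathbb{M}]^{\kappa^+}$, the map $p\mapsto s(p)$ from $S$ to $\kappa^{<\omega}$ cannot be injective on a set of size $\kappa^+$, so the pigeonhole principle yields a fixed finite sequence $s$ and a set $\mathcal{I}\in[S]^{\kappa^+}$ such that $s(p)=s$ for every $p\in\mathcal{I}$.

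Next I would verify that any two $p,q\in\mathcal{I}$ are compatible, with witness $p\wedge q$ as in Definition~\ref{MagidorForcing}. Since $p$ and $q$ share the same stem, they have the same length $n^p=n^q=:n$ and the same ordinal coordinates $\alpha^p_i=\alpha^q_i=:\alpha_i$ for $i\leq n$. The sequence $p\wedge q=\langle\langle\alpha_0,A^p_0\cap A^q_0\rangle,\dots,\langle\alpha_n,A^p_n\cap A^q_n\rangle\rangle$ is a legitimate condition: clause~$(\aleph)$ of Definition~\ref{MagidorForcing}(a) is immediate; clause~$(\beth)$ holds because $\mathcal{F}(\alpha_i)$ is a filter and hence closed under finite intersections; and clause~$(\gimel)$ follows since $(A^p_i\cap A^q_i)\cap(\alpha_{i-1}+1)\subseteq A^p_i\cap(\alpha_{i-1}+1)=\emptyset$. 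Finally, $p\wedge q\leq p$ (and similarly $\leq q$): condition $(\aleph)$ of Definition~\ref{MagidorForcing}(b) holds with equality, condition $(\beth)$ holds because the coordinates agree and $A^p_i\cap A^q_i\subseteq A^p_i$, and $(\gimel)$ is vacuous because no new coordinates are introduced.

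I do not foresee a genuine obstacle here; the only point worth double-checking is that $\kappa^{<\omega}$ really has size $\kappa$, which is automatic from the inaccessibility (even the uncountability) of $\kappa$. Everything else is bookkeeping built directly into the definition of $p\wedge q$.
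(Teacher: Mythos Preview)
Your argument is correct and is exactly the standard stem-matching proof: there are only $\kappa$ many stems, so pigeonhole gives $\kappa^+$ conditions with a common stem, and those are pairwise compatible via $p\wedge q$. The paper itself does not supply a proof of this proposition (it refers to Magidor's and Gitik's expositions), but what you have written is precisely the intended argument.
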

In particular the above implies that all cardinals ${\geq}\kappa^+$ are preserved after forcing with $\mathbb{M}$. Let us now describe the combinatorics of the generic extensions by $\mathbb{M}$ below $\kappa^+$. Hereafter we will assume that $\delta$ is an infinite cardinal and that  $G\s \mathbb{M}$ is a generic filter over $V$. Set $C_G:=\{\alpha<\kappa\mid \exists p\in G\,\exists n<n^p\, (\alpha=\alpha^p_n) \}.$
One may argue as in \cite[Lemma 5.10]{Git} that, below a direct extension of $\one_{\mathbb{M}}$, $C_G$ is a closed unbounded subset of $\kappa$ of order type $\omega^\delta$, which will be referred as the Magidor club induced by $G$.\footnote{Here $\omega^\delta$ stands for ordinal exponentiation rather than cardinal exponentiation.} In particular, there is a direct extension of $\one_\mathbb{M}$ which forces $\cof(\check{\kappa})=\cof^V(\check{\delta})$. 
However, notice that it is still necessary to prove that $\cof^V(\delta)$ and $\kappa$ have not been collapsed after adding this generic club. 
As usual, the key property that provides us of the necessary control on the combinatorics of $V_\kappa^{\mathbb{M}}$ is the  Prikry property.
\begin{prop}\label{PrikrypropertyforMagidor}
$\langle \mathbb{M}, \leq^*\rangle$ satisfies the Prikry property: namely, for each sentence $\varphi$ in the language of forcing $\mathcal{L}_{\Vdash}$ and a condition $p\in \mathbb{M}$, there is $q\leq^* p$ such that $q\parallel \varphi$.
\end{prop}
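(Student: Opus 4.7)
The plan is to adapt the classical Mathias argument for the Prikry property to the Magidor setting, using the coherent sequence $\mathcal{U}$ to orchestrate the measure-theoretic shrinking. Given $p\in\mathbb{M}$ and a sentence $\varphi$, the goal is to build a direct extension $q\leq^* p$ whose large sets have been pruned so that every stem extension $q{}^\curvearrowright\vec{\alpha}$ decides $\varphi$ the same way; by density this common value will then be decided by $q$ itself.

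First, I would replace $p$ by a pruned direct extension via Proposition~\ref{ThereisPrunedInMagidor}, so that every $p{}^\curvearrowright\vec{\alpha}$ with $\vec{\alpha}\in\mathfrak{A}_p$ is a legitimate condition. Second, I would establish a \emph{local decision lemma}: there exists $r\leq^* p$ such that for every $\vec{\alpha}\in\mathfrak{A}_r$, whenever some extension of $r{}^\curvearrowright\vec{\alpha}$ decides $\varphi$, the condition $r{}^\curvearrowright\vec{\alpha}$ itself already decides $\varphi$ with the same value. This would be proved coordinatewise on $i\leq n^p$: at each $\alpha=\alpha^p_i$ I would simultaneously shrink $A^p_i$ along all normal measures $\mathcal{U}(\alpha,\beta)$ with $\beta<o^\mathcal{U}(\alpha)$ by a recursion on the Mitchell order, closing under diagonal intersections in the spirit of the proof of Proposition~\ref{ThereisPrunedInMagidor}. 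Third, I would invoke density: since $\{s:s\Vdash\varphi\}\cup\{s:s\Vdash\neg\varphi\}$ is dense below $r$, some $q'\leq r$ decides $\varphi$, and extracting its stem produces some $\vec{\alpha}\in\mathfrak{A}_r$ for which $r{}^\curvearrowright\vec{\alpha}$ decides $\varphi$. A second round of normal-measure shrinking then uniformizes the decision across all addable $\vec{\alpha}$ (using normality to make the colouring $\vec{\alpha}\mapsto$ decision value constant on measure-one subsets of each $A^r_i$), producing the desired $q\leq^* r$ that forces the same value regardless of the stem.

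The main obstacle will be the interaction between the many measures $\mathcal{U}(\alpha,\beta)$ concentrated at a single $\alpha$ and the fact that an addable sequence can insert several new ordinals into a single interval $A^p_i$. Handling this requires the coherence clause~(2) of Definition~\ref{CoherentSequence}: inside $\Ult(V,\mathcal{U}(\alpha,\beta))$ the restriction $j^\alpha_\beta(\mathcal{U})\upharpoonright(\alpha,\beta)$ coincides with $\mathcal{U}\upharpoonright(\alpha,\beta)$, which permits an induction on the Mitchell order that reduces the multi-measure bookkeeping essentially to a classical Prikry-style argument at each lower level, and lets us simultaneously maintain measure one in all relevant $\mathcal{U}(\alpha,\beta)$. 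Once this bottleneck is resolved, the density and uniformization steps proceed along the lines of the standard Mathias argument.
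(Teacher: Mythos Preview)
The paper does not prove this proposition: immediately after the statement it writes ``For a proof of the above result see \cite{Mag} or \cite[Lemma 5.8]{Git}.'' Your outline is essentially the standard argument found in those references---prune, establish a strong local-decision lemma by shrinking the large sets via normality and diagonal intersections (using coherence to handle the recursion through the Mitchell order), then use density and a final uniformization---so there is nothing in the paper to compare against, and your plan is the expected one.
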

For a proof of the above result see \cite{Mag} or \cite[Lemma 5.8]{Git}.  There is another important structural property of  Magidor forcing which we would like to mention. This property is the following: for a condition $p\in \mathbb{M}$, $\mathbb{M}\downarrow p$ is isomorphic to the product $\mathbb{M}_1\times\mathbb{M}_2$, where $\mathbb{M}_1$ and $\mathbb{M}_2$ are two Magidor forcing. This \emph{fractal structure} is actually shared with other classical Prikry-type forcing that center around uncountable cofinalities, such as Radin forcing \cite{Rad} or Sinapova forcing \cite{Sin}. Here $\mathbb{M}_1$ is a Magidor forcing adding a club subset of  $\theta$, where $\theta$ is a point in the Magidor club $C_{\dot{G}}$ and $\dot{G}$ is a $\mathbb{M}\downarrow p$-name for a generic filter. On the other hand, $\mathbb{M}_2$ is essentially the Magidor forcing $\mathbb{M}$, though this time adding a generic club on $\kappa$ of points above $\theta$. Let us phrase this discussion in more formal terms.
\begin{defi}
Let $p\in\mathbb{M}$ and $m\leq n^p$. We will respectively denote by $p^{\leq m}$ and $p^{>m}$ the sequences
\begin{eqnarray*}
p^{\leq m}:=\langle \langle \alpha^p_0,A^p_0\rangle,\dots, \langle \alpha^p_{m}, A^p_m\rangle \rangle,\\
p^{>m}:=\langle\langle \alpha^p_{m+1}, A^p_{m+1}\rangle, \dots, \langle \alpha^p_{n^p}, A^p_{n^p}\rangle \rangle.
\end{eqnarray*}
\end{defi}
If $m<n^p$, is immediate that $p^{\leq m}\in\mathbb{M}_{\mathcal{U}\upharpoonright\alpha^p_m+1}$ and $p^{>m}\in\mathbb{M}_\mathcal{U}$. 
\begin{lemma}\label{SplittingMagidor}
Let $p\in\mathbb{M}$ and $m<n^p$. There is an isomorphism  between $\mathbb{M}_\mathcal{U}\downarrow p$ and $\mathbb{M}_{\mathcal{U}\upharpoonright\alpha^p_m+1}\downarrow p^{\leq m}\times\mathbb{M}_\mathcal{U}\downarrow p^{>m}$. In particular, $\mathbb{M}_\mathcal{U}\downarrow p$ projects onto $\mathbb{M}_{\mathcal{U}\upharpoonright\alpha^p_m+1}\downarrow p^{\leq m}$.
\end{lemma}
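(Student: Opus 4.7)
My plan is to exhibit an explicit order-isomorphism
$$\Phi\colon \mathbb{M}_\mathcal{U}\downarrow p \;\longrightarrow\; \mathbb{M}_{\mathcal{U}\upharpoonright\alpha^p_m+1}\downarrow p^{\leq m}\;\times\; \mathbb{M}_\mathcal{U}\downarrow p^{>m},$$
and then derive the projection clause as a corollary. The map is the obvious one: if $q\leq p$, then by clause $(\beth)$ of Definition~\ref{MagidorForcing}(b) every ordinal $\alpha^p_i$ still appears in the stem of $q$; let $m'$ be the unique index with $\alpha^q_{m'}=\alpha^p_m$, and set $\Phi(q):=(q^{\leq m'},q^{>m'})$.

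First I would check that $\Phi$ lands in the target. The sequence $q^{\leq m'}$ has top ordinal $\alpha^p_m$, and every large set $B^q_i$ with $i\leq m'$ sits in the filter $\mathcal{F}_\mathcal{U}(\alpha^q_i)$. Since $\alpha^q_i\leq \alpha^p_m$, that filter is built out of measures of the form $\mathcal{U}(\alpha^q_i,\beta)$, which by definition all appear in $\mathcal{U}\upharpoonright\alpha^p_m+1$; hence $\mathcal{F}_\mathcal{U}(\alpha^q_i)=\mathcal{F}_{\mathcal{U}\upharpoonright\alpha^p_m+1}(\alpha^q_i)$ and so $q^{\leq m'}\in \mathbb{M}_{\mathcal{U}\upharpoonright\alpha^p_m+1}$, clearly with $q^{\leq m'}\leq p^{\leq m}$. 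Similarly $q^{>m'}\leq p^{>m}$ in $\mathbb{M}_\mathcal{U}$. Going backwards, the inverse is the concatenation $\Psi(r,s):=r{}^\curvearrowright s$. The delicate point lives at the junction $\alpha^p_m$: one must verify that clause $(\gimel)$ holds for the first entry of $s$. But the first large set $B^s_0$ of $s$ is contained in $A^p_{m+1}$ (if $\beta^s_0=\alpha^p_{m+1}$), or in $A^p_{m+1}\cap\beta^s_0$ by the ordering clause $(\gimel)$ applied to $s\leq p^{>m}$ (if $\beta^s_0$ is a newly inserted ordinal); in either case $B^s_0\cap(\alpha^p_m+1)\subseteq A^p_{m+1}\cap(\alpha^p_m+1)=\emptyset$ by clause $(\gimel)$ applied to $p$. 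With that in hand, both $\Phi\circ\Psi$ and $\Psi\circ\Phi$ are manifestly the identity, and order preservation in both directions is a clause-by-clause verification against Definition~\ref{MagidorForcing}(b).

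The main obstacle, if it deserves the name, is purely notational: one has to keep the re-indexing straight when writing $q^{\leq m'}$ and $q^{>m'}$ as conditions of the two factors, and check that clauses $(\aleph)$--$(\gimel)$ split coherently across the boundary at $\alpha^p_m$. No deeper use of the coherence of $\mathcal{U}$ enters the argument; coherence is used only to guarantee that $\mathcal{U}\upharpoonright\alpha^p_m+1$ is itself a coherent sequence in the sense of Definition~\ref{CoherentSequence}, so that the restricted forcing is defined in the first place. Finally, the ``in particular'' clause is immediate from the isomorphism: composing $\Phi$ with the first-coordinate projection of the product (which is a forcing projection because the second factor has the maximum element $p^{>m}$) yields the desired projection of $\mathbb{M}_\mathcal{U}\downarrow p$ onto $\mathbb{M}_{\mathcal{U}\upharpoonright\alpha^p_m+1}\downarrow p^{\leq m}$.
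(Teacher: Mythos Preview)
Your proposal is correct and follows the standard approach; the paper does not actually supply a proof for this lemma, deferring instead to Magidor's original paper and Gitik's survey \cite[\S5]{Git}. Your explicit description of the splitting map and the verification at the junction $\alpha^p_m$ is exactly what one finds in those references, so nothing further is needed.
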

\begin{lemma}\label{PropertiesofSplittings}
In the conditions of the above lemma, $\mathbb{M}_{\mathcal{U}\upharpoonright\alpha^p_m+1}$ is ${(\alpha^p_m)}^+$-Knaster and $\langle \mathbb{M}_\mathcal{U}\downarrow p^{>m},\leq^*\rangle$ is $(\alpha^p_{m})^+$-closed. Moreover, for each $\varrho<\alpha^p_{m+1}$, there is $q_\varrho\leq^* p$ such that $\langle \mathbb{M}_\mathcal{U}\downarrow q_\varrho^{>m},\leq^*\rangle$ is $|\varrho\times \mathbb{M}_{\mathcal{U}\upharpoonright\alpha^p_m+1}|^+$-closed.
\end{lemma}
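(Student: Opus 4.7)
The plan is to establish the three clauses in sequence. The Knaster property is immediate: $\mathbb{M}_{\mathcal{U}\upharpoonright\alpha^p_m+1}$ is itself a Magidor forcing, now driven by the restricted coherent sequence of length $\alpha^p_m+1$ with $\alpha^p_m$ in the role of $\kappa$, so Proposition~\ref{MagidorIsKnaster} yields $(\alpha^p_m)^+$-Knasterness directly.

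For the $(\alpha^p_m)^+$-closure of $\langle\mathbb{M}_\mathcal{U}\downarrow p^{>m},\leq^*\rangle$, I would fix a $\leq^*$-decreasing sequence $\langle r_\xi\mid\xi<\beta\rangle$ with $\beta<(\alpha^p_m)^+$. Since $\leq^*$ fixes the stem, all the $r_\xi$ share the stem $\langle\alpha^p_{m+1},\dots,\alpha^p_{n^p}\rangle$, and the candidate lower bound $r^*$ keeps this stem while intersecting the large sets level by level: $A^{r^*}_i:=\bigcap_{\xi<\beta}A^{r_\xi}_i$ for $m+1\leq i\leq n^p$. Each such intersection lies in $\mathcal{F}(\alpha^p_i)$ because for every $i\geq m+1$ with $o^\mathcal{U}(\alpha^p_i)>0$ the cardinal $\alpha^p_i$ is measurable, hence strongly inaccessible, so $\alpha^p_i>\alpha^p_m$ forces $\alpha^p_i\geq(\alpha^p_m)^+$ and renders $\mathcal{F}(\alpha^p_i)$ $(\alpha^p_m)^+$-complete; the positions with $o^\mathcal{U}(\alpha^p_i)=0$ are harmless since there all the corresponding $A$-sets are empty.

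For the refined closure, fix $\varrho<\alpha^p_{m+1}$ and set $\mu:=|\varrho\times\mathbb{M}_{\mathcal{U}\upharpoonright\alpha^p_m+1}|$. The key preparatory estimate is $\mu<\alpha^p_{m+1}$: from $|\mathbb{M}_{\mathcal{U}\upharpoonright\alpha^p_m+1}|\leq 2^{\alpha^p_m}$ and the strong inaccessibility of $\alpha^p_{m+1}$ (guaranteed by $o^\mathcal{U}(\alpha^p_{m+1})>0$) one obtains $2^{\alpha^p_m}<\alpha^p_{m+1}$, which combines with $\varrho<\alpha^p_{m+1}$ to give the estimate. Now define $q_\varrho\leq^* p$ by replacing $A^p_{m+1}$ with $A^p_{m+1}\setminus(\mu+1)$ and leaving every other large set untouched; this is a legitimate direct extension because $\mathcal{F}(\alpha^p_{m+1})$ is $\alpha^p_{m+1}$-complete and $\mu+1<\alpha^p_{m+1}$, so $A^{q_\varrho}_{m+1}\in\mathcal{F}(\alpha^p_{m+1})$. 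Rerunning the intersection argument below $q_\varrho^{>m}$ on $\leq^*$-decreasing sequences of length $<\mu^+$ then yields the desired $\mu^+$-closure: every element of $A^{q_\varrho}_{m+1}$ exceeds $\mu$, every element of the higher large sets exceeds $\alpha^p_{m+1}>\mu$, so each relevant filter $\mathcal{F}(\alpha^p_i)$ is $\mu^+$-complete. The chief subtlety I anticipate is exactly the estimate $\mu<\alpha^p_{m+1}$, which is dispatched by the strong inaccessibility of the stem point $\alpha^p_{m+1}$ rather than any appeal to \gch{} at $\alpha^p_m$ (not among the paper's assumptions below $\kappa$).
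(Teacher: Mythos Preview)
The paper states this lemma without proof, deferring to the standard references, so there is no line-by-line comparison to make; your overall strategy (apply Proposition~\ref{MagidorIsKnaster} for the first clause, intersect large sets for the closure clauses) is the expected one and is sound in outline.

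Two points need repair. First, in the second clause you assert that the $r_\xi$ all carry the stem $\langle\alpha^p_{m+1},\dots,\alpha^p_{n^p}\rangle$. But the sequence lives in $\mathbb{M}_\mathcal{U}\downarrow p^{>m}$, so $r_0$ is only $\leq p^{>m}$, not $\leq^* p^{>m}$; its stem may be strictly longer, and your index range $m+1\leq i\leq n^p$ for the intersections is then too short. The fix is painless: every ordinal $\gamma$ appearing in any $r\leq p^{>m}$ is either some $\alpha^p_i$ with $i>m$ or lies in some $A^p_i$ and hence above $\alpha^p_{i-1}\geq\alpha^p_m$, so whenever $o^\mathcal{U}(\gamma)>0$ the cardinal $\gamma$ is measurable and $\geq(\alpha^p_m)^+$, and the intersection argument goes through at every coordinate of $r_0$.

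Second, for the ``moreover'' clause your estimate $\mu<\alpha^p_{m+1}$ rests on ``$o^\mathcal{U}(\alpha^p_{m+1})>0$'', which you say is guaranteed. It is not: stem points with $o^\mathcal{U}=0$ are legal (and typical, since $\mathcal{U}(\kappa,0)$ concentrates on them), and for such $\alpha^p_{m+1}$ one need not have $2^{\alpha^p_m}<\alpha^p_{m+1}$. The omitted case is, however, easier than the one you treat: if $o^\mathcal{U}(\alpha^p_{m+1})=0$ then $A^p_{m+1}=\emptyset$, so every ordinal $\gamma$ in a condition below $p^{>m}$ with $o^\mathcal{U}(\gamma)>0$ satisfies $\gamma>\alpha^p_{m+1}>\varrho$ and, being inaccessible above $\alpha^p_m$, also $\gamma>2^{\alpha^p_m}\geq|\mathbb{M}_{\mathcal{U}\upharpoonright\alpha^p_m+1}|$; hence $\gamma>\mu$ and no shrinking is needed ($q_\varrho=p$ already works). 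Splitting into the two cases according to $o^\mathcal{U}(\alpha^p_{m+1})$ completes the argument.
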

By combining Proposition \ref{PrikrypropertyforMagidor} and lemmas \ref{SplittingMagidor} and \ref{PropertiesofSplittings} one may easily obtain a complete picture of  the combinatorics of $V[G]_\kappa$. For convenience, let $\langle\kappa_\alpha\mid \alpha<\theta\rangle$ be an enumeration of the generic club induced by $G$. We will say that $\alpha<\kappa$ appears in $p\in\mathbb{M}$ if for some $m<n^p$ and $A\in\mathcal{F}(\alpha)$, $p(m)=\langle\alpha,A\rangle$. We will say that $\alpha<\kappa$ appears in $p$ at $m<n^p$  if $p(m)=\langle\alpha, A\rangle$, for some $A\in\mathcal{F}(\alpha)$.
\begin{prop}\label{CombinatoricsBelowkappa}
For each  $\varrho<\kappa$, set  $\alpha_\varrho:=\min\{\alpha<\theta\mid \kappa_\alpha\leq \varrho<\kappa_{\alpha+1}\}$ and let $p\in G$ where both $\kappa_{\alpha_\rho}$ and $\kappa_{\alpha_\rho+1}$ appear at $m$ and $m+1$, respectively. Then, $$\mathcal{P}(\varrho)^{V[G]}=\mathcal{P}(\varrho)^{V[G_{\alpha_\varrho}]},$$ where $G_{\alpha_\varrho}$ is the filter generated by $G$ and the natural projection between $\mathbb{M}\downarrow p$ and $\mathbb{M}_{\mathcal{U}\upharpoonright \kappa_{\alpha_\varrho}+1}\downarrow p^{\leq m}$.
\end{prop}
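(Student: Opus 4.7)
The plan is to combine the product factorization of Lemma \ref{SplittingMagidor} with the Prikry-plus-closure scheme afforded by Proposition \ref{PrikrypropertyforMagidor} and the moreover part of Lemma \ref{PropertiesofSplittings}. Since $V[G_{\alpha_\varrho}] \subseteq V[G]$, the inclusion $\mathcal{P}(\varrho)^{V[G_{\alpha_\varrho}]} \subseteq \mathcal{P}(\varrho)^{V[G]}$ is immediate, so the content lies in the reverse inclusion. Writing $\mathbb{Q}_0 := \mathbb{M}_{\mathcal{U}\upharpoonright\kappa_{\alpha_\varrho}+1}\downarrow p^{\leq m}$, $\mathbb{Q}_1 := \mathbb{M}_\mathcal{U}\downarrow p^{>m}$, and $\eta := |\varrho \times \mathbb{Q}_0|$, Lemma \ref{SplittingMagidor} gives $\mathbb{M}_\mathcal{U}\downarrow p \cong \mathbb{Q}_0 \times \mathbb{Q}_1$, and the restriction of $G$ decomposes correspondingly as $G_{\alpha_\varrho} \times H$.

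Fix $X \in \mathcal{P}(\varrho)^{V[G]}$ with name $\dot{X}$, which via the product forcing theorem may be viewed as a $\mathbb{Q}_1$-name for a $\mathbb{Q}_0$-name. It suffices to prove that the set
$$D := \{t \in \mathbb{Q}_1 \mid \exists \dot{Y} \in V^{\mathbb{Q}_0}\, (t \Vdash_{\mathbb{Q}_1} \dot{X} = \dot{Y})\}$$
is dense in $\mathbb{Q}_1$, since then $D \cap H \neq \emptyset$ and hence $X = \dot{X}[G] = \dot{Y}[G_{\alpha_\varrho}] \in V[G_{\alpha_\varrho}]$. To establish density below an arbitrary $t_0 \in \mathbb{Q}_1$, I apply the moreover part of Lemma \ref{PropertiesofSplittings} at the first position of $t_0$'s stem to obtain a direct extension $t_1 \leq^* t_0$ below which $\leq^*$ is $\eta^+$-closed. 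Enumerating $\varrho \times \mathbb{Q}_0$ as $\langle (\alpha_\xi, s_\xi) \mid \xi < \eta\rangle$, I recursively construct a $\leq^*$-decreasing chain $\langle t_\xi \mid \xi \leq \eta\rangle$ below $t_1$: at successor stages I invoke the Prikry property (Proposition \ref{PrikrypropertyforMagidor}) to select $t_{\xi+1} \leq^* t_\xi$ deciding the $\mathbb{Q}_1$-sentence ``$\check{s}_\xi \Vdash_{\mathbb{Q}_0} \check{\alpha}_\xi \in \dot{X}$'', while at limit stages the $\eta^+$-closure of $\leq^*$ delivers lower bounds. Setting $t^* := t_\eta$ and
$$\dot{Y} := \{(\check{\alpha}_\xi, s_\xi) \mid \xi < \eta,\, t^* \Vdash_{\mathbb{Q}_1} \check{s}_\xi \Vdash_{\mathbb{Q}_0} \check{\alpha}_\xi \in \dot{X}\},$$
a routine verification through the product theorem yields $t^* \Vdash_{\mathbb{Q}_1} \dot{X} = \dot{Y}$, so $t^* \in D$, witnessing density below $t_0$.

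The principal obstacle I foresee is reconciling the closure parameter delivered by Lemma \ref{PropertiesofSplittings} with the enumeration length $\eta$: the moreover part produces closure of degree $|\varrho' \times \mathbb{M}_{\mathcal{U}\upharpoonright \alpha^{t_0}_0 + 1}|^+$ for some $\varrho' < \alpha^{t_0}_1$, and one must verify that this dominates $\eta^+$. Since the first element of $t_0$'s stem is at least $\kappa_{\alpha_\varrho+1}$ and the coherent-sequence structure forces $|\mathbb{M}_{\mathcal{U}\upharpoonright \kappa_{\alpha_\varrho+1}+1}| \geq |\mathbb{Q}_0|$, while $\varrho'$ may be chosen $\geq \varrho$, this domination is clear; with it the Prikry-plus-closure scheme is standard.
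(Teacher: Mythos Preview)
Your proposal is correct and follows precisely the route the paper indicates: the paper omits the proof of this proposition, remarking just beforehand that it is obtained ``by combining Proposition~\ref{PrikrypropertyforMagidor} and lemmas~\ref{SplittingMagidor} and~\ref{PropertiesofSplittings}'', and your argument is exactly this Prikry-plus-closure scheme spelled out in detail.

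One small correction to your closing paragraph: it is not true that the first stem element of an \emph{arbitrary} $t_0\in\mathbb{Q}_1$ is at least $\kappa_{\alpha_\varrho+1}$, since $t_0\leq p^{>m}$ may have acquired new stem points from $A^p_{m+1}\subseteq(\kappa_{\alpha_\varrho},\kappa_{\alpha_\varrho+1})$. The clean way to secure the required $\eta^+$-closure is to invoke the moreover clause of Lemma~\ref{PropertiesofSplittings} at the pair $(p,m)$ itself---legitimate because $\varrho<\alpha^p_{m+1}=\kappa_{\alpha_\varrho+1}$---obtaining $q_\varrho\leq^* p$ with $\langle\mathbb{M}_\mathcal{U}\downarrow q_\varrho^{>m},\leq^*\rangle$ already $\eta^+$-closed, and then to run your recursion below any $t_0\leq q_\varrho^{>m}$; since $G_{\alpha_\varrho}$ does not depend on the particular witness $p\in G$, this adjustment is harmless.
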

The above proposition yields Magidor's theorem:
\begin{theo}[Magidor]\label{MagidorTheoremForcing}
Let $\delta<\kappa$ be a regular and a measurable cardinal with $o(\kappa)=\delta$, respectively. There is a cardinal-preserving generic extension of the universe where $\kappa$ is a strong limit cardinal with $\cof(\kappa)=\delta$.
\end{theo}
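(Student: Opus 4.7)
The plan is to force with Magidor forcing $\mathbb{M}:=\mathbb{M}_\mathcal{U}$, where $\mathcal{U}$ is a coherent sequence of measures with $\ell^\mathcal{U}=\kappa+1$ and $o^\mathcal{U}(\kappa)=\delta$; such a $\mathcal{U}$ exists by the hypothesis on $o(\kappa)$. Let $G$ be $\mathbb{M}$-generic over $V$ and let $\langle\kappa_\alpha\mid\alpha<\omega^\delta\rangle$ enumerate the induced Magidor club $C_G\subseteq\kappa$. Since $\cof(\omega^\delta)=\cof(\delta)=\delta$, once we verify that $\delta$ remains a cardinal in $V[G]$ we obtain $\cof^{V[G]}(\kappa)=\delta$, establishing the cofinality change.

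Cardinal preservation above $\kappa$ is immediate from Proposition \ref{MagidorIsKnaster}: $\mathbb{M}$ is $\kappa^+$-Knaster, so every cardinal ${\geq}\kappa^+$ is preserved. For cardinals $\mu<\kappa$, fix $\varrho\in(\mu,\kappa)$ and pick $p\in G$ as required by Proposition \ref{CombinatoricsBelowkappa}; then $\mathcal{P}(\mu)^{V[G]}=\mathcal{P}(\mu)^{V[G_{\alpha_\varrho}]}$, where $G_{\alpha_\varrho}$ is generic for the restricted Magidor forcing $\mathbb{Q}:=\mathbb{M}_{\mathcal{U}\upharpoonright\kappa_{\alpha_\varrho}+1}\downarrow p^{\leq m}$. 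The poset $\mathbb{Q}$ has cardinality $<\kappa$ in $V$ and is itself a Magidor forcing built from the shorter coherent sequence $\mathcal{U}\upharpoonright\kappa_{\alpha_\varrho}+1$. An induction on $\ell^\mathcal{U}$, with essentially trivial base case, then yields preservation by $\mathbb{Q}$ of all $V$-cardinals $\leq\kappa_{\alpha_\varrho}$, and hence preservation of $\mu$ in $V[G]$.

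Strong limitness follows from the same reduction together with a standard nice-names count: $|\mathcal{P}(\mu)^{V[G]}|=|\mathcal{P}(\mu)^{V[G_{\alpha_\varrho}]}|\leq (|\mathbb{Q}|^\mu)^V$, and since $|\mathbb{Q}|,\mu<\kappa$ and $\kappa$ is inaccessible in $V$, this quantity is $<\kappa$. Preservation of $\kappa$ itself is a corollary: were $|\kappa|^{V[G]}=\nu<\kappa$, then $\mathcal{P}(\kappa)^{V[G]}$ would inject into $\mathcal{P}(\nu)^{V[G]}$ and hence have cardinality $<\kappa$, contradicting $|\mathcal{P}(\kappa)^{V[G]}|\geq |\mathcal{P}(\kappa)^V|\geq\kappa^+$.

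The main obstacle is executing the induction cleanly. Coherence of $\mathcal{U}$ and the identification of $C_G$ via the Prikry property (Proposition \ref{PrikrypropertyforMagidor}) and the splitting lemmas \ref{SplittingMagidor} and \ref{PropertiesofSplittings} must together imply that $\kappa_{\alpha_\varrho}$ is, in $V$, a measurable cardinal with $o^\mathcal{U}(\kappa_{\alpha_\varrho})<\delta$, and that $\mathcal{U}\upharpoonright\kappa_{\alpha_\varrho}+1$ is a genuine coherent sequence of the kind the inductive hypothesis applies to. Once these structural facts are in place, the inductive hypothesis applies to $\mathbb{Q}$ and the argument closes.
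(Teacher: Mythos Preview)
Your proposal is correct and follows the same approach that the paper indicates: the paper does not spell out a proof but merely states that the theorem follows from Proposition~\ref{CombinatoricsBelowkappa} (together with the $\kappa^+$-Knasterness of Proposition~\ref{MagidorIsKnaster} and the splitting Lemmas~\ref{SplittingMagidor} and~\ref{PropertiesofSplittings}), and your argument is a faithful unpacking of that remark. The inductive reduction to a shorter coherent sequence and the nice-names count for strong limitness are the standard details one fills in, and they are sound as you have written them.
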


There is another remarkable feature of Magidor forcing which is essential for our purposes. This aspect is related with the way Magidor generics are generated. Recall that if $G\s\mathbb{M}$ is generic over $V$ then it generates a Magidor club that we denoted by $C_G$. Conversely, if $C_G\s\kappa$ is the Magidor club generated by $G$, then the set of conditions $p\in G(C_G)$ defined as
\begin{enumerate}
\item[$(\aleph)$] $\forall m<n^p\, \alpha^p_m\in C_G$,
\item[$(\beth)$] $\forall\vartheta\in C_G\,\exists q\leq_{\mathbb{M}} p\,\exists m<n^q\, \text{($\vartheta$ appears in $q$)}$,
\end{enumerate}
generates a filter which contains $G$, hence it is generic and $G(C_G)=G$. In particular,  $V[G]=V[C_G]$.

Let us say that a sequence $\vec{\gamma}$ in $\kappa$ is a Magidor sequence for $\mathbb{M}_\mathcal{U}$ over $V$ if the set $G(\vec{\gamma})$ is a $\mathbb{M}_\mathcal{U}$-generic filter over $V$.\footnote{Here we are identifying $\vec{\gamma}$ with a club subset of $\kappa$. }  By definition, any Magidor sequence for $\mathbb{M}_\mathcal{U}$ over $V$ generates a Magidor generic over $V$. Conversely,  any Magidor generic $G$ for $\mathbb{M}_\mathcal{U}$ generates a Magidor sequence $\vec{\gamma}_G$ over $V$, as witnessed by any increasing enumeration of $C_G$. From this it is clear that any Magidor extension is ultimately determined by a Magidor sequence. It is thus natural to ask whether there is a criterion that allows to establish when a sequence $\vec{\gamma}$ is indeed a Magidor sequence for some $\mathbb{M}_\mathcal{U}$. The following result due to Mitchell  \cite{MitHow} provides the desired characterization: 
\begin{theo}[Mitchell]\label{MitchellCriterion}
Assume that $V$ is an inner model of $W$ with $\mathbb{M}_\mathcal{U}\in V$. A sequence $\vec{\gamma}\in W$ is a Magidor sequence over $V$ if and only if the following hold true:
\begin{enumerate}
\item for $\alpha<|\vec{\gamma}|$, $\vec{\gamma}\upharpoonright \alpha$ is a Magidor sequence for  $\mathbb{M}_{\mathcal{U}\upharpoonright \gamma(\alpha)+1}$ over $V$;
\item for each $X\in \mathcal{P}(\kappa)^V$, $X\in\mathcal{F}_\mathcal{U}(\kappa)^V$ if and only if for a tail end of $\alpha<|\vec{\gamma}|$, $\vec{\gamma}(\alpha)\in X$.
\end{enumerate}
\end{theo}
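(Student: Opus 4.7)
The plan is to establish the two directions of the biconditional separately. For the forward direction I would read off clauses (1) and (2) from the genericity of $G(\vec{\gamma})$, using the splitting isomorphism of Lemma \ref{SplittingMagidor} for (1) and density arguments for (2). For the backward direction I would combine the Prikry property (Proposition \ref{PrikrypropertyforMagidor}) with pruning (Proposition \ref{ThereisPrunedInMagidor}) and an inductive deployment of hypotheses (1) and (2), bootstrapping off the forward direction.

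Suppose first that $\vec{\gamma}$ is a Magidor sequence over $V$ and write $G := G(\vec{\gamma})$. For (1), I would fix $\alpha<|\vec{\gamma}|$ and any $p\in G$ whose stem contains $\vec{\gamma}(\alpha)$ at some coordinate $m$. Lemma \ref{SplittingMagidor} yields an isomorphism between $\mathbb{M}_\mathcal{U}\downarrow p$ and $\mathbb{M}_{\mathcal{U}\upharpoonright\gamma(\alpha)+1}\downarrow p^{\leq m}\times\mathbb{M}_\mathcal{U}\downarrow p^{>m}$, so the image of $G$ under projection to the first factor is generic for $\mathbb{M}_{\mathcal{U}\upharpoonright\gamma(\alpha)+1}$; unfolding the isomorphism shows that this image coincides with $G(\vec{\gamma}\upharpoonright\alpha)$. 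For (2), if $X\in\mathcal{F}_\mathcal{U}(\kappa)^V$, the set of $p$ with $A^p_{n^p}\subseteq X$ is dense (shrink $A^p_{n^p}$ to $A^p_{n^p}\cap X$), and for any such $p\in G$, every $\vec{\gamma}(\alpha)$ past the stem of $p$ lies in $A^p_{n^p}\subseteq X$. Conversely, if $X\notin\mathcal{F}_\mathcal{U}(\kappa)^V$, pick $\beta<\delta$ with $\kappa\setminus X\in\mathcal{U}(\kappa,\beta)$; by coherence, $j^\kappa_\beta(\mathcal{U})\upharpoonright\kappa=\mathcal{U}\upharpoonright\kappa$ while $o^{j^\kappa_\beta(\mathcal{U})}(\kappa)=\beta$, so \L os yields $\{\alpha<\kappa\mid o^\mathcal{U}(\alpha)=\beta\}\in\mathcal{U}(\kappa,\beta)$. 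After pruning, any point of $A^p_{n^p}\cap(\kappa\setminus X)$ of Mitchell order $\beta$ may be adjoined to the stem, making the set of conditions whose stem contains at least $n$ such ordinals dense for each $n$; thus cofinally many $\vec{\gamma}(\alpha)$ avoid $X$.

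For the backward direction, assume $\vec{\gamma}$ satisfies (1) and (2); I need to show that $G(\vec{\gamma})$ meets every dense open $D\in V$. Given $p\in G(\vec{\gamma})$, after pruning I would iterate the Prikry property together with the $\alpha^p_i$-completeness of each $\mathcal{F}_\mathcal{U}(\alpha^p_i)$ to produce $p^*\leq^* p$ and a uniform decision: for each finite \emph{stem type} $\tau$ prescribing the number and Mitchell orders of ordinals to be adjoined at each coordinate $i\leq n^p$, there exist measure-one sets $B^\tau_i\subseteq A^{p^*}_i$ such that either every block sequence $\vec{x}\in[\biguplus_{i\leq n^p} B^\tau_i]^{<\omega}$ conforming to $\tau$ satisfies $(p^*){}^\curvearrowright \vec{x}\in D$, or none does. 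Density of $D$ forces some $\tau$ to be of the first kind, and I would then extract such a block sequence from $\vec{\gamma}$ itself. Hypothesis (2) supplies candidates in $B^\tau_{n^p}$ at the top coordinate. At each lower coordinate $i<n^p$, hypothesis (1) identifies $\vec{\gamma}\upharpoonright\alpha_i$ (where $\vec{\gamma}(\alpha_i)=\alpha^{p^*}_i$) as a Magidor sequence for $\mathbb{M}_{\mathcal{U}\upharpoonright\alpha^{p^*}_i+1}$, and the already-established forward-direction analogue of (2) for that sequence supplies candidates in $B^\tau_i\in\mathcal{F}_{\mathcal{U}\upharpoonright\alpha^{p^*}_i+1}(\alpha^{p^*}_i)$.

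The main obstacle lies in the backward direction, specifically in the joint bookkeeping needed to run (1) and (2) in tandem: one must ensure that at each coordinate $\alpha^{p^*}_i$ the measure-one set $B^\tau_i$ produced by the strengthened Prikry argument indeed lies in $\mathcal{F}_{\mathcal{U}\upharpoonright\alpha^{p^*}_i+1}(\alpha^{p^*}_i)$ so that (1) can be brought to bear recursively. This compatibility is precisely what coherence of $\mathcal{U}$ is designed to deliver, but tracking it through direct extensions and intersections with the large sets arising from each stem type $\tau$ forms the technical core of the argument.
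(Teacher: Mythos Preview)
The paper does not prove Theorem \ref{MitchellCriterion}; it is quoted as a result of Mitchell with a citation to \cite{MitHow}, so there is no paper proof against which to compare your approach. Your outline follows the standard strategy one finds in the literature (Prikry property plus the splitting of Lemma \ref{SplittingMagidor} for the forward direction; a strong-Prikry-lemma/uniformization argument together with an inductive appeal to (1) and (2) for the converse), and the forward direction is essentially correct. One small slip: for the ``only if'' part of (2) you argue that the set of conditions whose stem contains at least $n$ points of $\kappa\setminus X$ is dense, but this yields only infinitely many (not cofinally many) $\vec{\gamma}(\alpha)\notin X$; you should instead show, for each $\eta<\kappa$, the density of conditions whose stem contains a point of $(\kappa\setminus X)\setminus\eta$, which follows since $A^p_{n^p}\cap(\kappa\setminus X)\in\mathcal{U}(\kappa,\beta)$ is unbounded.

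For the backward direction there is a genuine gap that your closing paragraph gestures at but does not resolve. You start from some $p\in G(\vec{\gamma})$, produce $p^*\leq^* p$ with the uniform decision property, and then seek a block sequence $\vec{x}\subseteq\mathrm{ran}(\vec{\gamma})$ with $(p^*){}^\curvearrowright\vec{x}\in D$. The difficulty is not only the coherence bookkeeping you mention, but the more basic point that $p^*$ need not lie in $G(\vec{\gamma})$: passing to a $\leq^*$-extension may shrink some $A^{p}_i$ so as to exclude points of $\mathrm{ran}(\vec{\gamma})$, violating clause $(\beth)$ of the description of $G(C_G)$. Consequently $(p^*){}^\curvearrowright\vec{x}\in D$ does not by itself give $G(\vec{\gamma})\cap D\neq\emptyset$. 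The usual remedy is to run the argument inductively on $o^{\mathcal{U}}(\kappa)$ and, at the top coordinate, use (2) to choose the new stem points \emph{above} all points of $\mathrm{ran}(\vec{\gamma})$ that were lost in passing to $p^*$ (only boundedly many are lost, since by (2) a tail of $\vec{\gamma}$ lies in $A^{p^*}_{n^p}$); at lower coordinates one invokes (1) together with the inductive hypothesis to do the same. You should make this step explicit.
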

We will be using this result in the next section when we show that $\mathbb{R}$ projects onto $\mathrm{RO}^+(\mathbb{R}\upharpoonright\xi)$ (c.f. Proposition \ref{ProjectionRandTruncationsMagidor}).
\medskip

 Finally, we prove a generalization of the classical Röwbottom's Lemma \cite[Theorem 7.17]{Kan} which will be used in our future analysis of the quotients $\mathbb{R}/\mathbb{R}\upharpoonright\xi$. 
The following lemma is the key in proving Lemma \ref{RowbottomMagidor}.

\begin{lemma}\label{RowbottomMagidor1}
Let $f: [\kappa]^{<\omega} \to \tau, \tau < \kappa,$ and let $\langle U_\alpha \mid \alpha < \delta \rangle, \delta < \kappa,$ be a sequence of normal measures on $\kappa$. Then there are sets $A_\alpha \in U_\alpha,$ for $\alpha < \delta,$ such that whenever $\langle \alpha_0,\cdots, \alpha_{n-1}  \rangle$
is a finite sequence of ordinals less than $\delta,$ the function $f$ is constant on the set of increasing sequences $\langle \nu_0, \cdots, \nu_{n-1}  \rangle \in \prod_{i\leq n-1}A_{\alpha_i}$.
\end{lemma}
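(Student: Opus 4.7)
The plan is to prove the statement in two stages. In the first stage, I fix a single finite sequence of indices $s = \langle \alpha_0, \ldots, \alpha_{n-1}\rangle \in \delta^{<\omega}$ and construct sets $B^s_0, \ldots, B^s_{n-1}$, with $B^s_i \in U_{\alpha_i}$, on whose increasing product $f$ is constant. In the second stage, I intersect the $B^s_i$'s across all finite sequences $s$ to produce the desired $A_\alpha$.

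For the first stage I argue by induction on $n = |s|$, proving the stronger parametric claim that for any $g : [\kappa]^n \to \tau$ and any $s \in \delta^n$ such sets exist. The base case $n = 1$ is immediate since each $U_{\alpha_0}$ is $\tau^+$-complete. For the inductive step, I first use the $\tau^+$-completeness of $U_{\alpha_{n-1}}$ to choose, for every increasing tuple $\vec{\nu} = (\nu_0, \ldots, \nu_{n-2}) \in [\kappa]^{n-1}$, a set $C_{\vec{\nu}} \in U_{\alpha_{n-1}}$ on which the map $\nu \mapsto g(\{\nu_0, \ldots, \nu_{n-2}, \nu\})$ is constant with value $c_{\vec{\nu}}$. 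Then I apply the inductive hypothesis to the function $\{\nu_0 < \cdots < \nu_{n-2}\} \mapsto c_{(\nu_0, \ldots, \nu_{n-2})}$ and the shortened sequence $\langle \alpha_0, \ldots, \alpha_{n-2}\rangle$, obtaining $B'_0, \ldots, B'_{n-2}$ and a uniform value $c$. I set $B^s_i := B'_i$ for $i < n-1$. To produce $B^s_{n-1}$, for each $\nu < \kappa$ I set
\[
E_\nu := \bigcap\{C_{\vec{\nu}} \,:\, \nu_{n-2} = \nu,\; \nu_j \in B'_j \text{ for } j \leq n-2\},
\]
which belongs to $U_{\alpha_{n-1}}$ by $\kappa$-completeness (since fewer than $\kappa$ tuples enter the intersection), and let $B^s_{n-1} := \triangle_{\nu < \kappa} E_\nu \in U_{\alpha_{n-1}}$, using normality of $U_{\alpha_{n-1}}$. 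A direct unpacking then shows that $g$ takes the value $c$ on every increasing $n$-tuple in $\prod_{i<n} B^s_i$.

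For the second stage, for each $\alpha < \delta$ I define
\[
A_\alpha := \bigcap\{B^s_i \,:\, s \in \delta^{<\omega},\; i < |s|,\; s(i) = \alpha\}.
\]
Since $\delta < \kappa$ and $\kappa$ is inaccessible, $|\delta^{<\omega}| < \kappa$, so this is an intersection of fewer than $\kappa$ members of $U_\alpha$; hence $A_\alpha \in U_\alpha$ by $\kappa$-completeness. The verification is then immediate: for any $s = \langle \alpha_0, \ldots, \alpha_{n-1}\rangle$ and any increasing $\vec{\nu} \in \prod_{i<n} A_{\alpha_i}$, the inclusion $A_{\alpha_i} \subseteq B^s_i$ places $\vec{\nu}$ in the first-stage homogeneous product for $s$, so $f(\{\nu_0, \ldots, \nu_{n-1}\})$ equals the uniform value assigned to $s$.

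The main technical obstacle is the inductive step in the first stage, where the usual single-measure R\"owbottom diagonal intersection has to be adapted to a setting in which the ``top'' coordinate lives in a possibly distinct normal measure $U_{\alpha_{n-1}}$ and must interact with the inductive data coming from $U_{\alpha_0}, \ldots, U_{\alpha_{n-2}}$. Since the diagonalisation uses only $\kappa$-completeness and normality of $U_{\alpha_{n-1}}$, which hold for each $U_\alpha$, the adaptation goes through without additional difficulty.
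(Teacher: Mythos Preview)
Your proof is correct, but it is organized differently from the paper's. You fix one index sequence $s\in\delta^{<\omega}$ at a time, run an essentially classical R\"owbottom induction along the measures $U_{s(0)},\dots,U_{s(|s|-1)}$ to get per-$s$ homogeneous sets $B^s_i$, and then intersect over all $s$, using $|\delta^{<\omega}|<\kappa$ together with $\kappa$-completeness of each $U_\alpha$. The paper instead strengthens the induction hypothesis so that at stage $n$ it already produces a single family $\{A_\alpha\}_{\alpha<\delta}$ and a function $g:[\delta]^n\to\tau$ that simultaneously homogenizes all length-$n$ index sequences; the price is that the inductive step must be applied to a function into $^{\delta}\tau$, which is harmless since $\tau^{\delta}<\kappa$ by inaccessibility. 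Both then finish with a small intersection (yours over $\delta^{<\omega}$, the paper's over $\omega$). Your route is a little more elementary and modular, while the paper's packs the coordination into the induction itself; neither buys anything the other does not, and both rely on the same ingredients (normality for the diagonal intersection, $\kappa$-completeness for the remaining intersections, and inaccessibility of $\kappa$ to bound the relevant cardinalities).
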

\begin{proof}
We prove, by induction on $n < \omega,$ that for each $f: [\kappa]^{n} \to \tau,$ where $\tau < \kappa$, there are sets $A_\alpha \in U_\alpha,$ for $\alpha < \delta$  and a function $g:[\delta]^n \to \tau$ such that for each finite sequence $\langle \alpha_0,\cdots, \alpha_{n-1}  \rangle$ of ordinals less than $\delta$ and all increasing sequences $\langle \nu_0, \cdots, \nu_{n-1}  \rangle \in A_{\alpha_0} \times \cdots \times A_{\alpha_{n-1}}$, we have
\[
f(\langle \nu_0, \cdots, \nu_{n-1}  \rangle)=g(\langle \alpha_0,\cdots, \alpha_{n-1}  \rangle).
\]
From this the result follows easily by using the $\sigma$-completeness of the ultrafilters $U_\alpha$. Observe that when $n=1$, this is clear: for each $\alpha < \delta$ let $A_\alpha \in U_\alpha$ be such that $f \restriction A_\alpha$ is constant, and let
$g(\alpha)$ be this constant value.

Now suppose that the lemma holds for $n \geq 1$ and we prove it for $n+1$. Thus let $f: [\kappa]^{n+1} \to 2$. For each $\langle \nu_0, \cdots, \nu_{n-1}  \rangle \in [\kappa]^n$, let $f_{\langle \nu_0, \cdots, \nu_{n-1}  \rangle}$ be defined by
\[
f_{\langle \nu_0, \cdots, \nu_{n-1}  \rangle}(\nu) = f(\langle \nu_0, , \cdots, \nu_{n-1}, \nu  \rangle).
\]
By the induction hypothesis, we can find sets $A^{\langle \nu_0, \cdots, \nu_{n-1}  \rangle}_\alpha \in U_\alpha, \alpha < \delta$, and a function $g_{\langle \nu_0, \cdots, \nu_{n-1}  \rangle}: \delta \to \tau$ such that
whenever $\alpha < \delta,$ then for all $\nu \in A^{\langle \nu_0, \cdots, \nu_{n-1}  \rangle}_\alpha $,
\[
f(\langle \nu_0, \cdots, \nu_{n-1}, \nu  \rangle)= g_{\langle \nu_0, \cdots, \nu_{n-1}  \rangle}(\langle \alpha  \rangle).
\]
Define $H: [\kappa]^n \to$$^{\delta}\tau$ by $H(\langle \nu_0, \cdots, \nu_{n-1}  \rangle):=g_{\langle \nu_0, \cdots, \nu_{n-1}  \rangle}$.
By the induction hypothesis, we can find sets $B_\alpha \in U_\alpha$ and a function $G: [\delta]^n \to$$^{\delta}\tau$ such that
 for each finite sequence $\langle \alpha_0,\cdots, \alpha_{n-1}  \rangle$ of ordinals less than $\delta$ and all increasing sequences $\langle \nu_0, \cdots, \nu_{n-1}  \rangle \in \prod_{i\leq n-1}B_{\alpha_i}$, we have
\[
g_{\langle \nu_0, \cdots, \nu_{n-1}  \rangle}=G(\langle \alpha_0,\cdots, \alpha_{n-1}  \rangle).
\]
This gives us a definable function $g: [\delta]^{n+1} \to \tau$, defined by
\[
g(\langle \alpha_0, \cdots, \alpha_{n}  \rangle)=g_{\langle \nu_0, \cdots, \nu_{n-1}  \rangle}(\langle \alpha_n \rangle),
\]
for some (and hence any) increasing sequence $\langle \nu_0, \cdots, \nu_{n-1}  \rangle \in \prod_{i\leq n-1} B_{\alpha_i} $. Now let $A_\alpha := B_\alpha \cap \triangle_{\langle \nu_0, \cdots, \nu_{n-1}  \rangle \in [\kappa]^n}A^{\langle \nu_0, \cdots, \nu_{n-1}  \rangle}_\alpha \in U_\alpha.$ Let $\langle \alpha_0,\cdots, \alpha_{n}  \rangle$ be a finite set of ordinals less than $\delta$ and let $\langle \nu_0, \cdots, \nu_{n}  \rangle \in \prod_{i\leq n}A_{\alpha_i}$ be an increasing sequence. Then $\nu_n \in A^{\langle \nu_0, \cdots, \nu_{n-1}  \rangle}_{\alpha_n}$, and we have

\[
f(\langle \nu_0, \cdots, \nu_{n}  \rangle)=g_{\langle \nu_0,\cdots, \nu_{n-1}  \rangle}(\alpha_n)=g(\langle \alpha_0,\cdots, \alpha_{n}  \rangle),
\]
as required.
\end{proof}
Suppose $p= \langle \kappa, A  \rangle \in \mathbb{M}$ and $c: [A]^{<\omega} \to \tau,$ where $\tau < \kappa.$ By shrinking $A$, we may assume that $A= \biguplus_{\alpha < \delta}A(\alpha),$
where $A(\alpha) \in \mathcal{U}(\kappa, \alpha)$. Then we can apply the above lemma and find sets $B(\alpha)\in \mathcal{U}(\kappa, \alpha), B(\alpha) \subseteq A(\alpha),$
such that  whenever $\langle \alpha_0,\cdots, \alpha_{n-1}  \rangle$
is a finite sequence of ordinals less than $\delta,$ the function $c$ is constant on the set of increasing sequences $\langle \nu_0, \cdots, \nu_{n-1}  \rangle \in\prod_{i\leq n-1} B(\alpha_i)$. Note that $B= \biguplus_{\alpha < \delta}B(\alpha) \in \mathcal{F}(\kappa)$ and hence $q= \langle \kappa, B  \rangle \in \mathbb{M}$ and is an extension of $p$.
 \begin{lemma}[Generalized R\"{o}wbottom's Lemma]\label{RowbottomMagidor}
Let $p\in\mathbb{M}$. For each function $c:[\biguplus_{i\leq n^p} A^p_i]^{<\omega}\rightarrow \tau$, where $\tau < \alpha_0^p,$ there is a sequence $\langle B_i\mid i\leq n^p\rangle$  which is homogeneous for $c$. Here homogeneity means the following:\footnote{For simplicity we will require in the notion of homogeneity that (1) holds, though this is not strictly necessary. }
\begin{enumerate}
\item for each $i\leq n^p$, $B_i\s A^p_i$, $B_i\in\mathcal{F}(\alpha^p_i)$ and $B_i=\biguplus_{\alpha < \delta}B_i(\alpha)$, for some
$B_i(\alpha) \in \mathcal{U}(\alpha^p_i, \alpha)$;
\item for each $m<\omega$ and $\vec{x}, \vec{y}\in [\biguplus_{i\leq n^p} B_i]^m$, if for  $k<m,$  $\vec{x}(k), \vec{y}(k)$
belong to the same $B_i(\alpha)$, for some $i\leq n^p$ and $\alpha < \delta$, then $c(\vec{x}) =c(\vec{y}).$
\end{enumerate}
\end{lemma}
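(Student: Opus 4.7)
The plan is to argue by induction on $n^p$, absorbing one coordinate of the condition at each step via the single-coordinate version of the lemma already justified in the paragraph immediately preceding the statement. For the base case $n^p=0$, the condition has the form $p=\langle\langle\kappa,A^p_0\rangle\rangle$ and the coloring reduces to $c:[A^p_0]^{<\omega}\to\tau$, so that paragraph directly supplies the desired cell decomposition $B_0=\biguplus_{\alpha<\delta}B_0(\alpha)$ with $B_0(\alpha)\in\mathcal{U}(\kappa,\alpha)$ witnessing the required homogeneity.

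For the inductive step, assume the result for conditions of length $<n^p$. I would first stabilize the top coordinate $\alpha^p_{n^p}=\kappa$: for each finite sequence $\vec x$ in the bottom part $\biguplus_{i<n^p}A^p_i$, apply the single-coordinate version to the restricted coloring $c_{\vec x}(\vec y):=c(\vec x\frown\vec y)$ on $[A^p_{n^p}]^{<\omega}$ to obtain sets $B^{\vec x}(\alpha)\in\mathcal{U}(\kappa,\alpha)$ making $c_{\vec x}$ homogeneous on $\biguplus_\alpha B^{\vec x}(\alpha)$. Since the bottom part has cardinality $<\kappa$, the $\kappa$-completeness of each $\mathcal{U}(\kappa,\alpha)$ ensures that $B_{n^p}(\alpha):=\bigcap_{\vec x}B^{\vec x}(\alpha)\in\mathcal{U}(\kappa,\alpha)$. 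Next I would handle the bottom coordinates via a reduced coloring. For each cell pattern $\vec\sigma\in\delta^{<\omega}$, define $c^{\vec\sigma}:[\biguplus_{i<n^p}A^p_i]^{<\omega}\to\tau$ by $c^{\vec\sigma}(\vec x):=c(\vec x\frown\vec y)$ where $\vec y$ is any increasing sequence with $\vec y(k)\in B_{n^p}(\vec\sigma(k))$ for all $k$; the construction of $B_{n^p}$ makes this value independent of the choice of $\vec y$, and such $\vec y$ exists because each $B_{n^p}(\alpha)$ is unbounded in $\kappa$. Applying the inductive hypothesis to each $c^{\vec\sigma}$ with respect to $p^{\leq n^p-1}$ yields witnesses $B^{\vec\sigma}_i(\alpha)\in\mathcal{U}(\alpha^p_i,\alpha)$ for $i<n^p$, and since $|\delta^{<\omega}|=\delta<\alpha^p_0\leq\alpha^p_i$, the $\alpha^p_i$-completeness of $\mathcal{U}(\alpha^p_i,\alpha)$ keeps the diagonal intersections $B_i(\alpha):=\bigcap_{\vec\sigma}B^{\vec\sigma}_i(\alpha)$ inside $\mathcal{U}(\alpha^p_i,\alpha)$.

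To finish I would verify homogeneity of $\langle B_i\mid i\leq n^p\rangle$, with $B_i=\biguplus_\alpha B_i(\alpha)$, against the original $c$. Given matching sequences $\vec x,\vec y\in[\biguplus_{i\leq n^p}B_i]^m$, split them as $\vec x=\vec x_{\mathrm{bot}}\frown\vec x_{n^p}$ and $\vec y=\vec y_{\mathrm{bot}}\frown\vec y_{n^p}$, and let $\vec\sigma_{\mathrm{top}}$ denote the shared cell pattern of the top pieces; the very definition of $c^{\vec\sigma_{\mathrm{top}}}$ gives $c(\vec x)=c^{\vec\sigma_{\mathrm{top}}}(\vec x_{\mathrm{bot}})$ and $c(\vec y)=c^{\vec\sigma_{\mathrm{top}}}(\vec y_{\mathrm{bot}})$, while the matching bottom cell patterns together with the inductive homogeneity of the bottom witnesses (relative to $\mathcal U(\alpha^p_i,\alpha)$) force these two values to coincide. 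The main subtlety throughout is the bookkeeping of diagonal intersections across the various normal measures $\mathcal{U}(\alpha^p_i,\alpha)$; the inequalities $\tau<\alpha^p_0$ and $\delta<\alpha^p_0$ --- the latter coming from clause $(\aleph)$ of Definition~\ref{MagidorForcing} --- are precisely what ensure that no completeness degree is exceeded at any stage of the construction.
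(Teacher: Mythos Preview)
Your proposal is correct and follows essentially the same inductive scheme as the paper: stabilize the top coordinate $A^p_{n^p}$ first via the single-coordinate version (Lemma~\ref{RowbottomMagidor1}), then reduce to the shorter condition $p^{\leq n^p-1}$. The only cosmetic difference is in how the reduction is packaged: the paper codes the residual data as a single coloring $d(\vec y):=g_{\vec y}\in{}^{[\delta]^{<\omega}}\tau$ and invokes the inductive hypothesis once (using that $\tau^{\delta^{<\omega}}<\alpha^p_0$ by inaccessibility of $\alpha^p_0$), whereas you apply the inductive hypothesis separately to each $c^{\vec\sigma}$ and then intersect the $|\delta^{<\omega}|$-many witnesses using completeness. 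Both routes rely on the same cardinality bounds and yield the same conclusion; just make sure --- as the paper implicitly does in the paragraph before the lemma --- that the cell decompositions $B_i(\alpha)$ you produce at each stage are nested inside a fixed partition $A^p_i=\biguplus_\alpha A^p_i(\alpha)$, so that intersecting the $B^{\vec\sigma}_i(\alpha)$ over $\vec\sigma$ is coherent.
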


\begin{proof}
Let us argue by induction over the length of $p$ and over the coherent sequences of measures. If $n^p=1$ the argument is covered by Lemma
\ref{RowbottomMagidor1}, so let  us suppose that $n^p>1$ and that the result holds for all conditions of length less than $n^p$.

Set $n:= n^p$ and say
$p=\langle \langle \alpha_0, A_0\rangle, \cdots, \langle \alpha_n, A_n \rangle    \rangle$.
Fix $\tau<\alpha_0$ and \linebreak$c\colon[\biguplus_{i\leq n} A_i]^{<\omega}\rightarrow \tau$ a function. For a sequence $\vec y\in [\biguplus_{i\leq n-1} A_i]^{<\omega}$, define $c_{\vec{y}}\colon [A_n]^{<\omega}\rightarrow\tau$ by $
c_{\vec{y}}(\vec{x}):=c(\vec y ^{\frown} \vec x).$
Arguing as in the base case we can find $A^{\vec y} \subseteq A_n$
witnessing clauses (1) and (2) for $c_{\vec{y}}$.  In particular, for each such $\vec y$, we can find a func\-tion
$g_{\vec y}: [\delta]^{< \omega} \to \tau$ such that for each $\vec{\alpha}\in [\delta]^{<\omega}$
and all increasing sequences $\vec{x} \in \prod A^{\vec y}(\vec{\alpha})$, $c(\vec y^{\frown} \vec{x})=g_{\vec y}(\vec{\alpha}).$ Set $$B_n:=\bigcap \{ A^{\vec y}\mid     \vec y\in [\biguplus_{i\leq n-1} A_i]^{<\omega} \}.$$

Define $d$ on $[\biguplus_{i\leq n-1} A_i]^{<\omega}$ by $d(\vec y):=g_{\vec y}$. As $\tau^{\delta^{<\omega}} < \alpha_0$, the induction hypothesis give us a
 sequence $\langle B_i\mid  i \leq n-1   \rangle$ of sets  witnessing clauses (1) and (2) with respect to $d$.

\begin{claim}
$\langle B_i\mid i\leq n\rangle$  witnesses clause (1) and (2) for $c$.
\end{claim}
\begin{proof}[Proof of claim]
We are left with checking that clause (2) is holds.
Suppose that  $m<\omega$, $\vec{z}_1, \vec{z}_2\in [\biguplus_{i\leq n} B_i]^m$ and  that for each $k<m,$  $\vec{z}_1(k), \vec{z}_2(k)$
belong to the same $B_i(\alpha)$, for some $i\leq n$ and $\alpha < \delta$. Then we can find $\vec{x}_1, \vec{x}_2 \in [B_n]^{\leq m}$
and  $\vec{y}_1, \vec{y}_2 \in [\biguplus_{i \leq n-1} B_i]^{\leq m}$ with $|\vec{x}_1|=|\vec{x}_2|$ and $|\vec{y}_1|=|\vec{y}_2|$
such that $\vec{z}_1=\vec y_1 ^{\frown} \vec x_1$ and
$\vec{z}_2=\vec y_2 ^{\frown} \vec x_2$.
By the choice of $\vec y_1$ and $\vec y_2$ and homogeneity of the sequence $\langle B_i\mid  i \leq n-1   \rangle$ for $d$, $d(\vec y_1)=d(\vec y_2)$ which means $g_{\vec y_1}=g_{\vec y_2}$. Similarly our choice of $\vec x_1, \vec x_2$ and the homogeneity of $B_n$ yields
$
c(\vec z_1)=c(\vec y_1 ^{\frown} \vec x_1)=g_{\vec y_1}(\vec x_1)=g_{\vec y_2}(\vec x_2)=c(\vec y_2 ^{\frown} \vec x_2)=c(\vec z_2),$ which gives the desired result.
\end{proof}
The above claim finishes the proof of the induction step and thus yields the lemma.
\end{proof}
\section{Proof of Theorem \ref{main theorem 1}}\label{ProofTheorem1}
We will devote the current section to present the main forcing construction used in the  proof of Theorem \ref{main theorem 1} in case $\Theta=\lambda^+$. Hereafter, $\delta,\kappa, \lambda$ will be assumed as in the statement of Theorem \ref{main theorem 1} and $G\s\Add(\kappa,\lambda^+)$ will be a fixed generic filter  over $V$.

\begin{notation}\label{CohenForcingNotationMagidor}
\rm{$ $
\begin{itemize}
\item For each $x\subseteq \lambda^+$,  $\mathbb{A}_x:=(\Add(\kappa,x),\supseteq)$. 
\item For each $y\subseteq x\subseteq\lambda^+
$ and $H\s\mathbb{A}_x$ a generic filter over $V$, $H\upharpoonright y$ will denote the generic filter induced by $H$ and the standard projection between $\mathbb{A}_x$ and $\mathbb{A}_y$.
\end{itemize}
}
\end{notation}
By a result of Woodin (see e.g. \cite[\S2]{GitShe}) it is feasible to prepare the ground model and make the strongness of $\kappa$ indestructible under adding arbitrary many Cohen subsets of $\kappa$. Thus we may assume that $\kappa$ is strong in $V[G]$. The following  property of strong cardinals is standard \cite{Kan}.
\begin{prop}
For a $(\kappa+2)$-strong cardinal $\kappa$, $o(\kappa)=(2^\kappa)^+$. Thus, $(\kappa+2)$-strong cardinals $\kappa$ have maximal Mitchell-order.
\end{prop}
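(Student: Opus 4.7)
This is the classical Mitchell theorem on the Mitchell order at $(\kappa+2)$-strong cardinals; the authors cite \cite{Kan} for a proof. My plan is to split the argument into the upper bound $o(\kappa)\leq (2^\kappa)^+$, which holds in \zfc{} for any measurable $\kappa$, and the lower bound $o(\kappa)\geq (2^\kappa)^+$, which uses the hypothesis.

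For the upper bound I would invoke Mitchell's coherence argument: every normal measure on $\kappa$ is a subset of $\mathcal{P}(\kappa)$ and so lies in $V_{\kappa+2}$; moreover, if $W\lhd U$ then $W\in \Ult(V,U)$. Counting the $\lhd$-predecessors of any fixed $U$ inside $\Ult(V,U)$ (bounded by $(2^\kappa)^{\Ult(V,U)}$), together with an iteration of this bound along any $\lhd$-chain, yields the desired estimate $(2^\kappa)^+$ (see \cite[\S19]{Kan} for the detailed counting).

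For the lower bound I would fix an elementary embedding $j\colon V\to M$ witnessing $(\kappa+2)$-strongness, so that $V_{\kappa+2}\subseteq M$, and consider the derived normal measure $U_\ast:=\{X\subseteq\kappa : \kappa\in j(X)\}$. The standard factor analysis produces $k\colon \Ult(V,U_\ast)\to M$ with $k\circ i_{U_\ast}=j$, $k(\kappa)=\kappa$ and $\crit(k)>\kappa$. Since $k$ fixes $V_{\kappa+1}$ pointwise (by rank induction from $\crit(k)>\kappa$), it also fixes $V_{\kappa+2}^{\Ult(V,U_\ast)}$ pointwise, which combined with elementarity forces $V_{\kappa+2}^{\Ult(V,U_\ast)} = V_{\kappa+2}^M = V_{\kappa+2}$. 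Consequently every normal measure $W$ on $\kappa$ in $V$ lies in $V_{\kappa+2}\subseteq \Ult(V,U_\ast)$, so $W\lhd U_\ast$.

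To upgrade this to a $\lhd$-chain of length $(2^\kappa)^+$ I would argue by transfinite recursion on $\xi<(2^\kappa)^+$, at each stage constructing a normal measure $U_\xi$ as a derived measure from a variant $j_\xi$ of $j$, parameterised by seeds in $V_{\kappa+2}$ chosen so that $U_\xi$ differs as a subset of $\mathcal{P}(\kappa)$ from every previously built $U_{\xi'}$. The same factor argument ensures $V_{\kappa+2}\subseteq \Ult(V,U_\xi)$, whence $U_{\xi'}\lhd U_\xi$ for all $\xi'<\xi$ automatically. I expect the hard part to be this diagonalisation step: guaranteeing that the recursion produces $(2^\kappa)^+$ genuinely distinct measures rather than stabilising prematurely requires exploiting the full $(\kappa,\kappa+2)$-extender data rather than a single derived measure, and matching it against the upper bound from the first paragraph to confirm that the maximum value is actually attained.
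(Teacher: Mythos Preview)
The paper does not supply a proof of this proposition; it is stated as a standard fact with a reference to \cite{Kan}. So there is no argument in the paper to compare against, but your lower-bound argument contains a genuine error.

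The step that fails is the claim $V_{\kappa+2}^{\Ult(V,U_\ast)} = V_{\kappa+2}$. It is true that the factor map $k$ fixes $V_{\kappa+2}^{\Ult(V,U_\ast)}$ pointwise, but this only yields the inclusion $V_{\kappa+2}^{\Ult(V,U_\ast)} \subseteq V_{\kappa+2}^M = V_{\kappa+2}$; elementarity of $k$ does not give the reverse inclusion, since $k$ is not surjective onto $V_{\kappa+2}^M$. Indeed the asserted equality is impossible: $U_\ast \in V_{\kappa+2}$, yet $U_\ast \notin \Ult(V,U_\ast)$, as no normal measure lies in its own ultrapower. Hence your conclusion ``every normal measure $W$ on $\kappa$ satisfies $W \lhd U_\ast$'' already fails for $W = U_\ast$. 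No single normal ultrapower can absorb all of $V_{\kappa+2}$; this is precisely why the extender is indispensable, as you correctly suspect in your final paragraph. That paragraph is the right starting point, but the preceding single-measure argument is not a valid warm-up and should be discarded rather than ``upgraded''.
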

By virtue of the previous proposition, in $V[G]$, $\kappa$ is a measurable cardinal with $o(\kappa)=\delta$. Thus, we may fix in $V[G]$ a coherent sequence of measures $\mathcal{U}=\langle\mathcal{U}(\alpha,\beta)\mid \alpha\leq\kappa, \beta<o^{\mathcal{U}}(\alpha)\rangle$ with $o(\kappa)=o^{\mathcal{U}}(\kappa)=\delta$. For each pair $(\alpha,\beta)\in\dom(\mathcal{U})$, in what follows $\dot{\mathcal{U}}(\alpha,\beta)$ will stand for $\mathbb{A}_{\lambda^+}$-name for the measure $\mathcal{U}(\alpha,\beta)$.

\begin{lemma}\label{LemmaD+}
There exists an  unbounded set of ordinals $\mathcal{A}\subseteq \lambda^+$, closed under taking limits of ${\geq}\kappa^+$-se\-quences, such that, for every  $\xi\in \mathcal{A}$ and every $\mathbb{A}_{\lambda^+}$-generic filter $G$,
$\mathcal{U}_{\xi}:=\langle\dot{\mathcal{U}}(\alpha,\beta)_{G}\cap V[G\upharpoonright\xi]\mid ~\alpha\leq\kappa,\, \beta<o^{\dot{\mathcal{U}}}(\alpha)\rangle$
 is a coherent sequence of measures in $V[G\upharpoonright\xi]$.
\end{lemma}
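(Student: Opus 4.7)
The plan is to construct $\mathcal{A}$ as the set of $\xi<\lambda^+$ of cofinality ${\geq}\kappa^+$ above a certain bound $\xi_0$ which already captures the ``small'' part of $\mathcal{U}$, and then to verify that the restricted sequence $\mathcal{U}_\xi$ belongs to $V[G\upharpoonright\xi]$ and is coherent there. To obtain $\xi_0$: since $\mathbb{A}_{\lambda^+}$ is $\kappa$-closed, one has $\mathcal{P}(\alpha)^{V[G]}=\mathcal{P}(\alpha)^V$ for $\alpha<\kappa$, whence by $\gch_{\geq\kappa}$ each measure $\mathcal{U}(\alpha,\beta)$ at level $\alpha<\kappa$ has cardinality ${<}\kappa$. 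Combining this with the $\kappa^+$-cc, a standard nice-name computation yields $B_{\alpha,\beta}\in[\lambda^+]^{<\kappa}$ with $\mathcal{U}(\alpha,\beta)\in V[G\upharpoonright B_{\alpha,\beta}]$; since $\bigcup_{\alpha<\kappa,\,\beta<o^{\mathcal{U}}(\alpha)}B_{\alpha,\beta}$ has size $\leq\kappa$, it is bounded by some $\xi_0<\lambda^+$. One then puts
\[
\mathcal{A}:=\{\xi<\lambda^+\mid \xi_0\leq\xi\;\land\;\cof(\xi)\geq\kappa^+\},
\]
which is unbounded in $\lambda^+$ and closed under ${\geq}\kappa^+$-limits.

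For $\xi\in\mathcal{A}$, the components $\mathcal{U}_\xi(\alpha,\beta)=\mathcal{U}(\alpha,\beta)$ with $\alpha<\kappa$ already lie in $V[G\upharpoonright\xi]$ and remain normal $\alpha$-complete ultrafilters there because $\mathcal{P}(\alpha)$ is absolute between the relevant models. For $\alpha=\kappa$, I check that $\mathcal{U}_\xi(\kappa,\beta):=\mathcal{U}(\kappa,\beta)\cap V[G\upharpoonright\xi]$ is a normal $\kappa$-complete ultrafilter in $V[G\upharpoonright\xi]$ via the uniform schema: any witnessing datum in $V[G\upharpoonright\xi]$ (a subset of $\kappa$, a ${<}\kappa$-sequence of such, or a regressive function) admits a witnessing superset already in $V[G\upharpoonright\xi]$; for instance, for normality one takes $f^{-1}[\{\gamma_0\}]$ where $\gamma_0$ is the constant value supplied by normality of $\mathcal{U}(\kappa,\beta)$ in $V[G]$. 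That $\mathcal{U}_\xi$, as a function, belongs to $V[G\upharpoonright\xi]$ is where the hypothesis $\cof(\xi)\geq\kappa^+$ is used, together with the $\kappa^+$-cc and the $\kappa$-closure of $\mathbb{A}_{\lambda^+\setminus\xi}$: the truth value of ``$\dot X\in\dot{\mathcal{U}}(\kappa,\beta)$'' for $\dot X$ an $\mathbb{A}_\xi$-name for a subset of $\kappa$ admits a representative supported in $\xi$.

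Coherence of $\mathcal{U}_\xi$ at $(\kappa,\beta)$ in $V[G\upharpoonright\xi]$ then descends from coherence of $\mathcal{U}$ in $V[G]$: by the standard ultrapower characterization in $V[G]$, for every $X\in\mathcal{P}(\kappa)^{V[G\upharpoonright\xi]}$ and every $\beta'<\beta$,
\[
X\in\mathcal{U}(\kappa,\beta')\iff \{\alpha<\kappa\mid X\cap\alpha\in\mathcal{U}(\alpha,\beta')\}\in\mathcal{U}(\kappa,\beta),
\]
and both sides refer only to objects of $V[G\upharpoonright\xi]$, so the equivalence holds with $\mathcal{U}$ replaced by $\mathcal{U}_\xi$ and is readable inside $V[G\upharpoonright\xi]$. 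The same characterization applied in $V[G\upharpoonright\xi]$ to $j^\xi_\beta(\mathcal{U}_\xi)(\kappa,\beta')$ then yields the coherence identity $j^\xi_\beta(\mathcal{U}_\xi)\upharpoonright\kappa+1=\mathcal{U}_\xi\upharpoonright(\kappa,\beta)$. The main obstacle I anticipate is the closure argument showing $\mathcal{U}(\kappa,\beta)\cap V[G\upharpoonright\xi]\in V[G\upharpoonright\xi]$, where the cofinality hypothesis $\cof(\xi)\geq\kappa^+$ is indispensable and the product decomposition $\mathbb{A}_{\lambda^+}\cong \mathbb{A}_\xi\times\mathbb{A}_{\lambda^+\setminus\xi}$ is the key tool.
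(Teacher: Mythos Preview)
Your coherence verification via the \L o\'s characterization
\[
X\in\mathcal{U}(\kappa,\beta')\iff\{\alpha<\kappa\mid X\cap\alpha\in\mathcal{U}(\alpha,\beta')\}\in\mathcal{U}(\kappa,\beta)
\]
is clean and essentially equivalent to the paper's argument with representing functions. The difficulty is elsewhere: your construction of $\mathcal{A}$ does not work.

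You take $\mathcal{A}=\{\xi\geq\xi_0\mid\cof(\xi)\geq\kappa^+\}$ and then assert that for any such $\xi$ the truth value of ``$\dot X\in\dot{\mathcal{U}}(\kappa,\beta)$'' (for $\dot X$ an $\mathbb{A}_\xi$-name) has a representative supported in $\xi$. This is the entire content of the lemma, and it is not in general true for an arbitrary $\xi$ of large cofinality. The name $\dot{\mathcal{U}}(\kappa,\beta)$ may have support scattered throughout $\lambda^+$; by the $\kappa^+$-cc a maximal antichain deciding ``$\dot X\in\dot{\mathcal{U}}(\kappa,\beta)$'' has size $\leq\kappa$ and hence support of size $\leq\kappa$, but nothing forces that support to lie below $\xi$. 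Neither $\cof(\xi)\geq\kappa^+$ nor the $\kappa$-closure of $\mathbb{A}_{\lambda^+\setminus\xi}$ helps: different extensions of $G\upharpoonright\xi$ to full generics $G$ can yield different measures $\dot{\mathcal{U}}(\kappa,\beta)_G$, disagreeing on a fixed $X\in V[G\upharpoonright\xi]$, so $\mathcal{U}(\kappa,\beta)\cap V[G\upharpoonright\xi]$ need not be in $V[G\upharpoonright\xi]$.

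What is actually needed, and what the paper imports from \cite[Lemma~3.3]{FriHon}, is a closing-off (``catch your tail'') construction: starting from any $\eta_0$, let $\eta_{n+1}$ absorb the supports of the deciding antichains for all nice $\mathbb{A}_{\eta_n}$-names $\dot X$ for subsets of $\kappa$; the supremum after $\kappa^+$ steps (or any point closed under this process of cofinality $\geq\kappa^+$) then has the desired property. The set $\mathcal{A}_{(\alpha,\beta)}$ of such closure points is unbounded and closed under $\geq\kappa^+$-limits, and $\mathcal{A}=\bigcap_{(\alpha,\beta)}\mathcal{A}_{(\alpha,\beta)}$ is the required set. Once $\mathcal{A}$ is obtained this way, your coherence argument goes through; you should also record (as the paper does) that $o^{i^{\kappa,\xi}_\beta(\mathcal{U}_\xi)}(\kappa)=\beta$, so that the domains of the two sequences in the coherence clause agree.
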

\begin{proof}
Arguing as in \cite[Lemma 3.3]{FriHon}, for each $(\alpha,\beta)\in\dom(\mathcal{U})$, there is an unbounded set $\mathcal{A}_{(\alpha,\beta)}\s \lambda^+$, closed under taking limits of ${\geq}\kappa^+$-sequences for which  $\dot{\mathcal{U}}(\alpha,\beta)_G\cap V[G\upharpoonright\xi]$ is a normal measure on $\alpha$ in $V[G\upharpoonright\xi]$, for all $\xi\in\mathcal{A}_{(\alpha,\beta)}$. Clearly, the collection of unbounded sets in $\lambda^+$ which are closed under taking limits of ${\geq}\kappa^+$-sequences is closed under taking intersections of $\kappa$-many sets. Set $\mathcal{A}:=\bigcap_{(\alpha,\beta)\in\dom(\mathcal{U})}\mathcal{A}_{(\alpha,\beta)}$ and observe that $\mathcal{A}\s\lambda^+$ is  unbounded and closed in the above mentioned sense. For each $\xi\in \mathcal{A}$, set  $\mathcal{U}_\xi:=\langle \mathcal{U}_\xi(\alpha,\beta)\mid \alpha\leq \kappa,\, \beta<o^\mathcal{U}(\alpha)\rangle$, where $(\alpha,\beta)\in\dom(\mathcal{U})$ and $\mathcal{U}_\xi(\alpha,\beta):=\dot{\mathcal{U}}(\alpha,\beta)_G\cap V[G\upharpoonright\xi]$.

We claim that $\mathcal{A}$ is as desired. Fix $\xi\in \mathcal{A}$ and let $(\alpha,\beta)\in\dom(\mathcal{U})$. By construction $\mathcal{U}_\xi(\alpha,\beta)$ is a normal measure on $\alpha$, hence (1) of Definition \ref{CoherentSequence} holds. Let $i^{\alpha,\xi}_\beta$ be the ultrapower by $\mathcal{U}_\xi(\alpha,\beta)$ in $V[G\upharpoonright\xi]$. We are left with showing that $\mathcal{U}_\xi$ satisfies clause (2) of Definition \ref{CoherentSequence}.
\begin{claim}
$i^{\alpha,\xi}_\beta(\mathcal{U}_\xi)(\varrho,\nu) = \mathcal{U}_\xi(\varrho,\nu)$, for each $(\varrho,\nu)\in \dom(\mathcal{U}_\xi\upharpoonright(\alpha,\beta)).$
\end{claim}

\begin{proof}[Proof of claim]
 Let $(\varrho,\nu)\in\dom(\mathcal{U}_\xi\upharpoonright(\alpha,\beta))$.  By definition, $(\varrho,\nu)$ is member of $ \dom(\mathcal{U}\upharpoonright(\alpha,\beta))$. We now check that the claim holds. Let us distinguish two cases:  $\varrho<\alpha$ and $\varrho=\alpha$.

 $\blacktriangleright$ Assume $\varrho<\alpha$. If $X\in\mathcal{P}(\varrho)\cap V[G\upharpoonright\xi]$, observe that $i^{\alpha,\xi}_\beta(X)=X$, $i^{\alpha,\xi}_\beta(\varrho)=\varrho$ and $i^{\alpha,\xi}_\beta(\nu)=\nu$. Then, it is not hard to check that the desired equality holds.

 $\blacktriangleright$ Assume $\varrho=\alpha$. Then, $\nu<\beta$. By coherence of $\mathcal{U}$, $\mathcal{U}_\xi(\alpha,\nu)=j^{\alpha}_\beta(\mathcal{U})(\alpha,\nu)\cap V[G\upharpoonright\xi]$.
 Let $X\in \mathcal{U}_\xi(\alpha,\nu)$ and
$f\in V[G\upharpoonright\xi]$ be such that $X=[f]_{\mathcal{U}_\xi(\alpha,\beta)}$. Since $f\in V[G\upharpoonright\xi]$, it is not hard to check that $[f]_{\mathcal{U}_\xi(\alpha,\beta)}=[f]_{\mathcal{U}(\alpha,\beta)}$.

  Notice that $\Ult(V[G], \mathcal{U}(\alpha,\beta))\models \text{$``X\in j^{\alpha}_\beta(\mathcal{U})(\alpha,\nu)$''}$ and that this is true if and only if $Y:=\{\delta<\alpha\mid f(\delta)\in \mathcal{U}(\delta,\nu)\}\in \mathcal{U}(\alpha,\beta) .$
Since $f\in V[G\upharpoonright\xi]$, $f(\delta)\in \mathcal{U}_\xi(\delta,\nu)$ iff $f(\delta)\in \mathcal{U}(\delta,\nu)$, hence $Y=\{\delta<\alpha\mid f(\delta)\in \mathcal{U}_\xi(\delta,\nu)\}\in\mathcal{U}(\alpha,\beta)$. Observe that $Y\in V[G\upharpoonright\xi]$, hence the above is equivalent to  $Y\in \mathcal{U}_\xi(\alpha,\beta)$. Thus $[f]_{\mathcal{U}_\xi(\alpha,\beta)}\in i^{\alpha,\xi}_\beta(\mathcal{U}_\xi)(\alpha,\nu)$. Combining all the previous equivalences we arrive at
 $i^{\alpha,\xi}_\beta(\mathcal{U}_\xi)(\alpha,\nu)=j^\alpha_\beta(\mathcal{U})(\alpha,\nu)\cap V[G\upharpoonright\xi]=\mathcal{U}_\xi(\alpha,\nu),$
 as desired.
\end{proof}

\begin{claim}
$\dom(i^{\alpha,\xi}_\beta(\mathcal{U}_\xi)\upharpoonright \alpha+1)=\dom (\mathcal{U}_\xi\upharpoonright(\alpha,\beta))$.
\end{claim}

\begin{proof}[Proof of claim]
The above argument already gives the right to left inclusion. Let $(\varrho,\nu)\in \dom(i^{\alpha,\xi}_\beta(\mathcal{U}_\xi)\upharpoonright \alpha+1)$. It is not hard to check that if $\varrho<\alpha$ then $(\varrho,\nu)\in \dom (\mathcal{U}_\xi\upharpoonright(\alpha,\beta))$, so that we may assume  $\varrho=\alpha$. We have to show that $o^{i^{\alpha,\xi}_\beta(\mathcal{U}_\xi)}(\alpha)=\beta$.
Let
\[
Y=\{\varrho<\alpha \mid o^{\mathcal{U}}(\varrho)=\beta  \}.
\]
Since $\mathcal{U}$ is a coherent sequence of measures, $o^{j^\alpha_\beta(\mathcal{U})}(\alpha)=\beta$, i.e., $\alpha \in j^\alpha_\beta(Y)$,
and hence $Y \in \mathcal{U}(\alpha, \beta)$. Since $\mathcal{U} \restriction \alpha=\mathcal{U}_\xi \restriction \alpha,$ we have $Y \in V[G \restriction \xi]$ and hence $Y \in \mathcal{U}_\xi(\alpha, \beta)$. Thus  $\alpha \in i^{\alpha, \xi}_\beta(Y)$, which means
$o^{i^{\alpha,\xi}_\beta(\mathcal{U}_\xi)}(\alpha)=\beta$, as required.
\end{proof}
The above claims yield $i^{\alpha,\xi}_\beta(\mathcal{U}_\xi)\upharpoonright\alpha+1=\mathcal{U}_\xi\upharpoonright(\alpha,\beta)$, thus completing the proof of the lemma.
\end{proof}
\begin{remark}
\rm{For each $\xi\in \mathcal{A}$ and $\alpha<\kappa$, observe that $\mathcal{U}_\xi(\alpha,\beta)=\mathcal{U}(\alpha,\beta)$ and $\mathcal{U}(\alpha,\beta)\in V$, as $\mathbb{A}_\xi$ does not add bounded subsets of $\kappa$.}
\end{remark}

Let $\mathcal{A}$ be a set given by Lemma \ref{LemmaD+}. Hereafter we will be relying on the following notation:
\begin{notation}
\rm{For each $\xi\in\mathcal{A}$, let $\mathcal{U}_\xi$ 
be the coherent sequence of measures resulting of  Lemma \ref{LemmaD+} and let $\dot{\mathcal{U}}_\xi$ be a $\mathbb{A}_\xi$-name such that $\mathcal{U}_\xi=(\dot{\mathcal{U}}_\xi)_{G\upharpoonright\xi}$. Similarly, $\dot{\mathbb{M}}_\xi$ will be a $\mathbb{A}_\xi$-name such that $\mathbb{M}_{\mathcal{U}_\xi}=(\dot{\mathbb{M}}_\xi)_{G\upharpoonright \xi}$. By convention, $\mathcal{U}_{\lambda^+}:=\mathcal{U}$ and $\dot{\mathbb{M}}_{\lambda^+}$ will be a $\mathbb{A}_{\lambda^+}$-name such that  $\mathbb{M}_{\mathcal{U}_{\lambda^+}}=(\dot{\mathbb{M}}_{\mathcal{U}_{\lambda^+}})_G$. 
}
\end{notation}

\begin{prop}\label{ProjectionGenericsinMagidor}
Work in $V$. For each $\xi\in\mathcal{A}$, $\mathbb{A}_{\lambda^+} \ast \mathbb{M}_{\lambda^+}$ projects onto $\mathbb{A}_{\xi} \ast\mathbb{M}_\xi$.
\end{prop}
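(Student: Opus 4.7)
The plan is to establish the projection by invoking Mitchell's characterization of Magidor sequences (Theorem \ref{MitchellCriterion}). The Cohen component admits the standard restriction projection $\pi_1 \colon \mathbb{A}_{\lambda^+} \to \mathbb{A}_\xi$ sending $p \mapsto p \upharpoonright \xi$, which is regular because $\mathbb{A}_{\lambda^+}$ factors as $\mathbb{A}_\xi \times \mathbb{A}_{\lambda^+\setminus\xi}$. Thus the problem reduces to showing that, for an $\mathbb{A}_{\lambda^+}$-generic $G$ and any $\mathbb{M}_{\lambda^+}$-generic $H$ over $V[G]$, there is an $\mathbb{M}_\xi$-generic over $V[G\upharpoonright\xi]$ inside $V[G][H]$.

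To obtain such a generic I would take $\vec{\gamma}$ to be the Magidor sequence generated by $H$ over $V[G]$, i.e.\ the increasing enumeration of the Magidor club $C_H \subseteq \kappa$, and argue that $\vec{\gamma}$ is \emph{also} a Magidor sequence over $V[G\upharpoonright\xi]$ for $\mathbb{M}_{\mathcal{U}_\xi}$. By Theorem \ref{MitchellCriterion} this amounts to verifying two clauses: (i) each proper initial segment $\vec{\gamma}\upharpoonright\alpha$ is a Magidor sequence for $\mathbb{M}_{\mathcal{U}_\xi\upharpoonright \gamma(\alpha)+1}$ over $V[G\upharpoonright\xi]$; and (ii) for every $X \in \mathcal{P}(\kappa)^{V[G\upharpoonright\xi]}$, $X \in \mathcal{F}_{\mathcal{U}_\xi}(\kappa)$ if and only if $\vec{\gamma}(\alpha) \in X$ for a tail of $\alpha < |\vec{\gamma}|$.

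Clause (ii) is the heart of the matter and follows immediately from Lemma \ref{LemmaD+}. Indeed, $X \in \mathcal{F}_{\mathcal{U}_\xi}(\kappa)$ means that $X$ lies in every $\mathcal{U}_\xi(\kappa,\beta) = \mathcal{U}(\kappa,\beta) \cap V[G\upharpoonright\xi]$; since $X$ itself belongs to $V[G\upharpoonright\xi]$, this is equivalent to $X \in \mathcal{F}_{\mathcal{U}}(\kappa)$, which is in turn equivalent (by Mitchell's criterion applied inside $V[G]$ to $\vec{\gamma}$ as a Magidor sequence for $\mathbb{M}_{\mathcal{U}}$) to $\vec{\gamma}(\alpha) \in X$ for a tail of $\alpha$. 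Clause (i) is then addressed by an analogous argument pushed below $\kappa$, exploiting the remark after Lemma \ref{LemmaD+}: because $\mathbb{A}_\xi$ adds no bounded subsets of $\kappa$, on all bounded levels the coherent sequences $\mathcal{U}$ and $\mathcal{U}_\xi$ coincide, so Mitchell's criterion transfers from $V[G]$ to $V[G\upharpoonright\xi]$ verbatim. Once $\vec{\gamma}$ is certified as a Magidor sequence for $\mathbb{M}_{\mathcal{U}_\xi}$ over $V[G\upharpoonright\xi]$, the filter $H^\ast := G(\vec{\gamma}) \subseteq \mathbb{M}_\xi$ that it induces is generic over $V[G\upharpoonright\xi]$, and $(G\upharpoonright\xi) \ast H^\ast \in V[G][H]$ exhibits the required projection of the two-step iterations.

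The step I expect to be most delicate is the uniform verification of clause (i) along the entire sequence $\vec{\gamma}$: one must check that at each $\alpha < |\vec{\gamma}|$ the truncated sequence $\vec{\gamma}\upharpoonright\alpha$ remains Magidor generic for the correspondingly truncated coherent sequence over the intermediate model, without the argument drifting as $\alpha$ grows. The remedy is to organise the verification as an induction along $\vec{\gamma}$ in which the inductive hypothesis at stage $\alpha$ supplies clause (1) of Mitchell's criterion at $\alpha$, while clause (2) at each stage is inherited from its validity in $V[G]$ via the equality $\mathcal{U}\upharpoonright\alpha+1 = \mathcal{U}_\xi\upharpoonright\alpha+1$ guaranteed by Lemma \ref{LemmaD+}. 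No combinatorial input beyond this lemma and Theorem \ref{MitchellCriterion} should be needed.
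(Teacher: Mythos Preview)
Your proposal is correct and follows essentially the same route as the paper: reduce to showing that the Magidor sequence $\vec{\gamma}$ for $\mathbb{M}_{\lambda^+}$ over $V[G]$ is also a Magidor sequence for $\mathbb{M}_{\mathcal{U}_\xi}$ over $V[G\upharpoonright\xi]$, verify Mitchell's criterion by induction along $\vec{\gamma}$, and use Lemma~\ref{LemmaD+} (and the remark after it) to transfer the filter characterisation between the two models. Your identification of the inductive organisation of clause~(i) as the delicate point, and of the equality $\mathcal{U}_\xi\upharpoonright(\gamma(\alpha)+1)=\mathcal{U}\upharpoonright(\gamma(\alpha)+1)$ as the key input there, matches the paper exactly.
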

\begin{proof}
Let $\xi\in\mathcal{A}$. It suffices to prove that any $\mathbb{M}_{\lambda^+}$-generic over $V[G]$ induces a $\mathbb{M}_\xi$-generic over $V[G\upharpoonright\xi]$.  Notice that by the discussion carried out at the end of Section \ref{Preliminaries} it suffices  with showing that any Magidor sequence $\vec{\gamma}$ for $\mathbb{M}_{\lambda^+}$ over $V[G]$ is also a  Magidor sequence for $\mathbb{M}_{\xi}$ over $V[G\upharpoonright\xi]$. To this aim we will check that $\vec{\gamma}$ witnesses (1) and (2) of  Theorem \ref{MitchellCriterion}, when $\mathcal{U}=\mathcal{U}_\xi$ and $V=V[G\upharpoonright\xi]$.

\begin{claim}
For each $\alpha<|\vec{\gamma}|$, $\vec{\gamma}\upharpoonright\alpha$ is a Magidor sequence for  $\mathbb{M}_{\mathcal{U}_\xi\upharpoonright\gamma(\alpha)+1}$ over $V[G\upharpoonright\xi]$
\end{claim}
\begin{proof}[Proof of claim]
Let us argue this by induction on $\alpha<|\vec{\gamma}|$. Assume that for each $\beta<\alpha$, $\vec{\gamma}\upharpoonright\beta$ is a Magidor sequence for  $\mathbb{M}_{\mathcal{U}_\xi\upharpoonright(\vec{\gamma}\upharpoonright \beta+1)}$ over $V[G\upharpoonright\xi]$. In order to check that $\vec{\gamma}\upharpoonright\alpha$ witnesses the inductive step we need to verify that (1) and (2) of Theorem \ref{MitchellCriterion} hold. Clearly our induction assumption yields (1), hence we are left with verifying (2). Fix $X\in\mathcal{P}(\kappa)^{V[G\upharpoonright\xi]}$. First let us assume that $X\in\mathcal{F}_{\mathcal{U}_\xi}(\kappa)^{V[G\upharpoonright\xi]}$. Since $\mathcal{F}_{\mathcal{U}_\xi}(\kappa)^{V[G\upharpoonright\xi]}\s\mathcal{F}_{\mathcal{U}_{\lambda^+}}(\kappa)^{V[G]}$ and $\vec{\gamma}$ is a Magidor sequence for $\mathbb{M}_{\lambda^+}$, $\vec{\gamma}(\sigma)\in X$,  for a tail end of $\sigma<|\vec{\gamma}|$. Conversely, assume that for a tail end of $\sigma<|\vec{\gamma}|$, $\vec{\gamma}(\sigma)\in X$. As before, $X\in\mathcal{F}_{\mathcal{U}_{\lambda^+}}(\kappa)^{V[G]}$. However, notice that $\mathcal{F}_{\mathcal{U}_\xi}(\kappa)^{V[G\upharpoonright\xi]}=\mathcal{F}_{\mathcal{U}_{\lambda^+}}(\kappa)^{V[G]}\cap \mathcal{P}(\kappa)^{V[G\upharpoonright\xi]}$, hence $X\in \mathcal{F}_{\mathcal{U}_\xi}(\kappa)^{V[G\upharpoonright\xi]}$, as wanted
\end{proof}
The verification of Theorem \ref{MitchellCriterion} (2) for $\vec{\gamma}$ is essentially contained in the proof of the above claim. It thus follows that $\vec{\gamma}$ is as desired.

\end{proof}

Fix $\xi_0\in\mathcal{A}\setminus \lambda+1$ and $\pi:\xi_0\rightarrow \mathrm{Even}(\lambda)$ be a bijection.\footnote{For an ordinal ${\alpha}$, $\mathrm{Even}(\alpha)$ stands for the set of all even and limit ordinals ${\leq} \alpha$. } Hereafter, $\xi_0$ will be fixed. The particular choice of this ordinal is not relevant, we could just have taken any other in $\mathcal{A}\setminus \lambda+1$. Evidently, $\pi$ entails an $\in$-isomorphism between the generic extensions $V^{\mathbb{A}_{\xi_0}}$ and $V^{\mathbb{A}_{\mathrm{Even}(\lambda)}}$. Thus, defining $\dot{\mathcal{U}}^\pi_{\xi_0}:= \pi(\dot{\mathcal{U}}_{\xi_0})$, $(\dot{\mathcal{U}}^\pi_{\xi_0})_{\pi[G\upharpoonright\xi_0]}=(\dot{\mathcal{U}}_{\xi_0})_{G\upharpoonright\xi_0}=\mathcal{U}_{\xi_0}$.  Say that $\mathcal{U}^\pi_{\xi_0}(\alpha,\beta)$
are the measures of this sequence. For the enlighten of the notation, let  $H$ be the $\mathbb{A}_{\mathrm{Even}(\lambda)}$-generic filter generated by $\pi[G\upharpoonright\xi_0]$.
\begin{lemma}\label{ReflectionMeasuresD}
There exists an  unbounded set of ordinals $\mathcal{B}\subseteq \lambda$, closed under taking limits of ${\geq}\kappa^+$-se\-quences, such that, for every  $\gamma\in \mathcal{B}$ and every $\mathbb{A}_{\mathrm{Even}(\lambda)}$-generic filter $H$,
$\mathcal{U}^\pi_{\gamma}:=\langle\dot{\mathcal{U}}^\pi_{\xi_0}(\alpha,\beta)_{H}\cap V[H\upharpoonright\mathrm{Even}(\gamma)]\mid \alpha\leq\kappa,\, \beta<o^{\dot{\mathcal{U}}}(\alpha)\rangle$
 is a coherent sequence of measures in $V[H\upharpoonright\mathrm{Even}(\gamma)]$.
\end{lemma}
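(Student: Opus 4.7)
The plan is to repeat, almost verbatim, the argument of Lemma \ref{LemmaD+}, but working with the isomorphic forcing $\mathbb{A}_{\mathrm{Even}(\lambda)}$ and with restrictions of the non-standard form $H \upharpoonright \mathrm{Even}(\gamma)$, $\gamma<\lambda$. A direct appeal to Lemma \ref{LemmaD+} does not suffice, since $\pi^{-1}[\mathrm{Even}(\gamma)] \subseteq \xi_0$ need not be an initial segment, and hence $H \upharpoonright \mathrm{Even}(\gamma)$ does not correspond under $\pi$ to any restriction $G \upharpoonright \xi'$ for an ordinal $\xi' \in \mathcal{A}$.

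First, for each $(\alpha,\beta) \in \dom(\mathcal{U})$, I would adapt the reflection argument underlying \cite[Lemma 3.3]{FriHon} to produce an unbounded $\mathcal{B}_{(\alpha,\beta)} \subseteq \lambda$, closed under limits of $\geq \kappa^+$-sequences, with the property that for every $\gamma \in \mathcal{B}_{(\alpha,\beta)}$ and every $\mathbb{A}_{\mathrm{Even}(\lambda)}$-generic $H$, the set
\[
\dot{\mathcal{U}}^\pi_{\xi_0}(\alpha,\beta)_H \cap V[H \upharpoonright \mathrm{Even}(\gamma)]
\]
is a normal measure on $\alpha$ in $V[H \upharpoonright \mathrm{Even}(\gamma)]$. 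The idea is that $\mathbb{A}_{\mathrm{Even}(\lambda)}$ is $\kappa^+$-cc, so each name for a subset of $\alpha<\kappa$ depends on at most $\kappa$ coordinates of $\mathrm{Even}(\lambda)$; choosing $\gamma$ above a canonical $\kappa$-sized list of coordinates witnessing both ultrafilterness and normality of $\dot{\mathcal{U}}^\pi_{\xi_0}(\alpha,\beta)$ yields the desired club. Then set $\mathcal{B} := \bigcap_{(\alpha,\beta) \in \dom(\mathcal{U})} \mathcal{B}_{(\alpha,\beta)}$. Since $|\dom(\mathcal{U})| \leq \kappa$ and $\cof(\lambda) = \lambda > \kappa^+$ (as $\lambda$ is weakly compact), $\mathcal{B}$ is unbounded in $\lambda$ and still closed under $\geq \kappa^+$-sequence limits.

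Finally, I would verify clause (2) of Definition \ref{CoherentSequence} for $\mathcal{U}^\pi_\gamma$, $\gamma \in \mathcal{B}$, by copying the two inner claims from Lemma \ref{LemmaD+}. Letting $i^{\alpha,\gamma}_\beta$ denote the ultrapower in $V[H\upharpoonright\mathrm{Even}(\gamma)]$ by $\mathcal{U}^\pi_\gamma(\alpha,\beta)$, one shows $i^{\alpha,\gamma}_\beta(\mathcal{U}^\pi_\gamma)(\varrho,\nu) = \mathcal{U}^\pi_\gamma(\varrho,\nu)$ for $(\varrho,\nu) \in \dom(\mathcal{U}^\pi_\gamma\upharpoonright(\alpha,\beta))$, splitting into the cases $\varrho<\alpha$ (trivial, since the relevant objects are fixed by $i^{\alpha,\gamma}_\beta$) and $\varrho=\alpha$ (using that any function $f$ representing an element of the ultrapower lies in $V[H\upharpoonright\mathrm{Even}(\gamma)]$, so its $\mathcal{U}^\pi_\gamma(\alpha,\beta)$-class coincides with its $\dot{\mathcal{U}}^\pi_{\xi_0}(\alpha,\beta)_H$-class). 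The corresponding domain equality follows by exhibiting the ground model set $Y = \{\varrho<\alpha : o^{\mathcal{U}}(\varrho)=\beta\}$, which lies in $V \subseteq V[H\upharpoonright\mathrm{Even}(\gamma)]$ and witnesses $o^{i^{\alpha,\gamma}_\beta(\mathcal{U}^\pi_\gamma)}(\alpha)=\beta$.

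The main obstacle is the first step: the reflection must operate on restrictions indexed by the \emph{non-initial} set $\mathrm{Even}(\gamma) \subseteq \mathrm{Even}(\lambda)$ rather than on proper initial segments of a product forcing. This demands careful bookkeeping of the supports of $\mathbb{A}_{\mathrm{Even}(\lambda)}$-names for measure-one subsets and for functions witnessing normality, leveraging the $\kappa^+$-cc and $\kappa$-closure of $\mathbb{A}_{\mathrm{Even}(\lambda)}$. Once this has been arranged, the coherence verification transcribes mechanically from Lemma \ref{LemmaD+}.
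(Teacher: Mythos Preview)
Your proposal is correct and follows exactly the approach the paper indicates (the paper's proof reads, in its entirety, ``Similar to the proof of Lemma~\ref{LemmaD+}''). Note only that the obstacle you flag is milder than you suggest: $\mathrm{Even}(\gamma)$ \emph{is} an initial segment of the index set $\mathrm{Even}(\lambda)$ of $\mathbb{A}_{\mathrm{Even}(\lambda)}$, so after the order-isomorphism $\mathrm{Even}(\lambda)\cong\lambda$ the situation is formally identical to that of Lemma~\ref{LemmaD+} with $\lambda$ in place of $\lambda^+$, and no special bookkeeping for ``non-initial'' restrictions is needed.
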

\begin{proof}
Similar to the proof of  Lemma \ref{LemmaD+}.
\end{proof}
\begin{notation}\label{NotationUpi}
\rm{
For each $\gamma\in\mathcal{B}$, let  $\mathcal{U}^\pi_\gamma$ denote the coherent sequence of measure witnessing Lemma \ref{ReflectionMeasuresD}. By convention, $\mathcal{U}^\pi_\lambda:=\mathcal{U}_{\xi_0}$.
For each $\gamma\in\mathcal{B}\cup \{\lambda\}$,  let $\dot{\mathbb{M}}^\pi_\gamma$ be a $\mathbb{A}_{\mathrm{Even}(\gamma)}$-name for the Magidor forcing $\mathbb{M}_{\mathcal{U}^\pi_\gamma}$ in the generic extension $V[H\upharpoonright\mathrm{Even}(\gamma)]$. 
}
\end{notation}
\begin{lemma}\label{ProjectionsCohenPartMagidor}
Let $\hat{\mathcal{A}}=(\mathcal{A}\cap (\xi_0,\lambda^+))\cup\{\lambda^+\}$.
\begin{enumerate}
\item For every $\xi,\tilde{\xi}\in\hat{\mathcal{A}}$ with $\xi<\tilde{\xi}$, there is a projection
$$
\sigma^{\tilde{\xi}}_\xi: \mathbb{A}_{\tilde{\xi}}\ast \dot{\mathbb{M}}_{\tilde{\xi}}\rightarrow \mathrm{RO}^+(\mathbb{A}_{\xi}\ast \dot{\mathbb{M}}_\xi).
$$
\item For every $\xi\in\hat{\mathcal{A}}$ and $\gamma\in\mathcal{B}$, there is a projection
$$
\sigma^{\xi}_\gamma: \mathbb{A}_\xi\ast \dot{\mathbb{M}}_\xi\rightarrow \mathrm{RO}^+(\mathbb{A}_{\rm{Even}(\gamma)}\ast \dot{\mathbb{M}}^\pi_\gamma).
$$
\item For every $\xi\in\hat{\mathcal{A}}$ and $\gamma\in\mathcal{B}$, let $\hat{\sigma}^\xi_\gamma$ be the extension of $\sigma^\xi_\gamma$ to the Boolean completion of $\mathbb{A}_{\xi}\ast \dot{\mathbb{M}}_\xi$
$$
\hat{\sigma}^\xi_\gamma: \mathrm{RO}^+(\mathbb{A}_\xi\ast \dot{\mathbb{M}}_\xi)\rightarrow \mathrm{RO}^+(\mathbb{A}_{ \mathrm{Even}(\gamma)}\ast \dot{\mathbb{M}}^\pi_\gamma).
$$
Then the projections commute with $\sigma^{\lambda^{+}}_\gamma$:
$$
\sigma^{\lambda^{+}}_\gamma=\hat{\sigma}^{\xi}_\gamma\circ \sigma^{\lambda^{+}}_\xi.$$
\end{enumerate}
\end{lemma}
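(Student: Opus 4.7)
The plan is to construct the three projections concretely from the ingredients already in place. For the Cohen part, the relevant projections are standard restrictions of supports, and the delicate work lies in showing that the corresponding Magidor forcings also project, which I will do by verifying Mitchell's criterion (Theorem~\ref{MitchellCriterion}) for the image of a Magidor sequence under the natural restriction maps, following the pattern established in Proposition~\ref{ProjectionGenericsinMagidor}.

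For clause~(1), fix $\xi<\tilde{\xi}$ in $\hat{\mathcal{A}}$. The restriction map on supports yields a projection $\mathbb{A}_{\tilde{\xi}}\twoheadrightarrow \mathbb{A}_{\xi}$; this lifts to the two step iteration provided that any $\mathbb{M}_{\tilde{\xi}}$-generic over $V[G\upharpoonright\tilde{\xi}]$ induces an $\mathbb{M}_{\xi}$-generic over $V[G\upharpoonright\xi]$. As in Proposition~\ref{ProjectionGenericsinMagidor}, it suffices to verify that a Magidor sequence $\vec{\gamma}$ witnessing genericity for $\mathbb{M}_{\tilde{\xi}}$ over $V[G\upharpoonright\tilde{\xi}]$ also satisfies the two clauses of Theorem~\ref{MitchellCriterion} for $\mathbb{M}_{\xi}$ over $V[G\upharpoonright\xi]$. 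Clause~(1) of the criterion proceeds by induction on initial segments exactly as in Proposition~\ref{ProjectionGenericsinMagidor}. Clause~(2) uses that, by Lemma~\ref{LemmaD+} applied at both $\xi$ and $\tilde{\xi}$, we have $\mathcal{F}_{\mathcal{U}_{\xi}}(\kappa)^{V[G\upharpoonright\xi]}=\mathcal{F}_{\mathcal{U}_{\tilde{\xi}}}(\kappa)^{V[G\upharpoonright\tilde{\xi}]}\cap\mathcal{P}(\kappa)^{V[G\upharpoonright\xi]}$, so membership of a tail of $\vec{\gamma}$ in a set $X\in\mathcal{P}(\kappa)^{V[G\upharpoonright\xi]}$ is equivalent to $X\in\mathcal{F}_{\mathcal{U}_\xi}(\kappa)^{V[G\upharpoonright\xi]}$.

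For clause~(2), let $\xi\in\hat{\mathcal{A}}$ and $\gamma\in\mathcal{B}$. Compose the projection $\mathbb{A}_\xi\twoheadrightarrow \mathbb{A}_{\xi_0}$ (restriction of supports) with the isomorphism $\pi\colon\mathbb{A}_{\xi_0}\cong \mathbb{A}_{\mathrm{Even}(\lambda)}$ and with the further restriction $\mathbb{A}_{\mathrm{Even}(\lambda)}\twoheadrightarrow \mathbb{A}_{\mathrm{Even}(\gamma)}$. This yields the Cohen part of $\sigma^\xi_\gamma$. To lift to the iteration, one checks that a Magidor sequence $\vec{\gamma}$ for $\mathbb{M}_\xi$ over $V[G\upharpoonright\xi]$ is, via the identification induced by $\pi$, a Magidor sequence for $\mathbb{M}^\pi_\gamma$ over $V[H\upharpoonright\mathrm{Even}(\gamma)]$. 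Clause~(1) of clause~(1) above together with the $\pi$-isomorphism gives that $\vec{\gamma}$ is Magidor for $\mathbb{M}^\pi_\lambda$ over $V[H]$, and then an argument parallel to the previous paragraph transfers this to $V[H\upharpoonright\mathrm{Even}(\gamma)]$ using Lemma~\ref{ReflectionMeasuresD} to identify $\mathcal{F}_{\mathcal{U}^\pi_\gamma}(\kappa)^{V[H\upharpoonright\mathrm{Even}(\gamma)]}$ with $\mathcal{F}_{\mathcal{U}^\pi_\lambda}(\kappa)^{V[H]}\cap \mathcal{P}(\kappa)^{V[H\upharpoonright\mathrm{Even}(\gamma)]}$.

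For clause~(3), the equality $\sigma^{\lambda^+}_\gamma=\hat{\sigma}^\xi_\gamma\circ \sigma^{\lambda^+}_\xi$ is essentially bookkeeping once the projections have been defined as above, since each step is given by a restriction of supports (or the fixed bijection $\pi$), and these operations commute: restricting the $\lambda^+$-support of a Cohen condition first to $\xi$ and then to $\xi_0$ yields the same outcome as restricting directly to $\xi_0$, and analogously on the Magidor side any Magidor sequence for $\mathbb{M}_{\lambda^+}$ over $V[G]$ generates the same induced Magidor sequence on $\mathbb{M}^\pi_\gamma$ over $V[H\upharpoonright\mathrm{Even}(\gamma)]$ regardless of whether one passes through $V[G\upharpoonright \xi]$ en route. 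The main obstacle I anticipate is the verification required in clause~(2): namely, checking that the transfer of a Magidor generic through the composition ``restrict, apply~$\pi$, restrict'' still satisfies Mitchell's criterion in the target model, since the intermediate model $V[H\upharpoonright\mathrm{Even}(\gamma)]$ is unrelated to the sequence $V[G\upharpoonright \eta]$ ($\eta\in\mathcal{A}$) and one has to argue genericity of the image sequence from scratch; everything else reduces to careful application of Lemma~\ref{LemmaD+}, Lemma~\ref{ReflectionMeasuresD} and Theorem~\ref{MitchellCriterion}.
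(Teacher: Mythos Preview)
Your proposal is correct and follows essentially the same approach as the paper: both argue clauses~(1) and~(2) by showing that a Magidor sequence for the larger forcing remains a Magidor sequence for the smaller one via Mitchell's criterion (Theorem~\ref{MitchellCriterion}), exactly as in Proposition~\ref{ProjectionGenericsinMagidor}, and both treat clause~(3) as routine commutativity (the paper simply cites \cite[Lemma~3.8]{FriHon}). You have in fact spelled out more detail than the paper, which merely invokes Proposition~\ref{ProjectionGenericsinMagidor} for~(1), declares~(2) analogous, and defers~(3) entirely to the reference.
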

\begin{proof}
The argument for (3) is the same as in \cite[Lemma 3.8]{FriHon}.  Also,
(1) and (2) follow in the same fashion, so let us give details only for (1). Let $G\s \mathbb{A}_{\tilde{\xi}}$ generic over $V$ and $\vec{\gamma}$ be a Magidor sequence for $\mathbb{M}_{\tilde{\xi}}$ over $V[G]$. Appealing to Proposition \ref{ProjectionGenericsinMagidor} it is clear that $G\upharpoonright\xi \ast \dot{G}(\vec{\gamma})$ is $\mathbb{A}_\xi\ast \dot{\mathbb{M}}_\xi$-generic over $V$. Then any generic filter for $\mathbb{A}_{\tilde{\xi}}\ast \dot{\mathbb{M}}_{\tilde{\xi}}$ induces a generic filter for $\mathbb{A}_{\xi}\ast \dot{\mathbb{M}}_\xi$ and thus a generic filter for the Boolean completion $\mathrm{RO}^+(\mathbb{A}_{\xi}\ast \dot{\mathbb{M}}_\xi)$.
\end{proof}

\begin{defi}[Main forcing]\label{MainForcingRGolshani}
A condition in $\mathbb{R}$ is a triple $(p,\dot{q},r)$ for which all the following hold:
\begin{enumerate}
\item $(p,\dot{q})\in\mathbb{A}_{\lambda^+}\ast\dot{\mathbb{M}}_{\lambda^+}$;
\item $r$ is a partial function with $\dom(r)\in [\mathcal{B}]^{< \kappa^+}$;
\item For every $\gamma\in\dom(r)$, $r(\gamma)$ is a $\mathbb{A}_{\rm{Even}(\gamma)}\ast \dot{\mathbb{M}}^\pi_\gamma$-name such that
$$\one_{\mathbb{A}_{\rm{Even}(\gamma)}\ast \dot{\mathbb{M}}^\pi_\gamma}\Vdash_{\mathbb{A}_{\rm{Even}(\gamma)}\ast \dot{\mathbb{M}}^\pi_\gamma}\text{$``r(\gamma)\in \dot{\Add}(\kappa^+, 1)$''}.$$
\end{enumerate}
For conditions $(p_0,\dot{q}_0,r_0), (p_1,\dot{q}_1, r_1)$ in $\mathbb{R}$ we will write $(p_0,\dot{q}_0,r_0)\leq_\mathbb{R} (p_1,\dot{q}_1, r_1)$ iff $(p_0,\dot{q}_0)\leq_{\mathbb{A}_{\lambda^+}\ast \dot{\mathbb{M}}_{\lambda^+}}(p_1,\dot{q}_1)$, $\dom(r_1)\subseteq \dom(r_0)$ and for each $\gamma\in\dom(r_1)$,
$\sigma^{\lambda^{+}}_\gamma(p_0,q_0)\Vdash_{\mathbb{A}_{\rm{Even}(\gamma)}\ast \dot{\mathbb{M}}^\pi_\gamma}\text{$``r_0(\gamma)\leq r_1(\gamma)$''}.$
\end{defi}
\begin{defi}
$\mathbb{U}$ will denote the \emph{termspace forcing}. That is, the pair $(U,\leq)$ where $U:=\{(\one,\dot{\one},r)\mid (\one,\dot{\one},r)\in\mathbb{R}\}$ and $\leq$ is the  order inherited from $\mathbb{R}$. Set $\bar{\mathbb{R}}:=(\mathbb{A}_{\lambda^+}\ast\dot{\mathbb{M}}_{\lambda^+})\times \mathbb{U}$.
\end{defi}
\begin{prop}$ $\label{projectionUMagidor}
\begin{enumerate}
\item $\mathbb{U}$ is $\kappa^+$-directed closed.
\item The function $\rho: \bar{\mathbb{R}}\rightarrow \mathbb{R}$ given by $\langle(p,\dot{q}), (\one,\dot{\one},r)\rangle\mapsto (p,\dot{q},r)$ entails a projection. In particular,
$V^{\mathbb{A}_{\lambda^+}\ast \dot{\mathbb{M}}_{\lambda^+}}\subseteq V^\mathbb{R}\subseteq V^{\bar{\mathbb{R}}}.$
\item $V^{\mathbb{A}_{\lambda^+}\ast \mathbb{M}_{\lambda^+}}$ and $V^\mathbb{R}$ have the same ${<}\kappa^+$-sequences.
\end{enumerate}
\end{prop}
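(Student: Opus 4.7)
The plan is to tackle each of the three clauses separately, exploiting the fact that conditions in $\mathbb{U}$ have trivial first two coordinates, so that the $\mathbb{U}$-order reduces to $\one$-forced dominance of the name-valued third coordinate.

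For (1), I would take a directed system $\{(\one,\dot{\one},r_i)\mid i<\mu\}\s\mathbb{U}$ with $\mu<\kappa^+$, and build a common lower bound. Setting $D:=\bigcup_{i<\mu}\dom(r_i)$, since $\mu\leq\kappa$ and each $|\dom(r_i)|<\kappa^+$, we have $|D|\leq\kappa$, so $D\in [\mathcal{B}]^{<\kappa^+}$. For each $\gamma\in D$, directedness of the system forces $\{r_i(\gamma)\mid \gamma\in\dom(r_i)\}$ to be a ${<}\kappa^+$-directed subset of $\dot{\Add}(\kappa^+,1)$ in the $\mathbb{A}_{\mathrm{Even}(\gamma)}\ast \dot{\mathbb{M}}^\pi_\gamma$-generic extension. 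Since Cohen forcing at $\kappa^+$ is $\kappa^+$-directed closed in any generic extension, one can pick a name $r(\gamma)$ for a common lower bound. The resulting $r:=\{\langle \gamma, r(\gamma)\rangle\mid\gamma\in D\}$ gives a condition $(\one,\dot{\one},r)\in\mathbb{U}$ lying below every $(\one,\dot{\one},r_i)$, which establishes $\kappa^+$-directed closedness.

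For (2), the order-preservation of $\rho$ and the identity $\rho(\one_{\bar{\mathbb{R}}})=\one_\mathbb{R}$ are immediate. The crucial step is to check the refinement axiom: given $((p_0,\dot{q}_0),(\one,\dot{\one},r_0))\in\bar{\mathbb{R}}$ and $(p,\dot{q},r')\leq_\mathbb{R}(p_0,\dot{q}_0,r_0)$, I would set $(p',\dot{q}'):=(p,\dot{q})$ and define $r''$ with $\dom(r''):=\dom(r')$ by Boolean mixing in $\mathrm{RO}^+(\mathbb{A}_{\mathrm{Even}(\gamma)}\ast \dot{\mathbb{M}}^\pi_\gamma)$: below $\sigma^{\lambda^+}_\gamma(p,\dot{q})$ let $r''(\gamma):=r'(\gamma)$; on the complementary Boolean piece let $r''(\gamma):=r_0(\gamma)$ if $\gamma\in\dom(r_0)$, and $r''(\gamma):=\one_{\dot{\Add}(\kappa^+,1)}$ otherwise. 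The hypothesis $(p,\dot{q},r')\leq_\mathbb{R}(p_0,\dot{q}_0,r_0)$ guarantees $\sigma^{\lambda^+}_\gamma(p,\dot{q})\Vdash r'(\gamma)\leq r_0(\gamma)$ for each $\gamma\in\dom(r_0)$, from which one reads off $((p,\dot{q}),(\one,\dot{\one},r''))\leq_{\bar{\mathbb{R}}}((p_0,\dot{q}_0),(\one,\dot{\one},r_0))$ and $\rho((p,\dot{q}),(\one,\dot{\one},r''))\leq_\mathbb{R}(p,\dot{q},r')$. The inclusion $V^\mathbb{R}\subseteq V^{\bar{\mathbb{R}}}$ is a direct consequence of this projection, while $V^{\mathbb{A}_{\lambda^+}\ast\dot{\mathbb{M}}_{\lambda^+}}\subseteq V^\mathbb{R}$ follows from the obvious projection $\mathbb{R}\to \mathbb{A}_{\lambda^+}\ast\dot{\mathbb{M}}_{\lambda^+}$, $(p,\dot{q},r)\mapsto (p,\dot{q})$, which is easily checked to be a projection by taking $r$ unchanged on extensions of $(p,\dot{q})$.

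For (3), I would invoke Easton's Lemma on the product $\bar{\mathbb{R}}$. By (1), $\mathbb{U}$ is $\kappa^+$-closed. On the other hand, $\mathbb{A}_{\lambda^+}=\Add(\kappa,\lambda^+)$ is $\kappa^+$-cc (using $\kappa^{<\kappa}=\kappa$, which follows from the strong inaccessibility of $\kappa$ together with $\gch_{\geq\kappa}$), and $\dot{\mathbb{M}}_{\lambda^+}$ is forced to be $\kappa^+$-Knaster by Proposition \ref{MagidorIsKnaster}; hence $\mathbb{A}_{\lambda^+}\ast\dot{\mathbb{M}}_{\lambda^+}$ has the $\kappa^+$-cc. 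Easton's Lemma then yields that in $V^{\mathbb{A}_{\lambda^+}\ast\dot{\mathbb{M}}_{\lambda^+}}$, the poset $\mathbb{U}$ is $\kappa^+$-distributive, so $V^{\bar{\mathbb{R}}}$ and $V^{\mathbb{A}_{\lambda^+}\ast\dot{\mathbb{M}}_{\lambda^+}}$ share the same ${<}\kappa^+$-sequences. Sandwiching $V^\mathbb{R}$ between these two extensions via the chain of inclusions from (2) gives the desired conclusion. The main obstacle will be the Boolean mixing step in (2): one must be careful that the mixed name $r''(\gamma)$ is indeed forced by $\one$ to belong to $\dot{\Add}(\kappa^+,1)$ and satisfies both order relations simultaneously, for which the commutativity property in Lemma \ref{ProjectionsCohenPartMagidor}(3) and separativity of Cohen forcing come to the rescue.
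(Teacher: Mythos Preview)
Your proposal is correct and follows essentially the same approach as the paper: the treatment of (1) and (2) is virtually identical (the paper's mixing argument for the projection is exactly your Boolean-mixing construction of $r''$), and for (3) both arguments reduce to Easton's Lemma applied to the product $(\mathbb{A}_{\lambda^+}\ast\dot{\mathbb{M}}_{\lambda^+})\times\mathbb{U}$ followed by sandwiching. The only cosmetic difference is that the paper, rather than invoking the general fact that a two-step iteration of $\kappa^+$-cc forcings is $\kappa^+$-cc, passes to the dense subposet of $\mathbb{A}_{\lambda^+}\ast\dot{\mathbb{M}}_{\lambda^+}$ consisting of conditions whose Magidor stem is checked, and verifies $\kappa^+$-Knasterness of that subposet directly; your shortcut via the iteration theorem is equally valid for the purpose of applying Easton's Lemma.
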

\begin{proof}
(1) Let $\langle (\one,\dot{\one},r_\alpha)\mid \alpha<\kappa\rangle$ be a $\leq_{\mathbb{R}}$-decreasing sequence of conditions in $\mathbb{U}$. Set $\dom(r^*):=\bigcup_{\alpha<\kappa}\dom(r_\alpha)$ and observe that $\dom(r^*)\in[\mathcal{B}]^{<\kappa^+}$. For each $\gamma\in\dom(r^*)$, let $\alpha_\gamma:=\min\{\alpha<\kappa\mid \gamma\in \dom(r_\alpha)\}$. Clearly, $\gamma\in\dom(r_\alpha)$, for each $\alpha\geq \alpha_\gamma$. Since our sequence is $\leq_{\mathbb{R}}$-decreasing this yields
$$\one_{\mathbb{A}_{\rm{Even}(\gamma)}\ast \dot{\mathbb{M}}^\pi_\gamma}\Vdash_{\mathbb{A}_{\rm{Even}(\gamma)}\ast \dot{\mathbb{M}}^\pi_\gamma}\text{$``\langle r_\alpha(\gamma)\mid  \alpha_\gamma\leq \alpha<\kappa\rangle$ is $\leq_
{\dot{\Add}(\kappa^+, 1)}$-decreasing''}.$$
Let $r^*(\gamma)$ be a $\mathbb{A}_{\rm{Even}(\gamma)}\ast \dot{\mathbb{M}}^\pi_\gamma$-name for a condition forced by $\one_{\mathbb{A}_{\rm{Even}(\gamma)}\ast \dot{\mathbb{M}}^\pi_\gamma}$ to be a lower bound for the above sequence. It is clear that $(\one,\dot{\one},r^*)$ provides the desired $\leq_{\mathbb{R}}$-lower bound.

(2) Clearly, $\rho$ is order preserving and $\rho(\one_{\bar{\mathbb{R}}})=\one_{\mathbb{R}}$. Let $(p_1,\dot{q}_1,r_1)\leq_{\mathbb{R}} \rho(\langle (p_2,\dot{q}_2), (\one,\dot{\one},r_2)\rangle)$. Define $r_3$ with $\dom(r_3)=\dom(r_1)$ such that, for each $\gamma\in\dom(r_3)$,
$\langle \sigma, b\rangle \in r_3(\gamma)$ iff the following hold: if $b\leq \sigma^{\lambda^+}_\gamma(p_1,\dot{q}_1)$ then $b\Vdash_{\mathbb{A}_{\rm{Even}(\gamma)\ast\dot{\mathbb{M}}^\pi_\gamma}} \sigma\in r_1(\gamma)$ and, otherwise, if $b\perp \sigma^{\lambda^+}_\gamma(p_1,\dot{q}_1)$, $b\Vdash_{\mathbb{A}_{\rm{Even}(\gamma)\ast\dot{\mathbb{M}}^\pi_\gamma}} \sigma\in r_2(\gamma)$. It is not hard to check that $ \langle (p_1,\dot{q}_1), (\one,\dot{\one},r_3)\rangle$ is $\leq_{\bar{\mathbb{R}}}$-below the condition $\langle (p_2,\dot{q}_2), (\one,\dot{\one},r_2)\rangle$ and $(p_1,\dot{q}_1,r_3)\leq_{\mathbb{R}}(p_1,\dot{q}_1,r_1)$. This shows that $\rho$ defines a projection and thus that $V^{\mathbb{R}}\s V^{\bar{\mathbb{R}}}$. The remaining inclusion follows from the trivial fact that $\mathbb{R}$ projects onto $\mathbb{A}_{\lambda^+}\ast\dot{\mathbb{M}}_{\lambda^+}$.

(3) Before proving the result we need to begin with an easy observation. Let $(p,\dot{q})\in \mathbb{A}_{\lambda^+}\ast \dot{\mathbb{M}}_{\lambda^+}$ and say that
$$p\Vdash_{\mathbb{A}_{\lambda^+}}\dot{q}=\langle \langle\tau_0,\dot{A}_0\rangle, \dots, \langle\tau_{n-1}, \dot{A}_{n-1}\rangle,\langle \check{\kappa},\dot{A}_n\rangle\rangle,$$
for $\tau_i, \dot{A}_i$ being $\mathbb{A}_{\lambda^+}$-names. Clearly, one may extend $p$ to a condition $p^*$ ensuring that, for each $i<n$, $p^*\Vdash_{\mathbb{A}_{\lambda^+}}\tau_i=\check{\alpha}_i$, for some $\alpha_i<\kappa$. In other words, the set of conditions of the form
$$\langle p, \langle \langle\check{\beta}_0,\dot{A}_0\rangle, \dots, \langle\check{\beta}_{n-1}, \dot{A}_{n-1}\rangle,\langle \check{\kappa},\dot{A}_n\rangle\rangle,$$
endowed with the induced order, forms a dense subposet of $\mathbb{A}_{\lambda^+}\ast \dot{\mathbb{M}}_{\lambda^+}$. Call this forcing $\mathbb{Q}$. Notice that $\mathbb{Q}$ and $\mathbb{A}_{\lambda^+}\ast \dot{\mathbb{M}}_{\lambda^+}$ are forcing equivalent, hence $V^{\mathbb{Q}}=V^{\mathbb{A}_{\lambda^+}\ast\dot{\mathbb{M}}_{\lambda^+}}$. By combining Proposition \ref{MagidorIsKnaster} with the $\kappa^+$-Knasterness of $\mathbb{A}_{\lambda^+}$ it is easy to show that $\mathbb{Q}$ is $\kappa^+$-Knaster. By Easton's Lemma (see e.g. \cite[Lemma 4.4.]{GolMoh}), $\one_{\mathbb{Q}}\Vdash_{\mathbb{Q}}\text{$``\mathbb{U}$ is $\kappa^+$-distributive''}$, hence $V^\mathbb{Q}$ and $V^{\mathbb{Q}\times\mathbb{U}}$ have the same ${<}\kappa^+$-sequences, thus $V^\mathbb{Q}$ and $V^{\bar{\mathbb{R}}}$ also. This latter assertion yields the desired result.
\end{proof}
Let $\bar{R}\subseteq \bar{\mathbb{R}}$ a generic filter whose projection onto $\mathbb{A}_{\lambda^+}$ generates the generic filter $G$. Also, let $R\subseteq \mathbb{R}$ be the generic filter generated by $\rho[\bar{R}]$ and $S\subseteq\mathbb{M}_{\lambda^+}$ be the generic filter over $V[G]$ induced by $\bar{R}$. We next prove some important properties of the forcing $\mathbb{R}$.
\begin{prop}[Some properties of $\mathbb{R}$]$ $\label{PropertiesOfV[R]Magidor}
\begin{enumerate}
\item $\mathbb{R}$ is $\lambda$-Knaster. In particular, all $V$-cardinals ${\geq}\lambda$ are preserved.
\item $\mathbb{R}$   preserves all the cardinals outside $((\kappa^+)^V,\lambda)$, while collap\-ses  the cardinals there to $(\kappa^+)^V$. In particular, $$V[R]\models \text{$``(\kappa^+)^V=\kappa^+\,\wedge\,\lambda=\kappa^{++}$''}.$$
\item $V[R]\models\text{$``2^\kappa= \lambda^{+}=\kappa^{+3}$''}$.
\item $V[R]\models \text{$``\kappa$ is strong limit with $\cof(\kappa)=\delta$''}$.
\end{enumerate}
\end{prop}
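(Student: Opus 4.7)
The plan is to establish the four assertions in succession, using the projection $\rho\colon \bar{\mathbb{R}}\to \mathbb{R}$ of Proposition \ref{projectionUMagidor}(2) to transfer chain-condition and closure features from the more transparent product $\bar{\mathbb{R}}=(\mathbb{A}_{\lambda^+}\ast \dot{\mathbb{M}}_{\lambda^+})\times \mathbb{U}$ down to $\mathbb{R}$ whenever possible, and to work directly with $\mathbb{R}$ only for the point where such a reduction fails. For (1), I would show that each factor of $\bar{\mathbb{R}}$ is $\lambda$-Knaster: $\mathbb{A}_{\lambda^+}$ by the standard $\Delta$-system argument using $|p|<\kappa$ and the inaccessibility of $\lambda$; $\dot{\mathbb{M}}_{\lambda^+}$ via Proposition \ref{MagidorIsKnaster}, which forces it to be $\kappa^+$-Knaster and hence \emph{a fortiori} $\lambda$-Knaster; and $\mathbb{U}$ by first $\Delta$-systeming the supports (of size ${<}\kappa^+$) to obtain a root $d^*$ with $|d^*|\leq \kappa$, and then using $\gch_{\geq \kappa}$ to bound, for each $\gamma\in d^*$, the number of $\mathbb{A}_{\mathrm{Even}(\gamma)}\ast\dot{\mathbb{M}}^\pi_\gamma$-names for elements of $\dot{\Add}(\kappa^+,1)$ by some cardinal ${<}\lambda$, followed by a pigeonhole at the inaccessible $\lambda$ to isolate a $\lambda$-sized pairwise-compatible subfamily. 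The three Knasternesses combine for $\bar{\mathbb{R}}$ and transfer to $\mathbb{R}$ via $\rho$, giving (1) and the preservation of $V$-cardinals ${\geq}\lambda$.

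For (2), cardinals ${\leq}\kappa^+$ are handled by Proposition \ref{projectionUMagidor}(3): it equates the ${<}\kappa^+$-sequences of $V[R]$ with those of $V^{\mathbb{A}_{\lambda^+}\ast\dot{\mathbb{M}}_{\lambda^+}}$, and the latter model preserves cardinals ${\leq}\kappa^+$ since $\mathbb{A}_{\lambda^+}$ is $\kappa^+$-cc and $\dot{\mathbb{M}}_{\lambda^+}$ is $\kappa^+$-Knaster. The principal obstacle is the collapse of cardinals in $((\kappa^+)^V,\lambda)$, which is \emph{not} visible through either factor of $\bar{\mathbb{R}}$ and thus forces one to work with $\mathbb{R}$ directly. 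Here I would follow the template developed by Mitchell and adapted by Cummings--Foreman and Friedman--Honzik: for each such $\mu$, a density argument in $\mathbb{R}$ shows that the generic produces a surjection $\kappa^+\twoheadrightarrow \mu$ in $V[R]$, exploiting that the $\Add(\kappa^+,1)$-name $r(\gamma)$ is allowed, through the dependence on $\sigma^{\lambda^+}_\gamma(p,\dot q)$ built into the order of Definition \ref{MainForcingRGolshani}, to vary with the Cohen--Magidor stem and thus to code arbitrary $\kappa^+$-long sequences of ordinals below $\mu$. This is the step where the tight coupling between the three components of $\mathbb{R}$ is genuinely used, and it is where I expect the technical core of the proof to lie.

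For (3), $2^\kappa\geq \lambda^+$ in $V[R]$ is immediate because $\mathbb{R}$ projects onto $\mathbb{A}_{\lambda^+}$, whose $\lambda^+$ generic Cohen subsets of $\kappa$ survive to $V[R]$; for the reverse inequality, a direct bookkeeping under $\gch_{\geq \kappa}$ yields $|\mathbb{R}|=\lambda^+$, and together with the $\lambda$-cc from (1) this bounds the number of nice $\mathbb{R}$-names for subsets of $\kappa$ by $((\lambda^+)^{<\lambda})^\kappa=\lambda^+$. Coupled with $\lambda=(\kappa^{++})^{V[R]}$ from (2), this yields $2^\kappa=\lambda^+=\kappa^{+3}$ in $V[R]$. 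For (4), strong limitness of $\kappa$ combines two observations: in $V[G][S]$ it holds because $\mathbb{A}_{\lambda^+}$ is $\kappa$-closed (so $V$ and $V[G]$ share bounded subsets of $\kappa$ and $\kappa$ remains measurable in $V[G]$) and because Proposition \ref{CombinatoricsBelowkappa} controls the subsets of each $\mu<\kappa$ added by Magidor; and Proposition \ref{projectionUMagidor}(3) then propagates this to $V[R]$ via $\mathcal{P}(\mu)^{V[R]}=\mathcal{P}(\mu)^{V[G][S]}$ for every $\mu<\kappa$. Finally $\cof(\kappa)^{V[R]}=\delta$ follows from the presence of the Magidor club of order type $\omega^\delta$ in $V[G][S]\subseteq V[R]$ together with the preservation of $\delta$ as a cardinal from (2).
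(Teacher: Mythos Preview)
Your proposal is correct and follows essentially the same route as the paper. The only organizational differences are that the paper argues (1) directly in $\mathbb{R}$ (same $\Delta$-system and name-counting steps) rather than transferring Knasterness from $\bar{\mathbb{R}}$ via $\rho$, and for the collapse in (2) it first projects $\mathbb{R}$ onto $\mathrm{RO}^+(\mathbb{A}_{\mathrm{Even}(\eta_\theta)}\ast\dot{\mathbb{M}}^\pi_{\eta_\theta})\ast\dot{\Add}(\kappa^+,1)$ with $\eta_\theta:=\min(\mathcal{B}\cap(\theta,\lambda))$ and runs the coding-of-Cohen-functions argument there, which is a bit more concrete than the density argument you sketch in $\mathbb{R}$.
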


\begin{proof}
$ $
\begin{enumerate}
\item Let $A\in[\mathbb{R}]^\lambda$. By extending if necessary the conditions of $A$ we may further assume that $A$ is of the form $\{(p_\alpha,\dot{q}_\alpha, r_\alpha)\mid \alpha<\lambda\}$, where
$$p_\alpha\Vdash_{\mathbb{A}_{\lambda^+}}\dot{q}_\alpha=\langle(\check{\beta}^\alpha_0,\dot{A}^\alpha_0),\dots, (\check{\beta}^\alpha_{m_\alpha-1},\dot{A}^\alpha_{m_\alpha-1}),(\check{\kappa}, \dot{A}^\alpha_{m_\alpha}) \rangle.\footnote{See the discussion carried out in the proof Proposition \ref{projectionUMagidor} (3).} $$
Since  $\mathbb{A}_{\lambda^+}$ is $\kappa^+$-Knaster by passing to a set $\mathcal{I}\in[\lambda]^\lambda$ we may assume that, for all $\alpha,\gamma\in \mathcal{I}$, $p_\alpha\parallel p_\gamma$, $m^*=m_\alpha=m_\gamma$ and $\beta^\alpha_i=\beta^\gamma_i$, for $i<m^*$. Observe that for all $\alpha,\gamma\in\mathcal{I}$, $(p_\alpha\cup p_\gamma, \dot{q}_\alpha\wedge \dot{q}_\gamma)$ witnesses compatibility of $(p_\alpha,\dot{q}_\alpha)$ and $(p_\gamma,\dot{q}_\gamma)$.

On the other hand, appealing to the $\Delta$-system lemma \cite[Ch. 3, Lemma 6.15]{Kun}, we may refine $\mathcal{I}$ to $\mathcal{J}\in[\mathcal{I}]^\lambda$ and find $\Delta\in[\mathcal{B}]^{<\kappa^+}$ and $r^*$ in such a way that $\{\dom(r_\alpha)\mid\alpha\in\mathcal{J}\}$ forms a $\Delta$-system  with $r_\alpha\upharpoonright \Delta=r^*$, for $\alpha\in\mathcal{J}$. Indeed, this is feasible because  the set of $\bigcup_{\gamma\in\Delta}(\mathbb{A}_{\mathrm{Even}(\gamma)}\ast\dot{\mathbb{M}}^\pi_\gamma)$-names has cardinality less than $\lambda$. Altogether this shows that $\{p_\alpha\in A\mid \alpha\in\mathcal{J}\}$ is a subset of $A$ of pairwise compatible conditions with cardinality $\lambda$.

\item The preservation of cardinals ${\geq}\lambda$ is a consequence of item (1), so we are left with discussing what occurs with $V$-cardinals ${<}\lambda$. Let us begin arguing that cardinals ${\leq}(\kappa^+)^V$ are preserved. To this aim observe that $\mathbb{A}_{\lambda^+}\ast\dot{\mathbb{M}}_{\lambda^+}$ preserves cardinals ${\leq}(\kappa^+)^V$, hence Proposition~\ref{projectionUMagidor}(3) implies that $\mathbb{R}$ preserves cardinals ${\leq}(\kappa^+)^V$.\footnote{Actually, $\mathbb{A}_{\lambda^+}\ast\dot{\mathbb{M}}_{\lambda^+}$ is cardinal-preserving.}

Let us now argue that $\mathbb{R}$ collapses all $V$-cardinals in $(\kappa^+,\lambda)$.\footnote{Observe that there is no confusion between $\kappa^+$ and $(\kappa^+)^V$.}\linebreak Assuming this was true it is clear that they have to be collapsed to $\kappa^+$. For a $V$-cardinal $\theta\in (\kappa^+,\lambda)$,  let  $\eta_\theta:=\min\mathcal{B}\cap (\theta,\lambda)$. Notice that $\mathbb{R}$ projects onto $\mathrm{RO}^+(\mathbb{A}_{\mathrm{Even}(\eta_\theta)}\ast \dot{\mathbb{M}}^{\pi}_{\eta_\theta})\ast \dot{\Add}(\kappa^+,1)$ via the map $(p,\dot{q},r)\mapsto (\sigma^{\lambda^+}_{\eta_\theta}(p,\dot{q}), r(\eta_\theta))$, so if this latter forcing collapses $\theta$ then $\mathbb{R}$ also.
\begin{claim}
$\mathrm{RO}^+(\mathbb{A}_{\mathrm{Even}(\eta_\theta)}\ast \dot{\mathbb{M}}^{\pi}_{\eta_\theta})\ast \dot{\Add}(\kappa^+,1)$ collapses the interval $(\kappa^+, |\eta_\theta|]$. In particular, $\theta$ is collapsed.
\end{claim}
\begin{proof}[Proof of claim]
Observe that $\mathbb{A}_{\mathrm{Even}(\eta_\theta)}\ast \dot{\mathbb{M}}^{\pi}_{\eta_\theta}$ yields a generic extension where $2^\kappa\geq |\eta_\theta|$ and $(\kappa^+)^{\mathbb{A}_{\mathrm{Even}(\eta_\theta)}\ast \dot{\mathbb{M}}^{\pi}_{\eta_\theta}}=\kappa^+$. Working there, let $\langle f_\xi\mid \xi<|\eta_\theta|\rangle$ be an enumeration of $|\eta_\theta|$-many different  Cohen functions  added by this forcing. For each $\xi\in|\eta_\theta|$, set
$$D_\xi:=\{r\in\Add(\kappa^+,1)\mid \exists\zeta<\kappa^+\, \forall\gamma<\kappa\, f_\xi(\gamma)=p(\zeta+\gamma)\}.$$
It is fairly easy to check that $D_\xi$ is a dense subset of $\Add(\kappa^+,1)$. Let $A\s \Add(\kappa^+,1)$ generic over $V^{\mathbb{A}_{\mathrm{Even}(\eta_\theta)}\ast \dot{\mathbb{M}}^{\pi}_{\eta_\theta}}$ and define $\Phi:|\eta_\theta|\rightarrow \kappa^+$ by  $ \Phi(\xi):=\min\{\zeta<\kappa^+\mid \exists r\in A\, (\text{$\zeta$ witnesses that $r\in D_\xi$)}\} $. To prove the claim observe that it would suffice with showing that $\Phi$ is an injective function. For so, let $\xi\neq \xi'$ and assume that $\Phi(\xi)=\Phi(\xi')$. Denote this common value by $\sigma$. By definition, there are $r,s\in A$ such that, for all $\gamma<\kappa$, $f_\xi(\gamma)=r(\sigma+\gamma)$ and $f_{\xi'}(\gamma)=s(\sigma+\gamma)$ but, since $A$ is a filter, this entails $f_\xi=f_{\xi'}$, which yields a contradiction.
\end{proof}
\item By using the $\gch_{\geq\kappa}$ in the ground model, counting $\mathbb{A}_{\lambda^+}\ast \dot{\mathbb{M}}_{\lambda^+}$-nice names arguments yield $\text{$``2^\kappa=\lambda^+$''}$ in the corresponding generic extension. Now use Proposition \ref{projectionUMagidor}(3).
\item Once again, this follows by combing Proposition \ref{projectionUMagidor}(3) with the fact that $\mathbb{A}_{\lambda^+}\ast \dot{\mathbb{M}}_{\lambda^+}$ forces this property.
\end{enumerate}
\end{proof}

\section{$\TP(\kappa^{++})$ holds}\label{TPkappa++SectionMagidor}
In the present section we will prove that $V[R]\models \TP(\kappa^{++})$. For a more neat presentation we will simply give details in case $\Theta=\lambda^+$. The main ideas involved in the proof of the general case can be found in Section \ref{SectionGapArbitrary}.

\medskip

Let us briefly summarize the structure of the argument. First we beging proving that any counterexample for $\TP(\lambda)$ in $V[R]$ lies in an intermediate extension of $\mathbb{R}$. More formally, 
any $\lambda$-Aronszajn tree in $V[R]$ is a $\lambda$-Aronszajn tree in a generic extension given by some truncation of $\mathbb{R}$ (see Proposition~\ref{AronszajnWeakCompMagidor}).  These truncations have the important feature that they are isomorphic to a Mitchell forcing $\mathbb{R}^*$ without  mismatches between the Cohen and the collapsing component.

In latter arguments we shall again consider truncations of $\mathbb{R}^*$, $\mathbb{R}^*\upharpoonright\xi$, and use the weak compactness of $\lambda$ to prove that any $\lambda$-Aronszajn tree in  $V^{\mathbb{R}^*}$ reflects to a $\xi$-Aronszajn tree in $V^{\mathbb{R}^*\upharpoonright\xi}$ (see Lemma \ref{AronszajnWeakCompMagidor}). Then, we will be in conditions to use Unger's ideas \cite{Ung} to show that there are no $\xi$-Aronszajn trees in $V^{\mathbb{R}^*\upharpoonright \xi}$, and thus that $V[R]\models \TP(\lambda)$. For the record of this section, recall that $\xi_0\in\mathcal{A}\setminus\lambda+1$ is the ordinal fixed in the previous section.

\begin{defi}[Truncations of $\mathbb{R}$]\label{truncationsR}
Let $\xi\in\mathcal{A}\cap (\xi_0,\lambda^+)$. 
A condition in $\mathbb{R}\upharpoonright\xi$ is a triple $(p,\dot{q},r)$ for which all the following hold:
\begin{enumerate}
\item $(p,q)\in \mathbb{A}_\xi\ast \dot{\mathbb{M}}_\xi$;
\item $r$ is a partial function with $\dom(r)\in[\mathcal{B}]^{<\kappa^+}$;
\item For every $\alpha\in\dom(r)$, $r(\alpha)$ is a $\mathbb{A}_{\rm{Even}(\alpha)}\ast\dot{\mathbb{M}}^\pi_\alpha$-name such that
$$\one_{\mathbb{A}_{\mathrm{Even}(\alpha)}\ast \dot{\mathbb{M}}^\pi_\alpha}\Vdash_{\mathbb{A}_{\mathrm{Even}(\alpha)}\ast\dot{\mathbb{M}}^\pi_\alpha} \text{$``\dot{r}(\alpha)\in\dot{\Add}(\kappa^+,1)$''}.$$
\end{enumerate}
For conditions $(p_0,\dot{q}_0,r_0),(p_1,\dot{q}_1,r_1)$ in $\mathbb{R}\upharpoonright \xi$ we will write $(p_0,\dot{q}_0,r_0)\leq (p_1,\dot{q}_1,r_1)$ in case $(p_0,\dot{q}_0)\leq_{\mathbb{A}_{\mathrm{Even}(\alpha)}\ast\dot{\mathbb{M}}^\pi_\alpha} (p_1,\dot{q}_1)$, $\dom(r_1)$ $\subseteq\dom(r_0)$ and for each $\alpha\in\dom(r_1)$,
$\sigma^\alpha_\beta(p_0,q_0)\Vdash_{\mathbb{A}_{\mathrm{Even}(\alpha)}\ast \dot{\mathbb{M}}^\pi_\alpha}\text{$``\dot{r}_0(\alpha)\leq \dot{r}_1(\alpha)$''}. $
\end{defi}
The following is an immediate consequence of Lemma \ref{ProjectionsCohenPartMagidor}.
\begin{prop}\label{ProjectionRandTruncationsMagidor}
Let $\xi\in\mathcal{A}\cap (\xi_0,\lambda^+)$. Then there is a projection between $\mathbb{R}$ and $\mathrm{RO}^+(\mathbb{R}\upharpoonright\xi)$.
\end{prop}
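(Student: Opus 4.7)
The plan is to exhibit a natural projection
\[
\pi\colon \mathbb{R}\longrightarrow \mathrm{RO}^+(\mathbb{R}\upharpoonright\xi)
\]
defined on each condition by $\pi(p,\dot{q},r):=(\sigma^{\lambda^{+}}_{\xi}(p,\dot{q}),r)$, where the first coordinate is regarded inside $\mathrm{RO}^+(\mathbb{A}_\xi\ast\dot{\mathbb{M}}_\xi)$ via Lemma~\ref{ProjectionsCohenPartMagidor}(1), and the $r$-component is left untouched. This is sensible because the names $r(\gamma)$ for $\gamma\in\mathcal{B}$ live in the forcings $\mathbb{A}_{\mathrm{Even}(\gamma)}\ast\dot{\mathbb{M}}^\pi_\gamma$, which appear identically in the definitions of $\mathbb{R}$ and $\mathbb{R}\upharpoonright\xi$, so clauses (1)--(3) of Definition~\ref{truncationsR} pass through verbatim.

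First I would check that $\pi$ is order-preserving. If $(p_0,\dot{q}_0,r_0)\leq_{\mathbb{R}}(p_1,\dot{q}_1,r_1)$, the first clause of $\leq_{\mathbb{R}\upharpoonright\xi}$ follows from Lemma~\ref{ProjectionsCohenPartMagidor}(1), and the domain-inclusion clause is preserved trivially. For the third clause, fix $\gamma\in\dom(r_1)$; the hypothesis gives $\sigma^{\lambda^{+}}_\gamma(p_0,\dot{q}_0)\Vdash r_0(\gamma)\leq r_1(\gamma)$, and the commutativity $\sigma^{\lambda^{+}}_\gamma=\hat{\sigma}^{\xi}_\gamma\circ\sigma^{\lambda^{+}}_\xi$ from Lemma~\ref{ProjectionsCohenPartMagidor}(3) rewrites this as $\hat{\sigma}^{\xi}_\gamma(\sigma^{\lambda^{+}}_{\xi}(p_0,\dot{q}_0))\Vdash r_0(\gamma)\leq r_1(\gamma)$, which is exactly the requirement for $\pi(p_0,\dot{q}_0,r_0)\leq\pi(p_1,\dot{q}_1,r_1)$ in $\mathrm{RO}^+(\mathbb{R}\upharpoonright\xi)$.

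The core of the argument is the projection lifting property. Starting from $(p,\dot{q},r)\in\mathbb{R}$ and a condition $(\hat{p},\hat{q},\hat{r})\leq_{\mathbb{R}\upharpoonright\xi}\pi(p,\dot{q},r)$ (it suffices to consider elements of the dense image of $\mathbb{R}\upharpoonright\xi$ inside $\mathrm{RO}^+(\mathbb{R}\upharpoonright\xi)$), I would use that $\sigma^{\lambda^{+}}_\xi$ is itself a projection to find $(p',\dot{q}')\leq(p,\dot{q})$ in $\mathbb{A}_{\lambda^+}\ast\dot{\mathbb{M}}_{\lambda^+}$ with $\sigma^{\lambda^{+}}_\xi(p',\dot{q}')\leq(\hat{p},\hat{q})$. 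Then I would amalgamate the $r$-components by setting $\dom(r'):=\dom(r)\cup\dom(\hat{r})$ and declaring $r'(\gamma):=\hat{r}(\gamma)$ if $\gamma\in\dom(\hat{r})$ and $r'(\gamma):=r(\gamma)$ otherwise. Both verifications — $(p',\dot{q}',r')\leq_{\mathbb{R}}(p,\dot{q},r)$ and $\pi(p',\dot{q}',r')\leq(\hat{p},\hat{q},\hat{r})$ — reduce, at each relevant $\gamma$, to an inequality among $r(\gamma), \hat{r}(\gamma), r'(\gamma)$, which is automatic once one uses commutativity to rewrite $\sigma^{\lambda^{+}}_\gamma(p',\dot{q}')$ as $\hat{\sigma}^\xi_\gamma(\sigma^{\lambda^+}_\xi(p',\dot{q}'))$ and then combines it with the corresponding inequality coming from $(\hat{p},\hat{q},\hat{r})\leq_{\mathbb{R}\upharpoonright\xi}\pi(p,\dot{q},r)$.

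The main obstacle I foresee is the bookkeeping with the Boolean completion: ensuring that the inequality $\sigma^{\lambda^{+}}_\xi(p',\dot{q}')\leq(\hat{p},\hat{q})$, taken in $\mathrm{RO}^+(\mathbb{A}_\xi\ast\dot{\mathbb{M}}_\xi)$ rather than in $\mathbb{A}_\xi\ast\dot{\mathbb{M}}_\xi$ itself, propagates correctly to the ordering of $\mathrm{RO}^+(\mathbb{R}\upharpoonright\xi)$, and that no side conditions arise for the $r$-component beyond what commutativity already provides. Once this is handled, the result is a routine consequence of parts (1) and (3) of Lemma~\ref{ProjectionsCohenPartMagidor}, which is why the authors label the conclusion as \emph{immediate}.
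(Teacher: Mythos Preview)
Your proposal is correct and is exactly the natural argument the paper has in mind when it declares the result ``an immediate consequence of Lemma~\ref{ProjectionsCohenPartMagidor}'': project the Cohen--Magidor part via $\sigma^{\lambda^+}_\xi$, leave the term component unchanged, and use the commutation relation $\sigma^{\lambda^+}_\gamma=\hat{\sigma}^\xi_\gamma\circ\sigma^{\lambda^+}_\xi$ from part~(3) to verify both order preservation and the lifting property. One small simplification: since $(\hat{p},\hat{q},\hat{r})\leq\pi(p,\dot{q},r)$ already forces $\dom(r)\subseteq\dom(\hat{r})$, your amalgamated $r'$ is just $\hat{r}$, so the case split in the definition of $r'$ is unnecessary.
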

\begin{prop}\label{TruncationsAndAronszajnTreeMagidor}
Let $\dot{T}$ be a $\mathbb{R}$-name for a $\lambda$-Aronszajn tree. There is $\xi^*\in\mathcal{A}\cap (\xi_0,\lambda^+)$, such that $V^{\mathbb{R}\upharpoonright\xi^*}\models \text{$``T$ is a $\lambda$-Aronszajn tree''}$ 
\end{prop}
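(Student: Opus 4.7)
My plan is a reflection argument exploiting the $\lambda$-Knasterness of $\mathbb{R}$ (Proposition \ref{PropertiesOfV[R]Magidor}(1)) to localize a nice name for $T$ in some truncation $\mathbb{R}\upharpoonright\xi^*$, and then to verify that $T$ remains $\lambda$-Aronszajn in the corresponding intermediate extension. First, since $T$ is a $\lambda$-tree all of whose levels have size ${<}\lambda$, we have $|T|=\lambda$, so a nice $\mathbb{R}$-name $\dot{T}'$ equivalent to $\dot{T}$ can be built from $\lambda$-many maximal antichains of $\mathbb{R}$ deciding the atomic facts about the tree (membership at each node and the tree order). By the $\lambda$-cc each such antichain has size ${<}\lambda$, so $\dot{T}'$ mentions at most $\lambda$-many conditions in $\mathbb{R}$.

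Next, I attach to each $(p,\dot{q},r)\in\mathbb{R}$ a support $\sigma(p,\dot{q},r)\subseteq\lambda^+$ consisting of $\mathrm{supp}(p)$ (size ${<}\kappa$), the $\mathbb{A}_{\lambda^+}$-coordinates appearing in a chosen nice name for $\dot{q}$ (size $\leq\kappa$, since $\dot{q}$ names a finite sequence of pairs $(\alpha_i,\dot{A}_i)$ with $\dot{A}_i$ a name for a subset of $\kappa$ and $\mathbb{A}_{\lambda^+}$ is $\kappa^+$-cc), and $\mathrm{dom}(r)\subseteq\mathcal{B}\subseteq\lambda$ (size ${<}\kappa^+$). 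Thus each $\sigma$ has cardinality ${<}\kappa^+$. Letting $S$ be the union of the supports over all conditions appearing in the antichains that code $\dot{T}'$, we obtain $|S|\leq\lambda\cdot\kappa^+=\lambda$, so $\sup S<\lambda^+$. By the unboundedness of $\mathcal{A}$ in $\lambda^+$, I may choose $\xi^*\in\mathcal{A}\cap(\max(\xi_0,\sup S),\lambda^+)$.

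With this choice, every condition occurring in $\dot{T}'$ is naturally a condition of $\mathbb{R}\upharpoonright\xi^*$: the Cohen part is supported below $\xi^*$, the name $\dot{q}$ uses only $\mathbb{A}_{\xi^*}$-coordinates, and $\mathrm{dom}(r)\subseteq\lambda<\xi^*$. Hence $\dot{T}'$ can be construed as an $\mathbb{R}\upharpoonright\xi^*$-name, and since $\mathbb{R}$ projects onto $\mathrm{RO}^+(\mathbb{R}\upharpoonright\xi^*)$ (Proposition \ref{ProjectionRandTruncationsMagidor}), $V^{\mathbb{R}\upharpoonright\xi^*}$ is an intermediate model of $V^{\mathbb{R}}$ and therefore $T\in V^{\mathbb{R}\upharpoonright\xi^*}$. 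To see that $T$ is genuinely $\lambda$-Aronszajn there, I observe that $\mathbb{R}\upharpoonright\xi^*$ has the same global structure as $\mathbb{R}$ with $\lambda^+$ replaced by $\xi^*$, so the arguments of Proposition \ref{PropertiesOfV[R]Magidor} apply verbatim to yield $\lambda=(\kappa^{++})^{V^{\mathbb{R}\upharpoonright\xi^*}}$ together with the relevant cardinal preservation; in particular the levels of $T$ still have size ${<}\lambda$. Any cofinal branch of $T$ computed in $V^{\mathbb{R}\upharpoonright\xi^*}$ would persist to $V^{\mathbb{R}}$, contradicting that $T$ is Aronszajn in $V^{\mathbb{R}}$.

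The main obstacle is the support bookkeeping. One must pick canonical nice names so that the support of $\dot{q}$ is honestly bounded by $\kappa$ and, more delicately, argue that any condition of $\mathbb{R}$ whose support lies below $\xi^*$ is, through the projection of Proposition \ref{ProjectionRandTruncationsMagidor}, equivalent to an actual condition of $\mathbb{R}\upharpoonright\xi^*$; this requires careful use of $\mathbb{A}_{\lambda^+}$-nice names and the $\kappa^+$-cc of the Cohen component, together with the observation (see the remark after Lemma \ref{LemmaD+}) that $\mathcal{U}_\xi$ agrees with $\mathcal{U}$ on bounded subsets of $\kappa$, so that the Magidor names relativize cleanly.
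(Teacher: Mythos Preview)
Your proposal is correct and follows essentially the same approach as the paper: use the $\lambda$-cc to bound the supports of the antichains coding $\dot{T}$, pick $\xi^*\in\mathcal{A}$ above this bound, and conclude that $T$ lives and is Aronszajn in $V^{\mathbb{R}\upharpoonright\xi^*}$. Your treatment is in fact more careful than the paper's, which only explicitly bounds $\dom(p)$ and leaves the support analysis for the $\dot{q}$-component and the verification that $T$ remains Aronszajn in the intermediate model implicit; your discussion of these points (nice names for $\dot{q}$ via the $\kappa^+$-cc of $\mathbb{A}_{\lambda^+}$, persistence of branches upward) fills exactly the gaps the paper's terse proof glosses over.
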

\begin{proof}
Let $\dot{T}$ be a $\mathbb{R}$-name for a $\lambda$-Aronszajn tree $T$. Without loss of generality we may assume $\one_\mathbb{R}\Vdash_\mathbb{R} \dot{T}\subseteq\check{\lambda}$. 
Let $\{{A}_\alpha\}_{\alpha<\lambda}$ be a family of maximal antichains deciding $\text{$``\check{\alpha}\in \dot{T}$''}$. Set 
${A}^*:=\bigcup_{\alpha\in\lambda} A_\alpha$ and observe that $|A^*|\leq \lambda$. In particular, there is some $\xi^*\in\mathcal{A}\cap (\xi_0,\lambda^+)$ be such that  $\dom(p)\subseteq \kappa\times\xi^*$, for any condition $(p,\dot{q},r)\in A^*$. Clearly $\{A_\alpha\}_{\alpha<\lambda}$ is a family of maximal antichains in $\mathbb{R}\upharpoonright\xi^*$ deciding the same assertions, hence $V^{\mathbb{R}\upharpoonright\xi^*}\models \text{$``T$ is $\lambda$-Aronszajn''}$.
\end{proof}

Let $\pi^*:\xi^*\rightarrow \lambda$ be a bijection extending $\pi$. We use $\pi^*$ to define an $\in$-isomorphism between $V^{\mathbb{A}_{\xi^*}}$ and $V^{\mathbb{A}_\lambda}$.
\footnote{This choice will guarantee that our future construction coheres with the previous one.}
Again, $\mathcal{U}_\lambda^{\pi^*}:=(\pi^*{(\dot{\mathcal{U}}_{\xi^*})})_{\pi^*[G\upharpoonright\xi^*]}$ is a coherent sequence of measures 
which (pointwise) extends  $\mathcal{U}^\pi_{\lambda}$.
Let $\mathbb{M}^{\pi^*}_\lambda:=\mathbb{M}_{\mathcal{U}^{\pi^*}_\lambda}$. Let us denote by $\mathcal{U}_\lambda^{\pi^*}(\alpha,\beta)$ the measures appearing in $\mathcal{U}^{\pi^*}_\lambda$. For the ease of notation, let  $H^*$ be the $\mathbb{A}_{\mathrm{Even}(\lambda)}$-generic filter generated by $\pi^*[G\upharpoonright\xi^*]$.

\begin{prop}\label{R*RtruncatedAreIsomorphicMagidor} $ $
\begin{enumerate}
\item There is an isomorphism $\varphi:\mathbb{A}_{\xi^*}\ast \dot{\mathbb{M}}_{\xi^*}\rightarrow\mathbb{A}_{\lambda}\ast \dot{\mathbb{M}}^{\pi^*}_{\lambda}$.
\item For each $\xi\in\mathcal{B}$ the function $\varrho^{\lambda}_{\xi}=\sigma^{\xi^*}_{\xi}\circ \varphi^{-1}$ establishes a projection between $\mathbb{A}_{\lambda}\ast \dot{\mathbb{M}}^{\pi^*}_{\lambda}$ and $\mathrm{RO}^+(\mathbb{A}_{\even(\xi)}\ast \dot{\mathbb{M}}^{\pi}_\xi)$.
\end{enumerate}
\end{prop}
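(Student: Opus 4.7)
The plan is to prove (1) by transporting along the $\in$-isomorphism of Boolean-valued models induced by the bijection $\pi^*$, and then derive (2) as an essentially formal consequence of (1) combined with Lemma~\ref{ProjectionsCohenPartMagidor}(2).

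For (1), I would first observe that the bijection $\pi^*\colon \xi^*\to\lambda$ induces an order isomorphism $\hat\pi^*\colon \mathbb{A}_{\xi^*}\to\mathbb{A}_\lambda$ at the level of Cohen forcings, sending $p\in\mathbb{A}_{\xi^*}$ to the condition with domain $\{(\alpha,\pi^*(\beta))\mid(\alpha,\beta)\in\dom(p)\}$ and the obvious values; this is clearly a $\leq$-isomorphism because the Cohen forcing is defined purely from the index set. This $\hat\pi^*$ in turn induces an $\in$-isomorphism between the class of $\mathbb{A}_{\xi^*}$-names and the class of $\mathbb{A}_\lambda$-names, which I will also denote $\pi^*$. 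The key point is that, by the very choice of $\mathcal{U}^{\pi^*}_\lambda$ at the start of the section, one has $\pi^*(\dot{\mathcal{U}}_{\xi^*})=\dot{\mathcal{U}}^{\pi^*}_\lambda$, so that $\pi^*$ sends the $\mathbb{A}_{\xi^*}$-name $\dot{\mathbb{M}}_{\xi^*}$ for $\mathbb{M}_{\mathcal{U}_{\xi^*}}$ to the $\mathbb{A}_\lambda$-name $\dot{\mathbb{M}}^{\pi^*}_\lambda$ for $\mathbb{M}_{\mathcal{U}^{\pi^*}_\lambda}$. I would then set
$$\varphi(p,\dot{q}):=(\hat\pi^*(p),\pi^*(\dot{q}))$$
and check that this is an isomorphism $\mathbb{A}_{\xi^*}\ast\dot{\mathbb{M}}_{\xi^*}\to\mathbb{A}_\lambda\ast\dot{\mathbb{M}}^{\pi^*}_\lambda$. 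Preservation of order in each coordinate is immediate from the fact that $\hat\pi^*$ is an isomorphism and $\pi^*$ preserves $\Vdash$; bijectivity follows because $\pi^*$ is invertible on both Cohen conditions and names.

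For (2), I would simply note that $\sigma^{\xi^*}_\xi\colon \mathbb{A}_{\xi^*}\ast\dot{\mathbb{M}}_{\xi^*}\to\mathrm{RO}^+(\mathbb{A}_{\mathrm{Even}(\xi)}\ast\dot{\mathbb{M}}^\pi_\xi)$ is already a projection by Lemma~\ref{ProjectionsCohenPartMagidor}(2), and that $\varphi^{-1}\colon \mathbb{A}_\lambda\ast\dot{\mathbb{M}}^{\pi^*}_\lambda\to\mathbb{A}_{\xi^*}\ast\dot{\mathbb{M}}_{\xi^*}$ is an isomorphism and hence trivially a projection. Since the composition of two projections is a projection, $\varrho^\lambda_\xi=\sigma^{\xi^*}_\xi\circ\varphi^{-1}$ is a projection, as required.

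The only step that requires care, rather than routine verification, is checking that the equality $\pi^*(\dot{\mathcal{U}}_{\xi^*})=\dot{\mathcal{U}}^{\pi^*}_\lambda$ is legitimate for naming the Magidor forcing correctly; more concretely, I would need $\pi^*$ to cohere with the extension $\pi\subseteq\pi^*$ so that this new isomorphism is genuinely compatible with the projections $\sigma^\xi_\gamma$ to the truncations indexed by $\gamma\in\mathcal{B}$ (which involve only the image $\even(\gamma)\subseteq\even(\lambda)=\pi[\xi_0]\subseteq\pi^*[\xi^*]$). This coherence, however, is built into the construction by choosing $\pi^*$ as an extension of $\pi$, and it is exactly what makes the diagram in (2) commute at the level of the Cohen components; the Magidor component then carries along automatically via the induced $\in$-isomorphism.
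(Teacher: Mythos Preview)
Your proof is correct and takes essentially the same approach as the paper. For (1) the paper also transports along the isomorphism induced by $\pi^*$, though it phrases this more concretely by passing to the dense subposets of both iterations in which the Magidor stems are sequences of check names and writing the map explicitly as $(p,\langle \langle\check{\beta}_0,\dot{A}_0\rangle,\dots\rangle)\mapsto(\pi^*(p),\langle \langle\check{\beta}_0,\dot{A}_0\rangle,\dots\rangle)$; for (2) the paper, like you, simply notes that $\sigma^{\xi^*}_\xi$ is already a projection and composes with $\varphi^{-1}$.
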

\begin{proof}
For (1), recall that the subposet of $\mathbb{A}_{\xi^*}\ast \dot{\mathbb{M}}_{\xi^*}$ with conditions of the form $s:=(p, \langle \langle\check{\beta}_0,\dot{A}_0\rangle, \dots, \langle\check{\beta}_{n-1}, \dot{A}_{n-1}\rangle,\langle \check{\kappa},\dot{A}_n\rangle\rangle)$ is dense. Analogously, the same is true for $\mathbb{A}_{\lambda}\ast \dot{\mathbb{M}}^{\pi^*}_{\lambda}$. It is now routine to check that the map   $s\mapsto(\pi^*(p),\langle \langle\check{\beta}_0,\dot{A}_0\rangle, \dots, \langle\check{\beta}_{n-1}, \dot{A}_{n-1}\rangle,\langle \check{\kappa},\dot{A}_n\rangle\rangle)$ defines an isomorphism\linebreak between these two dense subposets, which is enough to prove the desired result. Observe that now (2) is immediate as $\sigma^{\xi^*}_\xi$ is a projection.
\end{proof}

\begin{defi}\label{DefinitionR*Magidor}
A condition in $\mathbb{R}^*$ is a triple $(p,\dot{q},r)$ for which all the following hold:
\begin{enumerate}
\item $(p,q)\in \mathbb{A}_\lambda \ast \dot{\mathbb{M}}^{\pi^*}_\lambda$;
\item $r$ is a partial function with $\dom(r)\in[\mathcal{B}]^{<\kappa^+}$;
\item  For every $\xi\in\dom(r)$, $r(\xi)$ is an $\mathbb{A}_{\mathrm{Even}(\xi)}\ast \dot{\mathbb{M}}^\pi_\xi$-name such that
$$\one_{\mathbb{A}_{\mathrm{Even}(\xi)}\ast \dot{\mathbb{M}}^\pi_\xi}\Vdash_{\mathbb{A}_{\mathrm{Even}(\xi)}\ast \dot{\mathbb{M}}^\pi_\xi}\text{$``\dot{r}(\xi)\in\dot{\Add}(\kappa^+,1)$''}.$$
\end{enumerate}
For conditions $(p_0,\dot{q}_0,r_0),(p_1,\dot{q}_1,r_1)$  in $\mathbb{R}^*$ we will write $(p_0,\dot{q}_0,r_0)\leq (p_1,\dot{q}_1,r_1)$ in case $(p_0,\dot{q}_0)\leq_{\mathbb{A}_{\mathrm{Even}(\lambda)}\ast \dot{\mathbb{M}}^{\pi^*}_\lambda}(p_1,\dot{q}_1)$, $\dom(r_1)$ $\subseteq\dom(r_0)$ and for each $\xi\in\dom(r_1)$,
$\varrho^\lambda_\xi(p_0,q_0)\Vdash_{\mathbb{A}_{\mathrm{Even}(\xi)}\ast \dot{\mathbb{M}}^\pi_\xi}\dot{r}_0(\xi)\leq \dot{r}_1(\xi). $
\end{defi}
\begin{prop}\label{R*RtruncatedAreIsomorphic2Magidor}
$\mathbb{R}^*$ and $\mathbb{R}\upharpoonright\xi^*$ are isomorphic. In particular, $\mathbb{R}^*$ forces that $\dot{T}$ is a $\lambda$-Aronszajn tree.
\end{prop}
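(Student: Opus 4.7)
The plan is to extend the first-coordinate isomorphism $\varphi$ of Proposition~\ref{R*RtruncatedAreIsomorphicMagidor}(1) to a full isomorphism between $\mathbb{R}\upharpoonright\xi^*$ and $\mathbb{R}^*$ by keeping the collapsing component $r$ untouched. Concretely, I would define $\Psi\colon \mathbb{R}\upharpoonright\xi^*\rightarrow \mathbb{R}^*$ by $\Psi(p,\dot{q},r):=(\varphi(p,\dot{q}),r)$. This is well-defined because clauses (2) and (3) of Definition~\ref{truncationsR} and Definition~\ref{DefinitionR*Magidor} describe the third coordinate $r$ in literally the same way: a partial function on $\mathcal{B}$ of cardinality ${<}\kappa^+$ whose values at $\xi\in\dom(r)$ are $\mathbb{A}_{\even(\xi)}\ast\dot{\mathbb{M}}^\pi_\xi$-names for conditions in $\dot{\Add}(\kappa^+,1)$. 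Bijectivity of $\Psi$ is then immediate from bijectivity of $\varphi$.

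The only non-trivial point is order preservation, and here Proposition~\ref{R*RtruncatedAreIsomorphicMagidor}(2) is doing all the work. Suppose $(p_0,\dot{q}_0,r_0)\leq_{\mathbb{R}\upharpoonright\xi^*} (p_1,\dot{q}_1,r_1)$. Then $(p_0,\dot{q}_0)\leq (p_1,\dot{q}_1)$ in $\mathbb{A}_{\xi^*}\ast\dot{\mathbb{M}}_{\xi^*}$, $\dom(r_1)\subseteq\dom(r_0)$, and for each $\xi\in\dom(r_1)$,
$$\sigma^{\xi^*}_{\xi}(p_0,\dot{q}_0)\Vdash_{\mathbb{A}_{\even(\xi)}\ast \dot{\mathbb{M}}^\pi_\xi}\dot{r}_0(\xi)\leq \dot{r}_1(\xi).$$
Since $\varphi$ is an isomorphism, $\varphi(p_0,\dot{q}_0)\leq \varphi(p_1,\dot{q}_1)$ in $\mathbb{A}_\lambda\ast\dot{\mathbb{M}}^{\pi^*}_\lambda$. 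Moreover, from $\varrho^\lambda_\xi=\sigma^{\xi^*}_\xi\circ\varphi^{-1}$ we get $\sigma^{\xi^*}_\xi(p_0,\dot{q}_0)=\varrho^\lambda_\xi(\varphi(p_0,\dot{q}_0))$, so the forcing relation above translates verbatim into the one required by Definition~\ref{DefinitionR*Magidor} for $\Psi(p_0,\dot{q}_0,r_0)\leq_{\mathbb{R}^*}\Psi(p_1,\dot{q}_1,r_1)$. The converse direction is symmetric, applying the same identity to $\varphi^{-1}$.

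For the ``in particular'' clause, Proposition~\ref{TruncationsAndAronszajnTreeMagidor} asserts that $\one_{\mathbb{R}\upharpoonright\xi^*}\Vdash\text{``$\dot{T}$ is a $\lambda$-Aronszajn tree''}$. The isomorphism $\Psi$ induces a canonical translation of $\mathbb{R}\upharpoonright\xi^*$-names into $\mathbb{R}^*$-names preserving the forcing relation, so the image of $\dot{T}$ under this translation is forced by $\mathbb{R}^*$ to be a $\lambda$-Aronszajn tree; as is customary, we identify it with $\dot{T}$. There is no genuine obstacle in this proof: the statement is essentially a bookkeeping verification that the third coordinate is identical in both forcings and that the projections appearing in the order definitions are matched by the commutation identity of Proposition~\ref{R*RtruncatedAreIsomorphicMagidor}(2).
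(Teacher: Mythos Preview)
Your proof is correct and follows exactly the paper's approach: the paper defines the isomorphism as $(p,\dot{q},r)\mapsto (\varphi(p,\dot{q}),r)$ and leaves the verification to the reader, which is precisely the map $\Psi$ you write out and check in detail. Your use of the identity $\varrho^\lambda_\xi=\sigma^{\xi^*}_\xi\circ\varphi^{-1}$ from Proposition~\ref{R*RtruncatedAreIsomorphicMagidor}(2) to match the order clauses is the right (and only) non-trivial observation.
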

\begin{proof}
It is not hard to check that $(p,\dot{q},r)\mapsto (\varphi(p,\dot{q}),r)$ defines an isomorphism between both forcings.
\end{proof}
We briefly digress from our previous discussion to introduce the notion of $\Pi^1_1$-indescribability,  which will be necessary in future arguments.
\begin{defi}[$\Pi^1_1$-indescribability]
Let $\theta$ be a cardinal and $X\in\mathcal{P}(\theta)$. We will say that $X$ is $\Pi^1_1$-indescribable in $\theta$ if for each $Y\s V_\theta$ and each $\Pi^1_1$ sentence $\Phi$, if
$\langle V_\theta, \in, Y\rangle\models \Phi$, then there is an ordinal $\eta\in X$ be such that $\langle V_\eta,\in, Y\cap V_\eta\rangle\models\Phi$. The cardinal $\theta$ is said to be $\Pi^1_1$-indescribable if $\theta$ is $\Pi^1_1$-indescribable in $\theta$.
\end{defi}
A classical result of Hanf and Scott \cite[Theorem 6.4]{Kan} establishes that $\theta$ is weakly compact if and only if $\theta$ is $\Pi^1_1$-indescribable. Thus, assuming that $\theta$ is weakly compact, it is not hard to prove that
$$\mathcal{F}_\theta:=\{X\in\mathcal{P}(\theta)\mid \text{$``\theta\setminus X$ is not $\Pi^1_1$-indescribable in $\theta$''}\}$$
forms a proper filter on $\theta$. An important property of $\mathcal{F}_\theta$ discovered by Levy is normality \cite[Proposition 6.11]{Kan}. This implies in particular that $\mathcal{F}_\theta$ extends $\mathrm{Club}(\theta)$, \emph{the club filter on $\theta$}, and thus concetrates on the set of Mahlo cardinals below $\theta$. We shall use this to obtain the following  refinement of the set $\mathcal{B}$. To this aim, recall that $H^*$ stands for the generic filter $\pi^*[G\upharpoonright\xi^*]$.
\begin{lemma}\label{DefofB*Magidor}
There is $\mathcal{B}^*\in(\mathcal{F}_\lambda)^V$,  $\mathcal{B}^*\subseteq \mathcal{B}$, with $\kappa^+<\min\mathcal{B}^*$ such that for every $\xi\in\mathcal{B}^*$, $\langle\mathcal{\dot{U}}_\lambda^{\pi^*}(\alpha,\beta)_{H^*}\cap V[H^*\upharpoonright\xi]\mid \alpha\leq \kappa,\,\beta<o^{\mathcal{\dot{U}^{\pi^*}}_{\lambda}}(\alpha)\rangle$ is a coherent sequence of measures in $V[H^*\upharpoonright\xi]$.
\end{lemma}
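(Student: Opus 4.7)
The plan is to follow the template of Lemma \ref{ReflectionMeasuresD}, with one crucial upgrade: the intermediate sets produced there must be arranged to lie inside $(\mathcal{F}_\lambda)^V$, so that intersecting them delivers $\mathcal{B}^*\in(\mathcal{F}_\lambda)^V$. Working in $V$, I would attach to each $(\alpha,\beta)\in\dom(\mathcal{U}^{\pi^*}_\lambda)$ the set
\[
\mathcal{B}_{(\alpha,\beta)}:=\{\xi<\lambda \mid \one_{\mathbb{A}_{\mathrm{Even}(\lambda)}}\Vdash\text{``}\dot{\mathcal{U}}^{\pi^*}_{\lambda}(\alpha,\beta)_{\dot{H}^*}\cap V[\dot{H}^*\upharpoonright\xi]\text{ is a normal measure on }\check{\alpha}\text{''}\}
\]
and aim to show $\mathcal{B}_{(\alpha,\beta)}\in(\mathcal{F}_\lambda)^V$. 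The case $\alpha<\kappa$ is automatic, since $\mathbb{A}_{\mathrm{Even}(\lambda)}$ adds no bounded subsets of $\kappa$ and hence the measure already lives in $V$; the content is the case $\alpha=\kappa$.

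For $\alpha=\kappa$ the key idea is to encode normality of the restricted ultrafilter as a $\Pi^1_1$-sentence $\Phi_\beta$ over a suitable structure on $V_\lambda$, and appeal to the Hanf-Scott characterization of weak compactness. Concretely, regressive functions on $\kappa$ that live in $V[\dot{H}^*\upharpoonright\xi]$ are parametrised by $\mathbb{A}_{\mathrm{Even}(\xi)}$-nice names, and similarly for candidate measure-one witnesses. With a uniform coding of such names inside $V_\lambda$, the normality requirement reads ``\emph{for every local name of a regressive function there is a local name for a measure-one witness on which it is forced constant}'', which I expect to be $\Pi^1_1$ over $\langle V_\lambda,\in,\mathbb{A}_{\mathrm{Even}(\lambda)},\dot{\mathcal{U}}^{\pi^*}_\lambda(\kappa,\beta)\rangle$. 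Since the ambient $\dot{\mathcal{U}}^{\pi^*}_\lambda(\kappa,\beta)$ is globally forced to be a normal measure, $\Phi_\beta$ is satisfied in the full $V_\lambda$-structure, and $\Pi^1_1$-indescribability reflects the satisfaction set to a member of $(\mathcal{F}_\lambda)^V$ coinciding with $\mathcal{B}_{(\kappa,\beta)}$.

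With each $\mathcal{B}_{(\alpha,\beta)}\in(\mathcal{F}_\lambda)^V$ and $|\dom(\mathcal{U}^{\pi^*}_\lambda)|\leq\kappa<\lambda$, I would use that $(\mathcal{F}_\lambda)^V$ is $\lambda$-complete (and normal) to conclude that $\mathcal{B}':=\bigcap_{(\alpha,\beta)\in\dom(\mathcal{U}^{\pi^*}_\lambda)}\mathcal{B}_{(\alpha,\beta)}\in(\mathcal{F}_\lambda)^V$. Setting $\mathcal{B}^*:=\mathcal{B}'\cap(\kappa^+,\lambda)$ secures $\kappa^+<\min\mathcal{B}^*$ via the club $(\kappa^+,\lambda)$, while the inclusion $\mathcal{B}^*\subseteq\mathcal{B}$ is automatic since $\mathcal{B}$ from Lemma \ref{ReflectionMeasuresD} is itself defined as an intersection of the very same family of sets. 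Finally, for each $\xi\in\mathcal{B}^*$ the coherence clause $(2)$ of Definition \ref{CoherentSequence} transfers from the coherence of $\mathcal{U}^{\pi^*}_\lambda$ in $V[H^*]$ by the same computation as in Lemma \ref{LemmaD+}, because the relevant ultrapower identities already have witnesses in $V[H^*\upharpoonright\xi]$.

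The hard part will be the second paragraph: pinning down a $V_\lambda$-absolute presentation of $\mathbb{A}_{\mathrm{Even}(\xi)}$-names together with the relevant fragment of the forcing relation, and verifying that the coded normality assertion is truly $\Pi^1_1$ rather than of higher complexity in the chosen structure. The $\kappa^+$-cc of $\mathbb{A}_{\mathrm{Even}(\lambda)}$ together with the fact that nice names for subsets of $\kappa$ have hereditary size $\leq\kappa<\lambda$ should make this coding feasible, echoing the analogous step in \cite[Lemma 3.3]{FriHon}.
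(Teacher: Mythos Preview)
Your approach is considerably more elaborate than what is needed, and it contains a genuine gap. The paper's argument is one line: rerun the construction of Lemma~\ref{ReflectionMeasuresD} (which in turn mirrors Lemma~\ref{LemmaD+}) \emph{inside} $\mathcal{B}$, now using the sequence $\dot{\mathcal{U}}^{\pi^*}_\lambda$; the output $\mathcal{B}^*$ is again unbounded in $\lambda$ and closed under $\geq\kappa^+$-limits, and any such set lies in $(\mathcal{F}_\lambda)^V$ (its closure is club, the missing points have cofinality $\leq\kappa$, and $\mathcal{F}_\lambda$ concentrates on regular cardinals above $\kappa$). No $\Pi^1_1$ coding of normality is required at all.

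The gap in your outline is the claim that ``$\mathcal{B}^*\subseteq\mathcal{B}$ is automatic since $\mathcal{B}$ is itself defined as an intersection of the very same family of sets.'' It is not the same family. The set $\mathcal{B}$ of Lemma~\ref{ReflectionMeasuresD} is built from the names $\dot{\mathcal{U}}^{\pi}_{\xi_0}(\alpha,\beta)$ and restriction to $V[H\upharpoonright\mathrm{Even}(\gamma)]$, whereas your $\mathcal{B}_{(\alpha,\beta)}$ are built from $\dot{\mathcal{U}}^{\pi^*}_{\lambda}(\alpha,\beta)$ and restriction to $V[H^*\upharpoonright\xi]$; the measure sequence and the target model are both different, and normality of the latter restriction does not imply normality of the former. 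To repair this you would in any case have to intersect with $\mathcal{B}$ and argue $\mathcal{B}\in(\mathcal{F}_\lambda)^V$ --- but that is exactly the ``unbounded and $\geq\kappa^+$-closed'' observation the paper uses, at which point your entire $\Pi^1_1$ detour becomes redundant. The paper avoids the issue by carrying out the construction inside $\mathcal{B}$ from the outset.
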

\begin{proof}
The construction of $\mathcal{B}^*$ is the same as for $\mathcal{B}$ but starting from $\mathcal{B}$ instead of $\lambda$ (c.f. Lemma \ref{ReflectionMeasuresD}). By construction, $\mathcal{B}^*$ is an unbounded set closed by increasing sequences of length $\geq\kappa^+$, hence $\mathcal{B}^*\in (\mathcal{F}_\lambda)^V$.
\end{proof}

\begin{notation}
\rm{
For each $\xi\in\mathcal{B}^*$, let $\mathcal{U}^{\pi^*}_\xi$ denote the sequence witnessing Lemma \ref{DefofB*Magidor}. Set
$\mathbb{M}^{\pi^*}_\xi:=\mathbb{M}_{\mathcal{U}^{\pi^*}_\xi}$.
}
\end{notation}

\begin{lemma}\label{ProjectionsPistaronEven}
Let $\hat{\mathcal{B}^*}=\mathcal{B}^*\cup \{\lambda\}$ and $\xi\leq\eta\in\hat{\mathcal{B}}^*$. There are projections
\begin{enumerate}
\item $\varrho^\eta_\xi:\mathbb{A}_\eta\ast \dot{\mathbb{M}}^{\pi^*}_\eta\rightarrow\mathrm{RO}^+(\mathbb{A}_{\even(\xi)}\ast\dot{\mathbb{M}}^{\pi}_\xi)$,
\item $\tau^\eta_\xi:\mathbb{A}_{\rm{Even}(\eta)}\ast \dot{\mathbb{M}}^{\pi}_\eta\rightarrow\mathrm{RO}^+(\mathbb{A}_{\even(\xi)}\ast\dot{\mathbb{M}}^{\pi}_\xi)$,
\item $\hat{\varrho}^\eta_\xi:\mathrm{RO}^+(\mathbb{A}_\eta\ast \dot{\mathbb{M}}^{\pi^*}_\eta)\rightarrow\mathrm{RO}^+(\mathbb{A}_{\even(\xi)}\ast\dot{\mathbb{M}}^{\pi}_\xi)$.
\item $\hat{\tau}^\eta_\xi:\rm{RO}^+(\mathbb{A}_{\rm{Even}(\eta)}\ast \dot{\mathbb{M}}^{\pi}_\eta)\rightarrow\mathrm{RO}^+(\mathbb{A}_{\even(\xi)}\ast\dot{\mathbb{M}}^{\pi}_\xi)$,
\end{enumerate}
such that $\tau^\lambda_\xi=\hat{\tau}^\eta_\xi\circ \tau^\lambda_\eta$ and $\varrho^\eta_\xi=\hat{\tau}^\eta_\xi \circ \varrho^\xi_\xi$. Moreover, $\varrho^\eta_\xi=\sigma^\eta_\xi$.
\end{lemma}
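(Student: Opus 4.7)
The plan is to follow the scheme of Lemma \ref{ProjectionsCohenPartMagidor} and Proposition \ref{ProjectionGenericsinMagidor}, defining the four projections from the canonical restriction maps on generics and verifying well-definedness via Mitchell's criterion (Theorem \ref{MitchellCriterion}). Since $\even(\xi)\subseteq\xi\subseteq\eta$ and $\even(\xi)\subseteq\even(\eta)$, we have obvious projections of the Cohen parts $\mathbb{A}_\eta\twoheadrightarrow \mathbb{A}_{\even(\xi)}$ and $\mathbb{A}_{\even(\eta)}\twoheadrightarrow \mathbb{A}_{\even(\xi)}$ given by restriction of the domain. The content of the lemma is that these restrictions lift to the two-step iterations with the Magidor component; the Boolean-completion projections $\hat{\varrho}^\eta_\xi$ and $\hat{\tau}^\eta_\xi$ then come for free by the general fact that a projection between preorders extends uniquely to a projection between their Boolean completions.

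To lift, I would take $G\subseteq \mathbb{A}_\eta$ generic over $V$ and $\vec{\gamma}$ a Magidor sequence for $\mathbb{M}^{\pi^*}_\eta$ over $V[G]$, and argue that $\vec{\gamma}$ remains a Magidor sequence for $\mathbb{M}^\pi_\xi$ over $V[G\upharpoonright\even(\xi)]$. This is a verbatim adaptation of the claim in the proof of Proposition \ref{ProjectionGenericsinMagidor}, proceeding by induction on $\alpha<|\vec{\gamma}|$ and applying Theorem \ref{MitchellCriterion}. The first clause of the criterion follows from the induction hypothesis. For the second clause, the key equality is
\[
\mathcal{F}_{\mathcal{U}^\pi_\xi}(\kappa)^{V[H\upharpoonright\even(\xi)]}
=\mathcal{F}_{\mathcal{U}^{\pi^*}_\eta}(\kappa)^{V[H^*\upharpoonright\eta]}\cap V[H\upharpoonright\even(\xi)],
\]
which is an immediate consequence of the construction of the coherent sequences $\mathcal{U}^\pi_\xi$ and $\mathcal{U}^{\pi^*}_\eta$ in Lemmas \ref{ReflectionMeasuresD} and \ref{DefofB*Magidor} together with the observation that $\mathbb{A}$-generics add no bounded subsets of $\kappa$, so the ground-model measures $\mathcal{U}(\alpha,\beta)$ agree in all the relevant intermediate models. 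The proof of $\tau^\eta_\xi$ is entirely analogous (and slightly easier) since both the source and target sit on the $\pi$-side.

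The commutativity identities $\tau^\lambda_\xi=\hat{\tau}^\eta_\xi\circ\tau^\lambda_\eta$ and the corresponding relation linking $\varrho^\lambda_\xi$ through $\varrho^\lambda_\eta$ and $\hat{\varrho}^\eta_\xi$ are automatic once all projections are recognised as the same underlying restriction-of-generics operation: evaluating at a generic $\bar{R}$ for the top forcing, both sides produce the same induced generic on $\mathbb{A}_{\even(\xi)}\ast\dot{\mathbb{M}}^\pi_\xi$, so the two compositions must agree on a dense set of the source and therefore coincide in $\mathrm{RO}^+$. The final identification with the projections $\sigma^\eta_\xi$ from Lemma \ref{ProjectionsCohenPartMagidor} follows by pulling back through the isomorphism $\varphi$ of Proposition \ref{R*RtruncatedAreIsomorphicMagidor}, since $\varphi$ was designed precisely so that $\pi^*$ extends $\pi$ and the two systems of projections restrict coherently to $\even(\xi)$.

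The main obstacle is the verification of clause (2) of Mitchell's criterion in the inductive step: one must carefully track how the normal filter $\mathcal{F}_\mathcal{U}(\kappa)$ changes when passing between $V[H^*\upharpoonright\eta]$ and $V[H\upharpoonright\even(\xi)]$. This is delicate because $\mathcal{U}^\pi_\xi$ and the restriction of $\mathcal{U}^{\pi^*}_\eta$ to $V[H\upharpoonright\even(\xi)]$ are defined through different names ($\pi(\dot{\mathcal{U}}_{\xi_0})$ versus $\pi^*(\dot{\mathcal{U}}_{\xi^*})$), and the compatibility relies on the fact that $\pi^*$ extends $\pi$ together with the definition of $\mathcal{B}^*\subseteq\mathcal{B}$ in Lemma \ref{DefofB*Magidor}; once this identification is made, everything else is routine bookkeeping.
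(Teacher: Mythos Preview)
Your proposal is correct and follows essentially the same approach as the paper, which simply says the construction of $\varrho^\eta_\xi$, $\tau^\eta_\xi$, $\hat{\varrho}^\eta_\xi$, $\hat{\tau}^\eta_\xi$ is analogous to Lemma~\ref{ProjectionsCohenPartMagidor} via the appropriate version of Proposition~\ref{ProjectionGenericsinMagidor}, and defers the ``moreover'' clause to \cite[Lemma~3.18]{FriHon}. Your write-up is in fact more detailed than the paper's proof; the only minor slip is that you paraphrase the second commutativity identity as a relation among $\varrho^\lambda_\xi$, $\varrho^\lambda_\eta$, $\hat{\varrho}^\eta_\xi$, whereas the statement records $\varrho^\eta_\xi=\hat{\tau}^\eta_\xi\circ\varrho^\xi_\xi$ (factoring through the $\tau$-side), but the underlying restriction-of-generics argument you give handles this identically.
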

\begin{proof}
The construction of $\varrho^\eta_\xi$, $\tau^\eta_\xi$, $\hat{\varrho}^\eta_\xi$ and $\hat{\tau}^\eta_\xi$ is analogous to Lemma \ref{ProjectionsCohenPartMagidor}, again using the adequate version of Proposition~\ref{ProjectionGenericsinMagidor}.  A proof for the moreover part can be found in \cite[Lemma 3.18]{FriHon}.
\end{proof}
\begin{defi}[Truncations of $\mathbb{R}^*$]\label{TruncationsR*}
Let $\xi\in\mathcal{B}^*$. A condition in $\mathbb{R}^*\upharpoonright\xi$ is a triple $(p,\dot{q},r)$ for which all the following hold:
\begin{enumerate}
\item $(p,q)\in \mathbb{A}_\xi \ast \dot{\mathbb{M}}^{\pi^*}_\xi$,
\item $r$ is a partial function with $\dom(r)\in[\mathcal{B}^*\cap \xi]^{<\kappa^+}$;
\item For every $\zeta\in\dom(r)$, $r(\zeta)$ is a $\mathbb{A}_{\mathrm{Even}(\zeta)}\ast \dot{\mathbb{M}}^\pi_\zeta$-name such that
$$\one_{\mathbb{A}_{\mathrm{Even}(\zeta)}\ast \dot{\mathbb{M}}^\pi_\zeta}\Vdash_{\mathbb{A}_{\mathrm{Even}(\zeta)}\ast \dot{\mathbb{M}}^\pi_\zeta}\text{$``\dot{r}(\zeta)\in\Add(\kappa^+,1)$''}.$$
\end{enumerate}
For conditions $(p_0,\dot{q}_0,r_0), (p_1,\dot{q}_1,r_1)$ in $\mathbb{R}^*\upharpoonright\xi$ we will write $(p_0,\dot{q}_0,r_0)\leq (p_1,\dot{q}_1,r_1)$ in case $(p_0,\dot{q}_0)\leq_{\mathbb{A}_\xi \ast \dot{\mathbb{M}}^{\pi^*}_\xi} (p_1,\dot{q}_1)$, $\dom(r_1)$ $\subseteq\dom(r_0)$ and for each $\zeta\in\dom(r_1)$,
$\varrho^\xi_\zeta(p_0,q_0)\Vdash_{\mathbb{A}_{\mathrm{Even}(\zeta)}\ast \dot{\mathbb{M}}^\pi_\zeta}\dot{r}_0(\zeta)\leq \dot{r}_1(\zeta). $
\end{defi}
The proof of the next result is analogous to Proposition \ref{ProjectionRandTruncationsMagidor}.
\begin{prop}
For each $\xi\in\mathcal{B}^*$, there is a projection between $\mathbb{R}^*$ and $\mathrm{RO}^+(\mathbb{R}^*\upharpoonright\xi)$. In particular, $\mathbb{R}^*$ is isomorphic to the iteration $\mathbb{R}^*\upharpoonright\xi\ast (\mathbb{R}^*/\mathbb{R}^*\upharpoonright\xi)$.
\end{prop}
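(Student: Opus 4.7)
The plan is to mimic step-by-step the proof of Proposition \ref{ProjectionRandTruncationsMagidor}. Recall that the earlier proposition simply packaged the projections of Lemma \ref{ProjectionsCohenPartMagidor} into a projection on the main poset, and the same strategy should work here using Lemma \ref{ProjectionsPistaronEven}. More precisely, the candidate projection $\Sigma\colon \mathbb{R}^{*}\longrightarrow \mathrm{RO}^{+}(\mathbb{R}^{*}\upharpoonright\xi)$ should be defined by
\[
\Sigma(p,\dot{q},r)\;=\;\bigl(\pi^{\lambda}_{\xi}(p,\dot{q}),\; r\upharpoonright (\mathcal{B}^{*}\cap\xi)\bigr),
\]
where $\pi^{\lambda}_{\xi}\colon \mathbb{A}_{\lambda}\ast \dot{\mathbb{M}}^{\pi^{*}}_{\lambda}\to \mathrm{RO}^{+}(\mathbb{A}_{\xi}\ast \dot{\mathbb{M}}^{\pi^{*}}_{\xi})$ is the natural projection on the first coordinate. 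The existence of $\pi^{\lambda}_{\xi}$ is proved exactly as in Proposition \ref{ProjectionGenericsinMagidor}: applying Theorem \ref{MitchellCriterion} to the coherent sequence $\mathcal{U}^{\pi^{*}}_{\xi}$ of Lemma \ref{DefofB*Magidor}, any Magidor sequence for $\mathbb{M}^{\pi^{*}}_{\lambda}$ over $V[H^{*}]$ restricts to a Magidor sequence for $\mathbb{M}^{\pi^{*}}_{\xi}$ over $V[H^{*}\upharpoonright\xi]$, since the filters $\mathcal{F}_{\mathcal{U}^{\pi^{*}}_{\xi}}(\kappa)$ are exactly $\mathcal{F}_{\mathcal{U}^{\pi^{*}}_{\lambda}}(\kappa)\cap V[H^{*}\upharpoonright\xi]$.

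Once $\pi^{\lambda}_{\xi}$ is in place, the verification that $\Sigma$ is a projection is routine. For order-preservation: if $(p_{0},\dot{q}_{0},r_{0})\leq_{\mathbb{R}^{*}}(p_{1},\dot{q}_{1},r_{1})$, then $\pi^{\lambda}_{\xi}$ respects the first coordinate order, the inclusion $\dom(r_{1})\cap\mathcal{B}^{*}\cap\xi\subseteq \dom(r_{0})\cap\mathcal{B}^{*}\cap\xi$ is immediate, and for $\zeta\in\dom(r_{1})\cap\mathcal{B}^{*}\cap\xi$ the forcing relation $\varrho^{\lambda}_{\zeta}(p_{0},\dot{q}_{0})\Vdash r_{0}(\zeta)\leq r_{1}(\zeta)$ factors through $\pi^{\lambda}_{\xi}(p_{0},\dot{q}_{0})$ via the commutativity clause of Lemma \ref{ProjectionsPistaronEven} (in the analog one proves for $\pi^{\lambda}_{\xi}$ together with $\varrho^{\xi}_{\zeta}$). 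For lifting of extensions: given $(p_{0}',\dot{q}_{0}',r_{0}')\leq_{\mathbb{R}^{*}\upharpoonright\xi} \Sigma(p,\dot{q},r)$, use that $\pi^{\lambda}_{\xi}$ is already a projection to produce $(p',\dot{q}')\leq (p,\dot{q})$ in $\mathbb{A}_{\lambda}\ast\dot{\mathbb{M}}^{\pi^{*}}_{\lambda}$ mapping below $(p_{0}',\dot{q}_{0}')$, and assemble the third coordinate by setting it equal to $r_{0}'$ on $\mathcal{B}^{*}\cap\xi$ and to $r$ on $\dom(r)\setminus\xi$, exactly as in the proof of Proposition \ref{projectionUMagidor}(2).

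The likely friction point is the first paragraph, namely producing $\pi^{\lambda}_{\xi}$ with target $\mathrm{RO}^{+}(\mathbb{A}_{\xi}\ast \dot{\mathbb{M}}^{\pi^{*}}_{\xi})$ rather than the Even-version that appears in Lemma \ref{ProjectionsPistaronEven}; but since $\mathcal{U}^{\pi^{*}}_{\xi}$ was defined precisely so that Mitchell's criterion applies in $V[H^{*}\upharpoonright\xi]$, this is really a repetition of Proposition \ref{ProjectionGenericsinMagidor} with $\mathbb{A}_{\lambda^{+}}$ replaced by $\mathbb{A}_{\lambda}$ and $\mathcal{U}$ replaced by $\mathcal{U}^{\pi^{*}}_{\lambda}$. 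Finally, the \emph{in particular} clause is a general forcing fact: any projection $\mathbb{P}\to \mathrm{RO}^{+}(\mathbb{Q})$ yields the decomposition $\mathbb{P}\cong \mathbb{Q}\ast(\dot{\mathbb{P}}/\dot{\mathbb{Q}})$ up to forcing equivalence, which applied to $\Sigma$ gives the promised isomorphism $\mathbb{R}^{*}\cong \mathbb{R}^{*}\upharpoonright\xi\ast(\mathbb{R}^{*}/\mathbb{R}^{*}\upharpoonright\xi)$.
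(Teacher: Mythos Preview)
Your proposal is correct and follows essentially the same approach as the paper, which gives no details and simply states that the proof is analogous to Proposition \ref{ProjectionRandTruncationsMagidor}. You have correctly identified that the only point requiring care is the construction of the projection $\pi^{\lambda}_{\xi}\colon \mathbb{A}_{\lambda}\ast\dot{\mathbb{M}}^{\pi^{*}}_{\lambda}\to\mathrm{RO}^{+}(\mathbb{A}_{\xi}\ast\dot{\mathbb{M}}^{\pi^{*}}_{\xi})$, which is not literally one of the maps listed in Lemma \ref{ProjectionsPistaronEven} but follows by rerunning the Mitchell-criterion argument of Proposition \ref{ProjectionGenericsinMagidor} using the coherent sequence $\mathcal{U}^{\pi^{*}}_{\xi}$ supplied by Lemma \ref{DefofB*Magidor}; the commutativity $\varrho^{\lambda}_{\zeta}=\hat{\varrho}^{\xi}_{\zeta}\circ\pi^{\lambda}_{\xi}$ needed for order-preservation on the third coordinate is the direct analog of Lemma \ref{ProjectionsCohenPartMagidor}(3).
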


\begin{lemma}\label{AronszajnWeakCompMagidor}
Assume there is a $\lambda$-Aronszajn tree $T$ in $V^{\mathbb{R}^*}$. Then there is $\xi\in\mathcal{B}^*$ such that $T\cap \xi$ is a $\xi$-Aronszajn tree in $V^{\mathbb{R}^*\upharpoonright\xi}$.
\end{lemma}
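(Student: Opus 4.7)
My plan is a standard reflection argument exploiting that $\lambda$ is weakly compact, hence $\Pi^1_1$-indescribable, together with the largeness of $\mathcal{B}^*$ as an element of $\mathcal{F}_\lambda$. Fix a nice $\mathbb{R}^*$-name $\dot{T}$ with $\one_{\mathbb{R}^*}\Vdash\dot{T}\subseteq\check\lambda$ and tree order $\dot{<}_T\subseteq\check\lambda\times\check\lambda$. Since $\mathbb{R}^*\cong\mathbb{R}\upharpoonright\xi^*$ inherits $\lambda$-Knasterness (Propositions \ref{R*RtruncatedAreIsomorphic2Magidor} and \ref{PropertiesOfV[R]Magidor}(1)), each maximal antichain deciding ``$\check\alpha\in\dot T$'' or the tree order has size less than $\lambda$. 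After a routine coding, arrange that $\mathbb{R}^*$, the assignment $\xi\mapsto\mathbb{R}^*\upharpoonright\xi$, the projections $\varrho^\lambda_\xi$ of Lemma \ref{ProjectionsPistaronEven}, the coherent sequences $\mathcal{U}^{\pi^*}_\xi$, and $\dot T$ itself are all coded into a single predicate $Y\subseteq V_\lambda$.

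The key step is to cast ``$\one_{\mathbb{R}^*}\Vdash\dot T$ has no cofinal branch'' as a $\Pi^1_1$ sentence $\Phi$ over the structure $\langle V_\lambda,\in,Y\rangle$: it universally quantifies over $\dot b\subseteq V_\lambda$ and asserts that no such $\dot b$ is forced by any $p\in\mathbb{R}^*$ to be a $\check\lambda$-branch of $\dot T$. The matrix of $\Phi$ is first-order in the structure once $Y$ is present, since the forcing relation for the relevant atomic formulae is absorbed by $Y$. Conjoined with the first-order fact that $\dot T$ names a tree of height $\lambda$ with bounded levels, the whole assertion remains $\Pi^1_1$.

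Now I invoke $\Pi^1_1$-indescribability. The set of $\xi<\lambda$ at which $\Phi$ reflects to $\langle V_\xi,\in,Y\cap V_\xi\rangle$ lies in $\mathcal{F}_\lambda$, and since $\mathcal{B}^*\in\mathcal{F}_\lambda$, these two sets meet. Fix $\xi\in\mathcal{B}^*$ in the intersection, additionally inaccessible and above $\kappa^+$. The membership $\xi\in\mathcal{B}^*$ guarantees, via Lemma \ref{DefofB*Magidor} and Lemma \ref{ProjectionsPistaronEven}, that $Y\cap V_\xi$ genuinely identifies $\mathbb{R}^*\upharpoonright\xi$, its truncated projections, and $\dot T\cap V_\xi$ as a nice $\mathbb{R}^*\upharpoonright\xi$-name. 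The reflected statement then reads ``$\one_{\mathbb{R}^*\upharpoonright\xi}\Vdash\dot T\cap V_\xi$ is a $\check\xi$-Aronszajn tree'', which is the required conclusion. The delicate point is the choice of $Y$: one must arrange the first-order recognition of $\mathbb{R}^*\upharpoonright\xi$ and of the forcing relation over $\mathbb{R}^*\upharpoonright\xi$ to survive reflection, and this is precisely where the coherence of the measure sequences at points of $\mathcal{B}^*$ (Lemma \ref{DefofB*Magidor}) enters the argument.
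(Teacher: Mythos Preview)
Your argument is correct and follows essentially the same route as the paper: code the forcing and the name into a predicate over $V_\lambda$, express ``$\dot T$ is $\lambda$-Aronszajn'' as a $\Pi^1_1$ sentence, and reflect using $\Pi^1_1$-indescribability together with $\mathcal{B}^*\in\mathcal{F}_\lambda$. The paper's presentation is slightly leaner in that it only codes $\mathbb{R}^*$ and $\dot T$ as predicates and then relies on the Mahlo-ness of $\xi$ (also available from $\mathcal{F}_\lambda$) to conclude $\mathbb{R}^*\cap V_\xi=\mathbb{R}^*\upharpoonright\xi$, rather than pre-coding the truncations and projections into $Y$.
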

\begin{proof}
Let $\dot{T}$ be a $\mathbb{R}^*$-name such that $\one_{\mathbb{R}^*}\Vdash_{\mathbb{R}^*}\text{``$\dot{T}$ is a $\lambda$-Aronszajn tree''}$. Without loss of generality we may assume that $\dot{T}$ is a $\mathbb{R}^*$-name for a subset of $\lambda$.
It is not hard to check that the above forcing sentence is equivalent to a $\Pi^1_1$ sentence $\Phi$ in the language $\mathcal{L}=\{\in,\mathbb{R}^*,\dot{T},\lambda\}$. 
Since $\lambda$ is weakly compact, hence $\Pi^1_1$-indescribable, there is a set $X\in(\mathcal{F}_\lambda)^V$ such that for each $\xi\in X$, $\langle V_\xi,\in, \mathbb{R}^*\cap V_\xi, \dot{T}\cap \xi, \xi\rangle\models \Phi$. By Lemma \ref{DefofB*Magidor} and the former discussion we can assume that all these $\xi$ are Mahlo and that $\xi\in\mathcal{B}^*$. In particular, $\mathbb{R}^*\cap V_\xi=\mathbb{R}^*\upharpoonright \xi$, and thus $\langle V_\xi,\in, \mathbb{R}^*\upharpoonright\xi, \dot{T}\cap \xi, \xi\rangle\models \Phi$. Notice that $\Phi$ is absolute between the universe of sets and this structure, hence $\one_{\mathbb{R}^*\upharpoonright\xi}\Vdash_{\mathbb{R}^*\upharpoonright\xi}\text{``$\dot{T}\cap \xi$ is a $\xi$-Aronszajn tree''}$.
\end{proof}

\begin{lemma}
Assume that there is a $\lambda$-Aronszajn tree $T\s \lambda$ in $V^{\mathbb{R}^*}$. Let $\xi\in\mathcal{B}^*$ be as in the previous lemma. Then $\mathbb{R}^*/(\mathbb{R}^*\upharpoonright\xi)$ adds $b_\xi$, a  cofinal branch throughout $T\cap \xi$.
\end{lemma}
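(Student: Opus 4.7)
The plan is to produce the desired branch $b_\xi$ by downward-projecting a single node of $T$ lying at level $\xi$. More precisely, working in $V^{\mathbb{R}^*}$, since $T$ has height $\lambda>\xi$, the $\xi$-th level of $T$ is non-empty; pick any $t\in T$ whose $T$-level is $\xi$. Set
\[
b_\xi:=\{s\in T\mid s<_T t\}.
\]
By the tree axioms $b_\xi$ is linearly ordered by $<_T$ and meets every $T$-level below $\xi$. Under the standard encoding by which $T\cap\xi$ is regarded as the sub-tree of nodes of $T$ whose $T$-level is ${<}\xi$ (this is exactly the encoding used in Lemma \ref{AronszajnWeakCompMagidor}, where the $\Pi^1_1$-reflection is applied to $\dot{T}\s\lambda$), the chain $b_\xi$ is a cofinal branch through $T\cap\xi$ sitting in $V^{\mathbb{R}^*}$.

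Now invoke Lemma \ref{AronszajnWeakCompMagidor}: the hypothesis is that $T\cap\xi$ is $\xi$-Aronszajn in $V^{\mathbb{R}^*\upharpoonright\xi}$, hence this model contains no cofinal branch through $T\cap\xi$. Therefore $b_\xi \notin V^{\mathbb{R}^*\upharpoonright\xi}$, which exactly means that $b_\xi$ is an object freshly introduced by the quotient forcing $\mathbb{R}^*/(\mathbb{R}^*\upharpoonright\xi)$. Since both models have the same ordinals and $b_\xi\in V^{\mathbb{R}^*}$, this gives the required assertion.

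There is no genuine obstacle in this lemma: its content is essentially the tautology that in a taller extension a node above the critical level produces a new branch through the reflected Aronszajn tree. The real difficulty is postponed to the subsequent step, where one must prove that the quotient $\mathbb{R}^*/(\mathbb{R}^*\upharpoonright\xi)$ \emph{cannot} add such a branch, thereby obtaining a contradiction and concluding $\mathrm{TP}(\lambda)$ in $V[R]$. That will require a quotient analysis in the spirit of Mitchell, using the factorization of $\mathbb{R}^*\upharpoonright\xi$ and $\mathbb{R}^*$ into Cohen and term-space parts, together with the Magidor combinatorics and the generalized R\"owbottom lemma (Lemma \ref{RowbottomMagidor}).
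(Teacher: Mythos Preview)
Your argument is correct and matches the paper's own proof almost verbatim: the paper simply observes that since $T$ is a $\lambda$-tree there is a cofinal branch $b_\xi$ through $T\cap\xi$ in $V^{\mathbb{R}^*}$, and since $T\cap\xi$ is $\xi$-Aronszajn in $V^{\mathbb{R}^*\upharpoonright\xi}$ this branch must be added by the quotient. Your explicit construction of $b_\xi$ as the set of predecessors of a node at level $\xi$ is just a slightly more detailed version of the same observation.
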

\begin{proof}
Observe that in $V^{\mathbb{R}^*}$ there is a cofinal branch $b_\xi$ for $T\cap\xi$, 
 as $T$ is a $\lambda$-tree. Nonetheless, $T\cap \xi$ is $\xi$-Aronszajn in $V^{\mathbb{R}^*\upharpoonright\xi}$ so this branch must be added by the quotient   $\mathbb{R}^*/(\mathbb{R}^*\upharpoonright\xi)$.
\end{proof}
By combining Proposition~\ref{TruncationsAndAronszajnTreeMagidor} and \ref{R*RtruncatedAreIsomorphic2Magidor} with the above lemma  it follows that if the quotients  $\mathbb{R}^*/(\mathbb{R}^*\upharpoonright\xi)$ do not add $\xi$-branches then $\TP(\lambda)$ holds in $V[R]$.
\medskip

In the next series of lemmas we will prove that for each $\xi\in\mathcal{B}^*$ there are forcings $\mathbb{P}_\xi$ and $\mathbb{Q}^{\rm{Even}}_\xi$ fulfilling the following properties:
\begin{itemize}
\item[($\alpha_\xi$)] $\mathbb{P}_\xi\times \mathbb{Q}^{\rm{Even}}_\xi$ projects onto $\mathbb{R}^*/(\mathbb{R}^*\upharpoonright\xi)$ in $V^{\mathbb{R}^*\upharpoonright\xi}$.
\item[($\beta_\xi$)] $\mathbb{P}_\xi\times\mathbb{Q}^{\rm{Even}}_\xi$ does not add new branches to $T\cap \xi$ over $V^{\mathbb{R}^*\upharpoonright\xi}$.
\end{itemize}
Combining ($\alpha_\xi$) and ($\beta_\xi$) we would conclude that $\mathbb{R}^*/(\mathbb{R}^*\upharpoonright\xi)$ does not add $\xi$-branches to $T\cap \xi$. In particular, if this is  true for each $\xi\in\mathcal{B}^*$ then $V[R]\models \TP(\lambda)$.

\smallskip

We now introduce a subforcing of $\mathbb{R}^* \upharpoonright \xi$ which we will use to prove properties $(\alpha_\xi)$ and $(\beta_\xi)$.

\begin{defi}\label{TruncationsREven}
Let $\xi\in\mathcal{B}^*$. A condition in the poset $\mathbb{R}^*_{\rm{Even}}\upharpoonright\xi$ is a triple $(p,\dot{q},r)$ for which all the following hold:
\begin{enumerate}
\item $(p,q)\in \mathbb{A}_{\rm{Even}(\xi)}\ast \dot{\mathbb{M}}^{\pi}_\xi$,
\item $r$ is a partial function with $\dom(r)\in[\mathcal{B}^*\cap \xi]^{<\kappa^+}$;
\item For every $\zeta\in\dom(r)$, $r(\zeta)$ is a $\mathbb{A}_{\mathrm{Even}(\zeta)}\ast \dot{\mathbb{M}}^\pi_\zeta$-name such that
$$\one_{\mathbb{A}_{\mathrm{Even}(\zeta)}\ast \dot{\mathbb{M}}^\pi_\zeta}\Vdash_{\mathbb{A}_{\mathrm{Even}(\zeta)}\ast \dot{\mathbb{M}}^\pi_\zeta}\text{$``\dot{r}(\zeta)\in\Add(\kappa^+,1)$''}.$$
\end{enumerate}
For conditions $(p_0,\dot{q}_0,r_0), (p_1,\dot{q}_1,r_1)$ in $\mathbb{R}^*\upharpoonright\xi$ we will write $(p_0,\dot{q}_0,r_0)\leq (p_1,\dot{q}_1,r_1)$ in case $(p_0,\dot{q}_0)\leq_{\mathbb{A}_{\rm{Even}(\xi)} \ast \dot{\mathbb{M}}^{\pi}_\xi} (p_1,\dot{q}_1)$, $\dom(r_1)$ $\subseteq\dom(r_0)$ and for each $\zeta\in\dom(r_1)$,
$\tau^\xi_\zeta(p_0,q_0)\Vdash_{\mathbb{A}_{\mathrm{Even}(\zeta)}\ast \dot{\mathbb{M}}^\pi_\zeta}\dot{r}_0(\zeta)\leq \dot{r}_1(\zeta). $
\end{defi}
Clearly $\mathbb{R}^*\upharpoonright\xi $ projects onto $\mathbb{R}^*_{\rm{Even}}\upharpoonright\xi$, for each $\xi\in\mathcal{B}^*\cup\{\lambda\}$. The following is a key lemma:
\begin{lemma}\label{FactorizationRstarxi}
For each $\xi\in\mathcal{B}^*$, $\psi_\xi:  \mathbb{R}^*\upharpoonright\xi\rightarrow \mathbb{A}_{\rm{Odd}(\xi)} \times \mathbb{R}^*_{\rm{Even}}\upharpoonright\xi$ given by  $ (p,\dot{q},r)\mapsto \langle p\upharpoonright\mathrm{Odd}(\xi), (\varrho^\xi_\xi(p,\dot{q}),r)\rangle$  defines a dense embedding. In particular, both posets are forcing equivalent and thus $V^{\mathbb{R}^*\upharpoonright\xi}$ can be seen as a  $\kappa^+$-cc extension of $V^{\mathbb{R}^*_{\rm{Even}}\upharpoonright\xi}$.
\end{lemma}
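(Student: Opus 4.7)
The plan is to verify the three defining properties of a dense embedding: order-preservation, preservation of incompatibility, and density of the range in the target product. Order-preservation and incompatibility-preservation are essentially bookkeeping that follows from (i) the natural product factorization $\mathbb{A}_\xi \cong \mathbb{A}_{\mathrm{Odd}(\xi)} \times \mathbb{A}_{\mathrm{Even}(\xi)}$ of Cohen forcing by support; (ii) the fact that $\varrho^\xi_\xi$ is a projection, hence monotone and reflects compatibility in its image; and (iii) the identity $\varrho^\xi_\zeta = \hat{\tau}^\xi_\zeta \circ \varrho^\xi_\xi$ from Lemma~\ref{ProjectionsPistaronEven}, which translates the ordering condition on the $r$-component (stated using $\varrho^\xi_\zeta$ in $\mathbb{R}^*\upharpoonright\xi$) exactly to the condition stated using $\tau^\xi_\zeta$ in $\mathbb{R}^*_{\mathrm{Even}}\upharpoonright\xi$. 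So the bulk of the work goes into density.

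For density, given a condition $\langle p^{\mathrm{odd}}, (s, \dot{t}, r) \rangle \in \mathbb{A}_{\mathrm{Odd}(\xi)} \times \mathbb{R}^*_{\mathrm{Even}}\upharpoonright\xi$, I would construct $(p, \dot{q}, r)\in \mathbb{R}^*\upharpoonright\xi$ whose image extends it. First, set $p$ to be the unique element of $\mathbb{A}_\xi$ with $p\upharpoonright\mathrm{Odd}(\xi) = p^{\mathrm{odd}}$ and $p\upharpoonright\mathrm{Even}(\xi) = s$; this is well-defined thanks to the product decomposition of the Cohen forcing. Next, observe that the Cohen component of $\varrho^\xi_\xi(p, \dot{\mathbf{1}})$ is precisely $p\upharpoonright\mathrm{Even}(\xi) = s$, so $(s, \dot{t}) \leq \varrho^\xi_\xi(p, \dot{\mathbf{1}})$ in $\mathrm{RO}^+(\mathbb{A}_{\mathrm{Even}(\xi)} \ast \dot{\mathbb{M}}^\pi_\xi)$. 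Applying the projection property of $\varrho^\xi_\xi$, one obtains a name $\dot{q}$ with $(p,\dot{q}) \leq (p,\dot{\mathbf{1}})$ and $\varrho^\xi_\xi(p,\dot{q}) \leq (s, \dot{t})$. Keeping the same $r$-function (whose support is already in $[\mathcal{B}^*\cap \xi]^{<\kappa^+}$) yields a condition of $\mathbb{R}^*\upharpoonright\xi$ mapping into the cone below $\langle p^{\mathrm{odd}}, (s,\dot{t},r)\rangle$.

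The main subtlety is in guaranteeing that the lifting of $\dot{t}$ to a Magidor name $\dot{q}$ can be carried out without touching the Cohen component, so that $p\upharpoonright\mathrm{Odd}(\xi) = p^{\mathrm{odd}}$ remains intact. This is legitimate because the Cohen part of the projection $\varrho^\xi_\xi$ acts as the literal restriction $p\mapsto p\upharpoonright\mathrm{Even}(\xi)$ (inherited through $\varphi^{-1}$ and $\sigma^{\xi^*}_\xi$), so all the flexibility in the projection property concentrates on the Magidor factor, exactly as needed. For the ``in particular'' clause, forcing equivalence is immediate from the dense embedding, while the $\kappa^+$-cc conclusion follows because $\mathbb{A}_{\mathrm{Odd}(\xi)}$ is $\kappa^+$-Knaster in $V$ and, after writing $\mathbb{R}^*_{\mathrm{Even}}\upharpoonright\xi$ as the product of a $\kappa^+$-cc factor with a $\kappa^+$-closed one (in the spirit of Proposition~\ref{projectionUMagidor}(3)), Easton's Lemma shows that $\mathbb{A}_{\mathrm{Odd}(\xi)}$ remains $\kappa^+$-cc in $V^{\mathbb{R}^*_{\mathrm{Even}}\upharpoonright\xi}$.
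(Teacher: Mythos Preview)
Your proof is correct and follows the same overall strategy as the paper: verify order-preservation and incompatibility-preservation routinely, then establish density of the range. The only notable difference lies in how density is achieved. The paper proceeds more directly: given $\langle p',(p,\dot q,r)\rangle\in\mathbb{A}_{\mathrm{Odd}(\xi)}\times\mathbb{R}^*_{\mathrm{Even}}\upharpoonright\xi$, it simply takes $(p'\cup p,\dot q^*,r)$, where $\dot q^*$ is the canonical reinterpretation of the $\mathbb{A}_{\mathrm{Even}(\xi)}$-name $\dot q$ as an $\mathbb{A}_\xi$-name (using that any condition in $\mathbb{M}^\pi_\xi$ is literally a condition in $\mathbb{M}^{\pi^*}_\xi$, since $\mathcal{U}^\pi_\xi\subseteq\mathcal{U}^{\pi^*}_\xi$ pointwise). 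This avoids invoking the projection property of $\varrho^\xi_\xi$ altogether and hence sidesteps the ``subtlety'' you flag about keeping the Cohen coordinate fixed. Your route via the abstract projection property is also valid, but it forces you to argue separately that the lift can be taken without altering $p$, which is exactly the content of the direct identification the paper uses. In short, both arguments work; the paper's is a one-line shortcut for the step you spend a paragraph justifying.
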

\begin{proof}
It is routine to check that $\psi_\xi$ is order-preserving and that it preserves incompatibility. Now let $\langle p', (p, \dot{q},r)\rangle\in \mathbb{A}_{\rm{Odd}(\xi)}\times \mathbb{R}^*_{\rm{Even}}\upharpoonright\xi$. Letting $\dot{q}^*$ the usual identification of $\dot{q}$ as a $\mathbb{A}_\xi$-name it follows that $\psi_\xi((p'\cup p, \dot{q}^*, r))\leq \langle p', (p, \dot{q},r)\rangle.$
\end{proof}

\begin{defi}\label{DefiAfterPropertiesAlphaBetaMagidor}
For each $\xi\in\mathcal{B}^*\cup\{\lambda\}$, define $\mathbb{C}_\xi:=\mathbb{A}_\xi\ast\dot{\mathbb{M}}^{\pi^*}_\xi$, $\mathbb{P}_\xi:=\mathbb{C}_\lambda/\mathbb{C}_\xi$ and $\mathbb{U}_\xi:=\{(\one,\dot{\one},r)\mid (\one,\dot{\one}, r)\in \mathbb{R}^*\upharpoonright\xi\}$. Over $V^{\mathbb{R}^*\upharpoonright\xi}$, define $\mathbb{Q}_\xi:=\{(\one,\dot{\one},r)\mid (\one,\dot{\one},r)\in\mathbb{R}^*/\mathbb{R}^*\upharpoonright\xi\}$.
Also, over $V^{\mathbb{R}_{\rm{Even}}^*\upharpoonright\xi}$, define  $\mathbb{Q}^{\rm{Even}}_\xi:=\{(\one, \dot{\one},r)\mid (\one, \dot{\one},r)\in \mathbb{R}^*/\mathbb{R}^*_{\rm{Even}}\upharpoonright\xi\}$,
\end{defi}

Arguing respectively as in Proposition~\ref{projectionUMagidor}  and Proposition \ref{PropertiesOfV[R]Magidor} one obtains the following:
\begin{prop}\label{projectionUgammaMagidor}
For each $\xi\in\mathcal{B}^*$, the following hold:
\begin{enumerate}
\item $\mathbb{U}_\xi$ is $\kappa^+$-directed closed.
\item $\mathbb{C}_\xi\times\mathbb{U}_\xi$ projects onto $\mathbb{R}^*\upharpoonright\xi$ via the map $\langle (p,\dot{q}),(\one,\dot{\one},r)\rangle\mapsto (p,\dot{q},r)$.
\item $V^{\mathbb{C}_\xi}$ and $V^{\mathbb{R}^*\upharpoonright\xi}$ have the same ${<}\kappa^+$-sequences
\end{enumerate}
\end{prop}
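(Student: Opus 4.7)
The plan is to imitate verbatim the arguments of Proposition~\ref{projectionUMagidor}, with the ambient poset $\mathbb{A}_{\lambda^+}\ast\dot{\mathbb{M}}_{\lambda^+}$ replaced by $\mathbb{C}_\xi$, the termspace $\mathbb{U}$ replaced by $\mathbb{U}_\xi$, and the forcing $\mathbb{R}$ replaced by its truncation $\mathbb{R}^*\upharpoonright\xi$. The role played by the projection $\sigma^{\lambda^+}_\gamma$ in the original proof is now played by the projection $\varrho^\xi_\gamma$ supplied by Lemma~\ref{ProjectionsPistaronEven}, which is defined exactly because $\dom(r)\subseteq \mathcal{B}^*\cap\xi$ for every $(p,\dot q,r)\in\mathbb{R}^*\upharpoonright\xi$.

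For (1), I would take a $\leq$-decreasing (or directed) family $\langle(\one,\dot\one,r_\alpha)\mid\alpha<\kappa\rangle$ from $\mathbb{U}_\xi$ and set $\dom(r^*):=\bigcup_\alpha\dom(r_\alpha)$, which still lies in $[\mathcal{B}^*\cap\xi]^{<\kappa^+}$ since $\kappa^+$ is regular. For each $\gamma\in\dom(r^*)$ let $\alpha_\gamma:=\min\{\alpha<\kappa\mid\gamma\in\dom(r_\alpha)\}$; the tail $\langle r_\alpha(\gamma)\mid\alpha_\gamma\leq\alpha<\kappa\rangle$ is forced to be a $\leq$-decreasing sequence in $\dot\Add(\kappa^+,1)$, so the $\kappa^+$-closure of $\Add(\kappa^+,1)$ provides a name $r^*(\gamma)$ for a lower bound. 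Then $(\one,\dot\one,r^*)$ is a $\leq_{\mathbb{R}^*\upharpoonright\xi}$-lower bound in $\mathbb{U}_\xi$.

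For (2), the map $\rho_\xi$ is clearly order preserving and sends $\one$ to $\one$. Given a refinement $(p_1,\dot q_1,r_1)\leq \rho_\xi(\langle(p_2,\dot q_2),(\one,\dot\one,r_2)\rangle)$ in $\mathbb{R}^*\upharpoonright\xi$, I would build $r_3$ with $\dom(r_3)=\dom(r_1)$ by declaring, for each $\gamma\in\dom(r_3)$ and each pair $\langle\sigma,b\rangle$, that $\langle\sigma,b\rangle\in r_3(\gamma)$ iff either $b\leq \varrho^\xi_\gamma(p_1,\dot q_1)$ and $b$ forces $\sigma\in r_1(\gamma)$, or $b\perp \varrho^\xi_\gamma(p_1,\dot q_1)$ and $b$ forces $\sigma\in r_2(\gamma)$. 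Exactly as in Proposition~\ref{projectionUMagidor}(2), the pair $\langle(p_1,\dot q_1),(\one,\dot\one,r_3)\rangle$ witnesses both $\leq$-refinements required for $\rho_\xi$ to be a projection.

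For (3), I would first pass to the dense subposet $\mathbb{Q}_\xi\subseteq \mathbb{C}_\xi$ consisting of those conditions whose Magidor stem entries are forced equal to check ordinals. As in the proof of Proposition~\ref{projectionUMagidor}(3), $\mathbb{Q}_\xi$ is $\kappa^+$-Knaster by combining the $\kappa^+$-Knasterness of $\mathbb{A}_\xi$ with Proposition~\ref{MagidorIsKnaster} applied to $\mathbb{M}^{\pi^*}_\xi$. Then Easton's Lemma together with part (1) of the present proposition yields $\one_{\mathbb{Q}_\xi}\Vdash\text{``}\mathbb{U}_\xi\text{ is }\kappa^+\text{-distributive''}$, so $V^{\mathbb{Q}_\xi\times\mathbb{U}_\xi}=V^{\mathbb{Q}_\xi}$ on ${<}\kappa^+$-sequences, and by forcing equivalence $V^{\mathbb{C}_\xi\times\mathbb{U}_\xi}$ and $V^{\mathbb{C}_\xi}$ agree there too. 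Since $\mathbb{R}^*\upharpoonright\xi$ sits between $\mathbb{C}_\xi$ and $\mathbb{C}_\xi\times\mathbb{U}_\xi$ via the projection of part (2), the identity of ${<}\kappa^+$-sequences between $V^{\mathbb{C}_\xi}$ and $V^{\mathbb{R}^*\upharpoonright\xi}$ follows. The only non-routine bookkeeping will be confirming that the Knaster/closure dichotomy still holds in the truncated context—this is where Lemma~\ref{ProjectionsPistaronEven} is doing the real work, by ensuring the projections $\varrho^\xi_\gamma$ used in the definition of the order on $\mathbb{R}^*\upharpoonright\xi$ are genuine.
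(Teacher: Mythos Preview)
Your proposal is correct and follows exactly the route the paper indicates: the paper's proof is simply the one-line remark ``Arguing respectively as in Proposition~\ref{projectionUMagidor} and Proposition~\ref{PropertiesOfV[R]Magidor} one obtains the following,'' and you have faithfully unpacked that reference with the obvious substitutions $\mathbb{A}_{\lambda^+}\ast\dot{\mathbb{M}}_{\lambda^+}\rightsquigarrow\mathbb{C}_\xi$, $\mathbb{U}\rightsquigarrow\mathbb{U}_\xi$, $\sigma^{\lambda^+}_\gamma\rightsquigarrow\varrho^\xi_\gamma$. One cosmetic point: avoid calling your dense subposet $\mathbb{Q}_\xi$, since that symbol is already reserved in Definition~\ref{DefiAfterPropertiesAlphaBetaMagidor} for the term-space quotient $\{(\one,\dot\one,r)\in\mathbb{R}^*/\mathbb{R}^*\upharpoonright\xi\}$.
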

\begin{prop}
For each $\xi\in\mathcal{B}^*$, the following hold:
\begin{enumerate}
\item $\mathbb{R}^*\upharpoonright\xi$ is $\xi$-Knaster. In particular, all $V$-cardinals ${\geq}\xi$ are preserved.
\item $\mathbb{R}^*\upharpoonright\xi$ preserves all the cardinals outside the interval $((\kappa^+)^V,\xi)$, while collapses the cardinals there to $(\kappa^+)^V$.  
In particular, $$V^{\mathbb{R}^*\upharpoonright\xi}\models\text{ $``(\kappa^+)^V=\kappa^+\,\wedge\,\xi=\kappa^{++}$''}.$$
\item $V^{\mathbb{R}^*\upharpoonright\xi}\models \text{$``\kappa$ is  strong limit  with $\cof(\kappa)=\delta$''}$.
\item $V^{\mathbb{R}^*\upharpoonright\xi}\models\text{$``2^\kappa\geq \xi$''}$.
\end{enumerate}
\end{prop}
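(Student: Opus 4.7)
The plan is to mirror the proof of Proposition~\ref{PropertiesOfV[R]Magidor}, replacing $\mathbb{R}$ by $\mathbb{R}^*\upharpoonright\xi$, $\lambda^+$ by $\xi$, and $\mathbb{A}_{\lambda^+}\ast\dot{\mathbb{M}}_{\lambda^+}$ by $\mathbb{C}_\xi=\mathbb{A}_\xi\ast\dot{\mathbb{M}}^{\pi^*}_\xi$, invoking at each step the structural facts for $\mathbb{R}^*\upharpoonright\xi$ already at our disposal, in particular Proposition~\ref{projectionUgammaMagidor} and Lemma~\ref{ProjectionsPistaronEven}.

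For (1), given $A\in[\mathbb{R}^*\upharpoonright\xi]^\xi$, I would first refine to conditions whose Magidor coordinate has a concretely decided stem, using the analogue of the dense subposet $\mathbb{Q}$ from the proof of Proposition~\ref{projectionUMagidor}(3). Combining the $\kappa^+$-Knasterness of $\mathbb{A}_\xi$ with the $\kappa^+$-Knasterness of Magidor forcing (Proposition~\ref{MagidorIsKnaster}), one refines $A$ to $\mathcal{I}\in[A]^\xi$ on which the first two coordinates are pairwise compatible. Finally, since $\xi$ is inaccessible (it lies in $\mathcal{B}^*\subseteq\mathcal{F}_\lambda$, which concentrates on Mahlo cardinals), we have $\xi^{<\kappa^+}=\xi$, and for any root $\Delta\in[\mathcal{B}^*\cap\xi]^{<\kappa^+}$ the pool of $\bigcup_{\gamma\in\Delta}(\mathbb{A}_{\mathrm{Even}(\gamma)}\ast\dot{\mathbb{M}}^\pi_\gamma)$-nice names has size strictly less than $\xi$. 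The $\Delta$-system lemma then produces the required $\xi$ pairwise compatible triples.

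For (2), preservation of cardinals $\leq (\kappa^+)^V$ follows from Proposition~\ref{projectionUgammaMagidor}(3) together with the fact that $\mathbb{C}_\xi$ is cardinal-preserving up to $\kappa^+$. For the collapse of $V$-cardinals $\theta\in((\kappa^+)^V,\xi)$, I would pick $\eta_\theta\in\mathcal{B}^*\cap(\theta,\xi)$ and project $\mathbb{R}^*\upharpoonright\xi$ onto $\mathrm{RO}^+(\mathbb{A}_{\mathrm{Even}(\eta_\theta)}\ast\dot{\mathbb{M}}^\pi_{\eta_\theta})\ast\dot{\Add}(\kappa^+,1)$ via $(p,\dot{q},r)\mapsto(\varrho^\xi_{\eta_\theta}(p,\dot{q}),r(\eta_\theta))$, using Lemma~\ref{ProjectionsPistaronEven}; the claim appearing in Proposition~\ref{PropertiesOfV[R]Magidor}(2) then collapses $\theta$ to $(\kappa^+)^V$. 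Such $\eta_\theta$ exists because $\mathcal{B}^*$ is a $\kappa^+$-club in $\lambda$ and $\xi$, being weakly compact (hence Mahlo), is a limit point of $\mathcal{B}^*$. For (3) and (4), note that these already hold in $V^{\mathbb{C}_\xi}$: Magidor forcing singularizes $\kappa$ to cofinality $\delta$ while preserving strong-limitness (Theorem~\ref{MagidorTheoremForcing} and Proposition~\ref{CombinatoricsBelowkappa}, plus the fact that $\mathbb{A}_\xi$ is $\kappa$-closed and hence adds no bounded subsets of $\kappa$), and $\mathbb{A}_\xi$ provides $\xi$ Cohen subsets of $\kappa$, so that $2^\kappa\geq\xi$. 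Proposition~\ref{projectionUgammaMagidor}(3) then transfers both properties to $V^{\mathbb{R}^*\upharpoonright\xi}$, since subsets of $\kappa$ and any cofinal $\delta$-sequence in $\kappa$ are ${<}\kappa^+$-sequences.

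The main obstacle I expect is clause (1): one must carefully extract $\xi$-Knasterness from the $\Delta$-system argument by controlling the number of ``types'' of restricted conditions below the root. This hinges on the bound $|\bigcup_{\gamma\in\Delta}(\mathbb{A}_{\mathrm{Even}(\gamma)}\ast\dot{\mathbb{M}}^\pi_\gamma)|^{\aleph_0}<\xi$, which in turn exploits the inaccessibility of $\xi$ and the fact that $|\Delta|<\kappa^+<\xi$. Everything else is a cosmetic adaptation of Proposition~\ref{PropertiesOfV[R]Magidor}.
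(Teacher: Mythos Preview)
Your approach is exactly what the paper intends: it gives no separate proof and simply refers back to Proposition~\ref{PropertiesOfV[R]Magidor}, so mirroring that argument with $\mathbb{C}_\xi$, Proposition~\ref{projectionUgammaMagidor}, and Lemma~\ref{ProjectionsPistaronEven} in place of their analogues is the right move.

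There are, however, two slips in your justifications for the large-cardinal properties of $\xi$. First, you write ``$\xi$ lies in $\mathcal{B}^*\subseteq\mathcal{F}_\lambda$, which concentrates on Mahlo cardinals''; but $\mathcal{B}^*\in\mathcal{F}_\lambda$, not $\mathcal{B}^*\subseteq\mathcal{F}_\lambda$, and membership of $\mathcal{B}^*$ in the weakly compact filter does \emph{not} force every element of $\mathcal{B}^*$ to be Mahlo. Second, in part~(2) you assert that ``$\xi$, being weakly compact (hence Mahlo), is a limit point of $\mathcal{B}^*$''; this is false, since $\xi<\lambda$ and only $\lambda$ is assumed weakly compact. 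Both issues are repaired the same way: since $\mathcal{F}_\lambda$ is a normal filter containing the club filter, the set of Mahlo cardinals below $\lambda$ and the set of limit points of $\mathcal{B}^*$ both lie in $\mathcal{F}_\lambda$, so one may harmlessly shrink $\mathcal{B}^*$ to lie inside their intersection (or, equivalently, restrict the statement to such $\xi$, which is all that Lemma~\ref{AronszajnWeakCompMagidor} ever uses). With that adjustment your $\Delta$-system count and the existence of $\eta_\theta\in\mathcal{B}^*\cap(\theta,\xi)$ go through as written.
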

\begin{lemma}
For each $\xi\in\mathcal{B}^*$, $\mathbb{Q}^{\rm{Even}}_\xi$ is $\kappa^+$-directed closed over $V^{\mathbb{R}^*_{\rm{Even}}\upharpoonright\xi}$.
\end{lemma}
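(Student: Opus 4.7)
The plan is to follow the template of Proposition~\ref{projectionUMagidor}(1), which establishes the analogous $\kappa^+$-directed closure for the ground-model termspace $\mathbb{U}$. Working in $V^{\mathbb{R}^*_{\rm{Even}}\upharpoonright\xi}$, fix $\theta\leq\kappa$ and a directed family $\{(\one,\dot{\one},r_\alpha)\mid \alpha<\theta\}$ in $\mathbb{Q}^{\rm{Even}}_\xi$. First I would take $\dom(r^*) := \bigcup_{\alpha<\theta}\dom(r_\alpha)$, which lies in $[\mathcal{B}]^{<\kappa^+}$ by the regularity of $\kappa^+$. For each coordinate $\gamma \in \dom(r^*)$, let $I_\gamma := \{\alpha < \theta \mid \gamma \in \dom(r_\alpha)\}$.

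Next I would exploit the fact that the first two components of each condition in the family are trivial, so that $\varrho^\lambda_\gamma(\one,\dot{\one})=\one_{\mathbb{A}_{\mathrm{Even}(\gamma)}\ast\dot{\mathbb{M}}^\pi_\gamma}$, and the ordering on $\mathbb{R}^*$ reduces, on such conditions, to the requirement that $\one\Vdash \dot r_0(\gamma) \leq \dot r_1(\gamma)$. Consequently the directedness of our family entails that $\{r_\alpha(\gamma)\mid \alpha \in I_\gamma\}$ is $\one$-forced to be a directed subcollection of $\dot{\Add}(\kappa^+,1)$ of size $<\kappa^+$. Since $\dot{\Add}(\kappa^+,1)$ is forced to be $\kappa^+$-directed closed, the Maximum Principle provides an $\mathbb{A}_{\mathrm{Even}(\gamma)}\ast \dot{\mathbb{M}}^\pi_\gamma$-name $r^*(\gamma)$ which is $\one$-forced to be a lower bound. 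Packing these together produces a candidate condition $(\one,\dot{\one},r^*)$ in $\mathbb{R}^*$ that, by construction, lies below each $(\one,\dot{\one},r_\alpha)$ in $\mathbb{R}^*$ and whose projection onto $\mathbb{R}^*_{\rm{Even}}\upharpoonright\xi$ is trivial, whence it belongs to $\mathbb{Q}^{\rm{Even}}_\xi$ and is the desired lower bound.

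The main subtlety I would need to address is that the sequence $\langle r_\alpha \mid \alpha<\theta\rangle$ lives in $V^{\mathbb{R}^*_{\rm{Even}}\upharpoonright\xi}$ rather than in $V$, so the coordinatewise construction must be carried out there. This is not a serious obstacle because each individual $r_\alpha(\gamma)$ is a ground-model name, and the Maximum Principle applied to a forcing which is $\kappa^+$-closed in any generic extension remains valid in $V^{\mathbb{R}^*_{\rm{Even}}\upharpoonright\xi}$. Alternatively, the conclusion can be obtained by introducing a termspace analog $\mathbb{U}^{\rm{Even}}_\xi$, defined in $V$ in analogy with $\mathbb{U}$, verifying that $\mathbb{U}^{\rm{Even}}_\xi$ is $\kappa^+$-directed closed in $V$ by literally the same argument as Proposition~\ref{projectionUMagidor}(1), and then showing that $\mathbb{U}^{\rm{Even}}_\xi$ projects onto $\mathbb{Q}^{\rm{Even}}_\xi$ in $V^{\mathbb{R}^*_{\rm{Even}}\upharpoonright\xi}$, in the style of Proposition~\ref{projectionUMagidor}(2).
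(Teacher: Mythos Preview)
Your proposal follows the correct template (Proposition~\ref{projectionUMagidor}(1)), and you correctly locate the main subtlety: the directed family lives in $V^{\mathbb{R}^*_{\rm{Even}}\upharpoonright\xi}$, not in $V$, while the lower bound $(\one,\dot\one,r^*)$ must lie in $\mathbb{R}^*\subseteq V$. However, your resolution of this subtlety is too quick. Saying that each $r_\alpha(\gamma)$ is a ground-model name and that the Maximum Principle ``remains valid in $V^{\mathbb{R}^*_{\rm{Even}}\upharpoonright\xi}$'' does not produce a ground-model name $r^*(\gamma)$, nor a function $r^*\in V$; applied in the extension, the Maximum Principle only yields a name living in the extension. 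As a sanity check, note that your argument, verbatim, would equally establish that $\mathbb{Q}_\xi$ is $\kappa^+$-directed closed over $V^{\mathbb{R}^*\upharpoonright\xi}$---and the paragraph immediately after this lemma exhibits an explicit counterexample to that statement. So something specific to the \emph{Even} situation is doing real work, and your write-up does not isolate it.

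The missing ingredient, which the paper singles out, is that $V^{\mathbb{R}^*_{\rm{Even}}\upharpoonright\xi}$ and $V^{\mathbb{A}_{\rm{Even}(\xi)}\ast\dot{\mathbb{M}}^\pi_\xi}$ have the same ${<}\kappa^+$-sequences (equivalently, the same subsets of $\kappa$). It is this fact that blocks the counterexample: in the non-Even case the odd Cohen part of $\mathbb{R}^*\upharpoonright\xi$ adds a $\kappa$-sequence $x$ not in $V^{\mathbb{A}_{\rm{Even}(\xi)}\ast\dot{\mathbb{M}}^\pi_\xi}$, and then no $\mathbb{A}_{\rm{Even}(\gamma)}\ast\dot{\mathbb{M}}^\pi_\gamma$-name can be forced below every $\check{x{\upharpoonright}\alpha}$. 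In the Even case no such $x$ exists, and the construction of Proposition~\ref{projectionUMagidor}(1) goes through. Your alternative route via a termspace $\mathbb{U}^{\rm{Even}}_\xi$ does not close the gap either: the projection of a $\kappa^+$-directed closed forcing need not be $\kappa^+$-directed closed, so this would at best give a weaker conclusion than the lemma asserts.
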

\begin{proof}
The argument is the same as in the proof of Proposition \ref{projectionUMagidor}(1), using the fact that $V^{\mathbb{R}^*_{\rm{Even}}\upharpoonright\xi}$ and $V^{\mathbb{A}_{\rm{Even}(\xi)}\ast \dot{\mathbb{M}}^{\pi}_\xi}$ have the same subsets of $\kappa.$
\end{proof}
Despite in \cite{FriHon} was claimed that $\mathbb{Q}_\xi$ is $\kappa^+$-closed over $V^{\mathbb{R}^*\upharpoonright\xi}$ this is, unfortunately, not the case: Let $x\in{}^{\kappa}2$ be  in $ V^{\mathbb{R}^*\upharpoonright\xi}\setminus V^{\mathbb{A}_{\rm{Even}(\xi)}\ast \dot{\mathbb{M}}^\pi_\xi}$ and fix $\gamma\in\mathcal{B}^*\cap \xi$. For each $\alpha< \kappa$, set $r_\alpha(\gamma):=x\upharpoonright\alpha\in V$. Clearly, $\langle r_\alpha(\gamma)\mid \alpha<\kappa\rangle$ defines a decreasing sequence of conditions of $\Add(\kappa^+,1)$ in $V^{\mathbb{A}_{\rm{Even}(\gamma)}\ast \dot{\mathbb{M}}^\pi_\gamma}$. Observe however that the only possible value for a lower bound is $r_\kappa(\gamma)=x$, which is not an element of the inner model $V^{\mathbb{A}_{\rm{Even}(\xi)}\ast \dot{\mathbb{M}}^\pi_\xi}$.\footnote{We are very grateful to the anonymous referee for pointing us out this.}
\smallskip

In the next series of lemmas we show  that $\mathbb{P}_\xi\times \mathbb{Q}^{\rm{Even}(\xi)}_\xi$ satisfies $(\alpha_\xi)$ and $(\beta_\xi)$, hence repairing the argument of \cite[\S3]{FriHon}

\begin{lemma}
For each $\xi\in\mathcal{B}^*$, the identity map defines a projection between $\mathbb{Q}^{\rm{Even}}_\xi$ and $\mathbb{Q}_\xi$.
\end{lemma}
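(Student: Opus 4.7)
My strategy is to show that, viewed inside the common model $V^{\mathbb{R}^*\upharpoonright\xi}$, the two posets $\mathbb{Q}^{\mathrm{Even}}_\xi$ and $\mathbb{Q}_\xi$ in fact \emph{coincide} (as subsets of $\mathbb{R}^*$ endowed with the same inherited ordering); the identity map is then not merely a projection but an isomorphism. The key tool is the product factorization from Lemma~\ref{FactorizationRstarxi}.

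First, I would invoke Lemma~\ref{FactorizationRstarxi} to identify $\mathbb{R}^*\upharpoonright\xi$ with $\mathbb{A}_{\mathrm{Odd}(\xi)}\times \mathbb{R}^*_{\mathrm{Even}}\upharpoonright\xi$. Under this isomorphism, a generic $G_{\mathbb{R}^*\upharpoonright\xi}$ corresponds to a product $G_{\mathrm{Odd}}\times G_{\mathrm{Even}}$, where $G_{\mathrm{Even}}$ coincides with the projection of $G_{\mathbb{R}^*\upharpoonright\xi}$ to $\mathbb{R}^*_{\mathrm{Even}}\upharpoonright\xi$; in particular $\mathbb{Q}^{\mathrm{Even}}_\xi\in V^{\mathbb{R}^*_{\mathrm{Even}}\upharpoonright\xi}\subseteq V^{\mathbb{R}^*\upharpoonright\xi}$, so that both posets can be compared inside $V^{\mathbb{R}^*\upharpoonright\xi}$. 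Next I would check that an upper-part condition $(\one,\dot{\one},r)\in \mathbb{R}^*$ lies in $\mathbb{Q}_\xi$ iff it lies in $\mathbb{Q}^{\mathrm{Even}}_\xi$: its projection to $\mathbb{R}^*\upharpoonright\xi$ is $(\one,\dot{\one},r\upharpoonright(\mathcal{B}^*\cap \xi))$, and under the product decomposition this corresponds to the pair $\langle \one_{\mathbb{A}_{\mathrm{Odd}(\xi)}},(\one,\dot{\one},r\upharpoonright(\mathcal{B}^*\cap \xi))\rangle$. Hence it belongs to $G_{\mathbb{R}^*\upharpoonright\xi}$ exactly when its second coordinate belongs to $G_{\mathrm{Even}}$, which is precisely the requirement defining membership in $\mathbb{Q}^{\mathrm{Even}}_\xi$. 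Therefore the underlying sets coincide.

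For the orderings, both are inherited from $\mathbb{R}^*$, so it suffices to examine Definition~\ref{DefinitionR*Magidor} restricted to upper-part conditions. Since $\varrho^\lambda_\zeta(\one,\dot{\one})=\tau^\lambda_\zeta(\one,\dot{\one})=\one$, the ordering in both cases reduces to the absolute requirement that $\dom(r_1)\subseteq \dom(r_0)$ and $\one\Vdash_{\mathbb{A}_{\mathrm{Even}(\zeta)}\ast \dot{\mathbb{M}}^\pi_\zeta} \dot{r}_0(\zeta)\leq \dot{r}_1(\zeta)$ for every $\zeta\in\dom(r_1)$; since the $\dot{r}_i(\zeta)$ are names in the ground model, this assertion is absolute between $V^{\mathbb{R}^*_{\mathrm{Even}}\upharpoonright\xi}$ and $V^{\mathbb{R}^*\upharpoonright\xi}$. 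Combining the three observations, $\mathbb{Q}^{\mathrm{Even}}_\xi$ and $\mathbb{Q}_\xi$ are the same poset in $V^{\mathbb{R}^*\upharpoonright\xi}$, so the identity is an isomorphism and in particular a projection. The only point requiring real care is the bookkeeping step identifying $G_{\mathrm{Even}}$ with the $\mathbb{R}^*_{\mathrm{Even}}\upharpoonright\xi$-projection of $G_{\mathbb{R}^*\upharpoonright\xi}$, which is immediate from the product structure afforded by Lemma~\ref{FactorizationRstarxi}; beyond this, no serious obstacle arises.
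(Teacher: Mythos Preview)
Your argument is correct and in fact establishes more than the paper claims: you show the two posets literally coincide in $V^{\mathbb{R}^*\upharpoonright\xi}$, so the identity is an isomorphism, not just a projection.

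The paper's own proof takes a slightly different and terser route. It does not invoke the product factorization of Lemma~\ref{FactorizationRstarxi}; instead it composes the two projections $\pi_\xi:\mathbb{R}^*\to\mathbb{R}^*\upharpoonright\xi$ and $\pi^{\mathrm{Even}}_\xi:\mathbb{R}^*\upharpoonright\xi\to\mathbb{R}^*_{\mathrm{Even}}\upharpoonright\xi$ and argues only the inclusion $\mathbb{Q}^{\mathrm{Even}}_\xi\subseteq\mathbb{Q}_\xi$, asserting that this well-definedness already suffices for the identity to be a projection. The key step---that $\pi^{\mathrm{Even}}_\xi\circ\pi_\xi(\one,\dot\one,r)\in G_{\mathbb{R}^*_{\mathrm{Even}}\upharpoonright\xi}$ implies $\pi_\xi(\one,\dot\one,r)\in G_{\mathbb{R}^*\upharpoonright\xi}$---is stated ``by definition'' in the paper, whereas your use of the product decomposition makes this step completely transparent: under $\psi_\xi$ the condition $(\one,\dot\one,r\upharpoonright(\mathcal{B}^*\cap\xi))$ corresponds to $\langle\one_{\mathbb{A}_{\mathrm{Odd}(\xi)}},(\one,\dot\one,r\upharpoonright(\mathcal{B}^*\cap\xi))\rangle$, so membership in $G_{\mathbb{R}^*\upharpoonright\xi}$ is visibly equivalent to membership of the second coordinate in $G_{\mathrm{Even}}$. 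What your approach buys is a cleaner justification of exactly the point the paper leaves compressed, together with the sharper conclusion; what the paper's approach buys is brevity and avoidance of the extra lemma.
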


\begin{proof}
Clearly $\mathbb{R}^*$ projects onto $\mathbb{R}^*\upharpoonright\xi$ and this latter onto $\mathbb{R}_{\rm{Even}}^*\upharpoonright\xi$. Let us denote each of these projections by $\pi_\xi$ and $\pi^{\rm{Even}}_\xi$, respectively. Observe that it is enough to show that our intended projection is well-defined:  Let $(\one,\dot{\one},r)\in\mathbb{Q}^{\rm{Even}}_\xi$. By definition, $(\one,\dot{\one},\pi^{\rm{Even}}_\xi\circ \pi_\xi(r))\in \dot{G}_{\mathbb{R}^*_{\rm{Even}}\upharpoonright\xi}$. Again, by definition, $(\one,\dot{\one},\pi_\xi(r))\in \dot{G}_{\mathbb{R}^*\upharpoonright\xi}$, which yields $(\one,\dot{\one},r)\in \mathbb{Q}_\xi$, as desired.
\end{proof}

\begin{prop}
For each $\xi\in\mathcal{B}^*$, $\mathbb{P}_\xi\times \mathbb{Q}^{\rm{Even}}_\xi$ satisfies $(\alpha_\xi)$.
\end{prop}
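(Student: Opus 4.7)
The plan is to define a map $\pi\colon \mathbb{P}_\xi\times\mathbb{Q}^{\rm{Even}}_\xi\to\mathbb{R}^*/(\mathbb{R}^*\upharpoonright\xi)$ by $((p,\dot{q}),(\one,\dot{\one},r))\mapsto (p,\dot{q},r)$ and to verify it is a projection. Order-preservation is immediate from the definitions of the orders on $\mathbb{P}_\xi$, $\mathbb{Q}^{\rm{Even}}_\xi$, and $\mathbb{R}^*$, and $\pi$ clearly sends the top to the top. For well-definedness of the codomain, I will invoke the dense embedding $\psi_\xi\colon \mathbb{R}^*\upharpoonright\xi\to \mathbb{A}_{\rm{Odd}(\xi)} \times \mathbb{R}^*_{\rm{Even}}\upharpoonright\xi$ from Lemma~\ref{FactorizationRstarxi}, which exhibits $G^*_{\mathbb{R}^*\upharpoonright\xi}$ as a mutually generic pair $G^{\rm{Odd}}\times G^*_{\rm{Even}}$. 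Writing $(p^*,\dot{q}^*)$ for the $\mathbb{C}_\xi$-projection of $(p,\dot{q})$, I need $\psi_\xi(p^*,\dot{q}^*, r\upharpoonright(\mathcal{B}^*\cap\xi)) = (p^*\upharpoonright\rm{Odd}(\xi), (\varrho^\xi_\xi(p^*,\dot{q}^*), r\upharpoonright(\mathcal{B}^*\cap\xi)))$ to lie in $G^{\rm{Odd}}\times G^*_{\rm{Even}}$. The odd coordinate follows directly from $(p,\dot{q})\in\mathbb{P}_\xi$. For the even coordinate, both $(\varrho^\xi_\xi(p^*,\dot{q}^*), \emptyset)$ (from $(p,\dot{q})\in\mathbb{P}_\xi$) and $(\one,\dot{\one}, r\upharpoonright(\mathcal{B}^*\cap\xi))$ (from $(\one,\dot{\one},r)\in\mathbb{Q}^{\rm{Even}}_\xi$) belong to $G^*_{\rm{Even}}$; any common refinement in that filter then dominates the coordinate-wise combination $(\varrho^\xi_\xi(p^*,\dot{q}^*), r\upharpoonright(\mathcal{B}^*\cap\xi))$, which lies in $G^*_{\rm{Even}}$ by upward closure.

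The main work is the projection lifting property. Given $(p',\dot{q}',r')\leq_{\mathbb{R}^*}(p,\dot{q},r)$ in the quotient together with a preimage $((p,\dot{q}),(\one,\dot{\one},r))$ of $(p,\dot{q},r)$, I aim to produce $((p',\dot{q}'),(\one,\dot{\one},r^\#))\leq ((p,\dot{q}),(\one,\dot{\one},r))$ in $\mathbb{P}_\xi\times\mathbb{Q}^{\rm{Even}}_\xi$ whose $\pi$-image is $\leq_{\mathbb{R}^*}(p',\dot{q}',r')$. The natural first attempt would take $((p',\dot{q}'),(\one,\dot{\one},r'))$; indeed $(p',\dot{q}')\in\mathbb{P}_\xi$ is automatic from the quotient condition. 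However, the order on $\mathbb{Q}^{\rm{Even}}_\xi$, obtained by restricting $\leq_{\mathbb{R}^*}$ to trivial Cohen--Magidor coordinates, demands $\one\Vdash r'(\zeta)\leq r(\zeta)$ for $\zeta\in\dom(r)$, while $(p',\dot{q}',r')\leq_{\mathbb{R}^*}(p,\dot{q},r)$ only yields $\varrho^\lambda_\zeta(p',\dot{q}')\Vdash r'(\zeta)\leq r(\zeta)$. To reconcile this, I will adapt the mixing idea from Proposition~\ref{projectionUMagidor}(2): inside $\mathrm{RO}^+(\mathbb{A}_{\rm{Even}(\zeta)}\ast\dot{\mathbb{M}}^\pi_\zeta)$, define $r^\#(\zeta)$ as a name forced by $\varrho^\lambda_\zeta(p',\dot{q}')$ to equal $r'(\zeta)$ and by its Boolean complement to equal $r(\zeta)$, for $\zeta\in\dom(r)$, and to equal $r'(\zeta)$ on $\dom(r')\setminus\dom(r)$. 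Then $\one\Vdash r^\#(\zeta)\leq r(\zeta)$ while $\varrho^\lambda_\zeta(p',\dot{q}')\Vdash r^\#(\zeta)\leq r'(\zeta)$, so $((p',\dot{q}'),(\one,\dot{\one},r^\#))$ fulfils both inequalities required of the lift.

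The main obstacle is the lifting step, in particular verifying that the mixing construction yields legitimate $\mathbb{A}_{\rm{Even}(\zeta)}\ast\dot{\mathbb{M}}^\pi_\zeta$-names for elements of $\dot{\Add}(\kappa^+,1)$ forced by $\one$, and handling the bookkeeping between $\mathbb{R}^*$-inequalities that are only witnessed below specific conditions and the termspace-like inequalities governing $\mathbb{Q}^{\rm{Even}}_\xi$.
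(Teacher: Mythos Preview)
Your approach is correct and close in spirit to the paper's, but organized differently. The paper first proves a separate lemma showing that the identity map is a projection from $\mathbb{Q}^{\rm{Even}}_\xi$ onto $\mathbb{Q}_\xi$ (the analogous termspace defined over $V^{\mathbb{R}^*\upharpoonright\xi}$ rather than over $V^{\mathbb{R}^*_{\rm{Even}}\upharpoonright\xi}$), and then reduces the proposition to checking that $\mathbb{P}_\xi\times\mathbb{Q}_\xi$ projects onto $\mathbb{R}^*/(\mathbb{R}^*\upharpoonright\xi)$ via $\langle(p,\dot q),(\one,\dot\one,r)\rangle\mapsto(p,\dot q,r)$, which it declares ``immediate'' by analogy with Proposition~\ref{projectionUMagidor}(2). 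Your well-definedness argument via $\psi_\xi$ is precisely the content of that intermediate lemma unfolded, and your mixing construction for the lifting step is exactly the Proposition~\ref{projectionUMagidor}(2) argument the paper is implicitly invoking. So the two proofs are really the same argument; you simply merge the two steps and make the mixing explicit, whereas the paper separates the passage $\mathbb{Q}^{\rm{Even}}_\xi\to\mathbb{Q}_\xi$ into its own lemma and leaves the lifting to the reader.

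One small point you should make explicit: after constructing $r^\#$ you need $(\one,\dot\one,r^\#)\in\mathbb{Q}^{\rm{Even}}_\xi$, i.e., that its restriction to $\mathcal{B}^*\cap\xi$ lies in $G^*_{\rm{Even}}$. This follows because $(\varrho^\xi_\xi(p'_*,\dot q'_*),\,r'\upharpoonright\xi)\in G^*_{\rm Even}$ (from $(p',\dot q',r')$ being in the quotient) and, using $\tau^\xi_\zeta\circ\varrho^\xi_\xi=\varrho^\xi_\zeta$ together with your mixing identity $\varrho^\lambda_\zeta(p',\dot q')\Vdash r^\#(\zeta)=r'(\zeta)$, this condition lies below $(\one,\dot\one,r^\#\upharpoonright\xi)$; upward closure finishes it.
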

\begin{proof}
By the above lemma it is enough with checking that $\mathbb{P}_\xi\times\mathbb{Q}_\xi$ satisfies $(\alpha_\xi)$.
By definition, a condition in $\mathbb{R}^*/\mathbb{R}^*\upharpoonright\xi$ is a triple $(p,\dot{q},r)$ such that $(\pi^\lambda_\xi(p,\dot{q}),r\upharpoonright\xi)\in \mathbb{R}^*\upharpoonright\xi$, where $\pi^\lambda_\xi$ is the composition of $\varrho^\lambda_\xi$ with the standard isomorphism between $\mathbb{C}_\xi$ and $\mathrm{RO}^+(\mathbb{C}_\xi)$. In particular, $(p,\dot{q})\in \mathbb{P}_\xi$. Now, it is immediate to check that $\tau: \mathbb{P}_\xi\times\mathbb{Q}_\xi\rightarrow\mathbb{R}^*/\mathbb{R}^*\upharpoonright\xi$ given by $\langle(p,\dot{q}),(\one,\one,r)\rangle\mapsto (p,\dot{q},r)$ defines a projection.
\end{proof}
It thus remains to prove that $\mathbb{P}_\xi\times\mathbb{Q}^{\rm{Even}}_\xi$ satisfies $(\beta_\xi)$. For this we will need the following
preservation lemmas from \cite{UngFrag} and \cite{Ung}, respectively.
\begin{lemma}
\label{preservationlemma}\hfill
\begin{enumerate}
  \item [(a)] Assume $2^\tau \geq \eta,$ $\mathbb{P}$ is $\tau^+$-c.c., and $\mathbb{R}$ is $\tau^+$-closed. Suppose $\dot{T}$
  is a $\mathbb{P}$-name for an $\eta$-tree. Then in $V[G_{\mathbb{P}}]$, forcing with $\mathbb{R}$ can not add any new cofinal branches through
  $\dot{T}_{G_{\mathbb{P}}}$.
  \item [(b)] Suppose $\kappa$ is a regular cardinal, $T$ is a $\kappa$-tree and $\mathbb{P}$ is such that $\mathbb{P} \times \mathbb{P}$ is $\kappa$-c.c.
  Then forcing with $\mathbb{P}$ can not add new cofinal branches through $T$.
\end{enumerate}
\end{lemma}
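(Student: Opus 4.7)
The plan is to prove parts (a) and (b) via the classical branch-preservation arguments of Silver--Baumgartner and Kunen respectively; both appear throughout the tree property literature and the strategy is to adapt them to the hypotheses at hand.

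For (a), I would argue by contradiction. Work in $V[G_\mathbb{P}]$ and suppose some $r_0 \in \mathbb{R}$ forces $\dot{c}$ to be a new cofinal branch through $T$. Newness implies that for every $r \leq r_0$ there exist extensions $r', r'' \leq r$ and a level $\beta < \eta$ where $r'$ and $r''$ force incompatible nodes of $T$; otherwise one could read off an old branch from $r$. Using the $\tau^+$-closure of $\mathbb{R}$, I would recursively build a splitting tree $\{r_s : s \in 2^{\leq \tau}\}$ of conditions below $r_0$ together with splitting levels $\alpha_s < \eta$, so that $r_{s\frown 0}$ and $r_{s\frown 1}$ force incompatible values of $\dot{c}$ at level $\alpha_s$, taking lower bounds at limit stages of cofinality ${\leq}\tau$ using the closure. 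The resulting $2^\tau$ conditions $\{r_f : f \in 2^\tau\}$ would then force $2^\tau$ pairwise distinct values of $\dot{c}$ at a common level $\alpha^* := \sup_s \alpha_s$. By the arithmetic assumption $2^\tau \geq \eta$ (together with regularity of $\eta$), the number of splitting levels is at most $2^{<\tau}$ and $\alpha^* < \eta$, while $|T_{\alpha^*}| < \eta \leq 2^\tau$ precludes $2^\tau$ distinct values there, a contradiction. The $\tau^+$-cc of $\mathbb{P}$ enters to ensure that $T$ is actually a tree with the expected level sizes over $V[G_\mathbb{P}]$ and that the names used in the splitting construction are coherently interpreted.

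For (b), the plan is the standard product argument. Suppose $\dot{b}$ is a $\mathbb{P}$-name for a new cofinal branch through $T$, witnessed by $p_0 \in \mathbb{P}$. Pass to the $\mathbb{P} \times \mathbb{P}$-extension and consider the two copies $b_L, b_R$ of the generic branch arising from the left and right projections. Newness of $\dot{b}$ implies that below $(p_0, p_0)$ the set of pairs $(p, q)$ forcing $b_L$ and $b_R$ to diverge at some level $< \kappa$ is dense; otherwise the common forced initial segments would define an old branch. From this I would extract an antichain $\{(p_\alpha, q_\alpha) : \alpha < \kappa\}$ in $\mathbb{P} \times \mathbb{P}$ by choosing, for each $\alpha$, a pair forcing $\dot{b}$ through tree-incomparable nodes $x_\alpha \neq y_\alpha$ at a common level $\beta_\alpha \geq \alpha$, arranged so that distinct indices produce ordered pairs that are incompatible in $T \times T$. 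This yields $\kappa$ pairwise incompatible conditions in $\mathbb{P}\times\mathbb{P}$, contradicting its $\kappa$-cc.

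The main obstacle I anticipate, chiefly in part (a), is making the splitting construction genuinely coherent inside $V[G_\mathbb{P}]$---in particular, verifying that at each stage the splitting levels stay uniformly below $\eta$ and that the $\tau^+$-closure indeed applies to the kind of decreasing sequences we build. In part (b), the delicate point is converting the density of divergence into a bona fide antichain of size $\kappa$, which requires organizing the choice of splitting nodes so that incompatibility is preserved across different indices rather than merely at a single stage.
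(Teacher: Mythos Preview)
The paper does not supply its own proof here; the lemma is quoted from \cite{UngFrag} and \cite{Ung}. Your sketches are precisely the classical Silver and Kunen arguments that appear in those references, so at the level of strategy you match what the paper invokes.

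There is, however, a genuine slip in your plan for (a). You propose to build the splitting tree inside $V[G_\mathbb{P}]$ and to invoke the $\tau^+$-closure of $\mathbb{R}$ there. But $\mathbb{R}$ is only assumed $\tau^+$-closed in $V$, and a $\tau^+$-c.c.\ forcing can destroy this: for instance, adding a Cohen real destroys the $\aleph_1$-closure of $\Add(\omega_1,1)^V$, since the new real manufactures $\omega$-sequences of old conditions whose union lies outside $V$. The construction must therefore be carried out in $V$, working with a $\mathbb{P}\times\mathbb{R}$-name for the branch; the tree $\{r_s:s\in 2^{\leq\tau}\}$ of $\mathbb{R}$-conditions is built in $V$, where the closure is genuinely available. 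The $\tau^+$-c.c.\ of $\mathbb{P}$ then plays an essential role --- not merely ``coherent interpretation of names'' as you put it --- in producing, at each node $s$, a single level $\alpha_s<\eta$ and extensions $r_{s^\frown 0},r_{s^\frown 1}\leq r_s$ that are forced, uniformly in $\mathbb{P}$, to send the branch through $\dot T$-incompatible nodes. Your closing worry about making the construction ``coherent inside $V[G_\mathbb{P}]$'' is exactly this issue; it is resolved by relocating the argument to $V$, not by patching it in the extension.

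Your outline for (b) is the right argument, and you have correctly identified the one delicate step.
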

\begin{prop}
Let $\xi\in\mathcal{B}^*$. If $\mathbb{P}_\xi\times\mathbb{P}_\xi$ is $\kappa^+$-cc over $V^{\mathbb{C}_\xi}$ then $\mathbb{P}_\xi\times\mathbb{Q}^{\rm{Even}}_\xi$ witnesses $(\beta_\xi)$.
\end{prop}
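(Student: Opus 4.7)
The plan is to prove that $\mathbb{P}_\xi \times \mathbb{Q}^{\mathrm{Even}}_\xi$ adds no new cofinal branch to $T \cap \xi$ over $V^{\mathbb{R}^*\upharpoonright\xi}$, which is exactly $(\beta_\xi)$. I would proceed by forcing with the two factors successively and invoking Lemma~\ref{preservationlemma} in each step.

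First, I would argue that $\mathbb{P}_\xi$ alone adds no new cofinal branch over $V^{\mathbb{R}^*\upharpoonright\xi}$ by applying Lemma~\ref{preservationlemma}(b) to the $\xi$-tree $T \cap \xi$, noting that $\xi = \kappa^{++}$ in the relevant extensions. The hypothesis gives that $\mathbb{P}_\xi \times \mathbb{P}_\xi$ is $\kappa^+$-cc (hence $\xi$-cc) over $V^{\mathbb{C}_\xi}$. Since $V^{\mathbb{R}^*\upharpoonright\xi}$ is a generic extension of $V^{\mathbb{C}_\xi}$ via a projection of the $\kappa^+$-directed closed forcing $\mathbb{U}_\xi$ (Proposition~\ref{projectionUgammaMagidor}(2)), an Easton-style preservation argument transfers this chain condition to $V^{\mathbb{R}^*\upharpoonright\xi}$. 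Lemma~\ref{preservationlemma}(b) then applies directly.

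Second, working in $W := V^{\mathbb{R}^*\upharpoonright\xi}[H_{\mathbb{P}_\xi}]$, I would apply Lemma~\ref{preservationlemma}(a) with $\tau := \kappa$ and $\eta := \xi$ to show that $\mathbb{Q}^{\mathrm{Even}}_\xi$ adds no new cofinal branch. The arithmetic requirement $2^\kappa \geq \xi$ already holds in $V^{\mathbb{R}^*\upharpoonright\xi}$, hence in $W$. Taking the base $V_0 := V^{\mathbb{R}^*_{\mathrm{Even}}\upharpoonright\xi}$, the forcing $\mathbb{Q}^{\mathrm{Even}}_\xi$ is $\kappa^+$-directed closed there by the earlier lemma. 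Using Lemma~\ref{FactorizationRstarxi} together with the hypothesis on $\mathbb{P}_\xi$, one exhibits $W$ as a $\kappa^+$-cc extension of $V_0$ via an appropriate forcing (essentially $\mathbb{A}_{\mathrm{Odd}(\xi)}$ composed with the quotient giving the $\mathbb{P}_\xi$-generic), over which $T \cap \xi$ is added as the relevant $\kappa^{++}$-tree. Lemma~\ref{preservationlemma}(a) then yields the desired branch preservation.

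Combining both steps, the product $\mathbb{P}_\xi \times \mathbb{Q}^{\mathrm{Even}}_\xi$ adds no new cofinal branch, establishing $(\beta_\xi)$. The main obstacle I expect is the Easton-type transfer of chain conditions across the subuniverses $V^{\mathbb{C}_\xi}$ and $V^{\mathbb{R}^*_{\mathrm{Even}}\upharpoonright\xi}$, which are incomparable inside $V^{\mathbb{R}^*\upharpoonright\xi}$; one must verify carefully that the $\kappa^+$-cc of $\mathbb{P}_\xi$ and of $\mathbb{P}_\xi \times \mathbb{P}_\xi$ survives the ambient forcings, so that both parts of Lemma~\ref{preservationlemma} can be invoked in the appropriate intermediate model.
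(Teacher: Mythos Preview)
Your proposal is correct and follows essentially the same route as the paper: transfer the $\kappa^+$-cc of $\mathbb{P}_\xi\times\mathbb{P}_\xi$ from $V^{\mathbb{C}_\xi}$ to $V^{\mathbb{R}^*\upharpoonright\xi}$ via Easton's lemma and Proposition~\ref{projectionUgammaMagidor}, apply Lemma~\ref{preservationlemma}(b) to handle $\mathbb{P}_\xi$, and then use the factorization of Lemma~\ref{FactorizationRstarxi} to view $V^{\mathbb{R}^*\upharpoonright\xi}[G_{\mathbb{P}_\xi}]$ as a $\kappa^+$-cc extension of $V^{\mathbb{R}^*_{\mathrm{Even}}\upharpoonright\xi}$ by $\mathbb{A}_{\mathrm{Odd}(\xi)}\ast\mathbb{P}_\xi$, so that Lemma~\ref{preservationlemma}(a) applies with the $\kappa^+$-closed $\mathbb{Q}^{\mathrm{Even}}_\xi$. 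The only difference is cosmetic: the paper presents the application of part~(a) before part~(b), while you do the reverse, and the paper notes explicitly that $\mathbb{A}_{\mathrm{Odd}(\xi)}\ast(\mathbb{P}_\xi\times\mathbb{P}_\xi)$ is $\kappa^+$-cc over $V^{\mathbb{R}^*_{\mathrm{Even}}\upharpoonright\xi}$ as the relevant chain condition for part~(a).
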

\begin{proof}
Let us first prove that if $\mathbb{P}_\xi\times\mathbb{P}_\xi$ is $\kappa^+$-cc over $V^{\mathbb{R}^*\upharpoonright\xi}$ then $\mathbb{P}_\xi\times\mathbb{Q}^{\rm{Even}}_\xi$ witnesses $(\beta_\xi)$. By Proposition \ref{FactorizationRstarxi} we can identify $V^{\mathbb{R}^*\upharpoonright\xi}$ as $V^{\mathbb{A}_{\rm{Odd}(\xi)}\times \mathbb{R}^*_{\rm{Even}} \upharpoonright \xi}$. Clearly, $\mathbb{A}_{\rm{Odd}(\xi)}\ast (\mathbb{P}_\xi\times\mathbb{P}_\xi)$ is $\kappa^+$-cc over $V^{\mathbb{R}^*_{\rm{Even}} \upharpoonright \xi}$.

Let $G_{\mathbb{R}^*_{\rm{Even}} \upharpoonright \xi}$ be a $\mathbb{R}^*_{\rm{Even}} \upharpoonright \xi$-generic filter over $V$
and
let $\tau \in V[G_{\mathbb{R}^*_{\rm{Even}} \upharpoonright \xi}]$ be an $\mathbb{A}_{\rm{Odd}(\xi)} (\cong \mathbb{R}^*\upharpoonright\xi / \dot{G}_{\mathbb{R}^*_{\rm{Even}} \upharpoonright \xi})$-name for $T \cap \xi.$ Then we can consider $\tau$ as an $\mathbb{A}_{\rm{Odd}(\xi)} \ast \mathbb{P}_\xi$-name for $T \cap \xi$ as well.
It then follows from Lemma \ref{preservationlemma}(a) that the tree $T \cap \xi$ has the same cofinal branches in the models
\[
V[G_{\mathbb{R}^*_{\rm{Even}} \upharpoonright \xi}][G_{\mathbb{A}_{\rm{Odd}(\xi)}} \ast G_{\mathbb{P}_\xi}][G_{\mathbb{Q}^{\rm{Even}}_\xi}]
\]
and
\[
V[G_{\mathbb{R}^*_{\rm{Even}} \upharpoonright \xi}][G_{\mathbb{A}_{\rm{Odd}(\xi)}} \ast G_{\mathbb{P}_\xi}]
\]
On the other hand,  recall that $T \cap \xi$ had no cofinal branches in $V^{\mathbb{R}^* \upharpoonright \xi}=V^{\mathbb{R}^*_{\rm{Even}} \upharpoonright \xi \ast \mathbb{A}_{\rm{Odd}(\xi)}}$. By our assumption, $\mathbb{P}_\xi\times\mathbb{P}_\xi$ is $\kappa^+$-cc over $V^{\mathbb{R}^*\upharpoonright\xi}$ hence, by Lemma \ref{preservationlemma}(b), $T \cap \xi$ has no cofinal branches
in $V[G_{\mathbb{R}^*_{\rm{Even}} \upharpoonright \xi}][G_{\mathbb{A}_{\rm{Odd}(\xi)}} \ast G_{\mathbb{P}_\xi}].$
The result follows as
$$V[G_{\mathbb{R}^*_{\rm{Even}} \upharpoonright \xi}][G_{\mathbb{A}_{\rm{Odd}(\xi)}} \ast G_{\mathbb{P}_\xi}][G_{\mathbb{Q}^{\rm{Even}}_\xi}]=V[G_{\mathbb{R}^* \upharpoonright \xi}][ G_{\mathbb{P}_\xi} \times G_{\mathbb{Q}^{\rm{Even}}_\xi}]
.$$
We are now left with showing that if $\mathbb{P}_\xi\times\mathbb{P}_\xi$ is $\kappa^+$-cc over $V^{\mathbb{C}_\xi}$ then it is also $\kappa^+$-cc over $V^{\mathbb{R}^*\upharpoonright\xi}$. Indeed, observe that then $\mathbb{C}_\xi \ast (\mathbb{P}_\xi\times \mathbb{P}_\xi)$ is $\kappa^+$-cc over $V$, hence, by Easton's lemma, this is $\kappa^+$-cc over $V^{\mathbb{U}_\xi}$. Thus, $\mathbb{P}_\xi\times\mathbb{P}_\xi$ is $\kappa^+$-cc over $V^{\mathbb{C}_\xi\times \mathbb{U}_\xi}$. Since $\mathbb{C}_\xi\times\mathbb{U}_\xi$ projects onto $\mathbb{R}^*\upharpoonright\xi$ the desired result follows.
\end{proof}
Therefore, we are left with showing that $\mathbb{P}_{\xi}\times\mathbb{P}_\xi$ is $\kappa^+$-cc over $V^{\mathbb{C}_\xi}$. We will devote the next series of lemmas to this purpose.

\begin{lemma}\label{PreviousNotInTheQuotient}
Let $\mathbb{P}$ and $\mathbb{Q}$ be  two forcing notions and $\pi:\mathbb{P}\rightarrow\mathbb{Q}$ be a projection. For every $p\in\mathbb{P}$ and $q\in\mathbb{Q}$, $q\Vdash_\mathbb{Q} p\notin \dot{(\mathbb{P}/\mathbb{Q})}$ if and only if for every generic filter $G\subseteq\mathbb{P}$ with $p\in G$, $q$ is not in $H$, the generic filter generated by $\pi[ G]$. In particular, $\pi(p)\perp q$ iff $q\Vdash_{\mathbb{Q}} p\notin \dot{(\mathbb{P}/\mathbb{Q})}$.
\end{lemma}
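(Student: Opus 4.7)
The plan is to use the standard characterization of the quotient forcing: given a projection $\pi:\mathbb{P}\to\mathbb{Q}$ and a $\mathbb{Q}$-generic filter $H$, a condition $p\in \mathbb{P}$ belongs to $\mathbb{P}/H$ if and only if $\pi(p)\in H$. One direction is immediate; the converse uses that the set of $q\in\mathbb{Q}$ that are either below $\pi(p)$ or incompatible with $\pi(p)$ is dense in $\mathbb{Q}$, so by genericity some such $q$ lies in $H$, forcing $\pi(p)\in H$ whenever $\pi(p)$ is compatible with every element of $H$. I would state this as an auxiliary observation at the beginning.

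For the forward direction of the main equivalence, assume $q\Vdash_{\mathbb{Q}}\check{p}\notin\dot{(\mathbb{P}/\mathbb{Q})}$ and pick an arbitrary $\mathbb{P}$-generic filter $G$ over $V$ with $p\in G$. Set $H$ to be the filter generated by $\pi[G]$; by standard facts about projections, $H$ is $\mathbb{Q}$-generic over $V$, and $V[G]$ interprets $\dot{(\mathbb{P}/\mathbb{Q})}$ as $\mathbb{P}/H$. Since $p\in G$, we have $\pi(p)\in H$, hence $p\in \mathbb{P}/H$. Therefore $q\notin H$, for otherwise we would have $q\Vdash p\notin\dot{(\mathbb{P}/\mathbb{Q})}$ while at the same time $p\in\mathbb{P}/H$ in $V[H]$.

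For the converse, I would argue by contrapositive. Suppose there exists $q'\leq_{\mathbb{Q}} q$ such that $q'\Vdash_{\mathbb{Q}}\check{p}\in\dot{(\mathbb{P}/\mathbb{Q})}$, and pick a $\mathbb{Q}$-generic filter $H$ with $q'\in H$. Then $p\in\mathbb{P}/H$ in $V[H]$, so by the auxiliary observation $\pi(p)\in H$. Now take $G\subseteq\mathbb{P}/H$ to be generic over $V[H]$ with $p\in G$ (possible since $p\in\mathbb{P}/H$). The core technical step is to verify that $G$ is $\mathbb{P}$-generic over $V$ and that the filter $H'$ generated by $\pi[G]$ equals $H$. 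The inclusion $H'\subseteq H$ is clear since $G\subseteq\mathbb{P}/H$. For the other inclusion, given $q_0\in H$, the set $D_{q_0}:=\{p^*\in\mathbb{P}/H\mid \pi(p^*)\leq q_0\}$ is dense in $\mathbb{P}/H$: given $p^*\in\mathbb{P}/H$, choose $q_1\in H$ below both $\pi(p^*)$ and $q_0$, then use the projection property to find $p^{**}\leq p^*$ with $\pi(p^{**})\leq q_1$, giving $p^{**}\in D_{q_0}$. Genericity of $G$ then yields some $p^*\in G$ with $\pi(p^*)\leq q_0$, so $q_0\in H'$. Hence $H=H'$, and since $q\in H$ (because $q'\leq q$ and $q'\in H$), we have produced a generic $G$ with $p\in G$ whose $\pi$-image generates a filter $H$ containing $q$, contradicting the hypothesis.

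The ``In particular'' clause follows by combining the previous equivalence with the standard fact that $\pi(p)\perp q$ if and only if $q\notin H$ for every $\mathbb{Q}$-generic $H$ containing $\pi(p)$: if $\pi(p)\perp q$ then for any $G$ generic with $p\in G$, $\pi(p)$ belongs to the filter generated by $\pi[G]$ and therefore $q$ cannot; conversely, if $\pi(p)$ and $q$ have a common extension $q^*$, take $G$ generic with $p\in G$ and $\pi(p)\in H$ arranged so that $q^*\in H$ (picking $H$ first and $G$ inside $\mathbb{P}/H$ as above), yielding $q\in H$. The main subtlety in the whole argument is the verification that $\pi[G]$ generates exactly $H$ rather than a proper sub-filter; this is where the density argument based on the projection property is essential.
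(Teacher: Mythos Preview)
Your proof is correct and follows essentially the same route as the paper's: the forward direction is immediate, and for the converse you argue by contrapositive, picking $q'\leq q$ forcing $p\in\dot{(\mathbb{P}/\mathbb{Q})}$, a $\mathbb{Q}$-generic $H\ni q'$, and then a $\mathbb{P}/H$-generic $G\ni p$ over $V[H]$, exactly as the paper does. The paper simply asserts ``Clearly $\pi[G]=H$'' and does not address the ``In particular'' clause at all, whereas you spell out the density argument showing the filter generated by $\pi[G]$ is all of $H$ and verify the final equivalence explicitly; these are welcome details but not a different approach.
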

\begin{proof}
The first implication is obvious. Conversely, assume that there is $q'\leq_\mathbb{Q} q$ be such that $q'\Vdash_\mathbb{Q} p\in \dot{(\mathbb{P}/\mathbb{Q})}$. Let $H\s\mathbb{Q}$ be some generic filter over $V$ containing $q$. Hence, $p\in\mathbb{P}/H$. Now let $G\s\mathbb{P}/H$ be some generic filter over $V[H]$ containing $p$. Clearly $\pi[G]=H$ and $q\in H$, which yields the desired contradiction.
\end{proof}
\begin{convention}
\rm{
Let $\mathbb{M}$ be Magidor forcing with respect to a coherent sequence of measures $\mathcal{V}$. Hereafter, we will identify each condition $p\in\mathbb{M}$ with the sequence $\langle\vec{\alpha}^p,\vec{A}^p\rangle$, where $\vec{\alpha}^p:=\langle\alpha^p_0,\dots,\alpha^p_{n^p}\rangle$ and $\vec{A}^p:=\langle A^p_0,\dots, A^p_{n^p}\rangle$. We will tend to  omit the superscript,  which will mean that $\langle\vec{\alpha},\vec{A}\rangle=\langle\vec{\alpha}^p,\vec{A}^p\rangle$, for some $p$ in the corresponding Magidor forcing.
}
\end{convention}

\begin{remark}
\rm{
Let $\xi\in\mathcal{B}^*\cup\{\lambda\}$. Observe that $\tilde{\mathbb{C}}_\xi$, the subposet of $\mathbb{C}_\xi$ with  conditions $(p,\dot{q})$ such that
$p\Vdash_{\mathbb{A}_\xi} \dot{q}=\langle \check{\vec{\alpha}}^q,\dot{\vec{A}}^q\rangle,$
is dense  in $\mathbb{C}_\xi$. Thus, for our current purposes it is enough to assume that $\mathbb{C}_\xi=\tilde{\mathbb{C}}_\xi$.
}
\end{remark}

\begin{lemma}\label{NotNotInTheQuotientMagidor}
Let $\xi\in\mathcal{B}^*$, $r=(p,\langle\check{\vec{\alpha}},\dot{\vec{A}}\rangle)\in\mathbb{C}_\lambda$ and $r'=(q, \langle\check{\vec{\beta}},\dot{\vec{B}}\rangle)\in\mathbb{C}_\xi$, Then, $r'\Vdash_{\mathbb{C}_\xi} \text{$``r\notin\mathbb{P}_\xi$''}$ if and only if one of the following hold:
\begin{enumerate}
\item $p\upharpoonright\xi\perp_{\mathbb{A}_\xi} q$;
\item $p\upharpoonright\xi\parallel_{\mathbb{A}_\xi} q$ and 
$$p\cup q\Vdash_{\mathbb{A}_\lambda} \langle\check{\vec{\beta}},\dot{\vec{B}}\rangle{}^\curvearrowright (\check{\vec{\alpha}}\setminus \check{\vec{\beta}})\notin\dot{\mathbb{M}}^{\pi^*}_\xi\,\vee\, \langle\check{\vec{\alpha}},\dot{\vec{A}}\rangle{}^\curvearrowright (\check{\vec{\beta}}\setminus \check{\vec{\alpha}})\notin\dot{\mathbb{M}}^{\pi^*}_\lambda.\footnote{Here we are identifying the $\mathbb{A}_\xi$-name $\mathbb{M}^{\pi^*}_\xi$ with its standard extension to a $\mathbb{A}_\lambda$-name.}$$
\end{enumerate}
\end{lemma}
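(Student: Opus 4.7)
The plan is to invoke Lemma~\ref{PreviousNotInTheQuotient}, by which $r'\Vdash_{\mathbb{C}_\xi}r\notin \mathbb{P}_\xi$ is equivalent to $\pi(r)\perp_{\mathbb{C}_\xi} r'$, where $\pi:\mathbb{C}_\lambda\to \mathbb{C}_\xi$ is the natural projection combining the Cohen restriction $\mathbb{A}_\lambda\to \mathbb{A}_\xi$ with the Magidor projection supplied (in its $\mathcal{U}^{\pi^*}$-version) by Proposition~\ref{ProjectionGenericsinMagidor}. By Mitchell's criterion (Theorem~\ref{MitchellCriterion}) a Magidor sequence $\vec{\gamma}$ for $\mathbb{M}^{\pi^*}_\lambda$ over $V[H^*]$ is simultaneously a Magidor sequence for $\mathbb{M}^{\pi^*}_\xi$ over $V[H^*\upharpoonright\mathrm{Even}(\xi)]$; consequently, the compatibility of $\pi(r)$ and $r'$ in $\mathbb{C}_\xi$ amounts to the existence of a generic $G\subseteq \mathbb{C}_\lambda$ containing $r$ whose induced $\mathbb{C}_\xi$-generic contains $r'$.

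For the direction $(1)\vee(2)\Rightarrow \pi(r)\perp_{\mathbb{C}_\xi} r'$: if $(1)$ holds, the Cohen coordinates are already incompatible in $\mathbb{A}_\xi$, and there is nothing to prove. Assume instead that $p\upharpoonright \xi \parallel_{\mathbb{A}_\xi} q$ and $(2)$ holds, and suppose toward a contradiction that some generic $G\subseteq \mathbb{C}_\lambda$ contains $r$ while inducing a filter on $\mathbb{C}_\xi$ containing $r'$. The common Magidor sequence $\vec\gamma$ produced by $G$ would then contain $\vec\alpha\cup \vec\beta$ and stay within the large sets of both $\vec A$ and $\vec B$. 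Reading off the initial segment of $\vec\gamma$ whose stem is $\vec\alpha\cup\vec\beta$ would yield legitimate Magidor conditions $\langle \vec\beta,\vec B\rangle{}^\curvearrowright(\vec\alpha\setminus \vec\beta)$ in $\mathbb{M}^{\pi^*}_\xi$ and $\langle \vec\alpha,\vec A\rangle{}^\curvearrowright(\vec\beta\setminus \vec\alpha)$ in $\mathbb{M}^{\pi^*}_\lambda$, contradicting the disjunctive failure forced by $p\cup q$.

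For the converse, assume that neither $(1)$ nor $(2)$ holds. Then $p\cup q\in \mathbb{A}_\lambda$ and some extension $p^{*}\leq_{\mathbb{A}_\lambda} p\cup q$ forces simultaneously $\langle \check{\vec\beta},\dot{\vec B}\rangle{}^\curvearrowright(\check{\vec\alpha}\setminus \check{\vec\beta})\in \dot{\mathbb{M}}^{\pi^*}_\xi$ and $\langle \check{\vec\alpha},\dot{\vec A}\rangle{}^\curvearrowright(\check{\vec\beta}\setminus \check{\vec\alpha})\in \dot{\mathbb{M}}^{\pi^*}_\lambda$. Since both witnesses share the stem $\vec\alpha\cup \vec\beta$, their coordinate-wise intersection yields, via the $\alpha$-completeness of each filter $\mathcal{F}_{\mathcal{U}^{\pi^*}_\lambda}(\alpha)$, a Magidor condition $\dot{s}\in \dot{\mathbb{M}}^{\pi^*}_\lambda$ with stem $\vec\alpha\cup \vec\beta$ extending both. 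Thus $(p^{*},\dot{s})\in \mathbb{C}_\lambda$ extends $r$, and its image under $\pi$ extends $r'$ in $\mathbb{C}_\xi$, providing the common lower bound that contradicts $\pi(r)\perp_{\mathbb{C}_\xi} r'$.

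The main obstacle lies in the fact that the Magidor part of $\pi$ is defined on generic objects via Theorem~\ref{MitchellCriterion} rather than directly on conditions. Translating compatibility statements back to the condition level requires carefully tracking how the large sets $\vec A$ of $r$ (living in $V[H^{*}]$) interact with those of $r'$ (living in $V[H^{*}\upharpoonright \mathrm{Even}(\xi)]$) under shrinking. The inclusion $\mathcal{F}_{\mathcal{U}^{\pi^*}_\xi}(\alpha)\subseteq \mathcal{F}_{\mathcal{U}^{\pi^*}_\lambda}(\alpha)$, together with Proposition~\ref{PrunedIsOnePruned}, ensure that the intersected large sets remain large in the appropriate filters and that the resulting common refinement can be taken genuinely pruned, so that no spurious failures of the addability of $\vec\alpha\cup\vec\beta$ get in the way.
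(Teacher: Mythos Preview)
Your proof is correct and follows essentially the same approach as the paper: both invoke Lemma~\ref{PreviousNotInTheQuotient} to reduce the question to compatibility of $\varrho^\lambda_\xi(r)$ with $r'$, and both resolve the converse direction by extending the Cohen part below $p\cup q$ so that the two Magidor extensions sharing the stem $\vec\alpha\cup\vec\beta$ become legitimate and can be amalgamated. The paper phrases its converse by building an explicit $\mathbb{C}_\lambda$-generic containing the amalgam and reading off the induced $\mathbb{C}_\xi$-generic (thereby using the generic-filter formulation in Lemma~\ref{PreviousNotInTheQuotient}), whereas you argue compatibility directly in the Boolean completion, but this is a cosmetic difference only.
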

\begin{proof}
First, observe that two conditions $\langle\vec{\alpha},\vec{A}\rangle, \langle\vec{\beta},\vec{B}\rangle\in \mathbb{M}^{\pi^*}_\lambda$ are compatible if and only if 
$\langle\vec{\alpha},\vec{A}\rangle{}^\curvearrowright (\vec{\beta}\setminus \vec{\alpha}), \langle\vec{\beta},\vec{B}\rangle{}^\curvearrowright (\vec{\alpha}\setminus \vec{\beta})\in\mathbb{M}^{\pi^*}_\lambda$. Thereby, if some of the above conditions is true, $\varrho^\lambda_\xi(r)\perp_{\mathbb{C}_\xi} r'$. Thus, Lemma \ref{PreviousNotInTheQuotient} yields $r'\Vdash_{\mathbb{C}_\xi} \text{$``r\notin\mathbb{P}_\xi$''}$. Conversely, assume that (1) and (2) are false and set $\vec{\gamma}:=\vec{\alpha}\cup \vec{\beta}$. Since (1) is false,  $p\cup q\in \mathbb{A}_\lambda$. Also, since (2) is false, we may let  a condition $a\leq_{\mathbb{A}_\lambda} p\cup q$ forcing the opposite. Let $A\s\mathbb{A}_\lambda$ generic (over $V$) containing $a$. By the above, in $V[A]$,  $\langle \vec{\beta},\vec{B}\rangle{}^\curvearrowright(\vec{\alpha}\setminus \vec{\beta})\in \mathbb{M}^{\pi^*}_\xi$ and $\langle\vec{\alpha},\vec{A}\rangle{}^\curvearrowright(\vec{\beta}\setminus \vec{\alpha})\in \mathbb{M}^{\pi^*}_\lambda$, hence both Magidor conditions are compatible. Let $\langle \vec{\gamma},\vec{C}\rangle\in\mathbb{M}^{\pi^*}_\lambda$  be a condition witnessing this compatibility and $S\s\mathbb{M}^{\pi^*}_\lambda$ be generic (over $V[A]$) containing $\langle \vec{\gamma},\vec{C}\rangle$. Set $r^*:=(a, \langle \check{\vec{\gamma}},\dot{\vec{C}}\rangle)$. Clearly, $r^*\in A\ast\dot{S}$ and $r^*\leq_{\mathbb{C}_\lambda} r$, so $r\in A\ast\dot{S}$. On the other hand, $\varrho^\lambda_\xi[A\ast\dot{S}]$ generates a $\mathbb{C}_\xi$-generic filter containing $r'$, hence Lemma \ref{PreviousNotInTheQuotient} yields  $r'\nVdash_{\mathbb{C}_\xi} \text{$``r\notin\mathbb{P}_\xi$''}$, as wanted.
\end{proof}

For each $\xi\in\mathcal{B}^*\cup\{\lambda\}$ we will assume that for each $r=(p, \langle\check{\vec{\alpha}},\dot{\vec{A}}\rangle)\in \mathbb{C}_\xi$, $p\Vdash_{\mathbb{A}_\xi}\text{$``\langle\check{\vec{\alpha}},\dot{\vec{A}}\rangle$ is pruned''}$. This is of course feasible by virtue of  Proposition \ref{ThereisPrunedInMagidor}.
\begin{lemma}\label{RefinningWithRowbottomMagidor}
Let $\xi\in\mathcal{B}^*$, $r=(p,\langle\check{\vec{\alpha}},\dot{\vec{A}}\rangle)\in\mathbb{C}_\lambda$ and $r'=(q, \langle\check{\vec{\beta}},\dot{\vec{B}}\rangle)\in\mathbb{C}_\xi$. Assume that $q\leq_{\mathbb{A}_\xi} p\upharpoonright\xi$,  $\vec{\alpha}\s \vec{\beta}$ and $$(\Upsilon)\;\; p\cup q\Vdash_{\mathbb{A}_\lambda}\text{$`` \forall\gamma\in\vec{\beta}\setminus \vec{\alpha}
\, \left(\dot{\vec{A}}(\check{k}_\gamma)\cap  \gamma\in\dot{\mathcal{F}}(\gamma)\right)$''},$$
where $k_\gamma:=\min \{k< |\vec{\alpha}|\mid \gamma< \vec{\alpha}(k)\}$.
Then there is a $\mathbb{A}_\xi$-name $\dot{\vec{C}}$  for which all the following hold:
\begin{enumerate}
\item[(I)]  $q\Vdash_{\mathbb{A}_\xi}\text{$``q^*:=\langle\check{\vec{\beta}},\dot{\vec{C}}\rangle\leq_{\mathbb{M}^{\pi^*}_\xi} \langle\check{\vec{\beta}},\dot{\vec{B}}\rangle\, \wedge\, q^*$ is pruned''}$.
\item[(II)] $q\Vdash_{\mathbb{A}_\xi}\text{$``\forall \tau\in[\biguplus_i \dot{\vec{C}}(i)]^{<\omega} \left( p\nVdash_{\mathbb{A}_\lambda/\mathbb{A}_\xi} \langle\check{\vec{\alpha}},\dot{\vec{A}}\rangle{}^\curvearrowright \tau\notin\dot{\mathbb{M}}^{\pi^*}_\lambda \right)$''}$.
\end{enumerate}
\end{lemma}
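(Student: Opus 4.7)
The plan is to apply the generalized Röwbottom's Lemma \ref{RowbottomMagidor} inside $V^{\mathbb{A}_\xi}$ below $q$ to a canonical $2$-coloring of block sequences through $\biguplus_i\vec B(i)$, and then to refine the resulting homogenized condition via Proposition \ref{ThereisPrunedInMagidor}. Let $j_i$ be the unique index with $\vec\alpha(j_i-1)<\vec\beta(i)\le\vec\alpha(j_i)$, and define $c\colon[\biguplus_i\vec B(i)]^{<\omega}\to 2$ by setting $c(\tau)=0$ iff $p\Vdash_{\mathbb{A}_\lambda/G_\xi}\langle\check{\vec\alpha},\dot{\vec A}\rangle{}^\curvearrowright\check\tau\notin\dot{\mathbb{M}}^{\pi^*}_\lambda$, and $c(\tau)=1$ otherwise. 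Since the range of $c$ is finite and in particular below $\vec\beta(0)$, Lemma \ref{RowbottomMagidor} produces a homogeneous sequence $\langle C_i:i\le n^q\rangle$ with $C_i\subseteq\vec B(i)$ and $C_i=\biguplus_{\alpha<\delta}C_i(\alpha)$, each $C_i(\alpha)\in\mathcal U(\vec\beta(i),\alpha)$.

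The heart of the argument is the claim that every Röwbottom type under this homogeneity necessarily receives value $1$. Given a type $\vec t=((i_0,\alpha_0),\dots,(i_{m-1},\alpha_{m-1}))$ with $i_0\le\cdots\le i_{m-1}$, it suffices to exhibit a single witness $\tau$ of type $\vec t$ with $c(\tau)=1$, since by homogeneity every such $\tau$ then inherits color $1$. I construct this witness by recursion on $k<m$, producing simultaneously $\tau(k)\in C_{i_k}(\alpha_k)\cap(\tau(k-1),\infty)$ and a descending chain $p=p_0\ge p_1\ge\cdots\ge p_m$ in $\mathbb{A}_\lambda/G_\xi$ such that for each $\ell\le k$, $p_{k+1}$ forces both $\tau(\ell)\in\vec A(j_{i_\ell})$ and $\vec A(j_{i_\ell})\cap\tau(\ell)\in\mathcal F(\tau(\ell))$. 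At stage $k$, the hypothesis $(\Upsilon)$ inherited along $p_k\cup q\le p\cup q$ yields $p_k\Vdash\vec A(j_{i_k})\cap\vec\beta(i_k)\in\mathcal F(\vec\beta(i_k))$; a direct computation using the coherence $j^{\vec\beta(i_k)}_\alpha(\mathcal U)\upharpoonright\vec\beta(i_k)+1=\mathcal U\upharpoonright(\vec\beta(i_k),\alpha)$ further gives $p_k\Vdash\{\gamma<\vec\beta(i_k):\vec A(j_{i_k})\cap\gamma\in\mathcal F(\gamma)\}\in\mathcal F(\vec\beta(i_k))$. Since $\mathcal F(\vec\beta(i_k))\subseteq\mathcal U(\vec\beta(i_k),\alpha_k)$ and $C_{i_k}(\alpha_k)\in\mathcal U(\vec\beta(i_k),\alpha_k)$, the intersection of these $\mathcal F$-large sets with $C_{i_k}(\alpha_k)\cap(\tau(k-1),\infty)$ is forced by $p_k$ to lie in $\mathcal U(\vec\beta(i_k),\alpha_k)$, hence to be nonempty; an extension $p_{k+1}\le p_k$ deciding a specific $\tau(k)$ in this intersection now exists.

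Once the construction terminates, an induction over the recursive formula defining $\langle\vec\alpha,\vec A\rangle{}^\curvearrowright\tau$, combined with Proposition \ref{PrunedIsOnePruned}, shows that $p_m$ forces each successive one-step extension involved to remain in $\dot{\mathbb{M}}^{\pi^*}_\lambda$, so $p_m\Vdash\langle\vec\alpha,\vec A\rangle{}^\curvearrowright\tau\in\dot{\mathbb{M}}^{\pi^*}_\lambda$, giving $c(\tau)=1$. To conclude, I apply Proposition \ref{ThereisPrunedInMagidor} inside $V^{\mathbb{A}_\xi}$ to the Magidor condition $\langle\vec\beta,\langle C_i\rangle\rangle$, obtaining a pruned $\le^*$-extension $q^*=\langle\vec\beta,\vec C\rangle$. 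Clause (I) is then immediate from the pruning, while clause (II) persists because any block sequence through $\biguplus_i\vec C(i)$ is also a block sequence through $\biguplus_i C_i$ and so still receives color $1$ by Röwbottom homogeneity.

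The principal obstacle I anticipate is the inductive pruning-type identity $p_k\Vdash\{\gamma<\vec\beta(i_k):\vec A(j_{i_k})\cap\gamma\in\mathcal F(\gamma)\}\in\mathcal F(\vec\beta(i_k))$: it requires transporting the pruning structure encoded by Proposition \ref{ThereisPrunedInMagidor} (which lives in $V^{\mathbb{A}_\lambda}$) into a statement about $\mathcal F$-membership usable inside $V^{\mathbb{A}_\xi}$, and critically exploits the coherence of $\mathcal U^{\pi^*}_\lambda$ through the ultrapower embeddings $j^{\vec\beta(i_k)}_\alpha$. A secondary delicate point is that the agreement between the measures of $\mathcal U^{\pi^*}_\xi$ and $\mathcal U^{\pi^*}_\lambda$ on $V^{\mathbb{A}_\xi}$-sets (traced back to Lemma \ref{LemmaD+}) is what guarantees that the coloring $c$ and its Röwbottom-output genuinely live inside $V^{\mathbb{A}_\xi}$, making $\dot{\vec C}$ an $\mathbb{A}_\xi$-name as the statement requires.
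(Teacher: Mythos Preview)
Your proposal is correct and follows the paper's overall strategy: define the same $2$-coloring $c$, apply the generalized R\"owbottom Lemma~\ref{RowbottomMagidor} to obtain a homogeneous sequence $\langle C_i\rangle$, argue that the homogeneous value must be $1$, and then prune via Proposition~\ref{ThereisPrunedInMagidor}. The only substantive difference is in how the ``color $1$'' step is carried out.

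The paper argues by contradiction. Assuming some block sequence $\vec{x}$ receives color $0$, homogeneity forces every $\vec{y}$ of the same R\"owbottom type to satisfy $r\cup p\Vdash_{\mathbb{A}_\lambda}\langle\check{\vec\alpha},\dot{\vec A}\rangle{}^\curvearrowright\check{\vec y}\notin\dot{\mathbb{M}}^{\pi^*}_\lambda$. Since $\langle\vec\alpha,\vec A\rangle$ is pruned, this is only possible if $r\cup p$ forces some $C_i(\alpha)\cap\vec A(k_{\beta_i})(\alpha)\cap\beta_i=\emptyset$; but by $(\Upsilon)$ both sets are $\mathcal{U}(\beta_i,\alpha)$-large, a contradiction. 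You instead prove the claim directly: for each type you build, by recursion on $k<m$, a single witness $\tau$ together with a decreasing chain $p_0\ge\cdots\ge p_m$ in $\mathbb{A}_\lambda/G_\xi$ so that $p_m\Vdash\langle\vec\alpha,\vec A\rangle{}^\curvearrowright\tau\in\mathbb{M}^{\pi^*}_\lambda$, whence $c(\tau)=1$. Both arguments are valid; the paper's is shorter, while yours makes the role of $(\Upsilon)$ at each coordinate more explicit.

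Two minor remarks. First, the ``pruning-type identity'' you flag as the principal obstacle is simpler than you suggest: since $p$ already forces $\langle\vec\alpha,\vec A\rangle$ to be pruned, one has $\{\gamma\in\vec A(j_{i_k}):\vec A(j_{i_k})\cap\gamma\in\mathcal F(\gamma)\}=\vec A(j_{i_k})$, and intersecting with $\beta_{i_k}$ gives a set in $\mathcal F(\beta_{i_k})$ directly by $(\Upsilon)$; no separate appeal to the coherence of $\mathcal U^{\pi^*}_\lambda$ is needed. Second, your invocation of $(\Upsilon)$ at stage $k$ literally covers only the case $\beta_{i_k}\in\vec\beta\setminus\vec\alpha$; when $\beta_{i_k}\in\vec\alpha$ the required $\vec A(j_{i_k})\cap\beta_{i_k}\in\mathcal F(\beta_{i_k})$ is just the clause $(\beth)$ of Definition~\ref{MagidorForcing}. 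This is a cosmetic gap, easily filled.
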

\begin{proof}
Let us work over $V^{\mathbb{A}_\xi\downarrow q}$.  Let $c:[\biguplus_{i\leq|\vec{\beta}|} \vec{B}(i)]^{<\omega}\rightarrow 2$ be defined as
$$
c(\vec{x}):=\begin{cases}
0, & \text{if $p\Vdash_{\mathbb{A}_\lambda/\mathbb{A}_\xi} \langle\check{\vec{\alpha}},\dot{\vec{A}}\rangle{}^\curvearrowright \vec{x}\notin \dot{\mathbb{M}}^{\pi^*}_\lambda$};\\
1, & \text{if $p\nVdash_{\mathbb{A}_\lambda/\mathbb{A}_\xi} \langle\check{\vec{\alpha}},\dot{\vec{A}}\rangle{}^\curvearrowright \vec{x}\notin \dot{\mathbb{M}}^{\pi^*}_\lambda$}.
\end{cases}
$$
There is  
a sequence $\vec{C}:=\langle C_i\mid i<|\vec{\beta}|\rangle$, such that $C_i\subseteq \vec{B}(i)$ is in $\mathcal{F}(\beta_i)$, $C_i=\biguplus_\alpha C_i(\alpha)$ with $C_i\in\mathcal{U}(\beta_i,\alpha)$, and $\vec{C}$ is homogeneous in the sense of Lemma \ref{RowbottomMagidor}.
In particular, $\langle {\vec{\beta}},{\vec{C}}\rangle\leq_{\mathbb{M}^{\pi^*}_\xi} \langle {\vec{\beta}},{\vec{B}}\rangle$.  By refining $\vec{C}$ we may further assume that $\langle {\vec{\beta}},{\vec{C}}\rangle$ is pruned (cf.
Proposition \ref{ThereisPrunedInMagidor}).
  Thus, (I)  holds. Towards a contradiction, assume that (II) is false. Let $r\leq_{\mathbb{A}_\xi} q$ be such that $r$ forces the negation of the above formula. By shrinking $r$ we may assume that there is a block sequence $\vec{x}$ for $q^*$ such that $r\Vdash_{\mathbb{A}_\xi}\text{$``\left( p\Vdash_{\mathbb{A}_\lambda/\mathbb{A}_\xi} \langle\check{\vec{\alpha}},\dot{\vec{A}}\rangle{}^\curvearrowright \check{\vec{x}}\notin \dot{\mathbb{M}}^{\pi^*}_\lambda\right) $''} $. Since $r\leq_{\mathbb{A}_\xi} q$, $r\cup p\in\mathbb{A}_\lambda$, hence $r\cup p\Vdash_{\mathbb{A}_\lambda} \langle\check{\vec{\alpha}},\dot{\vec{A}}\rangle{}^\curvearrowright \check{\vec{x}}\notin \dot{\mathbb{M}}^{\pi^*}_\lambda$. Now, since $r$ forces $\dot{\vec{C}}$ to be homogeneous for $\dot{c}$ (in the sense of Lemma \ref{RowbottomMagidor}),  it follows that for all block sequence $\vec{y}$ with the same length as $\vec{x}$,  
  if for all $k<|\vec{x}|$, $\vec{x}(k)$ and $\vec{y}(k)$ belong to the same set  $C_i(\alpha)$, then $r\cup p\Vdash_{\mathbb{A}_\lambda} \langle\check{\vec{\alpha}},\dot{\vec{A}}\rangle{}^\curvearrowright \check{\vec{y}}\notin\dot{\mathbb{M}}^{\pi^*}_\lambda$. Let $a$ consists of pairs $(i,\alpha)$ where for some $k<|\vec{x}|$, $\vec{x}(k)\in C_i(\alpha)$. 
  Since $p$ forces $\langle\check{\vec{\alpha}},\dot{\vec{A}}\rangle$ to be pruned  the only chance for this property to hold is that  $r\cup p \Vdash_{\mathbb{A}_\lambda} \text{$``\exists (i,\alpha)\in a,~\dot{C}_i(\alpha)\cap \dot{\vec{A}}(\check{k}_{\beta_i})(\alpha)\cap \beta_i=\emptyset$''}$, where
 $k_{\beta_i}:=\min\{k< |\vec{\alpha}|\mid \beta_i\leq \vec{\alpha}(k)\}$. Now let $A\s\mathbb{A}_\lambda$ be a generic filter with $r\cup p\in A$. Then the above proper\-ty would hold at $V[A]$, which implies that there is some $(i, \alpha) \in a$ be such that $\dot{{C}}_i(\alpha)_G\cap \dot{\vec{A}}(k_{\beta_i})(\alpha)_G\cap \beta_i=\emptyset$. By $(\Upsilon)$,  $\dot{\vec{A}}(k_{\beta_i})_G\cap \beta_i\in \mathcal{F}(\beta_i)$.   It thus follows that
  $\dot{{C}}_i(\alpha)_G $
  and $\dot{\vec{A}}(k_{\beta_i})(\alpha)_G\cap \beta_i$ are in $\mathcal{U}(\beta_i, \alpha)$, which yields the desired contradiction.
\end{proof}

\begin{lemma}\label{LyingWithinTheQuotient}
Let $\xi\in\mathcal{B}^*$, $r=(p,\langle\check{\vec{\alpha}},\dot{\vec{A}}\rangle)\in\mathbb{C}_\lambda$ and $r'=(q, \langle\check{\vec{\beta}},\dot{\vec{B}}\rangle)\in\mathbb{C}_\xi$. Assume that
\begin{enumerate}
\item[$(\aleph)$] $q\leq_{\mathbb{A}_\xi} p\upharpoonright\xi$;
\item[$(\beth)$] $\vec{\alpha}\s \vec{\beta}$;
\item[$(\gimel)$] $p\cup q\Vdash_{\mathbb{A}_\lambda}\text{$`` \langle\check{\vec{\alpha}},\dot{\vec{A}}\rangle{}^\curvearrowright (\check{\vec{\beta}}\setminus \check{\vec{\alpha}})\in\dot{\mathbb{M}}^{\pi^*}_\lambda$''}.$
\end{enumerate}
Let $\dot{\vec{C}}$ be the sequence obtained from Lemma \ref{RefinningWithRowbottomMagidor} with respect to $r$ and $r'$. Then, $(q,\langle\check{\vec{\beta}},\dot{\vec{C}}\rangle)\Vdash_{\mathbb{C}_\xi} (p,\langle\check{\vec{\alpha}},\dot{\vec{A}}\rangle)\in \mathbb{P}_\xi$.
\end{lemma}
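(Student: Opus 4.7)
The plan is to unwind the statement $(q,\langle\check{\vec{\beta}},\dot{\vec{C}}\rangle)\Vdash_{\mathbb{C}_\xi} r \in \mathbb{P}_\xi$ via Lemma~\ref{PreviousNotInTheQuotient}: it suffices to show that no extension $\bar{r}'' := (q',\langle\check{\vec{\gamma}},\dot{\vec{D}}\rangle) \leq_{\mathbb{C}_\xi} (q,\langle\check{\vec{\beta}},\dot{\vec{C}}\rangle)$ forces $r \notin \mathbb{P}_\xi$. Lemma~\ref{NotNotInTheQuotientMagidor} then reduces this to ruling out its clauses (1) and (2) for each such $\bar{r}''$. Clause (1), $p\upharpoonright\xi \perp_{\mathbb{A}_\xi} q'$, is immediate from $q' \leq q \leq p\upharpoonright\xi$. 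In clause (2), the first disjunct $\langle\check{\vec{\gamma}},\dot{\vec{D}}\rangle{}^\curvearrowright(\check{\vec{\alpha}}\setminus\check{\vec{\gamma}})\notin\dot{\mathbb{M}}^{\pi^*}_\xi$ collapses to $\langle\check{\vec{\gamma}},\dot{\vec{D}}\rangle\notin\dot{\mathbb{M}}^{\pi^*}_\xi$ because the chain $\vec{\alpha}\subseteq\vec{\beta}\subseteq\vec{\gamma}$ makes $\vec{\alpha}\setminus\vec{\gamma}$ empty, and this is vacuous since $\bar{r}''$ is a genuine condition. Thus the whole proof reduces to showing $p\cup q'\nVdash_{\mathbb{A}_\lambda}\langle\check{\vec{\alpha}},\dot{\vec{A}}\rangle{}^\curvearrowright(\check{\vec{\gamma}}\setminus\check{\vec{\alpha}})\notin\dot{\mathbb{M}}^{\pi^*}_\lambda$, i.e., to producing an extension of $p\cup q'$ in $\mathbb{A}_\lambda$ that forces the positive assertion.

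I would split $\vec{\gamma}\setminus\vec{\alpha} = \tau_1\cup\tau_2$ with $\tau_1:=\vec{\beta}\setminus\vec{\alpha}$ and $\tau_2:=\vec{\gamma}\setminus\vec{\beta}$. The hypothesis $(\gimel)$ directly gives $p\cup q\Vdash\langle\check{\vec{\alpha}},\dot{\vec{A}}\rangle{}^\curvearrowright\check{\tau}_1\in\dot{\mathbb{M}}^{\pi^*}_\lambda$, and since $\bar{r}''\leq_{\mathbb{C}_\xi} (q,\langle\check{\vec{\beta}},\dot{\vec{C}}\rangle)$ it is forced by $q'$ that $\tau_2\in[\biguplus_i \dot{\vec{C}}(i)]^{<\omega}$. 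Applying clause (II) of Lemma~\ref{RefinningWithRowbottomMagidor} to this particular $\tau_2$, one obtains that $q'\Vdash_{\mathbb{A}_\xi} p \nVdash_{\mathbb{A}_\lambda/\mathbb{A}_\xi}\langle\check{\vec{\alpha}},\dot{\vec{A}}\rangle{}^\curvearrowright\check{\tau}_2\notin \dot{\mathbb{M}}^{\pi^*}_\lambda$. Standard quotient-forcing arguments (by density, some $q^{(0)}\leq_{\mathbb{A}_\xi} q'$ forces the existence of a suitable extension of $p$, and pulling back gives the witness in $V$) then produce $p^*\leq_{\mathbb{A}_\lambda} p\cup q'$ with $p^*\Vdash\langle\check{\vec{\alpha}},\dot{\vec{A}}\rangle{}^\curvearrowright\check{\tau}_2\in\dot{\mathbb{M}}^{\pi^*}_\lambda$. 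Because $p^*\leq p\cup q'\leq p\cup q$, it also inherits $p^*\Vdash\langle\check{\vec{\alpha}},\dot{\vec{A}}\rangle{}^\curvearrowright\check{\tau}_1\in\dot{\mathbb{M}}^{\pi^*}_\lambda$ from $(\gimel)$.

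The remaining, and principal, hurdle is to combine the two individual extensions into a single joint extension, i.e. to deduce $p^*\Vdash\langle\check{\vec{\alpha}},\dot{\vec{A}}\rangle{}^\curvearrowright(\check{\tau}_1\cup\check{\tau}_2)\in\dot{\mathbb{M}}^{\pi^*}_\lambda$. The crucial structural observation is that $\vec{\alpha}\subseteq\vec{\beta}$ entails that no entry of $\vec{\alpha}$ lies strictly between consecutive points $\beta_{i-1}<\beta_i$ of $\vec{\beta}$; hence every $\gamma\in\tau_2\cap\vec{C}(i)\subseteq(\beta_{i-1},\beta_i)$ is assigned to the \emph{same} large set of $\vec{A}$ as the corresponding $\beta_i\in\tau_1$, namely $\vec{A}(k_{\beta_i})=\vec{A}(k_\gamma)$. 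Consequently, adding the ordinals of $\tau_1\cup\tau_2$ to $\langle\vec{\alpha},\vec{A}\rangle$ one at a time in increasing order, each single-step insertion is legitimate: its membership and ``cut'' requirements are precisely those already verified either by $\langle\vec{\alpha},\vec{A}\rangle{}^\curvearrowright\tau_1\in\mathbb{M}^{\pi^*}_\lambda$ (for entries in $\tau_1$) or by $\langle\vec{\alpha},\vec{A}\rangle{}^\curvearrowright\tau_2\in\mathbb{M}^{\pi^*}_\lambda$ (for entries in $\tau_2$), and the large set at the slot $\alpha_{k_{\beta_i}}$ remains unchanged across all these insertions. This produces $p^*$ as the desired extension of $p\cup q'$ refuting clause (2), finishing the proof. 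The main technical care is in tracking how large sets evolve as elements are inserted; everything else is direct unpacking of the preceding lemmas.
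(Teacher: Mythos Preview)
Your proof is correct and follows essentially the same route as the paper's: reduce via Lemma~\ref{NotNotInTheQuotientMagidor}, use the pointwise nature of the $\curvearrowright$ operation to pass between $\vec{\gamma}\setminus\vec{\alpha}$ and $\vec{\gamma}\setminus\vec{\beta}$, and invoke clause~(II) of Lemma~\ref{RefinningWithRowbottomMagidor} for the latter. The only cosmetic difference is that the paper runs the argument contrapositively---assuming some $(r,\langle\check{\vec{\gamma}},\dot{\vec{D}}\rangle)$ forces $r\notin\mathbb{P}_\xi$, it deduces $r\cup p\Vdash\langle\check{\vec{\alpha}},\dot{\vec{A}}\rangle{}^\curvearrowright(\check{\vec{\gamma}}\setminus\check{\vec{\beta}})\notin\dot{\mathbb{M}}^{\pi^*}_\lambda$ and reads off the contradiction with~(II) directly---whereas you construct the witness $p^*$ explicitly and then combine; your extra paragraph on how the two partial extensions merge is exactly what the paper hides behind ``it is not hard to check''.
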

\begin{proof}
Otherwise, let $r^*:=(r,\langle\check{\vec{\gamma}},\dot{\vec{D}}\rangle)\leq_{\mathbb{C}_\xi}(q,\langle\check{\vec{\beta}},\dot{\vec{C}}\rangle)$ be forcing the opposite. By using Lemma \ref{NotNotInTheQuotientMagidor} with respect to $r^*$ and $r$ it follows that either (1) or (2) must hold. It is not hard to check that $(\aleph)$-$(\gimel)$ implies that (2) holds: particularly, that $r\cup p\Vdash_{\mathbb{A}_\lambda}\text{$`` \langle\check{\vec{\alpha}},\dot{\vec{A}}\rangle{}^\curvearrowright (\check{\vec{\gamma}}\setminus \check{\vec{\alpha}})\notin\dot{\mathbb{M}}^{\pi^*}_\lambda$''}$ holds. By $(\gimel)$ and since $r\cup p\leq_{\mathbb{A}_\lambda} p\cup q$, $r\cup p\Vdash_{\mathbb{A}_\lambda}\text{$`` \langle\check{\vec{\alpha}},\dot{\vec{A}}\rangle{}^\curvearrowright (\check{\vec{\gamma}}\setminus \check{\vec{\beta}})\notin\dot{\mathbb{M}}^{\pi^*}_\lambda$''}.$ Clearly, $r\leq_{\mathbb{A}_\xi} q$ and $r\Vdash_{\mathbb{A}_\xi} \check{\vec{\gamma}}\setminus \check{\vec{\beta}}\in[\biguplus_i\dot{\vec{C}}(i)]^{<\omega}$. Observe that $(\gimel)$ yields $(\Upsilon)$ of  Lemma \ref{RefinningWithRowbottomMagidor}, and this latter implies $r\cup p\nVdash_{\mathbb{A}_\lambda}\text{$`` \langle\check{\vec{\alpha}},\dot{\vec{A}}\rangle{}^\curvearrowright (\check{\vec{\gamma}}\setminus \check{\vec{\beta}})\notin\dot{\mathbb{M}}^{\pi^*}_\lambda$''}$, which produces the desired contradiction.
\end{proof}

\begin{lemma}\label{LemmaCCnessMagidor}
Let $\xi\in\mathcal{B}^*$, $(q, \langle\check{\vec{\beta}},\dot{\vec{B}}\rangle)\in\mathbb{C}_\xi$ and $\dot{r}_0, \dot{r}_1$ be two $\mathbb{C}_\xi$-names forced by $\one_{\mathbb{C}_\xi}$ to be  in $\mathbb{P}_\xi$. Then, there are $(q^*, \langle\check{\vec{\beta}}^*,\dot{\vec{B}}^*\rangle)\in\mathbb{C}_\xi$,  $(p_0,\langle\check{\vec{\alpha}}_0,\dot{\vec{A}}_0\rangle)$, $(p_1,\langle\check{\vec{\alpha}}_1,\dot{\vec{A}}_1\rangle)\in \mathbb{P}_\xi$ and $\bar{p}_0, \bar{p}_1\in \mathbb{A}_\lambda$ be such that the following hold: For  $i\in\{0,1\}$,
\begin{itemize}
\item[(a)] $(q^*, \langle\check{\vec{\beta}}^*,\dot{\vec{B}}^*\rangle)\leq_{\mathbb{C}_\xi}(q, \langle\check{\vec{\beta}},\dot{\vec{B}}\rangle)$,
\item[$(b_i)$] $(q^*, \langle\check{\vec{\beta}}^*,\dot{\vec{B}}^*\rangle)\Vdash_{\mathbb{C}_\xi}\text{$``\dot{r}_i=(p_i,\langle\check{\vec{\alpha}}_i,\dot{\vec{A}}_i\rangle)\in\mathbb{P}_\xi$''}$,
\item[$(c_i)$] $\bar{p}_i\leq_{\mathbb{A}_\lambda} p_i$ and $(q^*, \langle\check{\vec{\beta}}^*,\dot{\vec{B}}^*\rangle)$ and $(\bar{p}_i,\langle\check{\vec{\alpha}}_i,\dot{\vec{A}}_i\rangle)$ satisfy conditions (1)-(3) of Lemma \ref{LyingWithinTheQuotient}.
\end{itemize}
\end{lemma}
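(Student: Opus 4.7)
\medskip
\noindent
\textbf{Proof proposal.}
The plan is to produce $(q^*,\langle\check{\vec{\beta}}^*,\dot{\vec{B}}^*\rangle)$, the decided values $(p_i,\langle\check{\vec{\alpha}}_i,\dot{\vec{A}}_i\rangle)$, and the strengthenings $\bar p_i$ in two successive rounds of ``decide and refine''. The crucial feature we shall exploit is that the two disjuncts in clause~(2) of Lemma~\ref{NotNotInTheQuotientMagidor} live at different forcing levels: the first (about $\dot{\mathbb{M}}^{\pi^*}_\xi$) is $\mathbb{A}_\xi$-decided, while the second (about $\dot{\mathbb{M}}^{\pi^*}_\lambda$) is genuinely $\mathbb{A}_\lambda$-decided. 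This asymmetry is what permits us to amalgamate the Magidor-stem extensions demanded by $i=0$ and $i=1$ into a single stem $\vec{\beta}^*$ in $\mathbb{C}_\xi$ before worrying about the $\mathbb{A}_\lambda$-part.

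First I would extend $(q,\langle\check{\vec{\beta}},\dot{\vec{B}}\rangle)$ to some $(q^1,\langle\check{\vec{\beta}}^1,\dot{\vec{B}}^1\rangle)$ that decides both $\dot r_0=(p_0,\langle\check{\vec{\alpha}}_0,\dot{\vec{A}}_0\rangle)$ and $\dot r_1=(p_1,\langle\check{\vec{\alpha}}_1,\dot{\vec{A}}_1\rangle)$ (this is routine: the stem of a Magidor-named condition is a finite tuple which can be decided by extending the Cohen part and the measure-one sets appropriately). Since $(q^1,\ldots)\Vdash \dot r_i\in\mathbb{P}_\xi$, clause~(1) of Lemma~\ref{NotNotInTheQuotientMagidor} fails for each $i$, so $p_i\upharpoonright\xi\parallel_{\mathbb{A}_\xi} q^1$; strengthen $q^1$ to lie below both $p_0\upharpoonright\xi$ and $p_1\upharpoonright\xi$. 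Next, since clause~(2) also fails for $i=0$, pick $\bar a_0\leq_{\mathbb{A}_\lambda} p_0\cup q^1$ forcing the conjunction $\langle\check{\vec{\beta}}^1,\dot{\vec{B}}^1\rangle{}^\curvearrowright(\check{\vec{\alpha}}_0\setminus\check{\vec{\beta}}^1)\in\dot{\mathbb{M}}^{\pi^*}_\xi$ and $\langle\check{\vec{\alpha}}_0,\dot{\vec{A}}_0\rangle{}^\curvearrowright(\check{\vec{\beta}}^1\setminus\check{\vec{\alpha}}_0)\in\dot{\mathbb{M}}^{\pi^*}_\lambda$. Because the first conjunct is $\mathbb{A}_\xi$-decided, it is still forced by $q^{1.5}:=\bar a_0\upharpoonright\xi$, an extension of $q^1$ in $\mathbb{A}_\xi$. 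Now $(q^{1.5},\ldots)$ still forces $\dot r_1\in\mathbb{P}_\xi$, so the same argument applied to $\dot r_1$ yields $\bar a_1\leq_{\mathbb{A}_\lambda} p_1\cup q^{1.5}$ forcing the analogous pair of statements; set $q^2:=\bar a_1\upharpoonright\xi$. By construction $q^2$ forces both $\langle\check{\vec{\beta}}^1,\dot{\vec{B}}^1\rangle{}^\curvearrowright(\check{\vec{\alpha}}_i\setminus\check{\vec{\beta}}^1)\in\dot{\mathbb{M}}^{\pi^*}_\xi$ for $i=0,1$. Using the prunedness of $\langle\check{\vec{\beta}}^1,\dot{\vec{B}}^1\rangle$ (which we may assume by Proposition~\ref{ThereisPrunedInMagidor}), the two stem-extensions can be merged: set $\vec{\beta}^*:=\vec{\beta}^1\cup\vec{\alpha}_0\cup\vec{\alpha}_1$ (enumerated increasingly) and choose an $\mathbb{A}_\xi$-name $\dot{\vec{B}}^*$ obtained by suitably splitting each $\dot{B}^1_k$ at the newly inserted ordinals, so that $q^2\Vdash_{\mathbb{A}_\xi}\langle\check{\vec{\beta}}^*,\dot{\vec{B}}^*\rangle\leq_{\dot{\mathbb{M}}^{\pi^*}_\xi}\langle\check{\vec{\beta}}^1,\dot{\vec{B}}^1\rangle$.

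For the second round, observe that $(q^2,\langle\check{\vec{\beta}}^*,\dot{\vec{B}}^*\rangle)$ still forces $\dot r_0,\dot r_1\in\mathbb{P}_\xi$, and that with respect to this enlarged stem the first disjunct of clause~(2) is now automatic (since $\vec{\alpha}_i\subseteq\vec{\beta}^*$). Apply failure of clause~(2) for $\dot r_0$ at this new condition to obtain $\bar p_0\leq_{\mathbb{A}_\lambda} p_0\cup q^2$ forcing $\langle\check{\vec{\alpha}}_0,\dot{\vec{A}}_0\rangle{}^\curvearrowright(\check{\vec{\beta}}^*\setminus\check{\vec{\alpha}}_0)\in\dot{\mathbb{M}}^{\pi^*}_\lambda$; set $q^{**}:=\bar p_0\upharpoonright\xi$. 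Repeat for $\dot r_1$ at $(q^{**},\langle\check{\vec{\beta}}^*,\dot{\vec{B}}^*\rangle)$ to obtain $\bar p_1\leq_{\mathbb{A}_\lambda} p_1\cup q^{**}$ forcing the analogue, and let $q^*:=\bar p_1\upharpoonright\xi$. One then verifies directly: (a) $(q^*,\langle\check{\vec{\beta}}^*,\dot{\vec{B}}^*\rangle)\leq_{\mathbb{C}_\xi}(q,\langle\check{\vec{\beta}},\dot{\vec{B}}\rangle)$; $(b_i)$ holds since each condition along the chain of refinements preserved the decision of $\dot r_i$; and $(c_i)$ follows because $q^*\leq \bar p_1\upharpoonright\xi\leq \bar p_0\upharpoonright\xi$, $\vec{\alpha}_i\subseteq \vec{\beta}^*$, and $\bar p_i\cup q^*\leq_{\mathbb{A}_\lambda}\bar p_i$, which already forces the required Magidor statement.

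The main obstacle I expect is bookkeeping the simultaneous demands: we must keep $q^*$ below both $\bar p_0\upharpoonright\xi$ and $\bar p_1\upharpoonright\xi$ while also ensuring that the chosen $\bar p_i$ force the enlarged mixture $\langle\check{\vec{\alpha}}_i,\dot{\vec{A}}_i\rangle{}^\curvearrowright(\check{\vec{\beta}}^*\setminus\check{\vec{\alpha}}_i)$ to be a valid Magidor condition in $\dot{\mathbb{M}}^{\pi^*}_\lambda$. The sequential two-phase strategy above addresses this: the first phase uses the $\mathbb{A}_\xi$-decided disjunct of clause~(2) purely to enlarge the $\mathbb{C}_\xi$-stem to accommodate both $\vec{\alpha}_0$ and $\vec{\alpha}_1$ at once, while the second phase revisits clause~(2) at the enlarged stem to extract $\bar p_0,\bar p_1$ with the correct $\mathbb{A}_\lambda$-witnesses. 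Without the level-separation of the two disjuncts, one would be forced to simultaneously solve two incompatible extension problems in $\mathbb{A}_\lambda$, which appears to be the essential reason why a purely $\mathbb{A}_\lambda$-level approach would run into obstructions of the kind encountered in the analogous Prikry-forcing step of \cite{FriHon}.
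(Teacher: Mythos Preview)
Your proof is correct and follows essentially the same strategy as the paper's. The paper proceeds more tersely: it first decides $\dot r_0,\dot r_1$, then writes ``by extending $q^*$ and $\vec{\beta}^*$ if necessary, we may further assume that $q^*\leq_{\mathbb{A}_\xi} p_0\upharpoonright\xi\cup p_1\upharpoonright\xi$ and $\vec{\alpha}_0\cup \vec{\alpha}_1\subseteq \vec{\beta}^*$'', then applies the failure of clause~(2) of Lemma~\ref{NotNotInTheQuotientMagidor} once for each $i$ to extract $\bar p_i\leq_{\mathbb{A}_\lambda} q^*\cup p_i$ forcing $\langle\check{\vec{\alpha}}_i,\dot{\vec{A}}_i\rangle{}^\curvearrowright(\check{\vec{\beta}}^*\setminus\check{\vec{\alpha}}_i)\in\dot{\mathbb{M}}^{\pi^*}_\lambda$, and finally extends $q^*$ once more below both $\bar p_0\upharpoonright\xi$ and $\bar p_1\upharpoonright\xi$.

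Your two-phase structure makes explicit precisely the step the paper treats as routine: the stem enlargement $\vec{\beta}^*\supseteq\vec\alpha_0\cup\vec\alpha_1$. You justify this by first invoking the failure of clause~(2) at the \emph{original} stem to harvest, via the $\mathbb{A}_\xi$-decided first disjunct, that the insertion of the $\vec\alpha_i$ into the $\mathbb{C}_\xi$-Magidor stem is legitimate (using prunedness to merge the two insertions), and only then re-apply clause~(2) at the enlarged stem to obtain the $\bar p_i$'s from the $\mathbb{A}_\lambda$-level second disjunct. The paper's single pass implicitly relies on the same level-separation you highlight; your version is simply more carefully written at that point. In substance the two arguments coincide.
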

\begin{proof}
Let $(q^*, \langle\check{\vec{\beta}}^*,\dot{\vec{B}}^*\rangle)\leq_{\mathbb{C}_\xi}(q, \langle\check{\vec{\beta}},\dot{\vec{B}}\rangle)$ and $(p_0,\langle\check{\vec{\alpha}}_0,\dot{\vec{A}}_0\rangle)$, $(p_1,\langle\check{\vec{\alpha}}_1,\dot{\vec{A}}_1\rangle)\in \mathbb{P}_\xi$ be such that $(b_0)$ and $(b_1)$ hold. By extending $q^*$ and $\vec{\beta}^*$ if necessary, we may further assume that $q^*\leq_{\mathbb{A}_\xi} p_0\upharpoonright\xi\cup p_1\upharpoonright\xi$ and $\vec{\alpha}_0\cup \vec{\alpha}_1\s \vec{\beta}^*$. For each $i\in\{0,1\}$, combining this with Lemma \ref{NotNotInTheQuotientMagidor} it follows that condition (3) must fail. Thus, there is $\bar{p}_i\leq_{\mathbb{A}_\lambda} q^*\cup p_i$ with  $\bar{p}_i\Vdash_{\mathbb{A}_\lambda} \langle\check{\vec{\alpha}}_i,\dot{\vec{A}}_i\rangle{}^\curvearrowright (\check{\vec{\beta}}^*\setminus\check{\vec{\alpha}}_i)\in \dot{\mathbb{M}}^{\pi^*}_\lambda$. Again, extend $p^*$ to ensure $q^*\leq_{\mathbb{A}_\xi} \bar{p}_0,\bar{p}_1$. It should be  clear at this point that, for $i\in\{0,1\}$, $(q^*, \langle\check{\vec{\beta}}^*,\dot{\vec{B}}^*\rangle)$ and $(\bar{p}_i,\langle\check{\vec{\alpha}}_i,\dot{\vec{A}}_i\rangle)$ witness $(c_i)$.
\end{proof}
We are finally in conditions to prove the $\kappa^+$-ccness of  $\mathbb{P}_\xi\times\mathbb{P}_\xi$. The proof is exactly the same as in \cite[Lemma 3.28]{FriHon} so we refer the reader there for details.
\begin{lemma}
Let $\xi\in\mathcal{B}^*$. Then, $\one_{\mathbb{C}_\xi}\Vdash_{\mathbb{C}_\xi} \text{$``\mathbb{P}_\xi\times\mathbb{P}_\xi$ is $\kappa^+$-cc''}$.
\end{lemma}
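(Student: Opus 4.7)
The plan is to assume for contradiction that in $V^{\mathbb{C}_\xi}$ there is an antichain $\{(\dot{r}^0_\alpha,\dot{r}^1_\alpha)\mid \alpha<\kappa^+\}$ in $\mathbb{P}_\xi\times\mathbb{P}_\xi$, and then construct two indices $\alpha\neq\beta$ whose corresponding pairs are actually compatible, contradicting the antichain hypothesis. The contradiction will be produced by combining a $\Delta$-system/pigeonhole argument with the key Lemma \ref{LyingWithinTheQuotient}, which converts combinatorial information in the ground model into membership in the quotient $\mathbb{P}_\xi$.

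First, for each $\alpha<\kappa^+$, I would apply Lemma \ref{LemmaCCnessMagidor} (in a version adapted to pairs, or twice in succession) to obtain a condition $(q^*_\alpha,\langle\check{\vec{\beta}}^*_\alpha,\dot{\vec{B}}^*_\alpha\rangle)\in\mathbb{C}_\xi$ deciding that $\dot{r}^i_\alpha=(p^i_\alpha,\langle\check{\vec{\alpha}}^i_\alpha,\dot{\vec{A}}^i_\alpha\rangle)$ for $i\in\{0,1\}$, together with witnesses $\bar{p}^i_\alpha\leq_{\mathbb{A}_\lambda} q^*_\alpha\cup p^i_\alpha$ fulfilling clauses $(a)$--$(c_i)$ of that lemma. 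In particular $\vec{\alpha}^0_\alpha\cup\vec{\alpha}^1_\alpha\subseteq\vec{\beta}^*_\alpha$ and $\bar{p}^i_\alpha\Vdash_{\mathbb{A}_\lambda}\langle\check{\vec{\alpha}}^i_\alpha,\dot{\vec{A}}^i_\alpha\rangle{}^\curvearrowright(\check{\vec{\beta}}^*_\alpha\setminus\check{\vec{\alpha}}^i_\alpha)\in\dot{\mathbb{M}}^{\pi^*}_\lambda$.

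Next, I would refine the index set to $I\in[\kappa^+]^{\kappa^+}$ by a $\Delta$-system/pigeonhole argument. Since the stems $\vec{\beta}^*_\alpha,\vec{\alpha}^0_\alpha,\vec{\alpha}^1_\alpha$ are finite sequences of ordinals below $\kappa$, we may assume these three stems are constant across $I$, say equal to $\vec{\beta}^*,\vec{\alpha}^0,\vec{\alpha}^1$. Using that $\mathbb{A}_\lambda$ is $\kappa^+$-Knaster and that $\mathbb{A}_\lambda$-conditions have domains of size $<\kappa$, the $\Delta$-system lemma yields, after further refinement, common roots for the domains of $q^*_\alpha$, $\bar{p}^0_\alpha$ and $\bar{p}^1_\alpha$ on which these Cohen conditions agree. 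By further pigeonhole we may also assume that for each $i$ the names $\dot{\vec{B}}^*_\alpha$, $\dot{\vec{A}}^i_\alpha$ (which depend on Cohen conditions with only finitely many coordinates appearing in the relevant decisions about the large sets' measure-one status) agree in the desired sense; this uses that $\mathbb{A}_{\xi}$ and $\mathbb{A}_\lambda$ are $\kappa^+$-cc and $\gch_{\geq\kappa}$.

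Pick any two $\alpha<\beta$ in the refined index set. Set $q^{**}:=q^*_\alpha\cup q^*_\beta\in\mathbb{A}_\xi$ and, for $i\in\{0,1\}$, $\bar{p}^i:=\bar{p}^i_\alpha\cup\bar{p}^i_\beta\in\mathbb{A}_\lambda$; these unions exist in the respective Cohen forcings because of the $\Delta$-system refinement. Let $\dot{\vec{B}}^{**}$ be a $\mathbb{A}_\xi$-name forced to be the entry-wise intersection $\dot{\vec{B}}^*_\alpha\cap\dot{\vec{B}}^*_\beta$ (still in $\mathcal{F}(\vec{\beta}^*(k))$). Then $(q^{**},\langle\check{\vec{\beta}}^*,\dot{\vec{B}}^{**}\rangle)$ extends both $(q^*_\alpha,\langle\check{\vec{\beta}}^*,\dot{\vec{B}}^*_\alpha\rangle)$ and $(q^*_\beta,\langle\check{\vec{\beta}}^*,\dot{\vec{B}}^*_\beta\rangle)$ in $\mathbb{C}_\xi$. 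For $i\in\{0,1\}$ the pair consisting of this extension and $(\bar{p}^i,\langle\check{\vec{\alpha}}^i,\dot{\vec{A}}^i_\alpha\cap\dot{\vec{A}}^i_\beta\rangle)$ satisfies hypotheses $(\aleph)$--$(\gimel)$ of Lemma \ref{LyingWithinTheQuotient} (the compatibility clause $(\gimel)$ follows from $(c_i)$ for both $\alpha$ and $\beta$). Hence there is a sequence $\dot{\vec{C}}^i$ so that $(q^{**},\langle\check{\vec{\beta}}^*,\dot{\vec{C}}^i\rangle)\Vdash_{\mathbb{C}_\xi} (p^i_\alpha,\langle\check{\vec{\alpha}}^i,\dot{\vec{A}}^i_\alpha\rangle),(p^i_\beta,\langle\check{\vec{\alpha}}^i,\dot{\vec{A}}^i_\beta\rangle)\in\mathbb{P}_\xi$, and moreover these two names are forced to be compatible in $\mathbb{P}_\xi$ (as their stems agree and their large-set-names share a common refinement in $\dot{\mathbb{M}}^{\pi^*}_\lambda$). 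Intersecting both $\dot{\vec{C}}^0$ and $\dot{\vec{C}}^1$ with $\dot{\vec{B}}^{**}$ coordinatewise yields a single condition in $\mathbb{C}_\xi$ that forces $(\dot{r}^0_\alpha,\dot{r}^1_\alpha)$ and $(\dot{r}^0_\beta,\dot{r}^1_\beta)$ to be compatible in $\mathbb{P}_\xi\times\mathbb{P}_\xi$, contradicting the antichain assumption.

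The delicate step, and the main obstacle, is the $\Delta$-system/pigeonhole argument: one must ensure that after shrinking to a $\kappa^+$-sized subset not only the finite stem data are constant but also the names for the large sets can be uniformly amalgamated (so that $\dot{\vec{A}}^i_\alpha\cap\dot{\vec{A}}^i_\beta$ is still forced into the relevant filter and, crucially, so that hypothesis $(\gimel)$ of Lemma \ref{LyingWithinTheQuotient} survives for the amalgamated condition $\bar{p}^i$). This is where the $\kappa^+$-Knasterness of the Cohen forcing and the $\gch_{\geq\kappa}$ (bounding the number of relevant names) are indispensable, and it is the step where one needs to mimic carefully the argument of \cite[Lemma 3.28]{FriHon}.
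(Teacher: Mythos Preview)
Your proposal follows essentially the same approach as the paper (which defers to \cite[Lemma~3.28]{FriHon}): apply Lemma~\ref{LemmaCCnessMagidor} to extract witnesses, refine so that Cohen parts are pairwise compatible and stems are constant, then amalgamate and invoke Lemma~\ref{LyingWithinTheQuotient} to force compatibility in $\mathbb{P}_\xi\times\mathbb{P}_\xi$.

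One comment: you are overcomplicating the refinement step. You do \emph{not} need the names $\dot{\vec{A}}^i_\alpha$ or $\dot{\vec{B}}^*_\alpha$ to ``agree'' in any sense, and no counting of names via $\gch_{\geq\kappa}$ is required here. The paper's argument simply uses the $\kappa^+$-Knasterness of $\mathbb{C}_\xi\times\mathbb{C}_\lambda^2$ (each factor is $\kappa^+$-Knaster by the argument of Proposition~\ref{PropertiesOfV[R]Magidor}(1)) to find $\mathcal{I}\in[\kappa^+]^{\kappa^+}$ on which the triples $\langle (q^*_\theta,\langle\check{\vec\beta}^*_\theta,\dot{\vec B}^*_\theta\rangle),(\bar p^0_\theta,\langle\check{\vec\alpha}^0_\theta,\dot{\vec A}^0_\theta\rangle),(\bar p^1_\theta,\langle\check{\vec\alpha}^1_\theta,\dot{\vec A}^1_\theta\rangle)\rangle$ are pairwise compatible, and then pigeonhole (there are only $\kappa$ many finite sequences below $\kappa$) to stabilize the stems $\vec\beta^*,\vec\alpha^0,\vec\alpha^1$. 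Once this is done, for any $\theta<\eta$ in $\mathcal{I}$ the amalgams $r_{\theta,\eta}=(q^*_\theta\cup q^*_\eta,\langle\check{\vec\beta}^*,\dot{\vec B}^*_\theta\wedge\dot{\vec B}^*_\eta\rangle)$ and $r'_{i,\theta,\eta}=(\bar p^i_\theta\cup\bar p^i_\eta,\langle\check{\vec\alpha}^i,\dot{\vec A}^i_\theta\wedge\dot{\vec A}^i_\eta\rangle)$ are well-defined conditions, and hypothesis $(\gimel)$ of Lemma~\ref{LyingWithinTheQuotient} transfers automatically: if $\bar p^i_\theta$ forces $\langle\check{\vec\alpha}^i,\dot{\vec A}^i_\theta\rangle{}^\curvearrowright(\check{\vec\beta}^*\setminus\check{\vec\alpha}^i)\in\dot{\mathbb{M}}^{\pi^*}_\lambda$ and likewise for $\eta$, then $\bar p^i_\theta\cup\bar p^i_\eta$ forces the same for the coordinatewise intersection, since membership in $\dot{\mathbb{M}}^{\pi^*}_\lambda$ here amounts to the large sets restricting to members of the relevant filters $\mathcal{F}(\gamma)$, and filters are closed under finite intersections.
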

\section{Forcing arbitrary failures of the $\sch_\kappa$}\label{SectionGapArbitrary}
In this last section we address the issue of obtaining arbitrary failures for the $\sch_\kappa$ in the generic extension of Section \ref{TPkappa++SectionMagidor}. Rather than providing full details we will just enumerate the necessary modifications in the arguments. After our exposition we hope to have convinced the reader that the results proved through Section \ref{TPkappa++SectionMagidor} still apply in the current context.
\begin{enumerate}
\item Assume the $\gch_{\geq \kappa}$. Let $\kappa<\lambda$ be a strong and a weakly compact cardinal, respectively. Let $\Theta\geq \lambda^{++}$ be a cardinal with $\cof(\Theta)>\kappa$ and $\delta=\cof(\delta)<\kappa$. By preparing the ground model we may further assume that $\kappa$ is a strong cardinal which is indestructible under adding Cohen subsets of $\kappa$. This preparatory forcing does not mess up our initial hypotheses.

\item Set $\mathbb{A}_\Theta:=\Add(\kappa,\Theta)$ and, for each $x\in [\Theta]^\lambda$,  $\mathbb{A}_x:=\Add(\kappa,x)$. Let $G\s \mathbb{A}_\Theta$ a generic filter over $V$ and $\mathcal{U}\in V[G]$ be a coherent sequence of measures with $\ell^{\mathcal{U}}=\kappa+1$ and $o^{\mathcal{U}}(\kappa)=\delta$. For each pair $(\alpha,\beta)\in\dom(\mathcal{U})$, let $\dot{\mathcal{U}}(\alpha,\beta)$ be a $\mathbb{A}_{\Theta}$-name such that $\mathcal{U}(\alpha,\beta)=\dot{\mathcal{U}}(\alpha,\beta)_G$. Arguing as in Lemma \ref{LemmaD+} we can prove the following:

\begin{lemma}
There exists an  unbounded set $\mathcal{A}\subseteq [\Theta]^\lambda$, closed under taking limits of ${\geq}\kappa^+$-se\-quences, such that, for every  $x\in \mathcal{A}$ and every $\mathbb{A}_{\Theta}$-generic filter $\bar{G}$,
$\mathcal{U}_{x}:=\langle\dot{\mathcal{U}}(\alpha,\beta)_{\bar{G}}\cap V[\bar{G}\upharpoonright x]\mid \alpha\leq\kappa,\, \beta<o^{\dot{\mathcal{U}}}(\alpha)\rangle$
 is a coherent sequence of measures in $V[\bar{G}\upharpoonright x]$.
\end{lemma}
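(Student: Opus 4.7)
The plan is to mimic verbatim the proof of Lemma \ref{LemmaD+}, replacing the linear parameter $\lambda^+$ by the directed set $[\Theta]^\lambda$ ordered by inclusion. First I would note that $|\dom(\mathcal{U})|\leq \kappa$, since $\ell^{\mathcal{U}}=\kappa+1$ and for each $\alpha\leq \kappa$, $o^{\mathcal{U}}(\alpha)\leq \delta<\kappa$. This control on the size of $\dom(\mathcal{U})$ will be crucial for the final intersection argument.

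Next, for each pair $(\alpha,\beta)\in \dom(\mathcal{U})$, I would construct an unbounded set $\mathcal{A}_{(\alpha,\beta)}\subseteq [\Theta]^\lambda$, closed under unions of ${\geq}\kappa^+$-increasing chains, such that whenever $x\in\mathcal{A}_{(\alpha,\beta)}$ and $\bar{G}\s \mathbb{A}_\Theta$ is generic over $V$, the set  $\dot{\mathcal{U}}(\alpha,\beta)_{\bar{G}}\cap V[\bar{G}\upharpoonright x]$ is a normal measure on $\alpha$ in $V[\bar{G}\upharpoonright x]$. This is carried out as in \cite[Lemma 3.3]{FriHon}: for each $\mathbb{A}_\Theta$-nice name for a subset of $\alpha$ occurring in $\dot{\mathcal{U}}(\alpha,\beta)$, its support lies in some set of size ${\leq}\kappa$, and one closes off under suitable names witnessing normality and ultrafilter-ness. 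The only change with respect to the previous argument is that now ``unboundedness'' means that for every $y\in [\Theta]^\lambda$ there is $x\in \mathcal{A}_{(\alpha,\beta)}$ with $y\s x$, and closure is with respect to $\s$-increasing chains of length ${\geq}\kappa^+$.

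I would then set $\mathcal{A}:=\bigcap_{(\alpha,\beta)\in \dom(\mathcal{U})}\mathcal{A}_{(\alpha,\beta)}$. Since the collection of $\s$-unbounded subsets of $[\Theta]^\lambda$ that are closed under ${\geq}\kappa^+$-chains is itself closed under intersections of up to $\kappa$ members (the proof is a straightforward back-and-forth between the $\kappa$ sets, using $\cof(\Theta)>\kappa$ and $\cof(\lambda)=\lambda>\kappa$), the set $\mathcal{A}$ inherits both properties. For each $x\in \mathcal{A}$, define $\mathcal{U}_x(\alpha,\beta):=\dot{\mathcal{U}}(\alpha,\beta)_{\bar{G}}\cap V[\bar{G}\upharpoonright x]$, which by construction is a normal measure on $\alpha$ in $V[\bar{G}\upharpoonright x]$. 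So clause (1) of Definition \ref{CoherentSequence} holds.

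The main obstacle, as in Lemma \ref{LemmaD+}, is verifying coherence, i.e. clause (2). I would reproduce the two internal claims of that lemma verbatim, letting $i^{\alpha,x}_\beta$ denote the ultrapower of $V[\bar{G}\upharpoonright x]$ by $\mathcal{U}_x(\alpha,\beta)$. For $(\varrho,\nu)\in \dom(\mathcal{U}_x\upharpoonright(\alpha,\beta))$, the case $\varrho<\alpha$ is immediate because $\mathbb{A}_x$ adds no bounded subsets of $\kappa$, so $\mathcal{U}_x(\varrho,\nu)=\mathcal{U}(\varrho,\nu)\in V$ and is fixed by $i^{\alpha,x}_\beta$. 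For $\varrho=\alpha$ and $\nu<\beta$, coherence of $\mathcal{U}$ gives $j^{\alpha}_\beta(\mathcal{U})(\alpha,\nu)\cap V[\bar{G}\upharpoonright x]=\mathcal{U}_x(\alpha,\nu)$; a representative $f\in V[\bar{G}\upharpoonright x]$ computes $[f]$ the same way in both ultrapowers, and the defining set $\{\delta<\alpha\mid f(\delta)\in \mathcal{U}(\delta,\nu)\}=\{\delta<\alpha\mid f(\delta)\in\mathcal{U}_x(\delta,\nu)\}$ lies in $V[\bar{G}\upharpoonright x]$, so membership in $\mathcal{U}(\alpha,\beta)$ is equivalent to membership in $\mathcal{U}_x(\alpha,\beta)$. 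Finally, to see that $\dom(i^{\alpha,x}_\beta(\mathcal{U}_x)\upharpoonright\alpha+1)=\dom(\mathcal{U}_x\upharpoonright(\alpha,\beta))$ one copies the Mitchell-order computation showing that $\{\varrho<\alpha\mid o^{\mathcal{U}}(\varrho)=\beta\}$ lies in $\mathcal{U}(\alpha,\beta)$ and hence in $\mathcal{U}_x(\alpha,\beta)$, so $o^{i^{\alpha,x}_\beta(\mathcal{U}_x)}(\alpha)=\beta$. The hardest point is purely bureaucratic: checking that the combinatorial filter on $[\Theta]^\lambda$ behaves well enough under the diagonal intersection of $\kappa$ many dense-and-closed sets, which is fine given $\cof(\Theta)>\kappa$.
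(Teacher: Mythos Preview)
Your proposal is correct and follows exactly the route the paper indicates: the paper does not give a standalone proof of this lemma but simply writes ``Arguing as in Lemma~\ref{LemmaD+}'', and your write-up is precisely that argument transported from the linear index set $\lambda^+$ to the directed set $([\Theta]^\lambda,\subseteq)$. One cosmetic remark: in your final sentence you speak of a ``diagonal intersection'', but the actual argument (and your own earlier line $\mathcal{A}=\bigcap_{(\alpha,\beta)}\mathcal{A}_{(\alpha,\beta)}$) uses a plain intersection of ${\leq}\kappa$ many sets, just as in Lemma~\ref{LemmaD+}; no diagonalization is needed.
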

Here we are taking advantage of Notation \ref{CohenForcingNotationMagidor}. Let $\dot{\mathcal{U}}_{x}$ be a $\mathbb{A}_x$-name such that $\mathcal{U}_x=(\dot{\mathcal{U}}_{x})_{G\upharpoonright x}$ Similarly, let $\dot{\mathbb{M}}_x$ denote a $\mathbb{A}_x$-name such that $\mathbb{M}_{\mathcal{U}_x}=(\dot{\mathbb{M}}_{\mathcal{U}_\xi})_{G\upharpoonright x}$. By convention, $\mathcal{U}_{\Theta}:=\mathcal{U}$ and $\dot{\mathbb{M}}_{\Theta}$ will denote a $\mathbb{A}_{\Theta}$-name such that $\mathbb{M}_{\mathcal{U}_\Theta}=(\dot{\mathbb{M}}_{\Theta})_G$. Arguing as in Proposition \ref{ProjectionGenericsinMagidor} one may argue that $\mathbb{M}_{\Theta}$ projects onto $\mathbb{M}_x$, for each $x\in\mathcal{A}$.

\item Choose $x_0\in\mathcal{A}$ with $\lambda+1\s x$ be arbitrary and let $\pi: \mathbb{A}_{x_0}\rightarrow \mathbb{A}_{ \mathrm{Even}(\lambda)}$ be an isomorphism.
Define $\dot{\mathcal{U}}^\pi_{x_0}:= \pi(\dot{\mathcal{U}}_{x_0})$. Clearly, $(\dot{\mathcal{U}}^\pi_{x_0})_{\pi[G\upharpoonright x_0]}=(\dot{\mathcal{U}}_{x_0})_{G\upharpoonright x_0}=\mathcal{U}_{x_0}$.  Say that $\mathcal{U}^\pi_{x_0}(\alpha,\beta)$
are the measures of $\mathcal{U}^\pi_{x_0}$ and that $\dot{\mathcal{U}}^\pi_{x_0}(\alpha,\beta)$ is a $\mathbb{A}_{x_0}$-name such that $\mathcal{U}^\pi_{x_0}(\alpha,\beta)=\dot{\mathcal{U}}^\pi_{x_0}(\alpha,\beta)_{\pi[G\upharpoonright x_0]}$. Let $\mathcal{B}\s\lambda$ be as given in Lemma \ref{ReflectionMeasuresD}. From this point on we will be relying on Notation \ref{NotationUpi}.

\item Set $\hat{\mathcal{A}}:=\{x\in\mathcal{A}\mid x_0\subseteq x\}$. Arguing as in Lemma \ref{ProjectionsCohenPartMagidor} we obtain a system of projections
$$
\langle \sigma^{\Theta}_x: \mathbb{A}_{\Theta}\ast \dot{\mathbb{M}}_\Theta\rightarrow \mathrm{RO}^+(\mathbb{A}_x\ast \dot{\mathbb{M}}_x)\mid \text{ $x\in\hat{\mathcal{A}}$}\rangle,
$$
$$
\langle \hat{\sigma}^{x}_\xi: \mathrm{RO}^+(\mathbb{A}_x\ast \dot{\mathbb{M}}_x)\rightarrow \mathrm{RO}^+(\mathbb{A}_{\mathrm{Even}(\xi)}\ast \dot{\mathbb{M}}^\pi_\xi)\mid \text{ $x\in\hat{\mathcal{A}}$, $\xi\in\mathcal{B}$}\rangle,
$$
$$
\langle {\sigma}^\Theta_\xi: \mathbb{A}_{\Theta}\ast \dot{\mathbb{M}}_\Theta\rightarrow \mathrm{RO}^+(\mathbb{A}_{\mathrm{Even}(\xi)}\ast \dot{\mathbb{M}}^\pi_\xi)\mid \text{$\xi\in\mathcal{B}$}\rangle.
$$
Also, one may guarantee that these projections commute; namely,
$
\sigma^{\Theta}_\xi=\hat{\sigma}^{x}_\xi\circ \sigma^{\Theta}_x,\;\text{for $x\in\hat{\mathcal{A}}$ and $\xi\in\mathcal{B}$}.
$
\item Using these projections define $\mathbb{R}$ as in Definition \ref{MainForcingRGolshani}. It is easy to check that $\mathbb{R}$ forces the statements of propositions \ref{projectionUMagidor} and \ref{PropertiesOfV[R]Magidor}. For each $x\in\hat{\mathcal{A}}$, define $\mathbb{R}\upharpoonright x$ as in Definition \ref{truncationsR}. Now assume that $\mathbb{R}$ forces a failure of $\TP(\lambda)$.  Arguing in the same fashion as in Lemma \ref{TruncationsAndAronszajnTreeMagidor} one obtains a set $x^*\in\hat{\mathcal{A}}$, $x_0\subsetneq x^*$ for which $\mathbb{R}\upharpoonright x^*$ forces the same failure.

\item Let $\pi^*$ be a bijection between $\mathbb{A}_{x^*}$ and $\mathbb{A}_\lambda$ extending $\pi$. Set $\mathcal{U}_\lambda^{\pi^*}:=\pi^*{(\dot{\mathcal{U}}_{x^*})}_{\pi^*[G\upharpoonright x^*]}$. It is evident that this is a coherent sequence of measures
which (pointwise) extends  $\mathcal{U}^\pi_{\lambda}$.
Set $\mathbb{M}^{\pi^*}_\lambda:=\mathbb{M}_{\mathcal{U}^{\pi^*}_\lambda}$. 

\item Argue as in Lemma \ref{R*RtruncatedAreIsomorphicMagidor} to show that $\pi^*$ extends to an isomorphism between $\mathbb{A}_{x^*}\ast \dot{\mathbb{M}}_{x^*}$ and $\mathbb{A}_\lambda\ast\dot{\mathbb{M}}^{\pi^*}_\lambda$, and use it to define $\mathbb{R}^*$ as in
Definition \ref{DefinitionR*Magidor}. It can be argued that $\mathbb{R}^*$ and $\mathbb{R}\upharpoonright x^*$ are isomorphic, hence $\mathbb{R}^*$ forces the existence of a $\lambda$-Aronszajn tree. From this point on it can be checked that  all the arguments of Section \ref{TPkappa++SectionMagidor} concerning $\mathbb{R}^*$ and its truncations $\mathbb{R}^*\upharpoonright\xi$ still apply in the current context.
\end{enumerate}

We close this section with the following open question:

\begin{quest}
\rm{What is the exact consistency strength of the conclusion of Theorem \ref{main theorem 1}? }
\end{quest}

\section{A word about $\TP(\kappa^{+})$ in $V^\mathbb{R}$ }\label{NotTPkappa^+}
Assume that $\kappa$ is a strong cardinal. In particular, $\kappa^{<\kappa}=\kappa$ and thus $\square^*_\kappa$ holds. Since $\mathbb{A}_{\Theta}\ast\dot{\mathbb{M}}_\Theta$ preserves $\kappa^+$ it follows that $\one_{\mathbb{A}_\Theta\ast \dot{\mathbb{M}}_\Theta}\Vdash_{\mathbb{A}_\Theta\ast \dot{\mathbb{M}}_\Theta}\text{$``\square^*_\kappa$ holds''}$. Thus, by Proposition \ref{projectionUMagidor}(3), $\one_{\mathbb{R}}\Vdash_{\mathbb{R}}\text{$``\square^*_\kappa$ holds''}$. Finally, Jensen's Theorem yields $\one_{\mathbb{R}}\Vdash_{\mathbb{R}}\text{$``\TP(\kappa^+)$ fails''}$.

\medskip

\textbf{Acknowledgments:} The authors want to thank the anonymous referee for his/her carefully reading and for pointing out some mistakes in former versions of the paper.

\bibliographystyle{alpha}
\bibliography{biblio2}

\begin{thebibliography}{FHS18}

\bibitem[Abr83]{Abr}
Uri Abraham.
\newblock Aronszajn trees on $\aleph_2$ and $\aleph_3$.
\newblock {\em Annals of Pure and Applied Logic}, 24(3):213--230, 1983.

\bibitem[CF98]{CumFor}
James Cummings and Matthew Foreman.
\newblock The tree property.
\newblock {\em Advances in Mathematics}, 133(1):1--32, 1998.

\bibitem[FH11]{FriHal}
Sy-David Friedman and Ajdin Halilovi{\'{c}}.
\newblock The tree property at $\aleph_{\omega+2}$.
\newblock {\em J. Symbolic Logic}, 76(2):477--490, 06 2011.

\bibitem[FHS18]{FriHon}
Sy-David Friedman, Radek Honzik, and {\v{S}}{\'a}rka Stejskalov{\'a}.
\newblock The tree property at the double successor of a singular cardinal with
  a larger gap.
\newblock {\em Annals of Pure and Applied Logic}, 169(6):548--564, 2018.

\bibitem[FW91]{ForWoo}
Matthew Foreman and W~Hugh Woodin.
\newblock The generalized continuum hypothesis can fail everywhere.
\newblock {\em Annals of Mathematics}, 133(1):1--35, 1991.

\bibitem[Git10]{Git}
Moti Gitik.
\newblock Prikry-type forcings.
\newblock In {\em Handbook of set theory}, pages 1351--1447. Springer, 2010.

\bibitem[Git14]{Gitik2014}
Moti Gitik.
\newblock Extender based forsings, fresh sets and aronszajn trees.
\newblock In {\em Infinity, computability, and metamathematics}, pages
  183--203. Coll. Publ., London, 2014.

\bibitem[GM18]{GolMoh}
Mohammad Golshani and Rahman Mohammadpour.
\newblock The tree property at double successors of singular cardinals of
  uncountable cofinality.
\newblock {\em Annals of Pure and Applied Logic}, 169(2):164--175, 2018.

\bibitem[GS89]{GitShe}
Moti Gitik and Saharon Shelah.
\newblock On certain indestructibility of strong cardinals and a question of
  hajnal.
\newblock {\em Archive for Mathematical Logic}, 28(1):35--42, 1989.

\bibitem[Kan08]{Kan}
Akihiro Kanamori.
\newblock {\em The higher infinite: large cardinals in set theory from their
  beginnings}.
\newblock Springer Science \& Business Media, 2008.

\bibitem[Kun14]{Kun}
Kenneth Kunen.
\newblock {\em Set theory an introduction to independence proofs}, volume 102.
\newblock Elsevier, 2014.

\bibitem[Mag78]{Mag}
Menachem Magidor.
\newblock Changing cofinality of cardinals.
\newblock {\em Fund. Math.}, 99:61--71, 1978.

\bibitem[Mit72]{Mit}
William Mitchell.
\newblock Aronszajn trees and the independence of the transfer property.
\newblock {\em Annals of Mathematical Logic}, 5(1):21--46, 1972.

\bibitem[Mit82]{MitHow}
William Mitchell.
\newblock How weak is a closed unbounded ultrafilter?
\newblock In {\em Studies in Logic and the Foundations of Mathematics}, volume
  108, pages 209--230. Elsevier, 1982.

\bibitem[Mit10]{MitChap}
William~J Mitchell.
\newblock Beginning inner model theory.
\newblock In {\em Handbook of set theory}, pages 1449--1495. Springer, 2010.

\bibitem[Nee14]{NeeUpTo}
Itay Neeman.
\newblock The tree property up to {$\aleph_{\omega+1}$}.
\newblock {\em The Journal of Symbolic Logic}, 79(2):429--459, 2014.

\bibitem[Pov20]{Pov}
Alejandro Poveda.
\newblock The tree property at first and double successors of singular
  cardinals with an arbitrary gap.
\newblock {\em Accepted in Annals of Pure and Applied Logic}, 2020.

\bibitem[Pri70]{Prikry}
Karel~L Prikry.
\newblock Changing measurable into accessible cardinals diss.
\newblock {\em Math}, 68:5--52, 1970.

\bibitem[Rad82]{Rad}
Lon~Berk Radin.
\newblock Adding closed cofinal sequences to large cardinals.
\newblock {\em Annals of Mathematical Logic}, 22(3):243--261, 1982.

\bibitem[Sin08]{Sin}
Dima Sinapova.
\newblock A model for a very good scale and a bad scale.
\newblock {\em PhD Dissertation}, 2008.

\bibitem[Sin16]{SinTree}
Dima Sinapova.
\newblock The tree property at the first and double successors of a singular.
\newblock {\em Israel Journal of Mathematics}, 216(2):799--810, 2016.

\bibitem[Spe90]{Spe}
Ernst Specker.
\newblock Sur un probleme de sikorski.
\newblock In {\em Ernst Specker Selecta}, pages 49--52. Springer, 1990.

\bibitem[Ung12]{UngFrag}
Spencer Unger.
\newblock Fragility and indestructibility of the tree property.
\newblock {\em Archive for Mathematical Logic}, 51(1):635--645, 2012.

\bibitem[Ung13]{Ung}
Spencer Unger.
\newblock Aronszajn trees and the successors of a singular cardinal.
\newblock {\em Archive for Mathematical Logic}, 52(5-6):483--496, 2013.

\bibitem[Ung16]{UngUpTo}
Spencer Unger.
\newblock The tree property below {$\aleph_{\omega\cdot 2}$}.
\newblock {\em Annals of Pure and Applied Logic}, 167(3):247--261, 2016.

\end{thebibliography}

\end{document}